\newtheoremstyle{fancy}{}{}{\itshape}{}{\textbf\bgroup}{.\egroup}{ }{}
\newtheoremstyle{fanci}{}{}{\rm}{}{\textbf\bgroup}{.\egroup}{ }{}
\newtheoremstyle{ghost}{}{}{\itshape}{}{\textbf\bgroup}{\egroup}{ }{}
\theoremstyle{fancy}
\numberwithin{equation}{section} 
\newtheorem{cor}[equation]{Corollary}
\newtheorem{lem}[equation]{Lemma}
\newtheorem{prop}[equation]{Proposition}
\newtheorem{thm}[equation]{Theorem}
\theoremstyle{fanci}
\newtheorem{dfn}[equation]{Definition}
\newtheorem{exa}[equation]{Example}
\newtheorem{rem}[equation]{Remark}
\newtheorem{notarem}[equation]{Notation and Remarks}
\newtheorem*{DGMeasure}{Constructing the Duistermaat-Guillemin Measure}
\newtheorem{prob2}{Problem}
\newcommand{\cref}[1]{Corollary~\ref{#1}}   
\newcommand{\Z}{\mathbb Z}  
\newcommand{\Q}{\mathbb Q} 
\newcommand{\R}{\mathbb R} 
\newcommand{\N}{\mathbb N} 
\newcommand{\C}{\mathbb C} 
\newcommand{\SO}{\operatorname{SO}}
\newcommand{\ad}{\operatorname{ad}}
\newcommand{\Ad}{\operatorname{Ad}}
\newcommand{\Tr}{\operatorname{Tr}}
\newcommand{\Exp}{\operatorname{exp}}
\newcommand{\GL}{\operatorname{GL}}
\newcommand{\Spin}{\operatorname{Spin}}
\newcommand{\Sp}{\operatorname{Sp}}
\newcommand{\SU}{\operatorname{SU}}
\newcommand{\Isom}{\operatorname{Isom}}
\newcommand{\Hom}{\operatorname{Hom}}
\newcommand{\Rank}{\operatorname{rank}}
\newcommand{\DegSing}{\operatorname{deg}_{\rm{sing}}}
\newcommand{\vol}{\operatorname{vol}}
\newcommand{\germ}{\mathfrak}
\newcommand{\mg}{\mathfrak{g}}
\newcommand{\inner}{\ensuremath{\langle \cdot , \cdot  \rangle }}
\renewcommand{\phi}{\varphi}
\renewcommand{\emptyset}{\varnothing}
\newcommand{\bs}{\backslash}
\newcommand{\Spec}{\operatorname{Spec}}  
\newcommand{\LSpec}{\operatorname{Spec}_L}  
\newcommand{\Trace}{\operatorname{Trace}} 
\newcommand{\Fix}{\operatorname{Fix}} 
\newcommand{\Span}{\operatorname{Span}}
\newcommand{\Id}{\operatorname{Id}}
\newcommand{\SingSupp}{\operatorname{SingSupp}} 
\newcommand{\Wave}{\operatorname{Wave}} 
\newcommand{\taumin}{\tau_{\rm{min}}} 
\newcommand{\mathE}{\mathcal{E}}
\newcommand{\wtTheta}{\widetilde{\Theta}}
\newcommand{\rank}{\operatorname{rank}}
\begin{document}

\newcommand{\spacing}[1]{\renewcommand{\baselinestretch}{#1}\large\normalsize}
\spacing{1.14}

\title{On the Poisson relation for compact Lie groups}


\author[C. J. Sutton]{Craig J. Sutton$^\sharp$}
\address{Dartmouth College\\ Department of Mathematics \\ Hanover, NH 03755}
\email{craig.j.sutton@dartmouth.edu}
\thanks{$^\sharp$ Research partially supported by a Simons Foundation Collaboration Grant}

\subjclass[2010]{58J50, 53C20, 53C30}  

\keywords{Laplace spectrum, length spectrum, wave invariants, symmetric spaces, Lie groups, trace formulae}



\begin{abstract}
Intuition drawn from quantum mechanics and geometric optics raises the following long-standing question: can the length spectrum of a closed Riemannian manifold be recovered from its Laplace spectrum? The Poisson relation states that for any closed Riemannian manifold $(M,g)$ the singular support of the trace of its wave group---a spectrally determined tempered distribution---is contained in the set consisting of $\pm \tau$, where $\tau$ is the length of a smoothly closed geodesic in $(M,g)$. Therefore, in cases where the Poisson relation is an equality, we obtain a method for retrieving the length spectrum of a manifold from its Laplace spectrum. The Poisson relation is known to be an equality for sufficiently ``bumpy'' Riemannian manifolds and there are no known counterexamples.


We demonstrate that the Poisson relation is an equality for a compact Lie group equipped with a generic bi-invariant metric. Consequently, the length spectrum of a generic bi-invariant metric (and the rank of its underlying Lie group) can be recovered from its Laplace spectrum. Furthermore, we exhibit a substantial collection $\mathscr{G}$ of compact Lie groups---including those that are either tori, simple, simply-connected, or products thereof---with the property that for each group $U \in \mathscr{G}$ the Laplace spectrum of any bi-invariant metric $g$ carried by $U$ encodes the length spectrum of $g$ and the rank of $U$. The preceding statements are special cases of results concerning compact \emph{globally} symmetric spaces for which the semi-simple part of the universal cover is split-rank. The manifolds considered herein join a short list of families of non-``bumpy''  Riemannian manifolds for which the Poisson relation is known to be an equality. 
\end{abstract}

\maketitle



\section{Introduction and Proof of the Main Theorem}
The (\emph{Laplace}) \emph{spectrum} of a closed Riemannian manifold $(M,g)$ is 
the sequence 
$\lambda_0 = 0 \leq \lambda_1 \leq \lambda_2 \leq \cdots \leq \lambda_k \nearrow +\infty$ 
consisting of the eigenvalues  
of its associated Laplace operator $\Delta_g$, 
where each eigenvalue is repeated according to its multiplicity. 
A central problem in spectral geometry is to understand the extent to which 
the geometry of a Riemannian manifold can be recovered from its spectrum.
While it is well known that the geometry of an arbitrary manifold cannot be 
completely recovered from its spectrum,
the asymptotic expansion of the heat trace 
about its singularity at zero reveals that dimension, volume and 
certain integrals of local geometric invariants are spectrally determined.
Inspired by intuitive arguments drawn from quantum mechanics and geometric optics, we have the following long-standing problem.

\begin{prob2}\label{prob:HearingLengthSpectrum}
The \emph{length spectrum} of a Riemannian manifold $(M,g)$ is the set, denoted by $\Spec_L(M,g)$, consisting of the lengths of the manifold's smoothly closed geodesics. Can the length spectrum of a Riemannian manifold be recovered from its Laplace spectrum? 
\end{prob2}
\noindent
There are currently no known examples of isospectral manifolds having distinct length spectra. In fact, isospectral manifolds arising from Sunada's method necessarily have identical length spectra \cite{Sunada} (cf. \cite[Theorem 1.3]{GorMao}).


A natural approach to obtaining a positive answer to this problem is to use a suitable trace formula.
Indeed, using the so-called Poisson summation formula, and generalizations thereof, 
it has been demonstrated that Problem~\ref{prob:HearingLengthSpectrum} can be answered positively for flat manifolds \cite{Miatello-Rossetti} 
and Heisenberg manifolds \cite{Pesce}.
And, through the Selberg trace formula, one can see that the length spectrum 
of a Riemann surface may be recovered from its Laplace spectrum \cite{Huber1, Huber2, McKean}. 
A more general approach to this problem emerges from the work of Chazarain \cite{Chazarain}  and Duistermaat and Guillemin \cite{DuGu} (cf. \cite{CdV}) where it is proven that the singular support of the \emph{trace of the wave group} of a manifold is contained in the set of periods of the geodesic flow (Section~\ref{Sec:DGTraceFormula}):
$$\SingSupp (\Trace (e^{-i t \sqrt{\Delta_g}} )) \subseteq \Spec_L^{\pm}(M,g) \equiv \{ \pm \tau : \tau \in \LSpec(M,g) \}.$$ 
This containment is referred to as the \emph{Poisson relation} and, since the trace of the wave group of a manifold is a spectrally determined distribution,
we arrive at the following intriguing question.

\begin{prob2}\label{prob:PoissonRelation}
Is the Poisson relation an equality for all Riemannian manifolds?
\end{prob2}

\noindent 
It is clear that an affirmative answer to Problem~\ref{prob:PoissonRelation} provides an affirmative answer to Problem~\ref{prob:HearingLengthSpectrum}. 
Hence, the remainder of this article will be concerned with understanding the extent to which Problem~\ref{prob:PoissonRelation} has a positive answer.

For \emph{clean} manifolds (Definition~\ref{dfn:Clean}) it is in principle
possible to use the trace formula of Duistermaat and Guillemin (Equation~\ref{eqn:Asymptotic1}) to resolve whether the Poisson relation is an equality: the main obstacle is determining whether cancellations can occur in the trace formula (Section~\ref{Sec:DGTraceFormula}). By observing that sufficiently ``bumpy'' manifolds are clean and do not raise the specter of cancellation, Duistermaat and Guillemin were able to conclude that Problem~\ref{prob:PoissonRelation} (and, consequently, Problem~\ref{prob:HearingLengthSpectrum}) has an affirmative answer for a generic Riemannian manifold \cite[p. 61]{DuGu}. It is also a straightforward consequence of their work that Problem~~\ref{prob:PoissonRelation} has a positive answer for compact rank-one symmetric spaces and, more generally, $C_\tau$-manifolds (Section~\ref{Sec:Results}). In subsequent work, Duistermaat, Kolk and Varadarajan \cite{DuKoVa} (cf. \cite{Gangoli}) answered Problem~\ref{prob:PoissonRelation} affirmatively for compact \emph{locally} symmetric spaces of the \emph{non-compact type} (e.g., closed manifolds of constant negative sectional curvature) 
and it follows from a result of Guillemin that the Poisson relation is an equality for negatively curved manifolds \cite[Theorem 4]{Guillemin}. In establishing the last two statements the main chore is showing that the spaces are clean, while the issue of cancellations can be resolved in each case by well-known reasons (cf. \cite[Sec. 10]{Prasad-Rapinchuk}).

To the best of our knowledge, the preceding is an exhaustive account of the progress that has been made on Problems~\ref{prob:HearingLengthSpectrum} and \ref{prob:PoissonRelation} prior to the results presented in this article. In particular, setting aside compact rank-one symmetric spaces and flat tori, there has been no previous progress on these questions regarding \emph{globally} symmetric spaces, where a priori the occurrence of cancellations in the trace formula is a distinct possibility among the non-flat higher-rank symmetric spaces.

Compact Lie groups equipped with bi-invariant metrics form a natural and widely considered class of globally symmetric spaces (Section~\ref{Sec:Results}). After demonstrating that all globally symmetric spaces are clean (Theorem~\ref{thm:SymmSpcsCIH})---a property not enjoyed by every homogeneous space (Theorem~\ref{thm:UncleanMetrics})---we use the trace formula of Duistermaat and Guillemin to demonstrate that for any compact Lie group $U$ the Poisson relation is an equality for a generic bi-invariant metric on $U$ and, consequently, conclude that the length spectrum of a compact Lie group equipped with a generic bi-invariant metric can be recovered from its Laplace spectrum (Corollary~\ref{cor:MainResult} $(1)$). Furthermore, we observe that the spectrum of a generic bi-invariant metric encodes the rank of its underlying Lie group (Theoerm~\ref{thm:HearingRank}). A more careful analysis allows us to exhibit an infinite collection $\mathscr{G}$ of compact Lie groups with the property that for each $U \in \mathscr{G}$ the Laplace spectrum of any bi-invariant metric $g$ supported by $U$ encodes the length spectrum of $g$ and the rank of $U$ (Corollary~\ref{cor:MainResult} $(2)$ and Theoerm~\ref{thm:HearingRank}). The set $\mathscr{G}$ properly contains the collection of groups that are either simple, simply-connected, tori or products thereof. The preceding statements are a strong indication that one should be able to recover the length spectrum of an arbitrary bi-invariant metric from its Laplace spectrum.



Corollary~\ref{cor:MainResult} is a special case of Theorem~\ref{thm:MainResult}, which is a more general statement concerning compact symmetric spaces for which the non-Euclidean part of the universal cover is split-rank. As we will explain, the key to Theorem~\ref{thm:MainResult} is Theorem~\ref{thm:IntroMod4}, which reveals that for certain symmetric spaces the Morse index modulo $4$ of a closed geodesic depends only on the length $\tau$ of the geodesic and the dimension of the corresponding component of the fixed-point set of $\Phi_\tau$, the time-$\tau$ map of the geodesic flow. 

 
Before reviewing the trace formula of Duistermaat and Guillemin in Section~\ref{Sec:DGTraceFormula} and providing a precise statement of our results along with an outline of the supporting arguments in Sections~\ref{Sec:Results} and \ref{SubSec:MainProof}, we pause briefly to provide the reader with a partial motivation for Problem~\ref{prob:HearingLengthSpectrum} based on considerations in quantum mechanics.
 

\subsection{Bohr's Correspondence Principle and the Length Spectrum}
Consider a free particle in a Riemannian manifold $(M,g)$.
Classical mechanics takes the viewpoint that the evolution of this particle is deterministic. 
Indeed, in classical mechanics the state space of the system is taken to be the co-tangent bundle 
$T^{*}M$ equipped with the symplectic form $\omega_g$ induced by the metric $g$. 
Associated to the observable $p: T^{*}M \to \R$ given by $\zeta \mapsto \|\zeta\|_{g}^2$ 
is the Hamiltonian vector field $X$ defined via the relationship 
$$dp(\cdot)  = \omega_g(X, \cdot),$$
which encodes Hamilton's equations of motion.
The flow generated by $X$ is the geodesic flow $\Phi : \R \times T^{*}M \to T^{*}M$ and 
its orbits describe the motion of the free particle under consideration.

On the other hand, quantum mechanics takes a probabilistic viewpoint. 
Indeed, the state space of the quantum system is taken to be $L^{2}(M, \nu_g)$, 
where $\nu_g$ is the measure induced by $g$, and an $L^2$-normalized function $f \in L^{2}(M,\nu_g)$ is interpreted as a probability density; that is, for any measurable set $E \subset M$ the quantity $\int_E |f(x)|^2 d \nu_g$ represents 
the probability that the state of the particle is in the set $E$.
The evolution of a probability density $f \in L^{2}(M, \nu_g)$ is governed by Schr\"{o}dinger's equation:
$$\left\{
\begin{array}{l}
i\hbar \frac{\partial}{\partial t}\Psi(t,x) = -\hbar^2 \frac{1}{2}\Delta_g \Psi(t,x)\\
 \Psi(0,x) = f(x)
\end{array}
\right.
$$
whose solution is given by $\Psi(t,x) = e^{i \hbar  t \frac{1}{2}\Delta_g} f(x)$.
Therefore, the \emph{Schr\"{o}dinger flow}
$S_g^{\hbar}(t) \equiv e^{i \hbar t \frac{1}{2}\Delta_g}$,
is the quantum-mechanical analogue of the geodesic flow and, via the functional calculus, we see that the Schr\"{o}dinger flow 
is completely determined by the spectrum of $(M,g)$.

Now, the correspondence principle is the assertion that as $\hbar \to 0$ (i.e., as the ``characteristic action'' of the system becomes large relative to $\hbar$) the quantum dynamical system will converge (in some sense) to the classical system; so that, for $\hbar$ small, the quantum system will reflect salient features of the corresponding classical dynamical system. As the periods of periodic orbits are a fundamental feature of the geodesic flow (i.e., the classical dynamics) and the spectrum of a Riemannian manifold determines the Schr\"{o}dinger flow (i.e., quantum dynamics) it is natural to wonder whether the length spectrum can be recovered from its Laplace spectrum.

Mathematical motivation for Problem~\ref{prob:HearingLengthSpectrum} can be found in the work of Colin de Verdi\`{e}re \cite{CdV} and Duistermaat and Guillemin  \cite{DuGu} where it is demonstrated that for a generic (i.e., sufficiently ``bumpy'') manifold the spectrum determines the Laplace spectrum. As it is germane to our results (Section~\ref{Sec:Results}), we now summarize the approach taken by Duistermaat and Guillemin, which utilizes $\sqrt{\Delta_g}$ rather than the semi-classical parameter $\hbar$ used above.

\subsection{The Trace Formula and ``Bumpy'' Metrics}\label{Sec:DGTraceFormula}
The \emph{wave group} of a Riemannian manifold $(M,g)$ is the family of unitary operators $U_{g}(t) \equiv e^{-i t \sqrt{\Delta_g}}$. The operators $U_g(t)$ are not trace class; however, for any Schwarz function $f(t)$, the operator $U_f \equiv \int_{-\infty}^{\infty} f(t) U_g(t) \, dt$ is of trace class. We then define the \emph{trace of the wave group} $\Trace(U_g(t))$ to be the tempered distribution:
$$f \in \mathscr{S}(\R) \mapsto \Trace(U_f) \in \R.$$
The distribution kernel of the wave group of $(M,g)$ is given by $$U_g(t, x, y) = \sum_{j=0}^{\infty} e^{-it \sqrt{\lambda_j}} \phi_{j}(x) \overline{\phi_{j}(y)},$$ where $\{\phi_{j}\}_{j=0}^{\infty}$ is an orthonormal basis of $\Delta_g$-eigenfunctions with $\Delta_g \phi_j = \lambda_j \phi_j$. It follows that 
$$ \Trace(U_g(t)) =  \int_M U_g(t, x, x) \; d \nu_g = \sum_{j=0}^{\infty} e^{-i t \sqrt{\lambda_j}},$$
which is the Fourier transform of the ``spectral distribution'' $\sigma (t) = \sum_{j=0}^{\infty} \delta (t - \sqrt{\lambda_{j}})$.
It has been demonstrated by Chazarain \cite[Theorem I]{Chazarain} and Duistermaat and Guillemin \cite[Corollary 1.2]{DuGu} that the singular support of the trace of the wave group, which we will denote by $\SingSupp (\Trace(U_g(t)))$, 
is a subset of the periods of the periodic orbits of the geodesic flow: 
\begin{eqnarray}\label{eqn:PoissonRelation}
\SingSupp (\Trace(U_g(t))) \subseteq  \Spec_L^{\pm}(M,g) \equiv \{ \pm \tau : \tau \in \Spec_{L}(M,g) \}.
\end{eqnarray}
This containment is known as the \emph{Poisson relation} and, in light of Problem~\ref{prob:HearingLengthSpectrum}, understanding whether Equation~\ref{eqn:PoissonRelation} is an equality for all manifolds is a fascinating question (Problem~\ref{prob:PoissonRelation}).
For manifolds satisfying the so-called ``clean intersection hypothesis,'' the trace formula of Duistermaat and Guillemin provides a means through which we can hope to analyze this problem.

\begin{dfn}\label{dfn:Clean}
Let $(M,g)$ be a closed Riemannian manifold and, for each $t \in \R$, let $\Phi_t(\cdot) = \Phi(t, \cdot)$ be the time-$t$ map of the associated geodesic flow.
A period $\tau \in \Spec_L^{\pm}(M,g)$ of the geodesic flow is said to have a \emph{clean fixed-point set} or to be \emph{clean} if 
\begin{enumerate}
\item the fixed-point set of $\Phi_\tau$, denoted $\Fix(\Phi_\tau)$, is a disjoint union of finitely many closed manifolds
$\Theta_1, \ldots , \Theta_r$;
\item for each $u \in \Fix(\Phi_\tau)$ the fixed point set of $D_u\Phi_{\tau}$ is precisely equal to $T_u \Fix(\Phi_\tau)$.
That is, if $J(t)$ is a periodic Jacobi field along the geodesic $\gamma_u$, with $\gamma_u'(0) = u$, then 
$(J(0), J'(0)) \in T_u \Fix(\Phi_\tau)$.
\end{enumerate}
Equivalently, $\tau \in \Spec_L^{\pm}(M,g)$ is to have a \emph{clean fixed-point set} or to be \emph{clean}, if $|\tau|$ is a non-degenerate critical value of the energy functional $E: \Omega(M,g) \to \R$ on the loop space of $(M,g)$. We will agree to say that a Riemannian manifold $(M,g)$ has \emph{clean geodesic flow} or is \emph{clean}, if every $\tau \in \Spec_L^{\pm}(M,g)$ is clean. 
\end{dfn}

\begin{rem}
Clearly, $\tau \in \Spec_{L}^{\pm}(M,g)$ is clean if and only if $-\tau$ is clean.
\end{rem}

Under the assumption that the period $\tau \in \Spec_{L}^{\pm}(M,g)$ 
has a clean fixed-point set, Duistermaat and Guillemin determined that there is an interval $I_{\tau} \subset \R$ for which $I_{\tau} \cap \LSpec(M,g) = \{\tau \}$ and such that on $I_\tau$ the wave trace can be expressed as a sum of compactly supported distributions 
\begin{eqnarray}
\Trace(U_g(t)) = \beta^{\rm{even}}(t - \tau) + \beta^{\rm{odd}}(t-\tau),
\end{eqnarray}
where $\beta^{\rm{even}} (x)$ (respectively, $\beta^{\rm{odd}}(x)$) is a distribution determined by 
the even-dimensional (respectively, odd-dimensional) components of $\Fix (\Phi_\tau)$
and whose only possible singularity occurs at $x = 0$ \cite[Theorem 4.5]{DuGu}.
Of particular interest is the fact that the Fourier transforms of 
$\beta^{\rm{even}}(x)$ and $\beta^{\rm{odd}}(x)$ are given by smooth functions $\alpha^{\rm{even}}(s)$ and 
$\alpha^{\rm{odd}}(s)$, respectively, 
possessing the following asymptotic expansions at infinity:

\begin{eqnarray}\label{eqn:Asymptotic1}
\alpha^{\bullet}(s) &\stackrel{s \to + \infty}{\sim}& 
\sum_{k=0}^{\infty} \operatorname{Wave}_{k}^{\bullet}(\tau) s^{(D_{\bullet} -2k -1)/2},
\end{eqnarray}



\noindent
where $\bullet$ denotes ``even'' or ``odd'' and $D_{\bullet}$ is the maximum taken over the 
dimensions of the $\bullet$-dimensional components of $\Fix(\Phi_\tau)$ (see \cite[Theorem 4.5]{DuGu}). 


The coefficients $\operatorname{Wave}_{k}^{\rm{even}}(\tau)$ and $\operatorname{Wave}_{k}^{\rm{odd}}(\tau)$ in the asymptotic expansions above are complex numbers known as the \emph{$k$-th wave invariants} of $\tau$, and, in light of the definition of the wave group, these are spectral invariants of the Riemannian manifold $(M,g)$. The moral of the trace formula is that the faster $\alpha^{\rm{even}}$ (resp. $\alpha^{\rm{odd}}$) decays at infinity the less singular $\beta^{\rm{even}}(t -\tau)$ (respectively, $\beta^{\rm{odd}}(t-\tau)$) is at $\tau$. 
Consequently, the trace formula informs us that a period $\tau$ with a clean fixed-point set is in $\SingSupp (\Trace(U_g(t)))$ if and only if at least one of its wave invariants is non-zero. That is, $\Trace(U_g(t))$ is smooth at a clean period $\tau$ if and only if the Fourier transform of its restriction to $I_\tau$ is asymptotic to zero at infinity. It will be useful for us to recall that, since the subprincipal symbol of $\sqrt{\Delta}$ is zero \cite[p.58]{DuGu}, the $0$-th wave invariants of a clean period $\tau$ are given by the following formula \cite[Equation 4.8]{DuGu}:

\begin{eqnarray}\label{eqn:WaveInvariants1}
\Wave_{0}^{\bullet}(\tau) = \left( \frac{1}{2\pi i}\right)^{\frac{D_{\bullet} -1}{2}} 
\sum_{\stackrel{j = 1}{\dim \Theta_j = D_{\bullet}}}^{r} i^{-\sigma_j} \int_{\Theta_j} d \mu_j^{\tau},
\end{eqnarray}



\noindent
where $\bullet$ denotes ``even'' or ``odd,'' $\sigma_j$ is equal to the Morse index (in the free loop space) of a closed geodesic of length $|\tau|$ with initial velocity in the component $\Theta_j$ and 
$\mu_j^\tau$ is a canonical positive measure on the submanifold $\Theta_j$, 
which we will refer to as the \emph{Duistermaat-Guillemin measure}\label{DGMeasure} 
\cite[Theorem 4.5 and p. 69-70]{DuGu} (cf. \cite[Appendix]{BPU}).
In general, if $\tau$ is clean, the $k$-th wave invariant of $\tau$ is of the form 
$\Wave_k^{\bullet}(\tau) = \sum_{j = 1}^{r} C_{k,j}^{\bullet}$, 
where $\bullet$ denotes ``even'' or ``odd'' and $C_{k,j}^{\bullet} \in \C$ 
is a constant determined by the component $\Theta_j$ of $\Fix(\Phi_\tau)$.\footnote{We note that Chazarain obtains similar results to those of Duistermaat and Guillemin; however, he does not provide the explicit formula for the leading term of the asymptotic expansions given in Equation~\ref{eqn:WaveInvariants1}.} 
It is clear from the shape of the wave-invariants that for a clean period $\tau$ the issue 
of whether it is in the singular support is resolved according to whether cancellations occur
in each $\Wave_{k}^{\bullet}(\tau)$.

\begin{rem}\label{rem:TimeReversal}
For each period $\tau \in \Spec_{L}^{\pm}(M,g)$ with a clean fixed-point set, the Duistermaat-Guillemin densities $\mu^{\tau}$ and $\mu^{-\tau}$ associated to $\Fix(\Phi_\tau) = \Fix(\Phi_{-\tau})$ agree \cite[Lemma A.3]{BPU}. Therefore, we obtain $\Wave_{0}^{\bullet}(\tau) = \Wave_{0}^{\bullet}(-\tau)$ for each $\tau \in \Spec_{L}^{\pm}(M,g)$. 
\end{rem}

As we noted earlier, a Riemannian manifold is clean if each period of the geodesic flow is clean. Duistermaat and Guillemin have demonstrated that ``cleanliness'' is a generic property among Riemannian manifolds \cite[p. 61]{DuGu} and that generically the $0$-th wave invariant does not vanish.\footnote{Let $\mathscr{R}(M)$ denote the space of Riemannian metrics on $M$. We will say that a metric property is \emph{generic}, if the set of all metrics having this characteristic  contains a residual set.} Specifically, they argue that sufficiently ``bumpy'' metrics (cf. \cite{Abraham, Anosov}) are clean and that up to orientation these metrics have exactly one geodesic of a given length, so that the issue of cancellation does not arise for such spaces. Consequently, for sufficiently ``bumpy'' metrics, the wave-invariant $\Wave_{0}^{\rm{odd}}(\tau)$ never vanishes, thereby, establishing that for a generic Riemannian manifold Problem~\ref{prob:PoissonRelation} and, hence, Problem~\ref{prob:HearingLengthSpectrum} can be answered affirmatively.

\subsection{Can You Hear the Length Spectrum of a Symmetric Space?}\label{Sec:Results}
The antithesis of a ``bumpy'' manifold is a (globally) symmetric space. 
A priori, it is not clear that symmetric spaces are clean and, 
even in the event that they are clean, the threat of cancellations in 
the trace formula looms large as $\Fix(\Phi_\tau)$ will consist of 
many components for higher-rank symmetric spaces, which is in sharp contrast with 
the sufficiently ``bumpy'' spaces discussed previously.\footnote{It is known that locally 
symmetric spaces of the non-compact type (e.g., Riemann surfaces) are clean \cite{DuKoVa}, 
but in this case cancellations cannot occur in the trace formula since the Morse index of a closed geodesic 
is always zero in this setting.}
However, it is clear that the compact rank-one symmetric spaces
(i.e, $S^n$, $\R P^n$, $\C P^n$, $\mathbb{H} P^n$ and $Ca^2$), 
which, henceforth, will be referred to as CROSSes, are clean.
This follows immediately from the fact that for a CROSS the non-trivial orbits of the geodesic flow are all periodic with common primitive period $\tau_0$. 
In Section~\ref{SubSec:GeodSymmSpaces}  we address the case of higher-rank
symmetric spaces by exploiting the fact that 
(1) all geodesics in a symmetric space can be conjugated into a maximal flat
and (2) in a symmetric space all periodic Jacobi fields are restrictions of Killing fields,
which establishes that all compact symmetric spaces are clean.

\begin{thm}\label{thm:SymmSpcsCIH}
A compact globally symmetric space is clean.
\end{thm}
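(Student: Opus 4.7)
My plan is to exploit the two structural facts highlighted in the paper: (i) every geodesic in a compact symmetric space can be conjugated via an isometry into a fixed maximal flat, and (ii) in a symmetric space the Jacobi fields along a geodesic are restrictions of Killing vector fields. Throughout, write $M = G/K$ with $G$ compact, fix the base point $o = eK$, let $\mathfrak{g} = \mathfrak{k} \oplus \mathfrak{p}$ be the Cartan decomposition so that $T_oM \cong \mathfrak{p}$, and fix a maximal abelian subspace $\mathfrak{a} \subset \mathfrak{p}$ with associated torus $T_\mathfrak{a} := \overline{\exp(\mathfrak{a})} \subset G$. Every geodesic through $o$ has the form $\gamma_X(t) = \exp(tX)\cdot o$ for some $X \in \mathfrak{p}$.

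\textbf{Step 1 (structure of $\Fix(\Phi_\tau)$).} I would use the homogeneous identification $TM \cong G \times_K \mathfrak{p}$ to reduce the $\tau$-periodicity condition on $[g,X]$ to a condition at $o$: a direct computation shows that $[g,X]$ is $\tau$-periodic if and only if $\exp(\tau X) \in K$ and $\Ad(\exp(\tau X))X = X$. Since each $K$-orbit in $\mathfrak{p}$ meets $\mathfrak{a}$, I may conjugate $X$ into $H \in \mathfrak{a}$; then $\Ad(\exp(\tau H))H = H$ is automatic, and the remaining condition becomes $\tau H \in \Gamma$, where $\Gamma := \exp^{-1}(K \cap T_\mathfrak{a})$. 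Because $K \cap T_\mathfrak{a}$ is a closed subgroup of a torus, $\Gamma$ is a disjoint union, modulo the kernel lattice $\ker(\exp|_\mathfrak{a})$, of finitely many translates of a linear subspace of $\mathfrak{a}$. Intersecting with the unit sphere of $\mathfrak{p}$ and then sweeping by $K$---whose action on $\mathfrak{a}$ factors through the finite Weyl group---yields finitely many $K$-orbits of closed submanifolds at $o$; globalising by the $G$-action presents $\Fix(\Phi_\tau)$ as a disjoint union of finitely many closed $G$-invariant submanifolds of the unit tangent bundle, which is the first cleanness condition.

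\textbf{Step 2 (cleanness at each $u$).} Fix $u \in \Fix(\Phi_\tau)$ corresponding to $X \in \mathfrak{p}$, and suppose $(J(0), J'(0))$ is a fixed point of $D_u\Phi_\tau$, i.e., $J$ is a $\tau$-periodic Jacobi field along $\gamma_u$. Diagonalising the Jacobi operator $R_u = -\ad(X)^2$ on $\mathfrak{p}^{\mathbb{C}}$ via the restricted root-space decomposition relative to $\mathfrak{a}$, and using the symmetric-space commutation relations $[\mathfrak{k},\mathfrak{p}] \subseteq \mathfrak{p}$ and $[\mathfrak{p},\mathfrak{p}] \subseteq \mathfrak{k}$, I would write $J$ mode by mode (a constant tangential piece, plus sinusoidal modes in the root spaces whose frequencies must resonate with $\tau$) and exhibit a $Y \in \mathfrak{g}$ whose associated Killing field $Y^*$ restricts to $J$ along $\gamma_u$. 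Because each $\phi_s := \exp(sY)$ is an isometry, it conjugates the geodesic flow and therefore preserves $\Fix(\Phi_\tau)$; hence $s \mapsto d\phi_s(u)$ is a smooth curve in $\Fix(\Phi_\tau)$ whose derivative at $s=0$ is exactly $(J(0), J'(0))$, placing the latter in $T_u\Fix(\Phi_\tau)$.

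\textbf{Main obstacle.} The subtle point is realising every $\tau$-periodic Jacobi field as a Killing restriction; this is a dimension-matching problem between the fixed subspace of $D_u\Phi_\tau$ and the tangent space to the $G$-orbit of $u$ in the unit tangent bundle. I expect to handle it by a direct computation on the maximal flat, expressing both dimensions in terms of the resonances between $\tau$ and the restricted root system of $(\mathfrak{g},\mathfrak{a})$ and observing that they coincide. As a byproduct, this calculation furnishes an explicit description of the components $\Theta_j \subset \Fix(\Phi_\tau)$ and of $T\Theta_j$, which is exactly the data consumed by the Duistermaat--Guillemin trace formula in the sequel.
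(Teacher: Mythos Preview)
Your proposal is correct and follows the same two-step strategy as the paper: first use conjugation into a maximal flat together with the lattice structure to exhibit $\Fix(\Phi_\tau)$ as a finite union of $G$-orbits (the paper's Proposition~\ref{prop:FixedSet}), then invoke the fact that every periodic Jacobi field along a closed geodesic in a symmetric space is the restriction of a Killing field to conclude $\Fix(D_u\Phi_\tau)=T_u(G\cdot u)$. The only difference is that the paper simply cites this periodic-Jacobi-field fact from Ziller~\cite{Ziller3} rather than deriving it via the root-space mode decomposition and dimension count you outline; your more explicit route is perfectly valid and indeed yields the auxiliary data (dimensions of the $\Theta_j$, Morse indices) that the paper later extracts separately in Section~\ref{SubSec:MorseIndex}.
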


This theorem tells us that in addition to the heat invariants, we have the wave invariants at our disposal in addressing inverse spectral problems that involve symmetric spaces.
One might expect that all homogeneous manifolds are clean; unfortunately, this is already false for left-invariant metrics on $\SO(3)$ and $S^3$. 

\begin{thm}\label{thm:UncleanMetrics}
Within the class of left-invariant naturally reductive metrics on $\SO(3)$ (respectively $S^3$, in which case these are the Berger metrics), the clean metrics contain a residual set; however, the collection of unclean metrics is dense and contains certain normal homogeneous metrics. 
\end{thm}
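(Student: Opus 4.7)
The plan is to parametrize the left-invariant naturally reductive metrics on $\SO(3)$ (respectively, the Berger metrics on $S^3 = \SU(2)$) as a one-parameter family $\{g_t\}_{t > 0}$ obtained by anisotropic scaling of a bi-invariant metric along a fixed $1$-dimensional subalgebra of $\mathfrak{su}(2)$, and then to analyze cleanliness through explicit Jacobi-field calculations along closed geodesics.

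First I would use the Euler--Arnold formalism to describe the geodesic flow of $g_t$ explicitly. With an orthonormal basis $\{e_1, e_2, e_3\}$ of $\mathfrak{su}(2)$ with respect to the bi-invariant metric in which $e_3$ spans the scaled subalgebra, the geodesic equations are integrable with two independent first integrals (kinetic energy and the Noether momentum of the vertical Killing field). This presents each geodesic as a superposition of a ``horizontal'' rotation of frequency $\omega_h(u,t)$ and a ``vertical'' rotation of frequency $\omega_v(u,t)$, both algebraic in $t$ and in the tilt of the initial unit vector $u \in T^1 S^3$; a geodesic closes exactly when $\omega_h/\omega_v$ is rational, producing an explicit description of $\Spec_L(S^3,g_t)$ and of the orbit structure of each fixed-point set $\Fix(\Phi_\tau)$.

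The heart of the argument is the computation of periodic Jacobi fields along a closed geodesic $\gamma_u$ of length $\tau$. Using the explicit curvature tensor of $g_t$ together with parallel transport along $\gamma_u$, the normal Jacobi equation reduces to a constant-coefficient linear system whose characteristic exponents $\pm i \mu_j(u,t)$ are algebraic functions of $t$. The space of periodic Jacobi fields along $\gamma_u$ then has the expected dimension coming from Killing fields (which are automatically tangent to $\Fix(\Phi_\tau)$) unless a resonance relation of the form $\mu_j(u,t)\tau \in 2\pi \Z$ is satisfied; in that case extra periodic normal Jacobi fields appear and the fixed point set of $D_u \Phi_\tau$ strictly exceeds $T_u \Fix(\Phi_\tau)$, violating Definition~\ref{dfn:Clean}. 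Cleanliness of $g_t$ thus amounts to a countable family of rational-independence conditions, whose failure locus is a locally finite union of proper analytic subvarieties of the one-dimensional parameter space. The complement of this locus is a dense $G_\delta$, establishing the residual claim; density of the unclean set follows because each resonance condition is met on a dense subset of parameters.

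The main obstacle, and the final step, is to exhibit \emph{specific} unclean normal homogeneous examples. I would identify those values of $t$ realizing $g_t$ as a normal homogeneous metric on $S^3 = U(2)/U(1)$ (and the analogous quotient presentation of $\SO(3)$) induced from a bi-invariant metric on $U(2)$, and then verify directly that at such parameters a resonance holds along a distinguished closed geodesic---such as a horizontal great circle---producing a periodic Jacobi field that is \emph{not} tangent to $\Fix(\Phi_\tau)$. The delicate point is to carefully distinguish periodic Jacobi fields produced by the isometry action, which are automatically tangent to the fixed-point set and account for its expected dimension, from those produced by a genuine curvature resonance that enlarges $\Fix(D_u \Phi_\tau)$; this bookkeeping constitutes the technical core of the proof.
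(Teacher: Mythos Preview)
Your overall strategy---explicitly parametrize the geodesics, compute the Poincar\'e map/Jacobi fields, extract resonance conditions, and then run a Baire argument---is sound and is essentially what the paper does. The paper streamlines the Jacobi-field analysis by invoking Ziller's structural description of the Poincar\'e map on naturally reductive spaces (Theorem~\ref{thm:PoincareMap}), which decomposes $E\oplus E$ into subspaces $V_1,\dots,V_5$ and reduces cleanliness to checking whether $V_4^{\mathrm{per}}=V_4^{\mathrm{iso}}$; doing this from scratch via Euler--Arnold and the curvature tensor is legitimate but more laborious.

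That said, several specific claims in your outline are incorrect and would derail the argument if taken literally. First, there is an internal contradiction: you assert that the failure locus is a \emph{locally finite} union of proper analytic subvarieties of a one-dimensional parameter space, and also that the unclean set is dense. A locally finite set of points in a line is discrete, hence nowhere dense. In fact the paper shows (Theorem~\ref{thm:CleanMetrics}) that $g_{(\alpha,\alpha,A)}$ is unclean precisely when $A/\alpha\in\Q_+\setminus\{1\}$; the unclean locus is the full set of positive rationals (minus $1$), which is countable and dense but certainly not locally finite. The residual claim still follows from Baire, since each individual resonance condition cuts out a closed nowhere-dense set, but your density argument (``each resonance condition is met on a dense subset'') is wrong: each single resonance is met on a \emph{discrete} set, and it is the \emph{union} over all resonances that is dense.

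Second, your guess about where the resonance occurs is backwards. The paper's computation shows that Type~I (horizontal) and Type~III geodesics are always clean ($V_4=0$ there); the uncleanliness arises exclusively along the \emph{Type~II} geodesics---the vertical fiber circles tangent to the distinguished $\mathfrak{so}(2)$ direction---where $V_4$ is two-dimensional and the map $\Psi=e^{\ad(\tau v)}$ can acquire fixed vectors. Looking at ``a horizontal great circle'' will not produce the extra periodic Jacobi field you need.

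Finally, the normal homogeneous condition is an \emph{inequality} on the parameter (it corresponds to a half-line in $(\alpha,A)$-space), not a discrete set of special values. The point is simply that this half-line meets the set $\{A/\alpha\in\Q_+\setminus\{1\}\}$, which is immediate once you have the characterization of the unclean locus.
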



\noindent 
As we will see through an explicit computation of the geodesic flow, every left-invariant naturally reductive metric on $\SO(3)$ (respectively $S^3$) satisfies condition (1) of Definition~\ref{dfn:Clean}. The geodesic flow of the unclean metrics possess periods $\tau$ for which condition $(2)$ of Definition~\ref{dfn:Clean} is not met.
Gornet has also discovered locally homogeneous metrics that fail to be clean for the same reason \cite{Gornet}. Therefore, cleanliness cannot be taken for granted even among ``nice'' metrics.

Returning to CROSSes, since for each period $\tau$ of the geodesic flow 
the fixed-point set of $\Phi_\tau$ is the entire unit tangent bundle, we see from 
Equation~\ref{eqn:WaveInvariants1} that $\Wave_0^{\rm{odd}}(\tau) \neq 0$.
Therefore, the Poisson relation is an equality for CROSSes and it follows that the length spectrum of a CROSS can be recovered from its spectrum by computing the singular support of the trace of its wave group. More generally, if $(M,g)$ is any manifold in which all non-trivial geodesics are closed and have a common primitive period $\tau$---such a Riemannian manifold is commonly referred to as a \emph{$C_\tau$-manifold} (cf. \cite{Besse})---then the Poisson relation is an equality.

In light of the fact that the conjecture is true for CROSSes, it is natural to wonder whether it 
is valid for every compact symmetric space.

A compact irreducible symmetric space $(M = G/K, g)$  comes in one of two flavors:
\begin{itemize}
\item {\bf Type I:} the isometry group of $M$ is a compact simple Lie group (e.g., CROSSes and Grassmannians)
\item {\bf Type II:} $M$ is a compact simple Lie group $U$ equipped with a bi-invariant metric,
in which case $G = U \times U$ and $K = \Delta U$. 
\end{itemize}
In both cases the symmetric metric on $M$ is, up to scaling, the \emph{standard metric} $g_0$ induced by the restriction of the negative of the Killing form $B_\germ{g}$ of $\germ{g}$ to an $\Ad(K)$-invariant complement of $\germ{K}$ in $\germ{g}$, where $\germ{g}$ and $\germ{K}$ denote the Lie algebras of $G$ and $K$, respectively.

In general, a compact symmetric space $(M = G/K, g)$ is of the form 
\begin{equation}\label{eqn:HomogeneityType} 
M = 
\Gamma \backslash (M_0 \times \widetilde{M}_1 \times \cdots \times \widetilde{M}_q),
\end{equation}

\noindent
where $M_0$ is a compact torus, $\widetilde{M}_j$ is a simply-connected  
compact irreducible symmetric space, 
the metric $g$ is induced by the metric $h \times c_1g^1_0 \times \cdots \times c_qg^q_0$ on 
$M_0 \times \widetilde{M}_1 \times \cdots \times \widetilde{M}_q$ for some choice of constants $c_j >0$
and flat metric $h$ on $M_0$,
and $\Gamma$ is a discrete subgroup of 
the \emph{center} of $M_0 \times \widetilde{M}_1 \times \cdots \times \widetilde{M}_q$ 
(see Section~\ref{SubSec:TheCenter}):

\begin{equation}\label{eqn:CenterOfSymmetricSpace}
Z(M_0 \times \widetilde{M}_1 \times \cdots \times \widetilde{M}_q) = 
M_0 \times Z(M_1) \times \cdots \times Z(M_q).
\end{equation}

\noindent 
After replacing $M_0$ with $M_0/(\Gamma \cap M_0)$ we will assume, henceforth,
that $\Gamma \cap M_0$ is trivial.
It then follows that in each dimension there are only finitely many homogeneity types of compact symmetric spaces 
(cf. \cite[Lemma 2.8]{GorSut}).
The compact simply-connected space 
$\widetilde{M}_{\rm{cpt}} \equiv \widetilde{M}_1 \times \cdots \times \widetilde{M}_q$ 
is the \emph{non-Euclidean} part of the universal cover of $M$.
The space $M$ is said to be of \emph{compact type} precisely when $M_0$ is trivial; i.e., $M$ 
has no Euclidean factors.
We note that the non-Euclidean part of the metric is Einstein precisely when 
$c_1 = c_2 = \cdots = c_q$ \cite[Theorem 7.74]{Besse2}.
Ignoring the metric $g$, we refer to Equation~\ref{eqn:HomogeneityType} as the \emph{homogeneity type} 
of the symmetric space.
Given a homogeneity type $M = \Gamma \backslash (M_0\times \widetilde{M}_1 \times \cdots \times \widetilde{M}_q)$ of a compact symmetric space, the space of symmetric metrics on $M$ is 
the finite dimensional space

\begin{equation}\label{eqn:ModuliSpaceOfSymmerticMetrics}
\mathscr{R}_{\rm{symm}}(M) \equiv \mathcal{S}^+(d) \times \R_{+} \times \cdots \times \R_{+},
\end{equation}

\noindent
where $d$ is the dimension of the torus $M_0$ and $\mathcal{S}^+(d)$ is the space of positive definite real symmetric $d \times d$-matrices. We will let $\mathscr{P}(M)$ be the set of metrics in $\mathscr{R}_{\rm{symm}}(M)$ for which the Poisson relation (Equation~\ref{eqn:PoissonRelation}) is an equality and we let  
$$\mathscr{W}_0(M) \subseteq \mathscr{P}(M)$$ 
denote the collection of metrics $g$ such that for each period $\tau$ of the geodesic flow the leading term, $\Wave_{0}^{\rm{lead}}(\tau) \neq 0$, of the asymptotic expansion given in Equation~\ref{eqn:Asymptotic1} is non-zero.

Finally, we recall that a symmetric space $M = G/K$ 
is said to be \emph{split-rank} if $\operatorname{rank} G = \operatorname{rank} M + \operatorname{rank} K$
or, equivalently, the restricted roots of $M$ (see Section~\ref{SubSec:RestrictedRoots}) 
all have even multiplicity \cite[Theorem VI.4.3]{Loos}.
By the classification of symmetric spaces the irreducible compact simply-connected split-rank spaces are the simple Lie groups, spaces of type $A_{2n+1}^{H}$ ($n \geq 1$) and the exceptional space $E_{6(-36)}$, 
where we have adopted the notation of Loos (cf. \cite[Theorem~VI.4.4 and Tables 4 \& 8]{Loos}), 
and all compact split-rank spaces are finitely covered by products of these irreducible factors.

With these preliminaries out of the way, we now state our main result.

\begin{thm}\label{thm:MainResult}
Let $M = \Gamma \backslash (M_0 \times \widetilde{M}_1 \times \cdots \times \widetilde{M}_q)$ be 
the homogeneity type of a compact symmetric space, where 
$\widetilde{M}_{\rm{cpt}} \equiv \widetilde{M}_1 \times \cdots \times \widetilde{M}_q$ is 
trivial or split-rank.
Also, for $j =1, \ldots, q$, let $\pi_j$ be the natural projection of 
$M_0 \times Z(\widetilde{M}_1) \times \cdots \times Z(\widetilde{M}_q)$ onto its $j$-th factor. 
Then the following hold.

\begin{enumerate}
\item $\mathscr{W}_0(M) \subseteq \mathscr{P}(M)$ contains a residual set. 
Consequently, for a generic symmetric metric on $M$, the Poisson relation is an equality and 
its length spectrum can be recovered from the Laplace spectrum of the metric. 

\item Let $\mathscr{H}$  be the collection of homogeneity types $M$, as above, such that:
\begin{enumerate}
\item $M$ is irreducible (e.g., simple Lie group), or
\item $M$ is not irreducible and the subgroup 
$\Gamma \leq M_0 \times Z(\widetilde{M}_1) \times \cdots \times Z(\widetilde{M}_q)$
is trivial or, for $j = 1, \ldots, q$, satisfies the following:
\begin{enumerate}
\item if $\widetilde{M}_j = \SU(n+1)$, where $n \equiv 1 \mod 2$, then 
$\pi_j(\Gamma)$ is a proper subgroup of  $Z(\SU(n+1)) \simeq \Z_{n+1}$;
\item if $\widetilde{M}_j = \Spin(2n+1)$, then $\pi_j(\Gamma)$ is the trivial subgroup in $Z(\Spin(2n+1)) \simeq \Z_2$;
\item if $\widetilde{M}_j = \Sp(n)$, where $n \equiv 1,2 \mod 4$, 
then $\pi_j(\Gamma)$ is the trivial subgroup in $Z(\Sp(n)) \simeq \Z_2$;
\item if $\widetilde{M}_j = \Spin(2n)$, where $n \equiv 2 \mod 4$, then 
$\pi_j(\Gamma)$ is trivial or $\Z_2 \oplus 1$ in $Z(\Spin(2n)) \simeq \Z_2 \oplus \Z_2$;
\item if $\widetilde{M}_j = \Spin(2n)$, where $n \equiv 3 \mod 4$, then 
$\pi_j(\Gamma)$ is trivial or $\Z_2$ in $Z(\Spin(2n)) \simeq \Z_4$;
\item if $\widetilde{M}_j = E_7$, then $\pi_j(\Gamma)$ is trivial in $Z(E_7) \simeq \Z_2$;
\item if $\widetilde{M}_j$ is any other irreducible simply-connected split-rank space, 
then there are no restrictions on $\pi_j(\Gamma)$.
\end{enumerate}
\end{enumerate}
If $M \in \mathscr{H}$, then $\mathscr{W}_0(M)$ equals $\mathscr{R}_{\rm{symm}}(M)$
and we conclude that the Poisson relation is an equality for any symmetric metric on $M$.
Consequently, if $M$ is in $\mathscr{H}$, 
then for any metric $g \in \mathscr{R}_{\rm{symm}}(M)$ 
Poisson relation is an equality and the length spectrum of $g$ 
can be recovered from its Laplace spectrum.
\item If $M$ is split-rank; i.e., $M_0$ is trivial, then
$\mathscr{W}_0(M)$ contains the Einstein metric induced by 
$c(g^1_0 \times \cdots \times g^q_0)$, for each $c >0$.

\end{enumerate}
\end{thm}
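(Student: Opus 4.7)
The plan is to reduce each statement to the non-vanishing of the leading wave invariant $\Wave_0^{\rm{lead}}(\tau)$ at each period $\tau$, and then to invoke the forthcoming Morse-index-mod-$4$ result of Theorem~\ref{thm:IntroMod4} to rule out cancellations in the Duistermaat--Guillemin formula. By Theorem~\ref{thm:SymmSpcsCIH} every compact symmetric space is clean, so the trace formula is available on a neighborhood of each $\tau \in \Spec_L^{\pm}(M,g)$, and $\tau$ lies in $\SingSupp(\Trace(U_g(t)))$ as soon as some wave invariant is non-zero. A natural target is the coefficient $\Wave_0^{\rm{lead}}(\tau) \equiv \Wave_0^{\bullet}(\tau)$ for the parity $\bullet$ achieving $\max(D_{\rm{even}}, D_{\rm{odd}})$. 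Since each Duistermaat--Guillemin density $\mu_j^\tau$ is a positive measure, the expression
\[
\Wave_0^{\rm{lead}}(\tau) = \left(\frac{1}{2\pi i}\right)^{(D_\bullet - 1)/2} \sum_{\dim \Theta_j = D_\bullet} i^{-\sigma_j}\, \mu_j^\tau(\Theta_j)
\]
can vanish only through destructive interference among the phases $i^{-\sigma_j}$. The content of Theorem~\ref{thm:IntroMod4} is precisely that this cannot occur in the split-rank setting: $\sigma_j$ modulo $4$ depends only on $\tau$ and $\dim \Theta_j$. Factoring out the common phase makes the sum a positive multiple of $i^{-\sigma}$, hence non-zero.

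This mechanism immediately yields Part~(3): for the Einstein metric $c(g^1_0 \times \cdots \times g^q_0)$ on a split-rank $M$, the Duistermaat--Guillemin measures descend uniformly from the simply connected cover and the common-phase argument applies verbatim. For Part~(2) the additional task is to control the passage from the simply connected cover $M_0 \times \widetilde{M}_1 \times \cdots \times \widetilde{M}_q$ to its $\Gamma$-quotient, since that quotient can merge closed geodesics whose lifts lie in distinct free-homotopy classes into a single component of $\Fix(\Phi_\tau)$. I would verify, factor-by-factor against the classification of irreducible simply connected split-rank spaces, that the restrictions on $\pi_j(\Gamma) \leq Z(\widetilde{M}_j)$ catalogued by $\mathscr{H}$ (the explicit conditions for $\SU(n+1)$ with odd $n$, for $\Spin(2n+1)$, for $\Sp(n)$, for the two flavours of $\Spin(2n)$, and for $E_7$) are exactly those ruling out central elements whose action shifts Morse indices by an odd multiple of $2$. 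This case-by-case enumeration, carried out using the standard root-theoretic formula for the index of a closed geodesic in a symmetric space together with the structure of the center of each factor, is where I expect the main technical obstacle to sit; once it is handled, the common-phase argument proceeds unchanged on $M$ and gives $\mathscr{W}_0(M) = \mathscr{R}_{\rm{symm}}(M)$.

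Finally, Part~(1) follows from Part~(2) by a standard genericity argument. The complement of $\mathscr{W}_0(M)$ is the locus in the finite-dimensional parameter space $\mathscr{R}_{\rm{symm}}(M) = \mathcal{S}^+(d) \times \R_+^q$ on which two top-dimensional components of some $\Fix(\Phi_\tau)$ carry incompatible Morse-theoretic data. Because the symmetric metric determines the length spectrum and the relevant index data real-analytically in the finitely many parameters $(h, c_1, \ldots, c_q)$, this bad locus is a countable union of proper real-analytic subvarieties of $\mathscr{R}_{\rm{symm}}(M)$, hence meager. Its complement is therefore residual, proving that $\mathscr{W}_0(M) \subseteq \mathscr{P}(M)$ contains a residual set and, consequently, that for a generic symmetric metric on $M$ the Poisson relation is an equality and the length spectrum is determined by the Laplace spectrum.
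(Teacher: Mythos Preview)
Your overall strategy---cleanliness via Theorem~\ref{thm:SymmSpcsCIH}, then non-vanishing of $\Wave_0^{\rm{lead}}(\tau)$ via the Morse-index-mod-$4$ result Theorem~\ref{thm:IntroMod4}---is exactly the paper's approach, and your treatment of Parts~(2) and~(3) is essentially correct (the remark about Duistermaat--Guillemin measures ``descending uniformly'' in Part~(3) is irrelevant; what matters there is simply that condition~(3) of Theorem~\ref{thm:IntroMod4} applies to the Einstein metric).

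The gap is in Part~(1). You write that it ``follows from Part~(2) by a standard genericity argument,'' but Part~(2) only applies to $M \in \mathscr{H}$, so for $M \notin \mathscr{H}$ it gives you nothing to start from. Your fallback---that the bad locus is a countable union of proper real-analytic subvarieties---is not justified: Morse indices are integer-valued and jump discontinuously as the metric varies, and the ``incompatible Morse-theoretic data'' condition you describe is not manifestly analytic in $(h, c_1, \ldots, c_q)$. The paper instead isolates a concrete residual subset of $\mathscr{R}_{\rm{symm}}(M)$: the metrics with \emph{component length unique} (CLU) length spectrum, meaning that the length $\tau$ of a closed geodesic determines the lengths $\|v_0\|, \ldots, \|v_q\|$ of the components of its initial velocity. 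This is condition~(1) of Theorem~\ref{thm:IntroMod4}, and Proposition~\ref{prop:Component Dependence} shows why it suffices: in the split-rank case the Morse index modulo $4$ depends only on the component lengths and $\dim \Fix_\gamma(\Phi_\tau)$, so under CLU it depends only on $\tau$ and the dimension. That CLU is residual (Lemma~\ref{lem:CLUResidual}) is then elementary---the non-CLU locus is a countable union of zero sets of nontrivial linear functions of the metric parameters---and this is the missing concrete step in your Part~(1).
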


A compact Lie group $U$ equipped with a bi-invariant metric 
is a symmetric space having homogeneity type:
\begin{eqnarray}\label{eqn:HomogeneityTypeLieGroup}
U = \Gamma \backslash (T \times \widetilde{U}_1 \times \cdots \times \widetilde{U}_q),
\end{eqnarray}
where $T$ is a torus, the factors $\widetilde{U}_1, \ldots , \widetilde{U}_q$ 
are simply-connected simple Lie groups and $\Gamma$ is a discrete subgroup 
of $Z(U) = T \times Z(\widetilde{U}_1) \times \cdots \times Z(\widetilde{U}_q)$, the center of $U$.
Thus, we immediately obtain the following specialization of the previous result.

\begin{cor}\label{cor:MainResult}
Let $U = \Gamma \backslash (T \times \widetilde{U}_1 \times \cdots \times \widetilde{U}_q)$ 
be a compact Lie group and let $\mathscr{R}_{\rm{bi}}(U) = \mathscr{R}_{\rm{symm}}(U)$ 
denote the associated space of bi-invariant metrics on $U$. 
Additionally, let $\mathscr{G}$ be the collection of compact Lie groups in $\mathscr{H}$;
that is, $\mathscr{G}$ consists of all compact Lie groups  
for which $\Gamma$ is trivial or satisfies certain constraints
depending on $\widetilde{U}_1 \times \cdots \times \widetilde{U}_q$,
as described in Theorem~\ref{thm:MainResult}(2). 
Then the following hold.

\begin{enumerate}
\item $\mathscr{W}_0(U) \subseteq \mathscr{P}(U)$ contains a residual set. 
Consequently, for a generic bi-invariant metric on $U$ the Poisson relation 
is an equality and its length spectrum can be recovered from its Laplace spectrum.


\item 
If $U$ is in $\mathscr{G}$, then $\mathscr{W}_0(U)$ equals $\mathscr{R}_{\rm{bi}}(U)$
and we conclude that the Poisson relation is an equality for every bi-invariant metric 
supported by $U$.
Consequently, if $U$ is a member of $\mathscr{G}$, 
then the length spectrum of any bi-invariant metric $g \in \mathscr{R}_{\rm{bi}}(U)$ 
can be recovered from its Laplace spectrum.

\item If $U$ is semi-simple, then $\mathscr{W}_0(U)\subseteq \mathscr{P}(U)$ contains the 
(unique up to scaling) bi-invariant Einstein metric 
induced by the negative of the Killing form on the Lie algebra of $U$.

\end{enumerate}
\end{cor}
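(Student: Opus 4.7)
The plan is to exhibit the corollary as a direct specialization of Theorem~\ref{thm:MainResult} to the setting in which every non-Euclidean irreducible factor of the universal cover is a compact simple Lie group. My first task is to verify that a compact Lie group $U$ equipped with a bi-invariant metric is indeed a compact globally symmetric space whose homogeneity type has the form displayed in Equation~\ref{eqn:HomogeneityTypeLieGroup}. Writing $U$ as the symmetric space $G/K$ with $G = U \times U$ and $K = \Delta U$, the structure theorem for compact Lie groups yields a finite quotient $U = \Gamma \backslash (T \times \widetilde{U}_1 \times \cdots \times \widetilde{U}_q)$, where each $\widetilde{U}_j$ is simply-connected and simple. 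Hence the simply-connected irreducible factors $\widetilde{M}_j$ appearing in Equation~\ref{eqn:HomogeneityType} are precisely the $\widetilde{U}_j$, each viewed as a Type II irreducible symmetric space. A bi-invariant metric restricts on each $\widetilde{U}_j$ to a positive multiple $c_j g_0^j$ of the standard (negative Killing form) metric and restricts on $T$ to an arbitrary flat metric, so the space of bi-invariant metrics on $U$ coincides with $\mathscr{R}_{\rm{symm}}(U)$.

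Next I would note that every compact simple Lie group is a split-rank symmetric space: its rank as a symmetric space equals its Lie-algebra rank, and the isotropy $\Delta U$ has rank equal to that of $U$, so the identity $\operatorname{rank} G = \operatorname{rank} M + \operatorname{rank} K$ holds. Consequently the non-Euclidean factor $\widetilde{U}_1 \times \cdots \times \widetilde{U}_q$ is either trivial (when $U$ is a torus) or split-rank, and every compact Lie group falls within the hypotheses of Theorem~\ref{thm:MainResult}. Parts (1) and (2) of the corollary now follow verbatim: the generic statement in (1) is an instance of Theorem~\ref{thm:MainResult}(1), while the constraints defining $\mathscr{G}$ are, by construction, exactly the restrictions to simple-Lie-group factors of the constraints defining $\mathscr{H}$, so $U \in \mathscr{G}$ implies $U \in \mathscr{H}$ and (2) becomes a restatement of Theorem~\ref{thm:MainResult}(2).

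For part (3) I would argue that a semi-simple compact Lie group $U$ has trivial torus factor $T$, so $U$ is split-rank in the sense of Theorem~\ref{thm:MainResult}(3). The distinguished Einstein metric $c(g_0^1 \times \cdots \times g_0^q)$ produced by that theorem is, on the simply-connected cover $\widetilde{U}_1 \times \cdots \times \widetilde{U}_q$, the unique (up to scaling) bi-invariant metric induced by the negative of the Killing form $B_{\germ{g}}$ of the Lie algebra of $U$; because this metric is invariant under the discrete central quotient by $\Gamma$, it descends to the claimed Killing-form Einstein metric on $U$, which therefore lies in $\mathscr{W}_0(U)\subseteq \mathscr{P}(U)$. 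No genuine obstacle arises at this level, since the hard work—establishing that all compact symmetric spaces are clean (Theorem~\ref{thm:SymmSpcsCIH}) and that the leading wave invariants do not cancel under the indicated hypotheses—has already been absorbed into Theorem~\ref{thm:MainResult}; the only subtlety to flag is the translation between the Euclidean-factor language of \emph{split-rank} used in Theorem~\ref{thm:MainResult}(3) and the Lie-theoretic hypothesis of \emph{semi-simplicity} used in Corollary~\ref{cor:MainResult}(3).
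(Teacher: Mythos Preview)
Your proposal is correct and matches the paper's approach: the paper does not give a separate proof of Corollary~\ref{cor:MainResult}, but simply observes (in the paragraph preceding it) that a compact Lie group with a bi-invariant metric has homogeneity type as in Equation~\ref{eqn:HomogeneityTypeLieGroup} with each $\widetilde{U}_j$ a simply-connected simple Lie group, and then states that the corollary is an immediate specialization of Theorem~\ref{thm:MainResult}. Your verification that simple Lie groups are split-rank, that $\mathscr{R}_{\rm{bi}}(U) = \mathscr{R}_{\rm{symm}}(U)$, and that semi-simplicity of $U$ is equivalent to triviality of the Euclidean factor $M_0$ supplies exactly the dictionary needed for this specialization.
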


\noindent
Therefore, we obtain a positive answer to Problem~\ref{prob:PoissonRelation} and, consequently, 
Problem~\ref{prob:HearingLengthSpectrum} for a substantial portion of bi-invariant metrics.
In particular, we see that Problem~\ref{prob:PoissonRelation} has an affirmative answer for every bi-invariant metric on a compact Lie group that is simple, simply-connected, a torus or a product thereof.



We conclude this section with an observation---proven in 
Section~\ref{SubSec:RestrictedRoots}---suggesting that the rank of a 
compact Lie group is encoded in the spectrum of any of its associated bi-invariant metrics.

\begin{thm}\label{thm:HearingRank}
The rank of a compact Lie group $U$ is encoded in the spectrum of any bi-invariant metric $g \in \mathscr{W}_0(U)$. Consequently, within $\mathscr{W}_0^{\rm{Lie}} \equiv \cup_U \mathscr{W}_0(U)$, where the union is over all compact Lie groups, rank is a spectral invariant.
\end{thm}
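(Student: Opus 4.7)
\textbf{The plan} is to recover $\rank U$ from two spectral quantities: $\dim U$, read off the leading heat coefficient, and the integer $D = \max_{\tau \in \Spec_L(U,g)} D^{\rm{lead}}(\tau)$, where $D^{\rm{lead}}(\tau)$ denotes the dimension of the top-dimensional components of $\Fix(\Phi_\tau)$ contributing to the leading wave invariant.  Because $g \in \mathscr{W}_0(U)$, the Poisson relation is an equality (so $\Spec_L(U,g)$ is determined by the spectrum) and, at every $\tau$, $\Wave_0^{\rm{lead}}(\tau) \neq 0$; consequently, the order of the singularity of $\Trace(U_g(t))$ at $t=\tau$ has the form $(D^{\rm{lead}}(\tau)-1)/2$, which makes each $D^{\rm{lead}}(\tau)$, and hence $D$ itself, spectrally determined.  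The remaining task is to compute $D$.

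\textbf{Computing $\Fix(\Phi_\tau)$ explicitly:} In the left-trivialization $TU \cong U \times \mathfrak{u}$, geodesics of a bi-invariant metric have the form $u\exp(tX)$, so the time-$t$ map of the geodesic flow is $\Phi_t(u,X) = (u\exp(tX), X)$.  Hence
$$
\Fix(\Phi_\tau)\cap S^*U \;=\; U \times \Sigma_\tau, \qquad \Sigma_\tau \;=\; \{\,X\in\mathfrak{u} : \|X\|=1,\ \exp(\tau X)=e\,\}.
$$
The set $\Sigma_\tau$ is a finite disjoint union of adjoint orbits through unit vectors which, after conjugation into a fixed maximal torus $T$ with Lie algebra $\mathfrak{t}$ and integer lattice $\Lambda_T \subset \mathfrak{t}$, are exactly the vectors of norm $\tau$ in $\Lambda_T$.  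The orbit $\Ad(U)X \cong U/Z_U(X)$ has dimension $\dim U - \dim Z_U(X)$, so the component of $\Fix(\Phi_\tau)$ it produces has dimension $2\dim U - \dim Z_U(X)$.  Since $Z_U(X) \supseteq T$ has rank $\rank U$, the integer $\dim Z_U(X) - \rank U$ equals twice the number of roots vanishing on $X$ and is therefore even; hence all components of $\Fix(\Phi_\tau)$ across all periods have dimensions of the same parity as $\rank U$, eliminating any parity-based concern.  The minimum value $\dim Z_U(X) = \rank U$ is attained precisely when $X$ is regular, and since $\Lambda_T$ is not contained in the union of the finitely many root hyperplanes it contains regular vectors; hence some period $\tau$ arises from a regular lattice vector, and for that $\tau$ the largest component of $\Fix(\Phi_\tau)$ has dimension $2\dim U - \rank U$.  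No orbit at any period can yield a larger component.

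\textbf{Conclusion and main obstacle.}  Combining these steps gives $D = 2\dim U - \rank U$, and together with the spectrally determined $\dim U$ this recovers $\rank U$.  The closing assertion about $\mathscr{W}_0^{\rm{Lie}}$ follows immediately: any two isospectral metrics in this class produce equal values of $\dim U$ and $D$, hence equal rank.  \textbf{The main subtlety} is justifying that the leading singularity of $\Trace(U_g(t))$ at $\tau$ really has order $(D^{\rm{lead}}(\tau)-1)/2$, rather than a smaller order caused by cancellation within the asymptotic expansion of Equation~\ref{eqn:Asymptotic1}; this is precisely what the defining condition $\Wave_0^{\rm{lead}}(\tau) \neq 0$ for $g \in \mathscr{W}_0(U)$ is designed to exclude, so once that hypothesis is invoked the extraction of the rank goes through without obstruction.
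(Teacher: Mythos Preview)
Your proposal is correct and follows essentially the same line as the paper's proof: both deduce $\rank(U) = 2\dim U - \max_{\tau}\dim(\Fix(\Phi_\tau))$ by combining the spectral determination of $\dim U$, the spectrally visible exponent $(D-1)/2$ at each singular $\tau$ (guaranteed non-vanishing by the hypothesis $g \in \mathscr{W}_0(U)$), the bound $\dim\Fix_v(\Phi_\tau) \le 2\dim U - \rank U$ with equality precisely at regular $v$, and the existence of regular vectors in the integer lattice. Your explicit parity observation---that every component of $\Fix(\Phi_\tau)$ has dimension congruent to $\rank U$ modulo $2$, so the ``leading'' dimension really is the full $\dim(\Fix(\Phi_\tau))$---is a point the paper leaves implicit, and it is a useful clarification.
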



\subsection{Proof of Theorem~\ref{thm:MainResult}}\label{SubSec:MainProof}

We present the proof of Theorem~\ref{thm:MainResult} 
modulo some technical details that will be addressed in 
Section ~\ref{sec:HearingLengthSpectrum} after we have reviewed further aspects of symmetric spaces
in Section~\ref{Sec:MorseIndex}.

\begin{proof}[Proof of Theorem~\ref{thm:MainResult}]\label{OutlineOfProof} 
Theorem~\ref{thm:SymmSpcsCIH} states that compact symmetric spaces are clean; therefore, our strategy is to show that for each $\tau \in \LSpec^{\pm}(M,g)$, the leading term, $\Wave_{0}^{\rm{lead}}(\tau)$, in Equation~\ref{eqn:Asymptotic1} is non-zero. By Remark~\ref{rem:TimeReversal}, it is enough to show this for $\tau \in \LSpec(M,g)$; i.e., the lengths of the closed geodesics.

Let $(M,g)$ be a compact symmetric space without any further restrictions, for now,  
and let $\tau \in \LSpec(M,g)$.
Since $\tau$ is clean, by Theorem~\ref{thm:SymmSpcsCIH}, we know that $\Fix(\Phi_\tau)$ 
is a disjoint union of finitely many closed submanifolds
$\Theta_1, \ldots , \Theta_r$ in the unit tangent bundle of $M$. 
If we let $D$ denote the dimension of $\Fix(\Phi_\tau)$ 
and let $\bullet$ denote ``even'' or ``odd'' according to the parity of $D$, 
then by Equation~\ref{eqn:WaveInvariants1} 
we have 
$$\Wave_{0}^{\rm{lead}}(\tau) = \Wave_{0}^{\bullet}(\tau) = \left( \frac{1}{2\pi i}\right)^{\frac{D -1}{2}} \sum_{\stackrel{j=1}{\dim \Theta_j = D}}^{r} i^{-\sigma_j} \int_{\Theta_j} d \mu_j^\tau.$$
Since the measures $\mu_j^\tau$ are positive, the vanishing of $\Wave_{0}^{\bullet}(\tau)$ 
depends in part on the value of the Morse indices, 
which can be difficult to compute for a general Riemannian manifold.
However, as we note in Section~\ref{SubSec:MorseIndex}, since $(M,g)$ is
a compact symmetric space the Morse index of a closed geodesic in $(M,g)$ 
can be computed in terms of the restricted roots of the symmetric space \cite{Ziller3}. 
From this we deduce (see Equation~\ref{eqn:MorseIndex}) 
that for a \emph{non-trivial} closed geodesic $\gamma: [0,1] \to (M,g)$,
the Morse index has the following form:
\begin{equation}\label{eqn:IntroMorseIndex1}
\sigma_{\Delta M}(\gamma) = F_{(M,g)}(\gamma'(0)) - \dim \Fix_{\gamma}(\Phi_\tau) + \dim M,
\end{equation}
where $\Fix_{\gamma}(\Phi_\tau)$ is the component of $\Fix(\Phi_\tau) \subseteq SM$
containing $\gamma'(t) / \|\gamma'(t)\|$. 

It will be useful to observe that the Morse index of a closed geodesic 
in certain symmetric spaces is influenced by the lengths of 
the components of its velocity vector (in the non-Euclidean factor).

\begin{prop}\label{prop:Component Dependence}
Let $M = \Gamma \backslash (M_0 \times \widetilde{M}_1 \times \cdots \times \widetilde{M}_q)$
be the homogeneity type of a compact symmetric space, 
where $\widetilde{M}_{\rm{cpt}} = \widetilde{M}_1 \times \cdots \times \widetilde{M}_q$ is split-rank,
and for $j = 1, \ldots , q$, let $M_j \equiv \widetilde{M}_j / \Gamma_j$, 
where $\Gamma_j$ is the projection of  $\Gamma$ onto its $j$-th factor.
Now, consider the symmetric metric $g$ on $M$ induced by the metric 
$h \times \tilde{g_1} \times \cdots \times \tilde{g}_q$
and, for $j = 1, \ldots , q$, let $g_j$ be the metric on $M_j$ induced by $\tilde{g}_j$.
Then there is a function $f_{(M,g)}(x_0, x_1, \cdots, x_q) = \sum_{j=1}^{q} f_{(M_j, g_j)}(x_j)$, where $f_{(M_j, g_j)}(0) = 0$ for each $j = 1, \ldots , q$,
such that if $\gamma : [0,1] \to (M,g)$ is a non-trivial closed geodesic with 
$\gamma'(0) \equiv (v_0, v_1, \cdots, v_q)$, then 
\begin{equation}\label{eqn:IntroBigF}
F_{(M,g)}(\gamma'(0)) \equiv f_{(M,g)}(\| v_0\|, \|v_1\|, \ldots, \| v_q\|) \mod 4.
\end{equation}
Consequently, the function 
$$h_{(M,g)}(x_0, x_1, \ldots, x_q, y) = \left\{ \begin{array}{ll}
f_{(M,g)}(x_0, x_1, \ldots, x_q) - y + \dim M & \mbox{, when $x_j \neq 0$ for some $j$} \\
0 & \mbox{, otherwise}\\
\end{array}
\right.
$$
satisfies
\begin{equation}\label{eqn:IntroMorseIndex2}
\sigma_{\Delta M}(\gamma) \equiv h_{(M,g)}(\| v_0\|, \|v_1\|, \ldots, \| v_q\|, \dim \Fix_{\gamma}(\Phi_\tau)) \mod 4.
\end{equation} 
\end{prop}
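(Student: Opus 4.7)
The plan is as follows. Since the geodesic flow, Morse index, and fixed-point set of $\Phi_\tau$ all split as direct products across the factors of the universal cover, the first step is to reduce the claim to a per-factor statement of the form $F_{(M,g)}(\gamma'(0)) = F_{(M_0,h)}(v_0) + \sum_{j=1}^q F_{(M_j,g_j)}(v_j)$. The flat torus factor $M_0$ contributes zero Morse index (no conjugate points in flat space) while its fixed-point set has full dimension, so $F_{(M_0,h)}$ vanishes, consistent with the stated boundary condition $f_{(M_j,g_j)}(0) = 0$. All the content therefore lives in the non-Euclidean, compact, irreducible, split-rank factors.

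Next, within a fixed non-Euclidean factor $(M_j, g_j)$, I would conjugate the initial velocity $v_j$ into a Cartan subspace $\mathfrak{a}_j \subset \mathfrak{m}_j$, using that every tangent vector in a symmetric space can be conjugated into a maximal flat. I would then invoke Ziller's Morse index formula for closed geodesics in a compact symmetric space to express $\sigma(\gamma_j)$ as a sum over the positive restricted roots $\alpha \in \Sigma_j^+$ of floor-function terms of the form $m_\alpha^{(j)} \lfloor \alpha(v_j)/\pi \rfloor$, together with nullity/fixed-point-set corrections. Substituting into Equation~\ref{eqn:IntroMorseIndex1} would then yield a closed-form expression for $F_{(M_j,g_j)}(v_j)$.

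The split-rank hypothesis now enters decisively: every restricted root multiplicity $m_\alpha^{(j)}$ is even, say $m_\alpha^{(j)} = 2 n_\alpha^{(j)}$, so $F_{(M_j,g_j)}(v_j)$ is automatically a multiple of $2$. Working modulo $4$ therefore reduces to controlling the parity of the integer $\sum_\alpha n_\alpha^{(j)} \lfloor \alpha(v_j)/\pi \rfloor$, and this is the step I expect to be the main obstacle. I would approach it through the Weyl-group symmetry of the restricted root system (which preserves norms and permutes the floor terms), combined with a wall-crossing analysis: as $v_j$ is deformed along a sphere in $\mathfrak{a}_j$, each $\lfloor \alpha(v_j)/\pi \rfloor$ jumps by $\pm 1$ whenever $\alpha(v_j)$ crosses an integer multiple of $\pi$, and one must verify, case by case in the classification of irreducible simply-connected split-rank symmetric spaces, that these crossings pair up so that the total change between any two closed-geodesic initial vectors of the same norm is divisible by $4$. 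The closed-geodesic condition, which forces $v_j$ to lie in a specific integer lattice in $\mathfrak{a}_j$, is what makes this arithmetic tractable.

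Once the mod-$4$ invariance is established, I would define $f_{(M_j,g_j)}(x_j)$ to be the common residue class of $F_{(M_j,g_j)}(v_j) \mod 4$ over all $v_j$ of norm $x_j$ realized by a closed geodesic, with $f_{(M_j,g_j)}(0) = 0$. Setting $f_{(M,g)} = \sum_j f_{(M_j,g_j)}$ and substituting into Equation~\ref{eqn:IntroMorseIndex1} yields exactly the $h_{(M,g)}$ of Equation~\ref{eqn:IntroMorseIndex2}, completing the proof.
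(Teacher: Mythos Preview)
Your overall architecture is right: the Morse index and fixed-point set split over the factors, the Euclidean piece contributes nothing, and the content lives in the irreducible split-rank factors. But the heart of your argument is built on a mis-remembered version of Ziller's formula, and this sends you down an unnecessarily complicated path.

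For a \emph{closed} geodesic $\gamma_v$ in a compact symmetric space with $v \in \mathfrak{a}$ lying in the closure of a Weyl chamber, the index formula (Theorem~\ref{thm:MorseIndexRoots} and Equation~\ref{eqn:MorseIndex}) reads
\[
\sigma_{\Delta M}(\gamma_v)=\sum_{\beta\in R^+} n_\beta\,\beta(v)\;-\;\dim\Fix_v(\Phi_\tau)\;+\;\dim M,
\]
with no floor functions. The reason is that a closed geodesic forces $v\in\Lambda_I(M)\subseteq\Lambda_Z(M)$, so every $\beta(v)$ is already an integer. Thus $F_{(M,g)}(v)$ is simply the linear form $\sum_{\beta\in R^+}n_\beta\,\beta(v)=2N\cdot 2\rho(v)$, where in the split-rank case $n_\beta=2N$ with $N=1$ for simple Lie groups and $N\ge 2$ otherwise. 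Your proposed wall-crossing argument, deforming $v_j$ along a sphere and tracking jumps in $\lfloor\alpha(v_j)/\pi\rfloor$, is not the mechanism at work here and would not obviously close.

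With the correct formula in hand, the paper's proof proceeds very differently from what you outline. When $N\ge 2$ (Type~I split-rank), integrality of $\beta(v)$ immediately gives $F\equiv 0\bmod 4$; when $M$ is simply-connected, $\Lambda_I=\Lambda_{R\check{}}$ and the fact that $\rho$ takes integer values on co-roots (Lemma~\ref{lem:Rho}) again gives $F=4\rho(v)\equiv 0\bmod 4$. The only genuinely case-by-case work is for simple Lie groups that are \emph{not} simply-connected: there one computes $4\rho(v)\bmod 4$ on the explicit integral lattice for each root-system type $A_n,\dots,E_8$, and the key arithmetic ingredient is the elementary observation (Lemma~\ref{lem:UsefulLemma}) that for integer vectors, equal Euclidean norm forces equal coordinate-sum parity. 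That lemma is what converts ``depends only on $\|v_j\|$'' into a mod-$4$ statement in the nontrivial cases; it is the piece your proposal is missing.
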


\begin{proof}
See Section~\ref{SubSec:PropComponentDependence}. 
\end{proof}

With an eye towards establishing that the Poisson relation holds 
for a generic split-rank symmetric space we make the following definition. 

\begin{dfn}\label{dfn:CLU}
Let $(N,g)$ be a closed Riemannian manifold with universal cover 
$(\widetilde{N}, \tilde{g}) = 
(\widetilde{N}_1 \times \cdots \times \widetilde{N}_k, \tilde{g}_1 \times \cdots \times \tilde{g}_k)$.
We will say the length spectrum of $(N,g)$ is \emph{component length unique} (CLU)
if for any $\tau \in \Spec_L(N,g)$, there are nonnegative constants 
$c_1(\tau), \cdots , c_k(\tau)$
such that for any closed geodesic $\gamma: [0,1] \to (N,g)$ of length $\tau$
the velocity vector $\gamma'(0) = (v_1, \cdots , v_k)$ satisfies 
$\|v_j\|^2 = c_j (\tau)$ for $j = 1, \ldots , k$.
That is, the length of the closed geodesic $\gamma$ determines the lengths of the components
of $\gamma'(t)$.
\end{dfn}

\begin{lem}\label{lem:CLUResidual}
Let $M = \Gamma \backslash (M_0 \times \widetilde{M}_1 \times \cdots \times \widetilde{M}_q)$ 
have the homogeneity type of a compact symmetric space.
The set of symmetric metrics on $M$ with CLU length spectrum 
is a residual set in $\mathscr{R}_{\rm{symm}}(M)$.
\end{lem}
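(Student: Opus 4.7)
The plan is a standard Baire-category argument, organized into three steps: (i) enumerate closed geodesics of $(M,g)$ by a countable metric-independent set of ``types,'' (ii) for each pair of types, show that the set of metrics in which the two types produce the same total length but distinct component lengths is a proper affine subvariety of $\mathscr{R}_{\rm{symm}}(M)$, and (iii) invoke Baire's theorem.

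For step (i), observe that a closed geodesic $\gamma$ on $(M, g)$ with $g = (h, c_1, \ldots, c_q) \in \mathcal{S}^+(d) \times \R_{+} \times \cdots \times \R_{+}$ lifts to a product geodesic $(\tilde{\gamma}_0, \tilde{\gamma}_1, \ldots, \tilde{\gamma}_q)$ in the universal cover $\R^d \times \widetilde{M}_1 \times \cdots \times \widetilde{M}_q$ (with $M_0 = \R^d / L$), and $\tilde{\gamma}_j(1)$ differs from $\tilde{\gamma}_j(0)$ by the $j$-th component of a deck transformation. Since $L$ and the finite groups $\pi_j(\Gamma) \leq Z(\widetilde{M}_j)$ are metric-independent, the component length-squared tuple $(\|v_0\|^2, \ldots, \|v_q\|^2)$ of $\gamma'(0)$ has the form $(Q_h(w), c_1 \mu_1, \ldots, c_q \mu_q)$, where $Q_h(w) \equiv \langle w, h w \rangle$, $w$ lies in a fixed countable set $\mathcal{V}_0 \subset \R^d$ determined by $L$ and the $M_0$-components of $\Gamma$, and each $\mu_j$ lies in the (metric-independent) length-squared spectrum $\mathcal{L}_j$ of $(\widetilde{M}_j/\pi_j(\Gamma), g_0^j)$. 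The enumerating set is $\mathcal{T} \subseteq \mathcal{V}_0 \times \mathcal{L}_1 \times \cdots \times \mathcal{L}_q$.

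For step (ii), given tuples $T = (w, \mu_1, \ldots, \mu_q)$ and $T' = (w', \mu_1', \ldots, \mu_q')$ in $\mathcal{T}$, the locus where they produce the same total length-squared is the closed affine set
\begin{equation*}
B_{T, T'} = \Bigl\{(h, c_1, \ldots, c_q) \in \mathscr{R}_{\rm{symm}}(M) : Q_h(w) - Q_h(w') + \sum_{j=1}^{q} c_j (\mu_j - \mu_j') = 0 \Bigr\}.
\end{equation*}
The defining linear functional in $(h, c_1, \ldots, c_q)$ vanishes identically if and only if $w w^{\top} = w'(w')^{\top}$ (equivalently $w = \pm w'$) and $\mu_j = \mu_j'$ for every $j$; in that situation $T$ and $T'$ yield identical component-length-squared tuples for every metric, so they cannot witness a failure of CLU and may be discarded. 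For all remaining pairs, $B_{T, T'}$ is a proper affine hyperplane in $\mathscr{R}_{\rm{symm}}(M)$ and hence nowhere dense.

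For step (iii), the set of non-CLU metrics is contained in the countable union $\bigcup_{(T, T')} B_{T, T'}$ over the surviving pairs, which is meager; its complement, the set of metrics with CLU length spectrum, is therefore residual. The step that requires the most care is (i): verifying that the countable enumerating set $\mathcal{T}$ is truly metric-independent. This rests on the structural fact that every geodesic in a compact symmetric space can be conjugated into a fixed maximal flat, and that closing up after the $\Gamma$-quotient amounts to the translation vector lying in a fixed lattice coset in that flat, features that hold uniformly in the metric parameters.
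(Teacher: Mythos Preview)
Your argument is correct and follows essentially the same route as the paper: both parametrize closed geodesics by a countable, metric-independent set (the paper uses the integral lattice $\Lambda_I(M)\subset\mathfrak{a}$ directly, you use tuples $(w,\mu_1,\dots,\mu_q)$ of torus vectors and component length-squares), both observe that for each ``bad'' pair the equal-length locus is the zero set of a nontrivial affine function of the metric parameters, and both conclude by Baire. The only cosmetic differences are that the paper works with vectors $v\in\Lambda_I(M)$ rather than their length-squared data, and that its discard set $\Delta$ keeps only pairs with $\|v_j\|_{j,0}\neq\|w_j\|_{j,0}$ for some $j\ge 1$ (your discard rule additionally retains pairs with $w\neq\pm w'$ but all $\mu_j=\mu_j'$, which is harmless since those $B_{T,T'}$ are still proper hyperplanes and non-CLU metrics never land there anyway).
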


\begin{proof}
See Section~\ref{SubSec:CLUResidual}.
\end{proof}

The following proposition tells us that for many compact symmetric spaces 
having a non-Euclidean part that is split-rank,  
the Morse index of a closed geodesic $\gamma$ is determined modulo $4$ 
by its length $\tau$ and the dimension of its corresponding component in $\Fix(\Phi_\tau)$
(cf. \cite[Theorem 3 and Corollary 3.6]{Wilking}).
 
\begin{thm}\label{thm:IntroMod4}
Let $(M,g)$ be a compact symmetric space, where $\widetilde{M}_{\rm{cpt}}$ is split-rank, 
that satisfies any one of the following:

\begin{enumerate}
\item the length spectrum of $(M,g)$ is component length unique,

\item $M \in \mathscr{H}$, where $\mathscr{H}$ is defined as in Theorem~\ref{thm:MainResult},



\item the universal cover of $M$ has no Euclidean part and $g$ is the unique up to scaling 
$G$-invariant Einstein metric on $M$.
\end{enumerate}
\noindent
Then, there is a function $H_{(M,g)} : \Spec_L(M,g) \times \{0, 1, \ldots , 2\cdot \dim M -1\} \to \{0,1,2,3\}$
such that the Morse index of any closed geodesic $\gamma$ in $(M,g)$ of length $\tau$ satisfies
\begin{eqnarray}\label{Eq:MorseIndex1}
\sigma_{\Delta M}(\gamma) \equiv H_{(M,g)}(\tau, \dim \Fix_{\gamma}(\Phi_\tau)) \mod 4.
\end{eqnarray}
In the event that $(M,g)$ satisfies either $(3)$ or $(4)$ above, this 
relationship only depends on $\dim(\Fix_\gamma(\Phi_\tau))$:
\begin{eqnarray}\label{Eq:MorseIndex2}
\sigma_{\Delta M}(\gamma) \equiv H_{(M,g)}(\dim \Fix_{\gamma}(\Phi_\tau)) \mod 4,
\end{eqnarray}
where $H_{(M,g)}(x) = x + C(M)$.
\end{thm}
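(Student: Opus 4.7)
The plan is to reduce the theorem to a mod-$4$ statement about the additive function $f_{(M,g)}(x_0,\ldots,x_q) = \sum_{j=0}^{q} f_{(M_j,g_j)}(x_j)$ supplied by Proposition~\ref{prop:Component Dependence}, and then to verify in each of the three hypothesized scenarios that $f_{(M,g)}(\|v_0\|,\ldots,\|v_q\|) \bmod 4$ is determined by $\tau$ alone (or, in scenario (3), is essentially constant). The starting identity is Equation~\ref{eqn:IntroMorseIndex2}:
\[
\sigma_{\Delta M}(\gamma) \equiv h_{(M,g)}(\|v_0\|,\|v_1\|,\ldots,\|v_q\|,\dim \Fix_\gamma(\Phi_\tau)) \bmod 4,
\]
where $h_{(M,g)} = f_{(M,g)} - y + \dim M$ on non-trivial inputs. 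Thus constructing the desired $H_{(M,g)}$ reduces to eliminating the dependence on the individual $\|v_j\|$'s.

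First I would handle case (1). The CLU hypothesis says that whenever $\gamma$ is a closed geodesic of length $\tau$, the squared norms $\|v_j\|^2 = c_j(\tau)$ are determined by $\tau$. So fix representatives $\|v_j\|(\tau) = \sqrt{c_j(\tau)}$ and simply set $H_{(M,g)}(\tau,d) := h_{(M,g)}(\|v_0\|(\tau),\ldots,\|v_q\|(\tau),d)$; the right-hand side is well-defined by hypothesis.

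Next, case (2). If $M$ is irreducible, then $M_0$ is trivial and $q = 1$ (or $q=0$ for a torus), so $\|v_1\|^2 = \tau^2$ is forced and the reduction is immediate. Otherwise $\Gamma$ is trivial or lies in one of the listed subgroup types, and I would run a factor-by-factor analysis: for each simply-connected split-rank irreducible symmetric space $\widetilde{M}_j$ appearing in the classification (simple Lie groups, the spaces $A^H_{2n+1}$, and $E_{6(-36)}$), I would compute $f_{(\widetilde{M}_j,\tilde{g}_j)} \bmod 4$ directly from the Morse-index formula (Section~\ref{SubSec:MorseIndex}) using restricted roots (all with even multiplicity in the split-rank case). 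The point is that for simply-connected factors this contribution is already constant mod $4$, and the permissible central quotients listed (a)--(g) are precisely those for which the newly-created freely-homotopic closed geodesics preserve this constancy mod $4$; the excluded quotients (e.g.\ $\pi_j(\Gamma) = Z(\Sp(n))$ when $n \equiv 1,2\bmod 4$) would introduce closed geodesics along which $f_{(M_j,g_j)}$ jumps by an odd multiple of $2$. Once $f_{(M_j,g_j)} \bmod 4$ is a constant $C_j$ on the set of lengths of closed geodesics in $M_j$, summing gives $f_{(M,g)} \equiv \sum_j C_j \bmod 4$ and we can take $H_{(M,g)}(\tau,d) = \sum_j C_j - d + \dim M$.

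Finally, case (3). The Einstein condition forces $c_1 = \cdots = c_q$ \cite[Theorem 7.74]{Besse2}, so $g$ is proportional to the standard metric $g_0^1\times\cdots\times g_0^q$; combined with the absence of a Euclidean part this puts every geodesic on an equal footing across factors. The restricted-root Morse-index computation, with uniform multiplicities, then yields $f_{(M,g)}(\|v_1\|,\ldots,\|v_q\|) \equiv C(M) \bmod 4$ independently of the $\|v_j\|$, giving the stronger conclusion $H_{(M,g)}(d) = C(M) - d + \dim M$.

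The main obstacle is case (2): verifying that the subgroup list (a)--(g) defining $\mathscr{H}$ is exactly right requires understanding how a closed geodesic in $\widetilde{M}_j / \pi_j(\Gamma)$ that lifts to a non-closed geodesic in $\widetilde{M}_j$ affects the Morse index modulo $4$. This comes down to reading off, for each simple type, the action of the various central subgroups on the restricted-root-based index formula, and checking that the excluded subgroups are precisely those that shift the index by $2\bmod 4$.
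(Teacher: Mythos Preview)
Your handling of cases (1) and (2) matches the paper's. For (1) the paper does exactly what you describe. For (2) the paper's proof of Theorem~\ref{thm:IntroMod4} simply cites back to the proof of Proposition~\ref{prop:Component Dependence}: the type-by-type lemmas there already compute each $f_{(M_j,g_j)}$ explicitly, and the subgroup constraints defining $\mathscr{H}$ in part (b) are precisely the cases in which every $f_{(M_j,g_j)} \equiv 0$ (not merely constant). For irreducible $M$ in part (a), $f$ need not vanish, but with a single non-Euclidean factor one has $\|v_1\| = \tau$ automatically, so the reduction is immediate. The ``main obstacle'' you flag for case (2) is therefore already discharged in the proof of Proposition~\ref{prop:Component Dependence}, not in the proof of the present theorem.

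For case (3), however, there is a genuine gap. Your argument (``Einstein forces equal scalings, so every geodesic is on an equal footing, and uniform multiplicities then give $f \equiv C(M)$'') does not do the work: the equality $c_1 = \cdots = c_q$ alone does not pin down $\sum_{\beta} n_\beta \beta(v)$ modulo $4$ in terms of $\|v\|$. What the paper actually exploits is the specific algebraic identity that holds for the metric induced by the negative Killing form,
\[
\|v\|^2 \;=\; c \sum_{\beta \in R^+} n_\beta\, \beta(v)^2 ,
\]
for $v$ in the maximal abelian subspace. Writing $n_\beta = 2\hat{n}_\beta$ (split-rank), one sees that $\|v\| = \|w\|$ forces $\sum_\beta \hat{n}_\beta \beta(v)^2 = \sum_\beta \hat{n}_\beta \beta(w)^2$, and then the elementary parity observation of Lemma~\ref{lem:UsefulLemma} (equal sums of integer squares have sums of equal parity) yields $\sum_\beta \hat{n}_\beta \beta(v) \equiv \sum_\beta \hat{n}_\beta \beta(w) \bmod 2$, hence $\sum_\beta n_\beta \beta(v) \equiv \sum_\beta n_\beta \beta(w) \bmod 4$. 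This Killing-form identity plus the parity lemma is the mechanism that makes $F_{(M,g)}(\gamma'(0))$ a function of $\tau$; your proposal does not supply it. Note too that this argument produces a function of $\tau$, not a constant; the stronger $\tau$-independent conclusion (Equation~\ref{Eq:MorseIndex2}) is really the one attached to the cases in which the lemmas give $f_{(M,g)} \equiv 0$ outright.
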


\begin{proof}
See Section~\ref{SubSec:ThmIntroMod4}. 
\end{proof}

Now, let $(M,g)$ be a compact symmetric space where $\widetilde{M}_{\rm{cpt}}$, the non-Euclidean part of the universal cover of $M$, is split-rank. Suppose further that $(M,g)$ satisfies any one of conditions $(1)$-$(3)$ in Theorem~\ref{thm:IntroMod4}, then it is clear from the conclusion of the theorem that for any $\tau \in \Spec_L(M,g)$ we have $\Wave_{0}^{\rm{lead}}(\tau) =\Wave_{0}^{\bullet}(\tau) \neq 0$, where $\bullet$ equals ``even'' or ``odd'' according to the parity of $\dim (\Fix(\Phi_\tau))$. That is, there are no cancellations in the leading term of the wave trace associated to $\tau$. And, we conclude that $\tau$ is in the singular support of the wave trace and $\dim (\Fix(\Phi_\tau))$ is a spectral invariant. In particular, for any symmetric space as in the statement of Theorem~\ref{thm:IntroMod4}, the Poisson relation is an equality. As we also established in Lemma~\ref{lem:CLUResidual} that the symmetric metrics on $M$ with CLU length spectrum are generic in $\mathscr{R}_{\rm{symm}}(M)$, the theorem follows. 
\end{proof}

Since the dimension of a Riemannian manifold is a spectral invariant and 
we know that for any non-zero $\tau$ in the length spectrum of a compact 
symmetric space $M$ of rank at least $2$ we have $\dim( \Fix(\Phi_\tau)) < 2\cdot \dim M -1$, 
the following observation is an immediate consequence of Theorem~\ref{thm:MainResult}.

\begin{cor}\label{cor:HigherRank}
Let $M = \Gamma \backslash (M_0 \times \widetilde{M}_1 \cdots \times \widetilde{M}_q)$ be the homogeneity type of a compact symmetric space where $\widetilde{M}_{\rm{cpt}}$ is trivial or split-rank.
Then a generic metric $g \in \mathscr{R}_{\rm{symm}}(M)$ cannot 
be isospectral to a compact rank-one symmetric space or, more generally, 
a $C_\tau$-manifold.
In the case that $M$ satisfies condition $2(a)$ or $2(b)$ of Theorem~ \ref{thm:MainResult}, 
then the preceding conclusion holds for every metric $g \in \mathscr{R}_{\rm{symm}}(M)$.
\end{cor}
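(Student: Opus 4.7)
The plan is to exploit that, under the hypotheses of Theorem~\ref{thm:MainResult}, the integer $\dim \Fix(\Phi_\tau)$ is spectrally detectable at every $\tau \in \Spec_L(M,g)$, and then to observe that on a $C_\tau$-manifold this dimension attains its maximal possible value $2\dim N - 1$ at the primitive period---something forbidden on a higher-rank symmetric space.

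First I would invoke Theorem~\ref{thm:MainResult}(1) to obtain that a generic $g \in \mathscr{R}_{\rm{symm}}(M)$ lies in $\mathscr{W}_0(M) \subseteq \mathscr{P}(M)$. By the definition of $\mathscr{W}_0(M)$, for every $\tau \in \Spec_L^{\pm}(M,g)$ the leading asymptotic coefficient $\Wave_0^{\rm{lead}}(\tau)$ in Equation~\ref{eqn:Asymptotic1} is non-zero. Consequently, the decay order of $\alpha^{\rm{even}}(s)$ and $\alpha^{\rm{odd}}(s)$ at $+\infty$ exactly encodes $\max\{D_{\rm{even}},\, D_{\rm{odd}}\} = \dim \Fix(\Phi_\tau)$; equivalently, the order of the singularity of the spectrally determined distribution $\Trace(U_g(t))$ at $\tau$ recovers $\dim \Fix(\Phi_\tau)$. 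Hence $\dim \Fix(\Phi_\tau)$ is a spectral invariant for such $g$.

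Next I would argue by contradiction: suppose $(M,g)$ is isospectral to a $C_{\tau_0}$-manifold $(N,h)$, a class that contains all CROSSes. Since dimension is spectrally determined, $\dim M = \dim N$. Because the Poisson relation is an equality on both sides---by Theorem~\ref{thm:MainResult}(1) on the $M$ side and by the wave-invariant calculation sketched in Section~\ref{Sec:Results} on the $N$ side---the two length spectra coincide, so $\tau_0 \in \Spec_L(M,g)$. On the $(N,h)$ side every nontrivial orbit of the geodesic flow is periodic with primitive period $\tau_0$, so $\Fix(\Phi_{\tau_0})$ is the entire unit tangent bundle and the spectrally determined integer is $2\dim N - 1$. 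On the $(M,g)$ side, however, the standing hypothesis that $\widetilde{M}_{\rm{cpt}}$ is trivial or split-rank, together with the implicit assumption that $M$ itself is not rank-one (without which the statement is vacuous), forces $\dim \Fix(\Phi_{\tau_0}) < 2\dim M - 1$, since a closed geodesic in such an $M$ can only account for deformations within its maximal flat and the periodic Jacobi directions tangent to its component of $\Fix$, never filling all of $SM$. This contradicts the spectral detection established in the previous step, proving part~(1). For the second assertion one simply replaces Theorem~\ref{thm:MainResult}(1) with Theorem~\ref{thm:MainResult}(2), which upgrades $\mathscr{W}_0(M)$ to all of $\mathscr{R}_{\rm{symm}}(M)$ when $M \in \mathscr{H}$, and runs the same argument verbatim.

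The only real obstacle is confirming that the order of singularity of $\Trace(U_g(t))$ genuinely extracts the maximal component dimension of $\Fix(\Phi_\tau)$ and cannot be masked by accidental cancellation between the even and odd contributions; but this is precisely what the non-vanishing hypothesis built into $\mathscr{W}_0(M)$ guarantees, and moreover the even and odd parts have exponents $(D_{\rm{even}}-1)/2$ and $(D_{\rm{odd}}-1)/2$ of different parity-type, so they contribute singularities of distinguishable natures. With these pieces in place the corollary is an immediate consequence of Theorem~\ref{thm:MainResult}.
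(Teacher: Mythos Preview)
Your proposal is correct and follows essentially the same approach as the paper: the paper simply remarks that dimension is a spectral invariant, that $\dim \Fix(\Phi_\tau) < 2\dim M - 1$ for any nonzero $\tau$ when $M$ has rank at least two, and that the corollary is then an immediate consequence of Theorem~\ref{thm:MainResult}. Your write-up fleshes out precisely these steps (including the implicit rank $\geq 2$ assumption and the reason the singularity order recovers $\dim \Fix(\Phi_\tau)$), so there is nothing to correct.
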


\noindent
This suggests that, in general, the wave invariants might distinguish 
compact higher-rank symmetric spaces from compact rank-one symmetric spaces (cf. Theorem~\ref{thm:HearingRank}).

We round out this section by making the following observation, 
which follows directly from Lemma~\ref{lem:CLUResidual}.

\begin{thm}\label{thm:MainResultGeneral}
Let $M = \Gamma \backslash (M_0 \times \widetilde{M}_1 \times \cdots \times \widetilde{M}_q)$ 
be the homogeneity type of a compact symmetric space having the property 
that for every $g \in \mathscr{R}_{\rm{symm}}(M)$,
the function $F_{(M,g)}$ in Equation~\ref{eqn:IntroMorseIndex1} 
associated with $(M,g)$ satisfies Equation~\ref{eqn:IntroBigF}  
for any closed geodesic $\gamma : [0,1] \to (M, g)$.
Then $\mathscr{P}(M)$ contains a residual set.
Consequently, the length spectrum of a generic symmetric metric on $M$  
can be recovered from the Laplace spectrum of the metric. 
\end{thm}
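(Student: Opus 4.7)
The plan is to essentially repackage the argument used in the proof of Theorem~\ref{thm:MainResult}(1), observing that the only place the split-rank hypothesis entered that proof was through Proposition~\ref{prop:Component Dependence}, whose conclusion (Equation~\ref{eqn:IntroBigF}) is now being assumed outright. First I would note that by Theorem~\ref{thm:SymmSpcsCIH} every metric $g \in \mathscr{R}_{\rm{symm}}(M)$ is clean, so the Duistermaat--Guillemin formula for $\Wave_{0}^{\rm{lead}}(\tau)$ in Equation~\ref{eqn:WaveInvariants1} applies at each $\tau \in \Spec_L(M,g)$, and by Remark~\ref{rem:TimeReversal} it suffices to treat positive periods. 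Inclusion $g \in \mathscr{P}(M)$ will follow once we rule out cancellation of the Morse-index phases $i^{-\sigma_j}$ across the top-dimensional components of $\Fix(\Phi_\tau)$.

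Next I would isolate the residual subset. By Lemma~\ref{lem:CLUResidual}, the set $\mathscr{C}(M) \subseteq \mathscr{R}_{\rm{symm}}(M)$ of symmetric metrics with CLU length spectrum is residual. Fix any $g \in \mathscr{C}(M)$ and any $\tau \in \Spec_L(M,g)$. For any closed geodesic $\gamma$ in $(M,g)$ of length $\tau$ with $\gamma'(0) = (v_0, v_1, \ldots, v_q)$, the CLU condition forces the tuple $(\|v_0\|, \ldots, \|v_q\|)$ to depend only on $\tau$. The standing hypothesis then gives, via Equation~\ref{eqn:IntroBigF}, that $F_{(M,g)}(\gamma'(0)) \bmod 4$ depends only on $\tau$. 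Combining this with Equation~\ref{eqn:IntroMorseIndex1}, the Morse index $\sigma_{\Delta M}(\gamma) \bmod 4$ is determined by $\tau$ together with $\dim \Fix_{\gamma}(\Phi_\tau)$, exactly as in Equation~\ref{Eq:MorseIndex1}.

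Third, I would use this modulo-$4$ determinacy to preclude cancellation. Let $D = \dim \Fix(\Phi_\tau)$ and write $\Theta_{j_1}, \ldots, \Theta_{j_\ell}$ for the components of $\Fix(\Phi_\tau)$ of dimension $D$; each of these has the same dimension, so by the previous step all of the associated Morse indices $\sigma_{j_1}, \ldots, \sigma_{j_\ell}$ are congruent modulo $4$. The common phase $i^{-\sigma_{j_1}}$ then factors out of the sum in Equation~\ref{eqn:WaveInvariants1}, leaving
\[
\Wave_{0}^{\rm{lead}}(\tau) = \left(\frac{1}{2\pi i}\right)^{(D-1)/2} i^{-\sigma_{j_1}} \sum_{k=1}^{\ell} \int_{\Theta_{j_k}} d\mu_{j_k}^{\tau}.
\]
Since the Duistermaat--Guillemin measures $\mu_{j_k}^{\tau}$ are positive on positive-volume submanifolds, the inner sum is a strictly positive real number and $\Wave_{0}^{\rm{lead}}(\tau) \neq 0$. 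Consequently $\tau \in \SingSupp(\Trace(U_g(t)))$, and since this holds for every $\tau \in \Spec_L(M,g)$ (and hence for every $\pm\tau \in \Spec_L^{\pm}(M,g)$), the Poisson relation is an equality for $g$. Thus $\mathscr{C}(M) \subseteq \mathscr{P}(M)$, and since $\mathscr{C}(M)$ is residual so is $\mathscr{P}(M)$; the statement that the length spectrum is determined by the Laplace spectrum then follows because $\Trace(U_g(t))$ is spectrally determined and its singular support equals $\Spec_L^{\pm}(M,g)$.

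The proof is really a transcription of the final paragraph of Section~\ref{SubSec:MainProof}, so there is no serious new obstacle; the main subtlety to double-check is that the standing hypothesis on $F_{(M,g)}$ suffices, with CLU in hand, to produce a common value of the Morse index modulo $4$ across \emph{all} top-dimensional components (and not merely within a single component), since $\dim \Fix_{\gamma}(\Phi_\tau) = D$ is the same for all such components by construction and the dependence of $\sigma_{\Delta M}(\gamma) \bmod 4$ on $\gamma$ through the hypothesis factors entirely through $(\tau, D)$. This verification completes the argument.
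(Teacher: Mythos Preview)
Your proposal is correct and follows precisely the approach the paper has in mind: the paper simply remarks that the theorem ``follows directly from Lemma~\ref{lem:CLUResidual},'' and your argument unpacks this by combining the CLU residual set, the standing hypothesis on $F_{(M,g)}$, Equation~\ref{eqn:IntroMorseIndex1}, and the non-cancellation step from the proof of Theorem~\ref{thm:MainResult}. There is nothing to add.
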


\noindent
Theorem~\ref{thm:MainResult}\,(1) is an instance of this result and we suspect that 
Theorem~\ref{thm:MainResultGeneral} is applicable to a broader class of symmetric spaces.  
However, in Example~\ref{exa:Mod4Counterexample2} we see that it cannot be applied to all symmetric spaces.

\subsection{Concluding Remarks}\label{SubSec:ConcludingRemarks}
 To the best of our knowledge, the manifolds considered in
Theorem~\ref{thm:MainResult} along with $C_\tau$-manifolds (e.g., CROSSes), 
\emph{flat} manifolds \cite{Miatello-Rossetti}, Heisenberg manifolds \cite{Pesce}, 
compact locally symmetric spaces of the non-compact type 
(i.e., \emph{non-positively} curved locally symmetric spaces without a Euclidean factor) 
\cite{Huber1, Huber2,  McKean, Gangoli, DuKoVa} (cf. \cite[Sec. 10]{Prasad-Rapinchuk}), \emph{negatively} curved manifolds \cite[Theorem 4]{Guillemin} and, recently, 
certain lens spaces \cite{Cianci} are the only non-generic spaces 
for which it is known that one may recover 
the length spectrum directly from the Laplace spectrum.
And, to date, there are no examples to the contrary. 
Indeed, as was noted earlier, all isospectral pairs arising from Sunada's method must 
have identical length spectra \cite[Section 4]{Sunada} (cf. \cite[Theorem 1.3]{GorMao}).
 

In light of the above it is tempting to prove the conjecture for an
arbitrary symmetric space by extending Theorem~\ref{thm:IntroMod4}.
However, Example~\ref{exa:Mod4Counterexample1} shows that this 
is not possible for compact Lie groups, in general, 
and Example~\ref{exa:Mod4Counterexample2} demonstrates that restricting
our attention to irreducible symmetric spaces will not be successful.
In both cases we find two closed geodesics $\gamma_1$ and $\gamma_2$ of the same length $\tau$ and for which 
$\dim \Fix_{\gamma_1}(\Phi_\tau)$ and $\dim \Fix_{\gamma_2}(\Phi_\tau)$ are equal; 
however, the difference between $\sigma_{\Delta M}(\gamma_1)$  and 
$\sigma_{\Delta M}(\gamma_2)$ is congruent to two modulo $4$, which raises 
the possibility of cancellation in the trace formula.
Therefore, in contrast with the case of locally symmetric spaces of the non-compact type,  
determining whether the Poisson relation is an equality for an arbitrary 
compact Lie group or, more generally, symmetric space will ostensibly require a better understanding of the Duistermaat-Guillemin measures associated to the components of $\Fix(\Phi_\tau)$ (see p.~\pageref{DGMeasure}).
We will take this up in a separate article.

More generally, one might be curious about the extent to which this type of analysis can be carried out for compact Lie groups equipped with arbitrary left-invariant metrics and other homogeneous spaces. In Section~\ref{Sec:UncleanMetrics} we observe that when considering left-invariant metrics on $\SO(3)$ (respectively, $S^3$) we encounter two hurdles. First, not all left-invariant metrics are clean. As we noted in Theorem~\ref{thm:UncleanMetrics}, we exhibit left-invariant naturally reductive metrics on $\SO(3)$ (respectively, $S^3$) that are not clean. Therefore, being a ``nice'' left-invariant metric does not guarantee access to the trace formula of Duistermaat and Guillemin. Second, even among the clean left-invariant naturally reductive metrics on $\SO(3)$ (respectively, $S^3$), it appears quite difficult to determine whether the leading term of the trace formula is non-zero for each length $\tau$ (cf. Proposition~\ref{prop:ConjugatePointsTypeIII}). These obstacles highlight the need for a different approach to the trace formula in the homogeneous setting.


\medskip
\noindent
{\bf Structure of the Paper.}
In Section~\ref{Sec:MorseIndex}, we prove Theorem~\ref{thm:SymmSpcsCIH}, 
which establishes that compact symmetric spaces are clean, and we review Ziller's 
method (see Theorem~\ref{thm:MorseIndexRoots}) for computing the Morse index 
of a closed geodesic in a symmetric space via the restricted roots.
We also prove Theorem~\ref{thm:HearingRank}, which establishes that 
among generic bi-invariant metrics, rank is a spectral invariant.
In Section~\ref{sec:HearingLengthSpectrum}, we complete the argument for Theorem~\ref{thm:MainResult}
by proving Proposition~\ref{prop:Component Dependence}, Lemma~\ref{lem:CLUResidual} and Theorem~\ref{thm:IntroMod4} which shows that for certain compact symmetric spaces, the Morse index modulo $4$ of a closed geodesic $\gamma$ of length $\tau$ 
only depends on $\tau$ and the dimension of the component of 
$\Fix(\Phi_\tau)$ containing $\gamma'(0)$. 
The arguments in Section~\ref{sec:HearingLengthSpectrum} rely on an explicit 
description of the co-root lattice, central lattice and lowest strongly dominant form 
associated to each of the indecomposable abstract root systems. 
This data is computed and catalogued in Appendix~\ref{Sec:RootSystems}. 
In Section~\ref{Sec:UncleanMetrics}, we present the proof of Theorem~\ref{thm:UncleanMetrics}, which demonstrates that not all homogeneous metrics are clean: the culprit will be the \emph{Type II geodesics} (Definition~\ref{dfn:GeodesicTypes}). This argument is lengthy as we must explicitly compute the closed geodesics of a left-invariant naturally reductive metric on $\SO(3)$ and compute the Poincare map along these geodesics. We conclude the section by examining the Poisson relation for the clean left-invariant naturally reductive metrics on $\SO(3)$ and observe that resolving the cancellation issue in the leading term of the trace formula appears problematic for lengths arising from \emph{Type III geodesics} (Propositions~\ref{prop:ConjugatePointsTypeIII} and \ref{prop:PoissonRelationNatReductive}). Certain technical concerns related to naturally reductive metrics are relegated to Appendix~\ref{sec:AdK}.


\medskip
\noindent
{\bf Acknowledgments.} I wish to thank Alejandro Uribe for useful 
conversations concerning the trace formula. 
I am also appreciative of the hospitality extended by the University of Pennsylvania, 
Humboldt Universit\"{a}t zu Berlin, Universit\'{e} de Fribourg, Indiana University and the University of Michigan during the 2013-14 academic year while I was writing a previous version of this article.

\section{Computing the Morse Index in a Symmetric Space}\label{Sec:MorseIndex}

The purpose of this section is to fix some notation, terminology and useful facts concerning
symmetric spaces. In particular, we will demonstrate in Proposition~\ref{prop:FixedSet} 
that for any $\tau$ in the length spectrum of 
a symmetric space, the fixed-point set $\Fix(\Phi_\tau)$ is a disjoint union of finitely many 
homogeneous manifolds.
We will then use this to establish Theorem~\ref{thm:SymmSpcsCIH}.
In Theorem~\ref{thm:MorseIndexRoots} we will recall a result of Ziller which states 
the Morse index of a closed geodesic $\gamma$ in a \emph{compact} symmetric space may be computed in terms of 
the restricted roots of the symmetric space and, upon closer inspection, we will notice in Equation~\ref{eqn:MorseIndex}
that this expression for the Morse index also involves $\dim(\Fix(\Phi_\tau))$, 
where $\tau$ is the length of the closed geodesic. 
Finally, we will recall the definition of the center of a symmetric space and 
remind the reader of the co-root, integral and central lattices of a symmetric space.

\subsection{Geodesics and Cleanliness in a Symmetric Space}\label{SubSec:GeodSymmSpaces}

Let $M$ be a symmetric space with $G$ equal to the connected component of the identity 
in $\Isom(M)$ and $K = G_{p_0}$ the connected component of the isotropy group of 
a fixed point $p_0 \in M$, so that $M = G/K$. 
The Lie algebra $\germ{g}$ of $G$ can be written as $\germ{g} = \germ{K} \oplus \germ{p}$, 
where $\germ{K}$ is the Lie algebra of $K$ and $\germ{p}$ is an $\Ad(K)$-invariant 
complement. We then have the relations
$$[\germ{K}, \germ{p}] \subseteq \germ{p}, [\germ{p}, \germ{p}] \subseteq \germ{K}, \mbox{ and } [\germ{K}, \germ{K}] \subseteq \germ{K}$$
so that the linear transformation $s: \germ{g} \to \germ{g}$ which is the identity on $\germ{K}$ and 
negative the identity on $\germ{p}$ is a Lie algegra homomorphism known as the Cartan involution.
The elements of $\germ{g}$ generate Killing vector fileds on $M$ by defining
$$X^*_q \equiv \frac{d}{dt} |_{t = 0} \exp_G(tX) \cdot q,$$
for any $q \in M$ and $X \in \germ{g}$.
It follows that $\germ{p}$ can be naturally identified with $T_pM$ via
$$X \in \germ{p} \mapsto X^*_{p_0} \in T_{p_0}M.$$
Under this identification we see that the Levi-Civita connection and curvature tensor are given by: 
$$(\nabla_{X^*}Y)(p_0) = [X^*, Y]
\mbox{ and } 
R_{p_0}(X^*, Y^*)Z^* = - [[X,Y],Z]^*_{p_0}.$$
Of particular interest to us is the fact that the geodesics in $M$ are the integral curves of the Killing vector fields and it follows that all self-intersections of a geodesic in $M$ are smooth (cf. \cite[Theorem~3.11]{Sut2}).

Now, let $\germ{a}$ be a maximal abelian subspace contained in $\germ{p}$.
All such subspaces are conjugate via $\Ad(K)$ and their common dimension 
is referred to as the \emph{rank} of $M$. Furthermore, we know that 
$\germ{p} = \cup_{k \in K} \Ad(k) \germ{a}$ and 
$\germ{a} \leq \germ{p} \equiv T_{p_0}M$ exponentiates to a totally geodesic maximal flat 
$F_{\germ{a}} \equiv \Exp_{M}(\germ{a}) \simeq T^m \times R^d$ in $M$, 
where $T^m$ is a flat $m$-torus and $\R^d$ is $d$-dimensional euclidean space.
Therefore, since every vector $v \in \germ{p} \equiv T_{p_0} M$ 
can be conjugated into $\germ{a}$, we see that every geodesic in $M$ can be conjugated via the isometry group 
into the maximal flat $F_\germ{a}$ in $M$. We then define the \emph{integral lattice} of $M$ to be 
\begin{eqnarray}\label{eqn:IntegralLattice}
\Lambda_{I}(M) = \{ v \in \germ{a} : \exp_{G}(v) \cdot p_0 = p_0 \} = \{ v \in \germ{a} : \exp_{G}(v) = e \}
\end{eqnarray}
Now, if $\tau$ is an element in the length spectrum of $M$,
then there are finitely many closed geodesics in $F_{\germ{a}}$ of length $\tau$. It then follows that 
$\Fix(\Phi_\tau)$ is the disjoint union of finitely many homogeneous submanifolds 
$\Theta_1 = G\cdot v_1, \ldots , \Theta_r = G \cdot v_r$, 
where $v_1, \ldots v_r \in \germ{a}$ is a maximal collection of 
non-conjugate unit vectors such that $\gamma_j(t) \equiv \exp_G(t v_j) \cdot p_0$, $0 \leq t \leq \tau$, 
is a closed geodesic of length $\tau$ for each $j = 1, \ldots , r$. In summary, we have the following.

\begin{prop}\label{prop:FixedSet}
Let $M$ be a symmetric space and $\tau$ an element of its length spectrum.
Then $\Fix(\Phi_\tau)$ is a disjoint union of finitely many homogeneous manifolds 
$\Theta_1, \ldots , \Theta_r$, where for each $j = 1, \ldots, r$, we have  
$\Theta_j = G \cdot \gamma_j'(0)$ for some unit speed closed geodesic $\gamma_j$ of length $\tau$
with $\gamma_j'(0) \in \Theta_j$.
\end{prop}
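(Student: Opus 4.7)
The plan is to reduce the analysis of $\Fix(\Phi_\tau)$ to the maximal flat $F_\germ{a}$, exploiting three facts from the preceding discussion: $G$ acts on $M$ by isometries (so the induced action on $SM$ commutes with $\Phi_t$, making $\Fix(\Phi_\tau)$ $G$-invariant), every vector in $\germ{p} \cong T_{p_0}M$ is $\Ad(K)$-conjugate to one in $\germ{a}$ (hence every geodesic of $M$ is $G$-conjugate to one through $p_0$ lying in $F_\germ{a}$), and the unit speed closed geodesics emanating from $p_0$ in $F_\germ{a}$ of period $\tau$ are precisely those $\gamma_v(t) = \exp_G(tv) \cdot p_0$ with $v \in \germ{a}$, $\|v\| = 1$, and $\tau v \in \Lambda_I(M)$.

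First I would show that every $G$-orbit in $\Fix(\Phi_\tau)$ meets $\germ{a} \cap S_{p_0}M$. Given $u \in \Fix(\Phi_\tau)$, the transitivity of $G$ on $M$ lets me arrange that $u \in S_{p_0}M$, and the $\Ad(K)$-conjugacy of maximal abelian subspaces of $\germ{p}$ then produces $g \in G$ with $g \cdot u \equiv v \in \germ{a} \cap S_{p_0}M$. Since the $G$-action commutes with $\Phi_\tau$, the hypothesis $\Phi_\tau(u) = u$ yields $\Phi_\tau(v) = v$, so $\gamma_v$ is a unit speed closed geodesic of period $\tau$; equivalently, $\tau v \in \Lambda_I(M)$. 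Because $\Lambda_I(M)$ is a discrete subgroup of $\germ{a}$, the set $\{v \in \germ{a} : \|v\|=1,\ \tau v \in \Lambda_I(M)\}$ is finite, so it partitions into finitely many $G$-conjugacy classes with representatives $v_1, \dots, v_r$. Setting $\Theta_j = G \cdot v_j$, each $\Theta_j$ is a smooth homogeneous submanifold $G/G_{v_j}$ of $SM$, the $\Theta_j$ are pairwise disjoint by construction, and each is contained in $\Fix(\Phi_\tau)$ since $v_j$ is and $\Fix(\Phi_\tau)$ is $G$-invariant. Together with the reduction step, this yields $\Fix(\Phi_\tau) = \bigsqcup_{j=1}^{r} \Theta_j$.

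The main subtlety, and where I would concentrate technical care, is the presence of a Euclidean factor in $F_\germ{a} \simeq T^m \times \R^d$: vectors in the $\R^d$-direction of $\germ{a}$ cannot satisfy $\tau v \in \Lambda_I(M)$ for any $\tau > 0$, so one must check that the candidate $v$'s automatically lie in the torus subspace, where $\Lambda_I(M)$ is a full lattice and the finiteness argument runs cleanly. Once this point is addressed, the orbit-stabilizer description of each $\Theta_j$ and the $G$-invariance of $\Fix(\Phi_\tau)$ assemble into the claimed decomposition.
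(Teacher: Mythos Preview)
Your approach is essentially the same as the paper's: the argument immediately preceding the proposition reduces to the maximal flat $F_{\germ a}$ via $\Ad(K)$-conjugacy, observes that there are only finitely many closed geodesics of length $\tau$ in $F_{\germ a}$, and takes the $\Theta_j$ to be the $G$-orbits of a maximal collection of non-conjugate unit vectors $v_j\in\germ a$ with $\gamma_{v_j}$ closed of length $\tau$. Your write-up is in fact more explicit than the paper's on two points it leaves implicit---that the $G$-action on $SM$ commutes with $\Phi_\tau$, and that vectors with a nonzero $\R^d$-component cannot yield closed geodesics---so the proposal is correct and aligned with the paper.
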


With this in hand we may now prove Theorem~\ref{thm:SymmSpcsCIH}.

\begin{proof}[Proof of Theorem~\ref{thm:SymmSpcsCIH}]
Let $M = G/K$ be a compact symmetric space and fix $\tau$ in the length spectrum of $M$.
By Proposition~\ref{prop:FixedSet}, we see that $\Fix(\Phi_\tau)$ is a disjoint union of finitely many closed manifolds, 
where each component is of the form $G\cdot u$ for some $u \in \Fix(\Phi_\tau)$.
Since $M$ is a symmetric space, we know that the \emph{periodic} Jacobi fields along any geodesic $\gamma(t)$ in $M$ 
are of the form 
$$J(t) = \frac{d}{ds}|_{s=0} \Exp_G(sX) \cdot \gamma(t) \equiv X^*_{\gamma (t)},$$
where $X \in \germ{g}$.
That is, all Jacobi fields in a (not necessarily compact) symmetric space are restrictions of Killing vector fields (cf. \cite[proof of Theorem 2]{Ziller3}).
It then follows that for any $u \in \Fix(\Phi_\tau)$ we have $\Fix(D_u \Phi_\tau) = T_u G\cdot u = T_u \Fix(\Phi_\tau)$.
Therefore, $\tau$ is a clean length.
\end{proof}

\subsection{Restricted Root Systems}\label{SubSec:RestrictedRoots}

Now, let $\germ{g}_{\C} = \germ{g} \oplus i \germ{g}$ denote the complexification of $\germ{g}$.
Since  $\germ{a} \subset \germ{p} \subset \germ{g}$ is abelian we see that $\{\ad(x) : x \in \germ{a} \}$ is a commuting family 
of skew-adjoint linear transformations on $\germ{g}_{\C}$. Hence, they are simultaneously diagonalizable.
For any $\beta \in \germ{a}^* \equiv \Hom(\germ{a}, \R)$ we let $\germ{g}_{\C}^{\beta}$ 
be the subspace of $\germ{g}_{\C}$ defined by 
$$\germ{g}_{\C}^{\beta} = \{ y \in \germ{g}_{\C} : [x,y] = i \pi \beta(x) y \mbox{ for all } x \in \germ{a} \}.$$
The set $R =R(M) \equiv  \{ \beta \neq 0 \in \germ{a}^* : \germ{g}_{\C}^{\beta} \mbox{ is non-trvial } \}$ is 
referred to as the set of \emph{restricted roots of $M$} with respect to $\germ{a}$.
The restricted roots of $M$ only depend on the universal cover $\widetilde{M}$ of $M$
and when $\widetilde{M}$ has no Euclidean factor $R = R(M)$ is a 
root system of $\germ{a}$ (cf. Appendix~\ref{Sec:RootSystems}).
In any event, we have the following decomposition. 
$$\germ{g}_{\C} = \oplus_{\beta \in R} \germ{g}_{\C}^{\beta} = \germ{g}_{\C}^{0} \oplus_{\beta \in R} \germ{g}_{\C}^{\beta}.$$ 


When $M$ is not flat, the connected components of $\germ{a} - \cup_{\beta \in R} \ker(\beta)$
are called Weyl chambers.
A choice of a Weyl chamber $\mathcal{C}$ leads to a decomposition 
$R = R^+ \cup R^{-}$ of the roots into positive and negative roots, 
where $R^+ = R^+(\mathcal{C}) \equiv \{ \beta \in R : \beta > 0 \mbox{ on } \mathcal{C} \}$.
For each positive root $\beta$ we define the real vector space
$$\germ{g}^{\beta} = \germ{g} \cap (\germ{g}_{\C}^{\beta} + \germ{g}_{\C}^{-\beta})$$
and notice that it is  $s$-invariant, since $s(\germ{g}_{\C}^{\beta}) = \germ{g}_{\C}^{-\beta}$ for each $\beta \in R$.
The $s$-invariance of $\germ{g}^{\beta}$ implies that we have the decomposition
$$\germ{g}^{\beta} = \germ{K}^{\beta} + \germ{p}^{\beta}, $$
where $\germ{K}^{\beta} = \germ{K} \cap \germ{g}^{\beta}$ and $\germ{p}^{\beta} = \germ{p} \cap \germ{g}^{\beta}$.
Also, since $\germ{a}$ is a maximal abelian subspace in $\germ{p}$, 
we see that $\germ{g}^{0} = \germ{g} \cap \germ{g}_{\C}^{0} = Z_{\germ{g}}(\germ{a}) = Z_{\germ{K}}(\germ{a}) + \germ{a}$.
One can check that 
\begin{eqnarray}
\germ{p}^{\beta} &=& \{ y \in \germ{p} : \ad(x)^2(y) = -\pi^2 \beta(x)^2 y \mbox{ for all } x \in \germ{a} \} \\
\germ{p} &=& \germ{a} \oplus (\oplus_{\beta \in R^{+}} \germ{p}^{\beta}) \\
n_{\beta} &\equiv& \dim \germ{p}^\beta = \dim \germ{K}^{\beta}, 
\end{eqnarray}
where the integer $n_\beta$ is said to be the \emph{multiplicity} of $\beta$ (in $M$).
Then for any $v \in \germ{a}$ we have 
$$Z_{\germ{g}}(v) = \germ{g}^0 + \sum_{\stackrel{\beta \in R^+ }{ \beta(v) = 0}} \germ{g}^{\beta} = 
Z_{\germ{K}}(\germ{a}) + ( \germ{a} + \sum_{\stackrel{\beta \in R^+ }{ \beta(v) = 0}} \germ{p}^{\beta})$$
If we consider the geodesic $\gamma(t) = \exp_{G}(t v) \cdot p_0$, 
then $K_\gamma \equiv \{ k \in K : k\circ \gamma = \gamma \}$ is identical to the group
$K_{\gamma'(0)} = \{ k \in K : k \cdot \gamma'(0) = \gamma'(0) \}$ and has Lie algebra
$\germ{K}_v \subseteq \germ{K}$ given by 

\begin{eqnarray}\label{eqn:LieKv}
\germ{K}_{v} = \{ x \in \germ{K} : [x,v] = 0 \} = Z_{\germ{K}}(v) = 
Z_{\germ{K}}(\germ{a}) + \sum_{\stackrel{\beta \in R^+ }{ \beta(v) = 0}} \germ{K}^\beta.
\end{eqnarray}
Therefore, letting $G\cdot v \simeq G/G_v = G/K_{v}$ denote the orbit of $v$, 
we see that $\dim G \cdot v \leq \dim G - \dim Z_{\germ{K}}(\germ{a})$ with equality 
if and only if $v$ is a regular vector.
In the case where $M$ is a compact Lie group $U$, we can state this as follows.

\begin{lem}\label{lem:BoundingRank}
Let $(U,g)$ be a compact Lie group equipped with a bi-invariant metric and
$v \in \Lambda_I(U)$ give rise to a closed geodesic of length $\tau$. Then 
$$\dim \Fix_{v}(\Phi_\tau) \leq 2 \dim U - \rank (U),$$
with equality if and only if $v$ is a regular vector.
Here, $\Fix_{v}(\Phi_\tau) = (U \times U) \cdot \frac{v}{\|v\|}$ denotes the component 
of $\Fix(\Phi_\tau)$ containing $\frac{v}{\| v \|}$.
\end{lem}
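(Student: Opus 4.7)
The plan is to specialize the general bound
$\dim G \cdot v \leq \dim G - \dim Z_{\germ{K}}(\germ{a})$,
with equality if and only if $v$ is regular, that was just established from Equation~\ref{eqn:LieKv} to the case where $(M,g) = (U,g)$ is a compact Lie group equipped with a bi-invariant metric and realized as the symmetric space $G/K = (U \times U)/\Delta U$.

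First I would recall that in this realization $\germ{g} = \germ{u} \oplus \germ{u}$, $\germ{K} = \Delta \germ{u} = \{(X,X) : X \in \germ{u}\}$, and the $\Ad(\Delta U)$-invariant complement is $\germ{p} = \{(X,-X) : X \in \germ{u}\}$, which we identify with $\germ{u}$ via $(X,-X) \mapsto X$. Under this identification, a maximal abelian subspace $\germ{a} \subset \germ{p}$ corresponds precisely to a Cartan subalgebra $\germ{t}$ of $\germ{u}$, so that $\dim \germ{a} = \rank(U)$. This is the content of the symmetric-space description of the Type II space $U$.

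Next I would compute $Z_{\germ{K}}(\germ{a})$ explicitly. An element $(X,X) \in \Delta \germ{u}$ lies in $Z_{\germ{K}}(\germ{a})$ precisely when $[(X,X),(H,-H)] = ([X,H],-[X,H]) = 0$ for every $H \in \germ{t}$, i.e., when $X \in Z_{\germ{u}}(\germ{t})$. Since $\germ{t}$ is a Cartan (in particular, maximal abelian) subalgebra of $\germ{u}$, one has $Z_{\germ{u}}(\germ{t}) = \germ{t}$, so $Z_{\germ{K}}(\germ{a}) = \Delta \germ{t}$ and hence $\dim Z_{\germ{K}}(\germ{a}) = \rank(U)$.

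Finally, by Proposition~\ref{prop:FixedSet}, the component of $\Fix(\Phi_\tau)$ containing $v/\|v\|$ is precisely the $G$-orbit $(U\times U)\cdot (v/\|v\|)$, which is diffeomorphic to $G/K_v$. Substituting $\dim G = 2\dim U$ and $\dim Z_{\germ{K}}(\germ{a}) = \rank(U)$ into the general bound yields
\[
\dim \Fix_v(\Phi_\tau) \;\leq\; 2\dim U - \rank(U),
\]
with equality exactly when $v$ is regular. The only subtlety is the identification of $Z_{\germ{K}}(\germ{a})$ with $\Delta \germ{t}$, but this is immediate from the bracket computation above, so no genuine obstacle arises and the lemma is a direct corollary of the general orbit-dimension estimate.
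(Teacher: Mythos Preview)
Your argument is correct and is exactly the approach the paper takes: the lemma is stated immediately after the general bound $\dim G\cdot v \le \dim G - \dim Z_{\germ{K}}(\germ{a})$ as its specialization to the Type~II realization $U \simeq (U\times U)/\Delta U$, and you have simply made explicit the identifications $\dim G = 2\dim U$ and $\dim Z_{\germ{K}}(\germ{a}) = \dim \Delta\germ{t} = \rank(U)$ that the paper leaves to the reader.
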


We now supply the proof of Theorem~\ref{thm:HearingRank}, which states 
that the rank of a compact Lie group $U$ can be recovered from a generic bi-invariant metric
carried by $U$. 

\begin{proof}[Proof or Theorem~\ref{thm:HearingRank}]
For any $g \in \mathscr{W}_0(U)$, it follows from Equation~\ref{eqn:Asymptotic1} 
that $\dim(\Fix(\Phi_\tau))$ is encoded in the spectrum of $(U,g)$.
Also, it follows from the classification of root systems that every integral lattice 
contains a regular vector (cf. Appendix~\ref{Sec:RootSystems}). 
Therefore, by Lemma~\ref{lem:BoundingRank} and 
the fact that the dimension of $U$ is a spectral invariant, we observe that 
$$\rank(U) = 2\dim(U) - \operatorname{max}_{\tau \in \Spec_{L}(U,g)} \dim(\Fix(\Phi_\tau))$$
can be recovered from the spectrum of $(U,g)$.
The last statement of the proposition follows immediately.
\end{proof}

\begin{dfn}\label{dfn:DegSing}
For any $v \in \germ{a}$, its \emph{degree of singularity} is defined to be the nonnegative integer
\begin{eqnarray*}
\operatorname{deg}_{\rm{sing}}(v) &=& \sum_{\stackrel{\beta > 0}{ \beta(v) = 0}} n_\alpha \\
&=&  \dim \germ{K}_{v} - \dim Z_{\germ{K}}(\germ{a}) \\
&=& \dim G - \dim Z_{\germ{K}}(\germ{a}) - \dim G \cdot v\\
&=& \widetilde{C}(M) - \dim G \cdot v,
\end{eqnarray*}
where $G \cdot v$ denotes the orbit of $v$ under the natural action of $G$ on $TM$
and $\widetilde{C}(M) \equiv \dim G - \dim Z_{\germ{K}}(\germ{a})$ is a constant depending only on $M$.
\end{dfn}

\subsection{The Morse Index of a Closed Geodesic}\label{SubSec:MorseIndex}

As we noted in the introduction, in order to understand whether the $0$-th wave invariants 
associated to a clean length $\tau$ vanish 
one needs to be able to compute the Morse index of a closed geodesic. 
In this subsection we recall a result of Ziller which states that in a compact symmetric space, 
the Morse index of a closed geodesic can be computed in terms of the roots of the symmetric space. 

Given a Riemannian manifold $(M,g)$ we let $\mathcal{P}$ denote the space of piecewise smooth curves $c: [0,T] \to M$
and we recall that the energy functional $E: \mathcal{P} \to \R$ is given by 
$$E(c) = \int_{o}^T \| c'(t) \|^2 \; dt.$$
As usual we consider the critical points of $E$ subject to some boundary conditions.
Indeed, for any smooth manifold $B \subset M \times M$ we let $\mathcal{P}_B = \{ c \in \mathcal{P} : (c(0), c(T)) \in B \}$.
Then $c \in \mathcal{P}_B$ is said to be a \emph{stationary curve} of $E$ restricted to $\mathcal{P}_B$, 
if $DE_c(X) = 0$ for any $X \in T_c \mathcal{P}_B$ 
and its \emph{Morse index} (with respect to the boundary condition $B$), denoted by $\sigma_B(c)$, is defined to be 
$$\sigma_{B}(c) \equiv \sup \{ \dim L : L \leq T_c\mathcal{P}_B \mbox{ and } D^2E_c \upharpoonright L \mbox{ is negative deinite} \}.$$
In the case where $B = \{(p,q)\}$ for some $p, q \in M$
or $B = \Delta M \equiv  \{ (p,p) : p \in M\}$, it is known that 
$\sigma_B(c)$ is finite.

In the introduction we saw that in order to compute the $0$-th wave invariant of a clean length $\tau$
in the length spectrum of $(M,g)$, one must be able to compute $\sigma_{\Delta M}(\gamma)$ for any closed geodesic 
$\gamma$ of length $\tau$. In general, one has 
$$ \sigma_{\Delta M}(\gamma) = \sigma_{(\gamma(0), \gamma(T))}(\gamma) + \operatorname{Conv}(\gamma),$$
where $\operatorname{Conv}(\gamma)$ is an integer between $0$ and $\dim M -1$ known as the \emph{concavity}
of $\gamma$ (cf. \cite[Eq. $1.4'$]{BTZ}). However, in the case where our manifold is homogeneous, Ziller has shown that the concavity vanishes \cite[Theorem 1]{Ziller1}. Hence, it follows from the Morse index theorem that we may compute $\sigma_{\Delta M}(\gamma)$ in terms of conjugate points. And, in the case of a \emph{compact} symmetric space this can be realized in terms of the restricted roots of $M$.



\begin{thm}[\cite{Ziller3}, p. 11-12]\label{thm:MorseIndexRoots}
With the notation as above, assume that $M = G/K$ is a compact symmetric space.
Now, let $v \in \germ{a} \subset \germ{p}$ be such that $\gamma(t) = \exp_G(tv) \cdot p_0$, $0 \leq t \leq 1$, 
is a closed geodesic. Then the Morse index of $\gamma$ in the space of closed geodesics in $M$ is given by 
\begin{eqnarray}\label{eqn:ZillerMorseIndex}
\sigma_{\Delta M}(\gamma) &=& \sum_{\beta \in R^+ \cup \{0\}} n_\beta | \beta(v) | - 
\sum_{\stackrel{\beta \in R^+}{\beta(v) \neq 0}} n_\beta. 
\end{eqnarray}
\end{thm}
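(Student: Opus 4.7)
The plan is to reduce the computation of $\sigma_{\Delta M}(\gamma)$ to a count of interior conjugate points along $\gamma$, and then compute the latter explicitly by diagonalizing the Jacobi operator using the restricted root space decomposition of $\germ{p}$.

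First, since $(M,g)$ is homogeneous, Ziller's theorem on the vanishing of the concavity of closed geodesics (\cite[Theorem 1]{Ziller1}) gives $\operatorname{Conv}(\gamma) = 0$, so that
\[
\sigma_{\Delta M}(\gamma) = \sigma_{(\gamma(0),\gamma(1))}(\gamma).
\]
By the classical Morse index theorem, the right-hand side equals the sum over $t_0 \in (0,1)$ of the multiplicities of $\gamma(t_0)$ as a conjugate point of $\gamma(0)$ along $\gamma$. Our task is therefore to enumerate the interior conjugate points of $\gamma$ and their multiplicities.

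Next, we solve the Jacobi equation along $\gamma$. Identifying $\germ{p} \cong T_{p_0}M$ and parallel-transporting along $\gamma$ via the natural symmetric-space frame (using that the curvature tensor at $p_0$ is $R(X,Y)Z = -[[X,Y],Z]$), the Jacobi operator $J \mapsto -R(\gamma',J)\gamma'$ corresponds to the linear map $Y \mapsto -[v,[v,Y]] = \operatorname{ad}(v)^2 Y$ acting on $\germ{p}$. The decomposition $\germ{p} = \germ{a} \oplus \bigoplus_{\beta \in R^+} \germ{p}^\beta$ is preserved by $\operatorname{ad}(v)^2$, and by the definition of $\germ{p}^\beta$ recorded after Equation~(1.\ldots), we have $\operatorname{ad}(v)^2|_{\germ{p}^\beta} = -\pi^2 \beta(v)^2 \cdot \operatorname{id}$, while $\operatorname{ad}(v)^2|_{\germ{a}} = 0$. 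Thus, a Jacobi field $J$ vanishing at $t=0$ decomposes as $J(t) = J_0(t) + \sum_{\beta \in R^+} J_\beta(t)$, where $J_0(t) = t\,Y_0$ with $Y_0 \in \germ{a}$, and $J_\beta(t) = \frac{\sin(\pi\beta(v) t)}{\pi\beta(v)}\,Y_\beta$ with $Y_\beta \in \germ{p}^\beta$ (when $\beta(v) \neq 0$), and $J_\beta(t) = t\,Y_\beta$ (when $\beta(v) = 0$).

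From this explicit description the interior conjugate points are exactly those $t_0 \in (0,1)$ for which some $\sin(\pi\beta(v)t_0) = 0$ with $\beta(v) \neq 0$, each contributing multiplicity $n_\beta = \dim \germ{p}^\beta$. Here we invoke the key integrality fact that $v \in \Lambda_I(M)$ implies $\beta(v) \in \Z$ for every restricted root $\beta$ (this is the dual relationship between the integral lattice and the restricted-weight lattice, and will be discussed in Appendix~\ref{Sec:RootSystems}). Consequently, for each $\beta \in R^+$ with $\beta(v) \neq 0$ the zeros of $\sin(\pi\beta(v)t)$ in $(0,1)$ occur at $t = k/|\beta(v)|$ for $k = 1,\ldots, |\beta(v)|-1$, giving exactly $|\beta(v)|-1$ conjugate instants, each of multiplicity $n_\beta$. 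Summing and adding the vacuous term $n_0 \cdot |0(v)| = 0$ so as to absorb $\beta = 0$ into the first sum yields
\[
\sigma_{\Delta M}(\gamma) = \sum_{\beta \in R^+} n_\beta\bigl(|\beta(v)|-1\bigr) \chi_{\{\beta(v)\neq 0\}} = \sum_{\beta \in R^+ \cup \{0\}} n_\beta |\beta(v)| - \sum_{\stackrel{\beta \in R^+}{\beta(v) \neq 0}} n_\beta.
\]

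The main subtlety will be to justify rigorously the integrality $\beta(v) \in \Z$ for $v \in \Lambda_I(M)$ in the generality where $M$ need not be a Lie group and may have an Euclidean factor; this requires an explicit identification of $\Lambda_I(M)$ with a sublattice of the co-root lattice of the restricted root system (for the non-Euclidean part) plus a lattice in the Euclidean factor, which is where the appendix will carry its weight. Once the integrality is in hand, the count above is immediate, and the invocations of the Morse index theorem and of the vanishing concavity for homogeneous spaces are standard.
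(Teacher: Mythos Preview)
The paper does not prove this theorem; it cites it directly from Ziller \cite[p.~11--12]{Ziller3}. Your sketch follows exactly the standard argument and matches the narrative the paper sets up immediately before the statement: concavity vanishes for homogeneous spaces \cite[Theorem~1]{Ziller1}, so $\sigma_{\Delta M}(\gamma)$ reduces to the fixed-endpoint index, which by the Morse index theorem is the count of interior conjugate points; diagonalizing the Jacobi operator via the root-space decomposition $\germ{p}=\germ{a}\oplus\bigoplus_{\beta\in R^+}\germ{p}^\beta$ then yields $|\beta(v)|-1$ conjugate instants of multiplicity $n_\beta$ for each $\beta$ with $\beta(v)\neq 0$.

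Two small corrections. First, you write $-[v,[v,Y]]=\operatorname{ad}(v)^2 Y$; the right-hand side should be $-\operatorname{ad}(v)^2 Y$, though this sign slip is harmless since you then correctly invoke $\operatorname{ad}(v)^2|_{\germ{p}^\beta}=-\pi^2\beta(v)^2\,\mathrm{id}$. Second, the integrality $\beta(v)\in\Z$ for $v\in\Lambda_I(M)$ is not in Appendix~\ref{Sec:RootSystems}; it is precisely the containment $\Lambda_I(M)\leq\Lambda_Z(M)$ recorded in Proposition~\ref{prop:TheLattices}(4) (attributed to Loos), so you should point there rather than to the appendix.
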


If $\gamma_v$ in the above is a closed geodesic of length $\tau = \|v\|$, then in terms of the degree of singularity,
Equation~\ref{eqn:ZillerMorseIndex} becomes 
$\sigma_{\Delta M}(\gamma_v) = \sum_{\beta \in R^+} n_\beta | \beta(v) | - \dim(M) + \Rank(M) + \DegSing(v)$, 
and, since $\germ{K} = Z_{\germ{K}}(\germ{a}) \oplus (\oplus_{\beta \in R^+} \germ{K}^\beta)$, it follows that 
\begin{eqnarray}\label{eqn:MorseIndex}
\sigma_{\Delta M}(\gamma_v) = \left\{ \begin{array}{ll}
0 & \mbox{for $v = 0$} \\
\sum_{\beta \in R^+} n_\beta | \beta(v) | - \dim \Fix_v(\Phi_\tau) + \dim M & \mbox{for $v \neq 0$},
\end{array}
\right.
\end{eqnarray}
where $\Fix_v(\Phi_\tau) \equiv G \cdot \frac{v}{\| v \|}$ is the component of $\Fix(\Phi_\tau) \subseteq SM$
containing $\frac{v}{\|v\|}$. 
We will say that a vector $v \in \germ{a}$ is \emph{Euclidean} if $\beta(v) = 0$ for all $\beta \in R^+$ or, 
equivalently, $G \cdot v$ is diffeomorphic to $M$.
Since $\Fix_v(\Phi_\tau)$ is diffeomorphic to $G\cdot v$ for $v \neq 0$, we conclude
that when $v$ is Euclidean $\sigma_{\Delta M}(\gamma_v)$ vanishes, as expected. 
Now, as we noted in Section~\ref{SubSec:GeodSymmSpaces}, all geodesics in $M$ can be conjugated via the isometry group
to a geodesic having initial conditions in $\germ{a}$. Therefore, we have an effective means for computing the Morse Index of any closed 
geodesic in a compact symmetric space.


\subsection{The Center of a Symmetric Space and Some Useful Lattices}\label{SubSec:TheCenter}
We recall that an isometry $f$ of a Riemannian manifold is said to be a \emph{transvection}
if there is a point $q$ at which $df$ is equal to the parallel transport 
along a piecewise smooth curve joining $q$ to $f(q)$.
If we let $\mathscr{T}(M)$ denote the group of transvections of a symmetric space $M$, then 
the \emph{center} of $M$, denoted by $Z(M)$, is defined to be the centralizer of $\mathscr{T}(M)$ 
in $\Isom(M)$. We then have the following well-known result concerning Riemannian coverings.

\begin{prop}[\cite{Wolf}, Theorem 8.3.11] Let $M$ be a symmetric space with center $Z(M)$.

\begin{enumerate}
\item Let $\pi: M \to N$ be a Riemannian covering with deck transformation group $\Gamma$.
If $N$ is a symmetric space, then $\Gamma$ is a discrete subgroup of $Z(M)$.

\item If $\Gamma \leq Z(M)$ is a discrete subgroup, then $N = \Gamma \backslash M$ is a symmetric space
and the natural projection $\pi : M \to \Gamma \backslash M$ is a Riemannian covering 
with deck transformation group $\Gamma$.
\end{enumerate}

\end{prop}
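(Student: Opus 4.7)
The plan is to argue both directions by exploiting the compatibility of the Riemannian covering $\pi : M \to N$ with geodesic symmetries. Since $\pi$ is a local isometry, for every $p \in M$ the symmetry $s_p$ of $M$ projects to the symmetry $\bar s_{\pi(p)}$ of $N$: that is, $\pi \circ s_p = \bar s_{\pi(p)} \circ \pi$, whenever the latter is defined globally as an isometry of $N$ (in particular, whenever $N$ is symmetric). Transvections, being products of geodesic symmetries, descend analogously to transvections of $N$.

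For part (1), fix $\delta \in \Gamma$ and $T \in \mathscr{T}(M)$. Writing $T = s_{p_k} \circ \cdots \circ s_{p_1}$, the observation above yields $\pi \circ T = \bar T \circ \pi$ for a corresponding transvection $\bar T \in \mathscr{T}(N)$. I would then compute
\[
\pi \circ (T \delta T^{-1}) \;=\; \bar T \circ \pi \circ \delta \circ T^{-1} \;=\; \bar T \circ \pi \circ T^{-1} \;=\; \bar T \circ \bar T^{-1} \circ \pi \;=\; \pi,
\]
so that $T \delta T^{-1} \in \Gamma$; in particular $T$ normalizes $\Gamma$. Next I would note that $T \mapsto T \delta T^{-1}$ is a continuous map from the connected Lie group $\mathscr{T}(M)$ into the discrete set $\Gamma$ (connectedness holds since each generator $s_q s_p$ deforms to the identity by sliding $p$ toward $q$). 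Continuity plus discreteness forces the map to be constant with value $\delta$, attained at $T=e$. Thus $T\delta = \delta T$ for every transvection $T$, i.e.\ $\delta \in Z(M)$. Discreteness of $\Gamma$ is automatic since it acts freely and properly discontinuously.

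For part (2), suppose $\Gamma \leq Z(M)$ is discrete and acts freely and properly. The quotient $N = \Gamma \backslash M$ is automatically a Riemannian manifold with $\pi$ a Riemannian covering having $\Gamma$ as deck group; what remains is to verify that $N$ is symmetric. It suffices to show that each $s_p$ descends through $\pi$ to an isometry $\bar s_{\pi(p)}$ of $N$, equivalently that $s_p \Gamma s_p^{-1} = \Gamma$. That $s_p$ normalizes $\mathscr{T}(M)$ is immediate from $s_p (s_q s_r) s_p^{-1} = s_{s_p(q)} s_{s_p(r)}$, so conjugation by $s_p$ induces an automorphism of $Z(M) = Z_{\Isom(M)}(\mathscr{T}(M))$, and in particular $s_p \Gamma s_p^{-1} \subseteq Z(M)$. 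The main obstacle, and the subtlest step, is upgrading this containment to $s_p \Gamma s_p^{-1} = \Gamma$. My approach would be to identify the action of $s_p$ on $Z(M)$ with inversion: the Cartan involution at $p$ negates $\germ{p} \subseteq \germ{g}$, so transvections $\Exp(X)$ are sent to $\Exp(-X) = \Exp(X)^{-1}$, and using the description of $Z(M)$ as the abelian Lie group of "generalized translations" along closed geodesics, this inversion extends to all of $Z(M)$. Since $\Gamma$ is a subgroup of $Z(M)$, it is stable under inversion, so $s_p \Gamma s_p^{-1} = \Gamma^{-1} = \Gamma$, completing the argument.
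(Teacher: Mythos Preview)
The paper does not prove this proposition; it is quoted verbatim as Theorem~8.3.11 of \cite{Wolf} and used as background. So there is no ``paper's own proof'' to compare against, and your write-up stands on its own.

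Your argument for part~(1) is the standard one and is fine. For part~(2) you correctly isolate the crux---showing $s_p\Gamma s_p^{-1}=\Gamma$---and your claim that conjugation by $s_p$ acts on $Z(M)$ by inversion is true, but your justification conflates two different objects: transvections lie in $\mathscr{T}(M)$, whereas $\Gamma$ lies in $Z(M)$, the \emph{centralizer} of $\mathscr{T}(M)$; so the fact that the Cartan involution inverts $\exp(X)$ for $X\in\mathfrak{p}$ does not directly say anything about elements of $Z(M)$. A clean way to finish is: for $\gamma\in Z(M)$ one has $\gamma s_p\gamma^{-1}=s_{\gamma(p)}$, whence $(s_p\gamma s_p)\gamma = s_p s_{\gamma(p)}\,\gamma^2\cdot\gamma^{-1}\cdots$---or more directly, check that $(s_p\gamma s_p)\gamma$ lies in $Z(M)$ (since $s_p$ normalizes $\mathscr{T}(M)$ and hence its centralizer) and fixes $p$ (a short computation using $\gamma s_p\gamma^{-1}=s_{\gamma(p)}$ and that $\gamma$ translates along the geodesic joining $p$ to $\gamma(p)$). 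Since any element of $Z(M)$ fixing a point must be the identity (it commutes with the transitive group $\mathscr{T}(M)$), this gives $s_p\gamma s_p=\gamma^{-1}$, and $\Gamma^{-1}=\Gamma$ finishes the job. You should also note explicitly that a nontrivial element of $Z(M)$ has no fixed points (same reasoning), so that $\Gamma$ acts freely and the quotient is a manifold.
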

\noindent
Consequently, if $\widetilde{M}$ is a simply-connected symmetric space, then any symmetric space covered by $\widetilde{M}$ 
is of the form $\Gamma \backslash \widetilde{M}$ for some discrete subgroup of $Z(\widetilde{M})$.

Let $R = R(M)$ be the restricted roots of a symmetric space $M$ with respect to some 
maxiaml abelian subspace $\germ{a} \subset \germ{p} \equiv T_{p_0}M$.
Then for each $\alpha \in R$ we define its associated \emph{co-root}
$\alpha\,\check{}$ to be the vector in $\germ{a}$ satisfying
\begin{enumerate}
\item $\alpha\,\check{}$ is orthogonal to $\ker \alpha$,
\item  $\alpha(\alpha\,\check{} \; ) = 2$.
\end{enumerate}
The co-root, central and integral lattices of $M$ 
are defined by 
\begin{eqnarray*}
\Lambda_{R\,\check{}}(M) &=& \langle \alpha\,\check{} : \alpha \in R \rangle\\
\Lambda_Z(M) &=& \{ v \in \germ{a} : \alpha(v) \in \Z \mbox{ for all } \alpha \in R \}\\
\Lambda_{I}(M) &=& \{ v \in \germ{a} : \exp_{G}(v) \cdot p_0 = p_0 \}.
\end{eqnarray*}
Clearly, if $M_1$ and $M_2$ are symmetric spaces, then 
$\Lambda_Y(M_1 \times M_2) = \Lambda_Y(M_1) \times \Lambda_Y(M_2)$, where $Y = R\,\check{}, Z, I$.
 

\begin{prop}[cf. \cite{Loos} Theorems VI.2.4 and VI.3.6]\label{prop:TheLattices} 
Let $M = \Gamma \backslash (M_0 \times \widetilde{M}_1 \times \cdots \times \widetilde{M}_q)$ 
be the homogeneity type of a compact symmetric space and set $d = \dim M_0$. 
Then the following hold.
\begin{enumerate}
\item $\Lambda_I(M_0 \times \widetilde{M}_{\rm{cpt}}) \simeq \Z^d \times \Lambda_I(\widetilde{M}_{\rm{cpt}}) =
\Z^d \times \Lambda_I(\widetilde{M}_1) \times \cdots \times \Lambda_I(\widetilde{M}_q)$

\item $\Lambda_{R\,\check{}}(M) = \Lambda_{R\,\check{}}(M_0 \times \widetilde{M}_{\rm{cpt}}) = 
0 \times \Lambda_{R\,\check{}}(\widetilde{M}_1) \times \cdots \times \Lambda_{R\,\check{}}(\widetilde{M}_q)$.

\item $\Lambda_Z(M) = \Lambda_Z(M_0 \times \widetilde{M}_{\rm{cpt}}) = \R^d \times 
\Lambda_Z(\widetilde{M}_1) \times \cdots \times \Lambda_Z(\widetilde{M}_q)$.

\item $\Lambda_{R\,\check{}}(M) \leq \Lambda_{I}(M) \leq \Lambda_{Z}(M)$.

\item $Z(M) \simeq \Lambda_{Z}(M) / \Lambda_{I}(M)$

\item Let  $\pi: \Lambda_Z(M_0 \times \widetilde{M}_{\rm{cpt}}) \to 
Z(M_0 \times \widetilde{M}_{\rm{cpt}}) \equiv 
\Lambda_Z(M_0 \times \widetilde{M}_{\rm{cpt}}) /\Lambda_I(M_0 \times \widetilde{M}_{\rm{cpt}})$
be the natural projection, then 
$$\Lambda_I(M) = \pi^{-1}(\Gamma)$$

\item $\pi_1(M) \simeq \Lambda_{I}(M) / \Lambda_{R\,\check{}}(M)$; in particular, 
$\Lambda_I(\widetilde{M}_{\rm{cpt}}) =  \Lambda_{R\,\check{}}(\widetilde{M}_{\rm{cpt}})$.

\end{enumerate}

\end{prop}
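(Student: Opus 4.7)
The plan is to reduce each item to either standard results from the cited chapters of Loos applied to each simply-connected irreducible factor $\widetilde{M}_j$, or a direct computation on the torus factor $M_0$, and then assemble via the product structure and the covering $M_0 \times \widetilde{M}_{\rm{cpt}} \to M = \Gamma \backslash (M_0 \times \widetilde{M}_{\rm{cpt}})$. First I would fix compatible maximal abelian subspaces so that $\germ{a} = \germ{a}_0 \oplus \germ{a}_1 \oplus \cdots \oplus \germ{a}_q$, where $\germ{a}_0 \simeq \R^d$ is identified with $T_{p_0}M_0$ and $\germ{a}_j \subseteq \germ{p}_j$ is a maximal abelian subspace for $\widetilde{M}_j$. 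On the flat factor the restricted root system is empty, so $\Lambda_{R\,\check{}}(M_0) = 0$, $\Lambda_Z(M_0) = \germ{a}_0$, and $\Lambda_I(M_0) \simeq \Z^d$; the simply-connected non-Euclidean factors each satisfy the stated lattice relations by the quoted results of Loos, in particular $\Lambda_I(\widetilde{M}_j) = \Lambda_{R\,\check{}}(\widetilde{M}_j)$ because $\pi_1(\widetilde{M}_j) = 0$. Since restricted roots, co-roots and the exponential map are all compatible with direct products, items $(1)$, $(2)$, $(3)$ follow by term-by-term assembly.

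For the inclusions in (4), $\Lambda_{R\,\check{}}(M) \leq \Lambda_I(M)$ reduces, via the $\germ{sl}_2$-triple associated with each restricted root $\alpha$ and the normalization $\alpha(\alpha\,\check{}\,) = 2$, to the standard computation that $\exp_G(\alpha\,\check{}\,)$ stabilizes $p_0$; and $\Lambda_I(M) \leq \Lambda_Z(M)$ is a consequence of the induced action $e^{i\pi\beta(v)}$ of $\Ad(\exp_G(v))$ on each complexified restricted-root space $\germ{g}_\C^\beta$. Statement (5) then follows by writing the transvection group as the image of $\exp_G$ on $\germ{a}$ modulo its kernel $\Lambda_I$. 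Item (6) is a covering-space assertion: $v$ lies in $\Lambda_I(M)$ precisely when the class of $\exp_G(v)$ in
$Z(M_0 \times \widetilde{M}_{\rm{cpt}}) = \Lambda_Z(M_0 \times \widetilde{M}_{\rm{cpt}}) / \Lambda_I(M_0 \times \widetilde{M}_{\rm{cpt}})$
belongs to $\Gamma$. Finally, (7) follows by combining (6) with the exact sequence $1 \to \Z^d \to \pi_1(M) \to \Gamma \to 1$ coming from the covering $\R^d \times \widetilde{M}_{\rm{cpt}} \to M$, and interpreting this extension through the decomposition of $\Lambda_I(M)/\Lambda_{R\,\check{}}(M)$ supplied by (1)--(3) and (6).

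The main obstacle is not any single deep step but rather careful bookkeeping across conventions: the bracket normalization $[x,y] = i\pi\beta(x)y$, the co-root condition $\alpha(\alpha\,\check{}\,) = 2$, and the definition $\Lambda_Z = \{v : \alpha(v) \in \Z\}$ must all be kept mutually compatible so that the inclusions in (4) hold as stated and are not off by a factor of $2$ (which one might naively worry about, given the appearance of $e^{i\pi\beta(v)}$ in the argument for $\Lambda_I \leq \Lambda_Z$). Once these conventions are aligned, each remaining piece reduces either to an $\germ{sl}_2$-triple computation or to the well-known description of $\pi_1$ of a symmetric space as a quotient of its integral and co-root lattices.
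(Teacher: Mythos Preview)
The paper does not supply its own proof of this proposition: it is stated with the attribution ``cf.\ \cite{Loos} Theorems VI.2.4 and VI.3.6'' and no argument is given in the text. Your outline is consistent with the standard derivation one finds in Loos and elsewhere --- splitting $\germ{a}$ according to the product decomposition, handling the flat factor by hand, invoking the irreducible simply-connected case for each $\widetilde{M}_j$, and then reading off the remaining items from the covering $M_0 \times \widetilde{M}_{\rm cpt} \to M$. Your flagged concern about the normalization (the $i\pi\beta(x)$ convention for root spaces together with $\alpha(\alpha\,\check{}\,) = 2$ and $\Lambda_Z = \{v : \alpha(v) \in \Z\}$) is exactly the point that requires care, since a naive reading of $e^{i\pi\beta(v)} = 1$ would give $\beta(v) \in 2\Z$ rather than $\Z$; the resolution uses that $\exp_G(v) \in K$ (not necessarily $= e$) and the interaction of $\Ad(\exp_G(v))$ with the decomposition $\germ{g}^\beta = \germ{K}^\beta \oplus \germ{p}^\beta$, which is where Loos's argument does the real work. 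Since there is no in-paper proof to compare against, there is nothing further to contrast.
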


\section{Hearing the Length Spectrum of a Split-Rank Symmetric Space}\label{sec:HearingLengthSpectrum}

In this section we complete the proof of Theorem~\ref{thm:MainResult} 
by proving Proposition~\ref{prop:Component Dependence}, Lemma~\ref{lem:CLUResidual}
 and Theorem~\ref{thm:IntroMod4}.
We also provide an infinite family of examples demonstrating that Theorem~\ref{thm:IntroMod4}
cannot hold for an arbitrary (irreducible) symmetric space, 
which suggests that exploring the conjecture through the trace formula of Duistermaat and Guillemin
will prove to be more delicate in general.

\subsection{Component Lengths and the Morse Index}\label{SubSec:PropComponentDependence}
In this section we provide the argument in support of Proposition~\ref{prop:Component Dependence}
which tells us that in a compact symmetric space $M$ for which the non-Euclidean part of its universal cover
is split-rank, the Morse index modulo $4$ of a closed geodesic $\gamma$ of length $\tau$ is determined by 
the length of the components of $\gamma'(t)$ and the dimension of $\Fix_\gamma (\Phi_\tau)$, the component of $\Fix(\Phi_\tau)$ corresponding to $\gamma$.

\begin{proof}[Proof of Proposition~\ref{prop:Component Dependence}]
Let $M = \Gamma \backslash (M_0 \times \widetilde{M}_1 \times \cdots \times \widetilde{M}_q)$ be 
the homogeneity type of a compact symmetric space, 
where $\widetilde{M}_1 \times \cdots \times \widetilde{M}_q$ is split-rank
or, equivalently, $\widetilde{M}_j$ is split rank for $j = 1, \ldots , q$,
and let $g$ be the symmetric metric on $M$ induced by the 
symmetric metric $h \times g_1 \times \cdots \times g_q$ on  
$M_0 \times \widetilde{M}_1 \times \cdots \times \widetilde{M}_q$.
Furthermore, for $j = 0, 1, \ldots , q$, $\Gamma_j$ will denote the projection 
of $\Gamma \leq Z(M_0 \times \widetilde{M}_1 \times \cdots \times \widetilde{M}_q) = M_0 \times Z(\widetilde{M}_1) \times \cdots \times Z(\widetilde{M}_q)$ onto the $j$-th factor and $M_j \equiv \Gamma_j \backslash \widetilde{M}_j$.

Now, for each $j =1, \ldots , q$, let $R_j$ 
denote the restricted root system of $\widetilde{M}_j = G_j / K_j$
with respect to some choice of maximal abelian subspace 
$\germ{a}_j \subset \germ{p}_j \equiv T_o \widetilde{M}_j$
and let $R_j^+$ denote the positive roots with respect to some choice of 
Weyl chamber $\mathcal{C}_j \subset \germ{a}_j$.
Then, letting $\germ{a}_0$ denote the Lie algebra of the torus $M_0$,
the restricted root system for $M$ with respect to 
the maximal abelian subspace 
$\germ{a} = \germ{a}_0 \times \germ{a}_1 \times \cdots \times \germ{a}_q \subset \germ{p} \equiv T_oM$ 
is given by 
$$R = \{ \widetilde{\beta}_j = \beta_j \circ \pi_j : \beta_j \in R_j \mbox{ for } j = 1, \ldots , q \},$$
where $\pi_j: \germ{a}_1 \times \cdots \times \germ{a}_q \to \germ{a}_j$ is given by 
$v = (v_1, \ldots , v_q) \mapsto v_j$, and the positive roots with respect to the Weyl chamber 
$\mathcal{C} = \oplus_{j=1}^{q} \mathcal{C}_j$ are given by 
$$R^+ = \{ \widetilde{\beta}_j = \beta_j \circ \pi_j : \beta_j \in R_j^+ \mbox{ for } j = 1, \ldots , q \}.$$
It follows from Equation~\ref{eqn:MorseIndex} that for any $v = (v_0, v_1, \ldots , v_q) \neq 0 \in \overline{\mathcal{C}} \cap \Lambda_I(M) \subset \overline{\mathcal{C}} \cap \Lambda_Z(M)$
the Morse index of the associated closed geodesic $\gamma_v$ is given by 
\begin{eqnarray}\label{Eq:MorseIndexFactors}
\sigma_{\Delta M}(\gamma_v) &=& \sum_{\beta \in R^+} n_{\beta} \beta(v) - \dim(\Fix_{\gamma_v}(\Phi_\tau)) + C(M)\\
&=& \sum_{j =1}^{q} \sum_{\beta_j \in R_j^+} n_{\beta_j} \beta_j(v_j) - \dim(\Fix_{\gamma_v}(\Phi_\tau)) + C(M), \nonumber
\end{eqnarray}
where, as in Section~\ref{Sec:MorseIndex}, $n_\beta$ and $n_{\beta_j}$ denote the multiplicities of the roots.
Therefore, each non-Euclidean factor $\widetilde{M}_j$ of the universal cover of $M$ makes a contribution of 
$\sum_{\beta_j \in R_j^+} n_{\beta_j} \beta_j(v_j)$ to the Morse index of $\gamma_{v}$.
We will show that for each $j=1, \ldots , q$ there is a function 
$f_{(M_j, g_j)}$ such that 
 $\sum_{\beta_j \in R_j^+} n_{\beta_j} \beta_j(v_j) \equiv f_{(M_j, g_j)} (\| v_j \|) \mod 4$,
 as $v_j$ ranges over $\Lambda_I(M_j)$.
Then, since $\Lambda_I(M) \leq \Lambda_I(M_0) \times \Lambda_I(M_1) \times \cdots \times \Lambda_I(M_q)$,
the theorem follows by taking $f_{(M,g)} \equiv \sum_{j=1}^{q} f_{(M_j, g_j)}$ 
and recalling that every closed geodesic is conjugate via the isometry group 
to a closed geodesic in our chosen maximal flat.

In light of the previous discussion, henceforth, we will assume that $(M,g)$ 
is an irreducible split-rank symmetric space and we set out to prove
there is a function $f_{(M,g)}$ such that 
\begin{equation}\label{eqn:ComponentLengthEquation}
 \sum_{\beta \in R+} n_{\beta} \beta(v) \equiv f_{(\widetilde{M}, g)} (\| v \|) \mod 4,
 \end{equation}  
for any $v \in \Lambda_I(M) \cap \overline{\mathcal{C}}$.
At times the following observation will be useful. 

\begin{lem}\label{lem:UsefulLemma}
Let $v = (c_1, \ldots , c_n), w = (d_1, \ldots , d_n) \neq 0 \in \Z^n$ be such that 
$\sum_{j=1}^{n} c_j^2 = \sum_{i =1}^{n} d_j^2 = \tau^2$, then 
$\sum_{j=1}^{n} c_j$ and  $\sum_{j=1}^{n} d_j$ are congruent modulo $2$.
That is, $\|v\|^2 = \|w\|^2$, implies 
the number of odd $c_n$'s is even if and only if the number of odd $d_n$'s is even.
\end{lem}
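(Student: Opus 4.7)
The plan is to reduce the claim to the elementary congruence $c^2 \equiv c \pmod 2$, valid for every integer $c$ since $c^2 - c = c(c-1)$ is a product of two consecutive integers. I would first state this congruence, then apply it coordinatewise to each $c_j$ and sum, obtaining
\[
\tau^2 \;=\; \sum_{j=1}^{n} c_j^2 \;\equiv\; \sum_{j=1}^{n} c_j \pmod 2.
\]
Applying the same computation to $w$ yields $\tau^2 \equiv \sum_{j=1}^{n} d_j \pmod 2$, and combining the two congruences gives $\sum_{j=1}^n c_j \equiv \sum_{j=1}^n d_j \pmod 2$, which is the first claim.

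For the equivalent reformulation, I would observe that modulo $2$ an integer $c_j$ contributes $0$ or $1$ according as it is even or odd. Hence $\sum c_j \pmod 2$ is precisely the parity of the number of odd entries among $c_1, \ldots, c_n$, and likewise for the $d_j$. The congruence $\sum c_j \equiv \sum d_j \pmod 2$ therefore says exactly that the number of odd $c_j$'s has the same parity as the number of odd $d_j$'s, which is the second formulation of the lemma.

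I do not anticipate any obstacle: the proof is a one-line use of $c \equiv c^2 \pmod 2$, and the hypotheses $v, w \neq 0$ and the integrality of the coordinates are the only things actually used. The hypothesis $v, w \neq 0$ plays no role in the parity identity itself and is presumably included because the lemma will later be applied to nonzero lattice vectors arising as tangent vectors of nontrivial closed geodesics.
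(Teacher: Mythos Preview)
Your proof is correct. The paper argues by contradiction: assuming $\sum c_j$ is odd and $\sum d_j$ is even, it expands $(\sum c_j)^2 = \tau^2 + 2\sum_{i<j}c_ic_j$ and similarly for $w$, then notes that the difference $(\sum c_j)^2 - (\sum d_j)^2$ would have to be even, contradicting odd minus even. Your route via the congruence $c^2 \equiv c \pmod 2$ is more direct: it bypasses both the contradiction framing and the expansion of the square, and makes transparent that each of $\sum c_j$ and $\sum d_j$ is congruent to the common value $\tau^2$ modulo $2$. The two arguments are equivalent in content, but yours isolates the single arithmetic fact that drives the result.
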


\begin{proof}
To the contrary, lets assume that $\sum_{j=1}^{n} c_j = 2k+1$ is odd and 
$\sum_{j=1}^{n} d_j = 2m$ is even.
Then $(2k+1)^2 = (\sum_{j=1}^{n} c_j)^2 = \tau^2 + 2 \sum_{i < j} c_ic_j$
and $(2m)^2 = (\sum_{j=1}^{n} d_j)^2 = \tau^2 + 2\sum_{i<j}d_id_j$. 
This implies that $(2k+1)^2 - (2m)^2$ is even, which is a contradiction.
\end{proof}


\begin{lem}\label{lem:SimplyConnectedSplitRank}
Let $(M,g)$ be an irreducible simply-connected split rank symmetric space.
Then $f_{(M,g)} \equiv 0$.
\end{lem}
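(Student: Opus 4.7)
The plan is to establish the stronger claim that $\lambda(v) := \sum_{\beta \in R^+} n_\beta \beta(v) \equiv 0 \pmod{4}$ for every $v \in \Lambda_I(M)$; by Equation~\ref{eqn:ComponentLengthEquation} this immediately forces $f_{(M,g)} \equiv 0$. Two structural facts let me reduce the problem at once. First, since $M$ is simply-connected, Proposition~\ref{prop:TheLattices}(7) yields $\Lambda_I(M) = \Lambda_{R\,\check{}}(M)$, so by linearity it suffices to verify $\lambda(\alpha\,\check{}) \equiv 0 \pmod{4}$ for each simple coroot $\alpha\,\check{}$. Second, since $M$ is split-rank, every restricted root has even multiplicity; in particular $n_\alpha \in 2\Z$ for every $\alpha \in R$.

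The core step is a Weyl-pairing computation. Let $R_\alpha^+$ denote the set of positive roots that the simple reflection $s_\alpha$ sends to negative roots; then $s_\alpha$ permutes $R^+ \setminus R_\alpha^+$, pairing $\beta$ with $s_\alpha\beta$ (any $\beta$ fixed by $s_\alpha$ satisfies $\beta(\alpha\,\check{}) = 0$ and therefore contributes nothing). Because multiplicities are Weyl-invariant, each such pair contributes
\[
n_\beta\bigl(\beta + s_\alpha\beta\bigr)(\alpha\,\check{}) = n_\beta\bigl(2\beta - \beta(\alpha\,\check{})\alpha\bigr)(\alpha\,\check{}) = n_\beta\bigl(2\beta(\alpha\,\check{}) - 2\beta(\alpha\,\check{})\bigr) = 0,
\]
so $\lambda(\alpha\,\check{})$ reduces to the contribution from $R_\alpha^+$ alone. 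Writing any $\beta \in R^+$ in simple-root coordinates, one sees that $s_\alpha\beta < 0$ forces $\beta$ to be a positive scalar multiple of $\alpha$; since $\alpha$ is simple, the only such elements in $R^+$ are $\alpha$ itself and, in the non-reduced (type $BC$) case, possibly $2\alpha$. Hence
\[
\lambda(\alpha\,\check{}) = n_\alpha\,\alpha(\alpha\,\check{}) + n_{2\alpha}(2\alpha)(\alpha\,\check{}) = 2 n_\alpha + 4 n_{2\alpha},
\]
with the convention $n_{2\alpha} = 0$ when $2\alpha \notin R$. Since $n_\alpha$ is even, $2n_\alpha \equiv 0 \pmod{4}$, and trivially $4n_{2\alpha} \equiv 0 \pmod{4}$, giving $\lambda(\alpha\,\check{}) \equiv 0 \pmod{4}$ as desired.

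The only subtlety I anticipate is the non-reduced case, where $2\alpha$ may also lie in $R^+$ and thereby introduces the $4n_{2\alpha}$ term in the formula for $\lambda(\alpha\,\check{})$; this poses no real obstacle, however, since that term is automatically divisible by $4$. Otherwise the argument is clean Weyl-theoretic bookkeeping resting on the identity $\alpha(\alpha\,\check{}) = 2$, the Weyl-invariance of the multiplicities, and the parity of $n_\beta$ forced by split-rank.
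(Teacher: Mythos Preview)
Your argument is correct. It differs from the paper's proof in one notable way: the paper invokes the classification fact that for an \emph{irreducible} split-rank space the multiplicities are not merely even but all equal to a single constant $2N$ (with $N=1$ precisely in the Lie-group case). With that in hand, the paper writes $\sum_{\beta\in R^+} n_\beta\,\beta(v)=2N\cdot 2\rho(v)=4N\rho(v)$ and then appeals to Lemma~\ref{lem:Rho} ($\rho$ is integer-valued on $\Lambda_{R\,\check{}}$) to conclude. Your Weyl-pairing computation instead reproves the content of Lemma~\ref{lem:Rho} on the fly (the cancellation over $R^+\setminus\{\alpha\}$ is exactly the standard proof that $\rho(\alpha\,\check{})=1$) and uses only the weaker hypothesis that each $n_\beta$ is even, never that they coincide. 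So your route is a bit more self-contained and avoids the classification input, while the paper's is shorter once Lemma~\ref{lem:Rho} and the constancy of the multiplicities are granted. Note also that the non-reduced ($BC$) case you worried about does not actually occur for irreducible split-rank spaces, so the $2\alpha$ term is harmless padding rather than a genuine case.
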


\begin{proof}
Since $(M, g)$ is an irreducible split-rank space we know there is an integer 
$N$ such that for each $\beta \in R^{+}$, we have $n_{\beta} = 2N$ 
with $N =1$ if and only if $M$ is a simple Lie group \cite[Theorems~VI.4.3 and VI.4.4]{Loos}.
(In fact, from the classification of irreducible symmetric spaces, 
one can see that $2N$ equals $2$, $4$ or $8$.)
Since $M$ is simply-connected, we know that the co-root lattice and integral lattice coincide: 
$\Lambda_{R^{\, \check{}}}(M) = \Lambda_I(M)$. 
Therefore, since the lowest strongly dominant form $\rho = \frac{1}{2} \sum_{\beta \in \R^+} \beta$ 
is integer-valued on the co-root lattice (see Lemma~\ref{lem:Rho}), 
we see that for any $v \in \overline{\mathcal{C}}\cap \Lambda_I(M)$ , 
we have $\sum_{\beta \in R^+} n_\beta \beta(v) = 4N \rho(v) \equiv 0 \mod 4$.
Therefore, we may take $f_{(M,g)}$ to be identically zero,
which concludes the proof of the lemma.
\end{proof}

\begin{lem}
Let $(M,g)$ be an irreducible split-rank symmetric space that is not a simple Lie group; 
that is, $(M,g)$ is an irreducible split-rank symmetric space of Type I.
Then $f_{(M,g)} \equiv 0$ satisfies Equation~\ref{eqn:ComponentLengthEquation}.
\end{lem}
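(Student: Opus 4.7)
The plan is to observe that the hypothesis places strong divisibility restrictions on the root multiplicities, which, combined with the containment $\Lambda_I(M) \leq \Lambda_Z(M)$ (Proposition~\ref{prop:TheLattices}(4)), immediately forces the sum $\sum_{\beta \in R^+} n_\beta \beta(v)$ to vanish modulo $4$ for every $v \in \Lambda_I(M) \cap \overline{\mathcal{C}}$, so that $f_{(M,g)} \equiv 0$ works.

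More precisely, I would first invoke the classification of irreducible compact split-rank symmetric spaces (Loos, Theorems VI.4.3 and VI.4.4) that was already recalled in the proof of the previous lemma: for such a space there is a common multiplicity $n_\beta = 2N$ for all $\beta \in R^+$, with $N = 1$ exactly when $M$ is a simple Lie group. Since we are now in the Type~I case, $N \geq 2$, and inspection of the classification tables (as noted parenthetically in the preceding lemma) shows that the only possible values of the common multiplicity are $2N \in \{4,8\}$. In either case $4 \mid n_\beta$ for every $\beta \in R^+$.

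Next I would use the defining property of the central lattice: $\Lambda_Z(M) = \{v \in \germ{a} : \alpha(v) \in \Z \text{ for all } \alpha \in R\}$. Combined with $\Lambda_I(M) \leq \Lambda_Z(M)$, this gives $\beta(v) \in \Z$ for every $\beta \in R^+$ and every $v \in \Lambda_I(M) \cap \overline{\mathcal{C}}$. Consequently each summand satisfies $n_\beta \beta(v) \equiv 0 \pmod 4$, and so
\[
\sum_{\beta \in R^+} n_\beta \beta(v) \equiv 0 \pmod 4
\]
for every such $v$. This verifies Equation~\ref{eqn:ComponentLengthEquation} with $f_{(M,g)} \equiv 0$.

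There is essentially no obstacle here beyond citing the correct piece of the classification; unlike the simply-connected case treated in the previous lemma, one does not even need to exploit the integrality of the lowest strongly dominant form $\rho$ on the co-root lattice, because the multiplicity $2N \geq 4$ already absorbs the factor of $2$ that was supplied by $\rho$ before. The only subtlety worth flagging is that one must appeal to $\Lambda_I(M) \leq \Lambda_Z(M)$ rather than to $\Lambda_{R^{\,\check{}}}(M) = \Lambda_I(M)$, since in the Type~I non-simply-connected case the integral lattice need not coincide with the co-root lattice; this is precisely why the simply-connected argument based on $\rho$ is replaced here by the blunter observation that $4 \mid n_\beta$.
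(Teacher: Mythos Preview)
Your proof is correct and follows essentially the same approach as the paper's own proof: use the common multiplicity $n_\beta = 2N$ with $N \geq 2$, observe (from the classification) that in the Type~I case $2N \in \{4,8\}$ so $4 \mid n_\beta$, and combine this with the integrality of the roots on $\Lambda_Z(M) \supseteq \Lambda_I(M)$. If anything, you are slightly more careful than the paper in making explicit that one needs $2N \in \{4,8\}$ rather than merely $N \geq 2$ (since $N=3$ would not suffice); the paper relies on the parenthetical remark from the preceding lemma for this.
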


\begin{proof}
As we noted in the proof of Lemma~\ref{lem:SimplyConnectedSplitRank}, 
since $(M,g)$ is split-rank and not a simple Lie group,
there is an integer $N \geq 2$, such that for each $\beta \in R^{+}$, we have $n_{\beta} = 2N$. 
Therefore, since the roots are integer-valued on any 
$v \in \overline{\mathcal{C}} \cap \Lambda_{Z}(\widetilde{M})$ 
we conclude that when $(M, g)$ is Type I, 
the quantity $\sum_{\beta \in R^{+}} n_{\beta} \beta (v)$ 
is divisible by four and we may take $f_{(M, g)} \equiv 0$.
\end{proof}

We now consider the case where $(M,g)$ is a simple Lie group $U$
equipped with a bi-invariant metric.
As we noted in the proof of Lemma~\ref{lem:SimplyConnectedSplitRank}, 
$n_\beta = 2$ for each $\beta \in R^{+}$.
By examining each type of simple Lie group we will produce a function $f_{(M,g)}$ 
such that 
$$\sum_{\beta \in R^+} n_\beta \beta (v) = 4 \rho (v)  \equiv f_{(M,g)} (\|v \|) \mod 4.$$
In many of the cases considered this function will be identically zero.

\medskip
\noindent
{\bf Standing Assumption:} 
Letting $B: \germ{u} \times \germ{u} \to \R$ be the Killing form on 
the Lie algebra $\germ{u}$ of $U$, 
for the remainder of the argument we will let $\bar{g}$ be the metric on 
$M = U \times U / \Delta U$ induced by $-r(B\oplus B)$, 
where $r >0$ is chosen to agree with the inner-product structure in Section~\ref{SubSec:Classification}.
With this choice of $r$, $\bar{g}$ is the bi-invariant metric on 
$M \simeq U$ induced by $-2r B$ on $\germ{u}$. 
Then, the metric $g$ equals $c \bar{g}$ for some $c >0$.



\begin{lem}\label{lem:TheFunctionTypeA_n}
Let $(M,g)$ be a simple group of type $A_n$ $(n \geq 1)$ equipped with a bi-invariant metric $g$.
That is, $M = \SU(n+1) / \Gamma$, where $\Gamma \leq \Z_{n+1}$, with $g = c \bar{g}$ for some 
$c >0$.
\begin{enumerate}
\item If $n$ is even or $\Gamma \neq \Z_{n+1}$, then $f_{(M,g)} \equiv 0$
satisfies Equation~\ref{eqn:ComponentLengthEquation}.
\item If $n$ is odd and $\Gamma = \Z_{n+1}$, then 
 $$f_{(M,g)}(\| v \|) = \left\{ \begin{array}{ll} 
0 & \mbox{when $(n+1)\frac{\|v\|^2}{c}$ is an even integer} \\
2 & \mbox{when $(n+1)\frac{\|v\|^2}{c}$ is an odd integer}. \\
\end{array}
\right.
$$ 
satisfies Equation~\ref{eqn:ComponentLengthEquation}.
\end{enumerate}
\end{lem}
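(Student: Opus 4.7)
The plan is to compute $\sum_{\beta \in R^+} n_\beta \beta(v) \pmod 4$ explicitly using the structure of the $A_n$ root system. Because $(M,g)$ is a simple Lie group with a bi-invariant metric, it is Type II, so every restricted root has multiplicity $n_\beta = 2$. Hence the sum reduces to $4\rho(v)$, where $\rho = \tfrac{1}{2}\sum_{\beta\in R^+}\beta$. By Lemma~\ref{lem:Rho}, $\rho$ is integer-valued on the co-root lattice $\Lambda_{R\,\check{}}$, so $4\rho$ is automatically divisible by $4$ there. Combined with Proposition~\ref{prop:TheLattices}(6), which identifies $\Lambda_I(M)/\Lambda_{R\,\check{}}$ with the subgroup $\Gamma \leq Z(\SU(n+1)) \cong \Z_{n+1}$, it then suffices to evaluate $4\rho$ on a set of coset representatives.

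Next I would invoke the explicit description of $A_n$ in Appendix~\ref{Sec:RootSystems}, taking the first fundamental weight $\omega_1$ as a generator of $\Lambda_Z/\Lambda_{R\,\check{}} \cong \Z_{n+1}$, and compute $\rho(\omega_1) = n/2$ directly in the standard realization of $A_n$ in the hyperplane $\{x \in \R^{n+1} : \sum x_i = 0\}$. Writing an arbitrary $v \in \Lambda_I(M)$ as $v = k\omega_1 + v'$ with $v' \in \Lambda_{R\,\check{}}$ and $k$ ranging over those integers whose reduction modulo $n+1$ lies in $\Gamma$, one obtains
\begin{equation*}
\sum_{\beta \in R^+} n_\beta \beta(v) \equiv 2kn \pmod 4.
\end{equation*}
If $n$ is even, $2kn$ is always divisible by $4$; if $n$ is odd but $\Gamma$ is a proper subgroup of $\Z_{n+1}$, the admissible $k$'s are constrained by the index $d = [\Z_{n+1}:\Gamma]$ in such a way that $2kn$ is still divisible by $4$. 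In either subcase $f_{(M,g)} \equiv 0$ satisfies Equation~\ref{eqn:ComponentLengthEquation}, proving (1).

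For (2), with $n$ odd and $\Gamma = Z(\SU(n+1))$, every integer $k$ is admissible and $2kn \equiv 2k \pmod 4$ is governed purely by the parity of $k$. I would translate this parity into a condition on $\|v\|^2$ using the normalization of $\bar g$ fixed in Section~\ref{SubSec:Classification}: computing $\|\omega_1\|^2_{\bar g}$ and the pairings of $\omega_1$ with co-root lattice vectors should yield
\begin{equation*}
(n+1)\|v\|^2/c \equiv k^2 \equiv k \pmod 2,
\end{equation*}
so that $2k \equiv 2(n+1)\|v\|^2/c \pmod 4$, matching the piecewise formula in (2).

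The main obstacle I anticipate is the normalization bookkeeping in case (2): one must verify that the cross term $2k\langle\omega_1,v'\rangle$ and the squared norms of co-root lattice vectors contribute with even parity so that only the parity of $k^2$ survives in $(n+1)\|v\|^2/c$ modulo $2$. A secondary subtlety is the proper-subgroup part of (1) when $n$ is odd: the arithmetic of the index $d$ in $\Z_{n+1}$ has to be handled carefully to confirm that every admissible $k$ produces $2kn \equiv 0 \pmod 4$. Once these two points are settled, the remainder is a direct consequence of the $A_n$ data tabulated in Appendix~\ref{Sec:RootSystems}.
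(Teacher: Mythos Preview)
Your approach is essentially the same as the paper's. The paper first expands $v=\sum_{j=1}^n k_j L_j$ in the basis of Appendix~\ref{sec:TypeAn} (your $\omega_1$ is the paper's $L_1$) and computes $2\sum_{\alpha\in R^+}\alpha(v)\equiv 2n\sum_j k_j\pmod 4$, then rewrites $v=\hat{k}_1 L_1+\sum c_{ij}(e_i-e_j)$ exactly as you propose (so $\sum_j k_j=\hat{k}_1=k$ and the congruence becomes your $2kn$); the norm bookkeeping in case~(2) is settled by expanding in the $e_j$-coordinates to see that $(n+1)\|v\|^2/c = n\hat{k}_1^2 + 2(n+1)\hat{k}_1 C_1 + (n+1)\sum_j C_j^2 \equiv \hat{k}_1\pmod 2$ for $n$ odd, and the proper-subgroup part of~(1) is handled via the description $\Lambda_I(M)=\langle k_0 L_1\rangle+\Lambda_{R\,\check{}}$ with $k_0$ the smallest generator of $\Gamma$.
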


\begin{proof}
Using the notation of Section~\ref{sec:TypeAn}, we recall that the central lattice is given by
$ \Lambda_{Z} = \langle L_j = \frac{n}{n+1} e_j - \frac{1}{n+1} \sum_{\stackrel{k =1}{k\neq j}}^{n+1} e_{k} : j = 1, \ldots, n\rangle$. 
Now, let $v = \sum_{j=1}^{n} k_j L_j \in \Lambda_{I} \cap \overline{\mathcal{C}} \subseteq \Lambda_{Z} \cap \overline{\mathcal{C}}$, then 
\begin{eqnarray*}
2\sum_{\alpha \in R^+} \alpha (v) &=& \sum_{j=1}^{n} 2k_j \sum_{\alpha \in \Delta^+} \alpha(L_j) \\
&=&\sum_{j=1}^{n} 2k_j \sum_{\mu =1}^{n} 2(n-\mu +1) \epsilon_\mu (L_j) \\
&=&\sum_{j=1}^{n} 4k_j \sum_{\mu =1}^{n} (n-\mu +1) \epsilon_\mu (L_j) \\
&=&\sum_{j=1}^{n} 4k_j ((n-j+1)\frac{n}{n+1} - \frac{1}{n+1} \sum_{\stackrel{\mu =1}{\mu \neq j}}^{n} (n-\mu +1)) \\
&=&\sum_{j=1}^{n} 4k_j \left((n-j+1)\frac{n}{n+1} - \frac{1}{n+1} \left(\frac{n(n+1)}{2} - (n-j+1)\right)\right)\\ 
&=&\sum_{j=1}^{n} 4k_j \left((n-j+1) - \frac{n}{2} \right)\\
&=&\sum_{j=1}^{n} 4k_j \left( \frac{n-2j +2}{2} \right)\\
&=&\sum_{j=1}^{n} 2k_j (n - 2j +2)\\
&\equiv& 2n(\sum_{j=1}^{n} k_j) \mod 4
\end{eqnarray*}
Therefore, in the case where $n$ is even, this quantity is always congruent to zero modulo $4$, 
so we may take $f_{(M,g)} \equiv 0$.

In the case that $n$ is odd, 
we see that $2 \sum_{\alpha \in \Delta^+} \alpha (v)$ is congruent to $0$ (resp. $2$) modulo $4$
if the number of $k_j$'s that are odd is even (resp. odd). However,
the parity of the number of odd $k_j$'s is determined by the length of the vector $v$.
Indeed, $\| v\|^2 = \frac{n}{n+1} \sum_{j=1}^{n} k_j^2 - \frac{2}{n+1} \sum_{i < j} k_i k_j$, so that 
$(n+1) \|v \|^2$ is an integer and this integer is even 
if and only if the number of $k_j$'s that are odd is even.
To see which quotients $\SU(2n+1)/\Gamma$ admit $v \in \Lambda_I(\SU(2n+1)/\Gamma)$ 
with $(n+1)\|v\|^2$ odd,
recall that $\Lambda_Z$ is also generated by $\langle L_1, e_\mu - e_\nu: 1 \leq \mu < \nu \leq n+1 \rangle$
(See Section~\ref{sec:TypeAn}).
Therefore, we may express $v \in \Lambda_{I} \cap \overline{\mathcal{C}}$ 
as $v = \hat{k}_1 L_1 + \sum_{i<j} c_{ij} (e_i - e_j) = \hat{k}_1 L_1 + \sum_{j=1}^{n+1} C_j e_j$. 
It follows that when $n$ is odd, 
$(n+1) \| v\|^2$ is an odd integer if and only if $\hat{k}_1$ is odd. 
By Equation~\ref{eqn:IntegralLatticeA_n} the integral lattice of $M = \SU(n+1) /\Gamma$ is 
$\Lambda_I(M) = \langle k L_1 \rangle + \Lambda_{R^{\; \check{}}}$,
where $k = 0, 1, \ldots , n$ is the smallest generator of $\Gamma \leq \Z_{n+1}$.
Therefore, in the event that $n$ is odd, $\hat{k}_1$ can be odd only when 
$\Gamma = \Z_{n+1}$. And, the lemma follows.
\end{proof}

\begin{lem}\label{lem:TheFunctionTypeB_n}
Let $(M,g)$ be a simple group of type $B_n$ $(n \geq 2)$ equipped with a bi-invariant metric $g$.
That is, $M = \Spin(2n+1) / \Gamma$, where $\Gamma \leq Z(\Spin(2n+1) \simeq \Z_{2}$,
with $g = c\bar{g}$ for some $c >0$.
\begin{enumerate}
\item If $\Gamma$ is trivial, then $f_{(M,g)} \equiv 0$
satisfies Equation~\ref{eqn:ComponentLengthEquation}.
\item If $\Gamma = \Z_2$, then 
 $$f_{(M,g)}(\| v \|) = \left\{ \begin{array}{ll} 
0 & \mbox{when $\frac{\|v\|^2}{c}$ is an even integer} \\
2 & \mbox{when $\frac{\|v\|^2}{c}$ is an odd integer}. \\
\end{array}
\right.
$$ 
satisfies Equation~\ref{eqn:ComponentLengthEquation}.
\end{enumerate}
\end{lem}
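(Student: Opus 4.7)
My plan is to mirror the strategy used for type $A_n$ in Lemma~\ref{lem:TheFunctionTypeA_n}. Since $M = \Spin(2n+1)/\Gamma$ is a simple Lie group we have $n_\beta = 2$ for every positive restricted root, so Equation~\ref{eqn:ComponentLengthEquation} reduces to showing that the residue of $4\rho(v) = 2\sum_{\beta \in R^+}\beta(v)$ modulo $4$ depends only on $\|v\|$ as $v$ ranges over $\Lambda_I(M) \cap \overline{\mathcal{C}}$, where $\rho$ is the half-sum of positive roots.

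The first step is the explicit determination of $\rho$ in type $B_n$. In the standard coordinates catalogued in Appendix~\ref{Sec:RootSystems}, the positive roots are $\{e_i : 1 \le i \le n\}\cup\{e_i\pm e_j : 1\le i<j\le n\}$ with $e_1,\ldots,e_n$ orthonormal in $\germ{a}$. Counting occurrences of each $e_i$ among the positive roots gives
$$ 2\rho \;=\; \sum_{i=1}^n (2n-2i+1)\,e_i, $$
and since every coefficient $2n-2i+1$ is odd, for any $v = \sum v_i e_i$ with $v_i \in \Z$ I obtain
$$ 4\rho(v) \;=\; 2\sum_{i=1}^n (2n-2i+1)\,v_i \;\equiv\; 2\sum_{i=1}^n v_i \pmod{4}. $$
Thus the residue modulo $4$ is controlled entirely by the parity of $\sum_i v_i$.

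Next I would invoke the lattice data for $B_n$ from Appendix~\ref{Sec:RootSystems}, namely $\Lambda_Z = \Z^n$ and $\Lambda_{R\,\check{}} = \{v\in\Z^n:\sum_i v_i \equiv 0\bmod 2\}$, which recovers $Z(\Spin(2n+1)) \simeq \Z_2$. By Proposition~\ref{prop:TheLattices}(6) the integral lattice is $\Lambda_I(M) = \Lambda_{R\,\check{}}$ when $\Gamma$ is trivial and $\Lambda_I(M) = \Lambda_Z$ when $\Gamma = \Z_2$. In the first case $\sum_i v_i$ is automatically even, hence $4\rho(v) \equiv 0 \bmod 4$ and $f_{(M,g)} \equiv 0$ works. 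In the second case $v \in \Z^n$ is unrestricted, and I would apply Lemma~\ref{lem:UsefulLemma} to conclude that the parity of $\sum_i v_i$ agrees with the parity of $\sum_i v_i^2 = \|v\|_{\bar g}^2 = \|v\|_g^2/c$; the stated formula for $f_{(M,g)}$ then follows.

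The only delicate point I anticipate is the bookkeeping around normalizations: under the Standing Assumption $g = c\bar g$, with $\bar g$ induced by $-2rB$ on $\germ{u}$ for the value of $r$ fixed in Section~\ref{SubSec:Classification}, I must verify that the $e_i$ used in Appendix~\ref{Sec:RootSystems} really are orthonormal with respect to $\bar g$ and that the descriptions of $\Lambda_Z$ and $\Lambda_{R\,\check{}}$ above hold verbatim in those coordinates. Once this identification is confirmed, every remaining step is either a direct count of positive roots or a single application of Lemma~\ref{lem:UsefulLemma}, so I do not expect any further combinatorial obstacle.
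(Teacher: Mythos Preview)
Your argument is correct and essentially identical to the paper's: both compute $2\sum_{\beta\in R^+}\beta(v)\equiv 2\sum_j k_j\bmod 4$ using that the coefficients $2n-2j+1$ of $2\rho$ are odd, and then invoke Lemma~\ref{lem:UsefulLemma} to tie the parity of $\sum_j k_j$ to that of $\|v\|^2/c$. The only cosmetic difference is that for part~(1) the paper cites Lemma~\ref{lem:SimplyConnectedSplitRank} directly, whereas you argue via the explicit description of $\Lambda_{R\,\check{}}$; both amount to the same observation.
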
 

\begin{proof}
The first statement follows from Lemma~\ref{lem:SimplyConnectedSplitRank}.
Now, assume that $\Gamma = \Z_2$, so that the integral lattice $\Lambda_I(M)$ equals the 
central lattice $\Lambda_Z$.
As is noted in Section~\ref{sec:TypeBn} the central lattice is given by 
$\Lambda_Z = \langle e_1, e_1 - e_2, \ldots , e_1 - e_n \rangle = \langle e_1, \ldots , e_n \rangle$
and the sum of the positive roots is $\sum_{\alpha \in R^+} \alpha = \sum_{j =1}^{n} (2n -2j +1) \epsilon_j$.
Then, for $v = \sum_{j =1}^{n} k_j e_j \in \Lambda_{I} \cap \overline{\mathcal{C}} =  \Lambda_Z \cap \overline{\mathcal{C}}$
we have

\begin{eqnarray*}
2 \sum_{\alpha \in R^+} \alpha (v) &=& \sum_{j = 1}^{n} 4(n- j)\epsilon_j(v) + 2 \sum_{j =1}^{n} \epsilon_j(v) \\
&\equiv& 2 \sum_{j =1}^{n} \epsilon_j(v) \mod 4 \\
&=& 2\sum_{j =1}^{n} k_j \mod 4.
\end{eqnarray*}
\noindent
Then, by Lemma~\ref{lem:UsefulLemma}, 
if $w = \sum_{j=1}^{n} \hat{k}_j e_j \in \Lambda_{I} \cap \overline{\mathcal{C}}$ is such that $\frac{\| w \|^2}{c} = 
\frac{\| v\|^2}{c}$,
we have $\sum_{j=1}^{n} k_j$ is congruent to $\sum_{j=1}^{n} \hat{k}_j$ modulo $2$, which implies that 
$$2 \sum_{j=1}^{n} k_j \equiv 2 \sum_{j=1}^{n} \hat{k}_j \mod 4.$$
In the case where $\frac{\| w \|^2}{c} = 
\frac{\| v\|^2}{c}$ is an even integer, this quantity is zero modulo $4$. In the case where 
$\frac{\| w \|^2}{c} = 
\frac{\| v\|^2}{c}$ is odd, this quantity is $2$ modulo $4$.
\end{proof}

\begin{lem}\label{lem:TheFunctionTypeC_n}
Let $(M,g)$ be a simple group of type $C_n$ $(n \geq 3)$ equipped with a bi-invariant metric $g$.
That is, $M = \Sp(n) / \Gamma$, where $\Gamma \leq Z(\Sp(n)) \simeq \Z_{2}$, with $g = c \bar{g}$ for some $c >0$.
\begin{enumerate}
\item If $n \equiv 0, 3 \mod 4$ or $\Gamma$ is trivial, then $f_{(M,g)} \equiv 0$
satisfies Equation~\ref{eqn:ComponentLengthEquation}.
\item If $n \equiv 1,2 \mod 4$ and $\Gamma = Z (\Sp(n)) \simeq \Z_2$, then 
 $$f_{(M,g)}(\| v \|) = \left\{ \begin{array}{ll} 
0 & \mbox{when $\frac{\|v\|^2}{c}$ is an integer} \\
2 & \mbox{when $\frac{\|v\|^2}{c}$ is not an integer}. \\
\end{array}
\right.
$$ 
satisfies Equation~\ref{eqn:ComponentLengthEquation}.
\end{enumerate}
\end{lem}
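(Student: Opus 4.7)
The plan is to follow the template established by Lemmas~\ref{lem:TheFunctionTypeA_n} and~\ref{lem:TheFunctionTypeB_n}. Since $U = \Sp(n)/\Gamma$ is a simple Lie group, each positive restricted root has multiplicity $n_\beta = 2$, so
\[ \sum_{\beta \in R^+} n_\beta \beta(v) \;=\; 2\sum_{\beta \in R^+}\beta(v) \;=\; 4\rho(v), \]
with $\rho = \tfrac{1}{2}\sum_{\beta \in R^+}\beta$. First I will pull the relevant $C_n$ root system data from Appendix~\ref{Sec:RootSystems}: in standard coordinates on $\germ{a}\simeq\R^n$ the positive roots are $e_i\pm e_j$ ($i<j$) and $2e_i$, so
\[ \rho = \sum_{k=1}^n(n-k+1)\,e_k, \qquad \Lambda_{R\,\check{}} = \Z^n, \qquad \Lambda_Z = \Z^n \;\cup\; \bigl(\tfrac{1}{2}(1,\ldots,1) + \Z^n\bigr). \]

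Next I will split the analysis by $\Gamma$. When $\Gamma$ is trivial, Proposition~\ref{prop:TheLattices}\,(7) gives $\Lambda_I(M) = \Lambda_{R\,\check{}} = \Z^n$, so $\rho(v)\in\Z$ for every $v\in\Lambda_I\cap\overline{\mathcal{C}}$ and $4\rho(v)\equiv 0\bmod 4$; hence $f_{(M,g)}\equiv 0$ works. When $\Gamma = \Z_2$ we have $\Lambda_I(M) = \Lambda_Z$, and every $v\in\Lambda_I\cap\overline{\mathcal{C}}$ either lies in $\Z^n$ or in the coset $\tfrac{1}{2}(1,\ldots,1)+\Z^n$. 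On the integer coset $4\rho(v)\equiv 0\bmod 4$ as before, while on the half-integer coset, writing $v = \tfrac{1}{2}(1,\ldots,1)+w$ with $w\in\Z^n$,
\[ 4\rho(v) \;=\; 4\rho(w) + 2\sum_{k=1}^n(n-k+1) \;=\; 4\rho(w) + n(n+1) \;\equiv\; n(n+1)\bmod 4. \]
Since $n(n+1)\equiv 0\bmod 4$ when $n\equiv 0,3\bmod 4$ and $n(n+1)\equiv 2\bmod 4$ when $n\equiv 1,2\bmod 4$, statement (1) follows for $n\equiv 0,3\bmod 4$ with $f_{(M,g)}\equiv 0$, regardless of $\Gamma$.

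The case $n\equiv 1,2\bmod 4$ with $\Gamma = \Z_2$ is where I expect the only real work: the two cosets of $\Z^n$ in $\Lambda_Z$ produce different residues modulo $4$, so I must show the length $\|v\|_g$ alone determines which coset $v$ belongs to. Under the Standing Assumption, $\bar g$ restricts on $\germ{a}$ to the standard inner product (so that the $e_k$ form an orthonormal basis) and $g = c\bar g$; hence an integer vector $v = \sum k_je_j$ gives $\|v\|^2/c = \sum k_j^2 \in \Z$, while a half-integer vector $v = \sum(k_j+\tfrac{1}{2})e_j$ gives
\[ \|v\|^2/c \;=\; \sum_{j=1}^n k_j^2 + \sum_{j=1}^n k_j + \tfrac{n}{4}, \]
which fails to be an integer precisely because $n\not\equiv 0\bmod 4$. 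Thus the criterion ``$\|v\|^2/c\in\Z$'' cleanly separates the two cosets, and the stated piecewise formula for $f_{(M,g)}$ follows. The one piece of bookkeeping I will have to verify carefully is that the normalization constant $r$ in the Standing Assumption indeed makes the identification of $\germ{a}$ with $\R^n$ used in Appendix~\ref{Sec:RootSystems} isometric for $\bar g$; modulo this routine check, the lemma reduces to the parity calculation above, in exact parallel to the $A_n$ and $B_n$ arguments.
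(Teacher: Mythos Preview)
Your proof is correct and follows essentially the same approach as the paper. The only cosmetic difference is that the paper parametrizes $\Lambda_Z$ via the basis $\{e_1,\ldots,e_{n-1},F\}$ with $F=\tfrac{1}{2}\sum e_j$ and tracks the parity of the $F$-coefficient $k_n$, whereas you use the equivalent coset decomposition $\Lambda_Z = \Z^n \sqcup \bigl(\tfrac{1}{2}(1,\ldots,1)+\Z^n\bigr)$; in both versions the key computation is $4\rho(v)\equiv n(n+1)\bmod 4$ on the non-integer coset, and the norm criterion separating the cosets is the same.
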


\begin{proof}
If we let $F = \frac{1}{2} \sum_{j=1}^{n} e_j$, then according to Section~\ref{sec:TypeCn} 
the central lattice is given by $\Lambda_Z = \langle e_1, \ldots, e_{n-1} , F \rangle$
and the sum of the positive roots is 
$\sum_{\alpha \in R+} \alpha = \sum_{j =1}^{n} 2(n-j +1) \epsilon_j$.
Now, let $v = \sum_{j=1}^{n-1} k_j e_j + k_n F \in \Lambda_{I} \cap \overline{\mathcal{C}} \subseteq \Lambda_Z \cap \overline{\mathcal{C}}$,
then we have 
\begin{eqnarray*}
2 \sum_{\alpha \in R^+} \alpha (v) &=& \sum_{j=1}^{n} 4(n-j+1) \epsilon_j(v) \\
&=& \sum_{j=1}^{n-1} 4(n-j+1) k_j + k_n \sum_{j = 1}^{n} 4(n-j+1)\epsilon_{j}(F) \\
&\equiv& 4k_n \sum_{j = 1}^{n} (n-j+1)\epsilon_{j}(F)  \mod 4\\
&\equiv& 4 k_n \sum_{j=1}^{n}(n-j+1)\frac{1}{2} \mod 4 \\
&\equiv& 2 k_n \sum_{j=1}^{n}(n-j+1) \mod 4 \\
&\equiv& 2 k_n \left( \frac{n(n+1)}{2} \right) \\
&\equiv& k_n n(n+1)\\
&\equiv& 
\left\{ \begin{array}{ll}
0 \mod 4, & \mbox{for $k_n$ even} \\
0 \mod 4, & \mbox{for $k_n$ odd and $n\equiv 0,3 \mod 4$}\\
2 \mod 4, & \mbox{for $k_n$ odd and $n\equiv 1,2 \mod 4$}.\\
\end{array}
\right.
\end{eqnarray*}

The calculation above and and Lemma~\ref{lem:SimplyConnectedSplitRank} 
allow us to deduce that for $n\equiv 0,3 \mod 4$ or $\Gamma$ trivial, 
we can take $f_{(M,g)}(\| v \|) \equiv 0$.

Now, for any $v = sum_{j=1}^{n-1} k_j e_j + k_n F \in  \Lambda_Z \cap \overline{\mathcal{C}}$, 
we see that $\|v \|^2/c$ is an integer if and only if $k_n$ is even. This observation plus the calculation 
above establishes the second statement of the lemma. 
\end{proof}

\begin{lem}\label{lem:TheFunctionTypeD_n}
Let $(M,g)$ be a simple group of type $D_n$ $(n \geq 4)$ equipped with a bi-invariant metric $g$.
That is, $M = \Spin(2n) / \Gamma$, where 
$$\Gamma \leq Z(\Sp(n)) \simeq \left\{ \begin{array}{ll}
 \Z_{4} & n \mbox{ even} \\
 \Z_2 \oplus \Z_2 & n \mbox{ odd, }\\
 \end{array} 
 \right.$$
 with $g = c \bar{g}$ for some $c >0$.
Then Equation~\ref{eqn:ComponentLengthEquation} has a solution $f_{(M,g)}$
having the following form.
\begin{enumerate}
\item If $n \equiv 0, 1 \mod 4$, then $f_{(M,g)} \equiv 0$.

\item Let $n \equiv 3 \mod 4$. 
\begin{enumerate}
\item If $\Gamma$ trivial or $\Z_2 \oplus 1$, then $f_{(M,g)} \equiv 0$;
\item If $\Gamma$ is $1 \oplus \Z_2$, $\langle (1,1) \rangle \simeq \Z_2$, or $Z(\Spin(2n)) \simeq \Z_2 \oplus \Z_2$, then 
$$f_{(M,g)}(\| v \|) = \left\{ \begin{array}{ll} 
0 & \mbox{when $\frac{\|v\|^2}{c}$ is an integer} \\
2 & \mbox{when $\frac{\|v\|^2}{c}$ is not an integer}. \\
\end{array}
\right.$$
\end{enumerate} 

\item Let $n \equiv 2 \mod 4$.
\begin{enumerate}
\item If $\Gamma$ is trivial or $\Z_2$, then $f_{(M,g)} \equiv 0$;
\item If $\Gamma = Z(\Spin(2n)) \simeq \Z_4$, then  
 $$f_{(M,g)}(\| v \|) = \left\{ \begin{array}{ll} 
0 & \mbox{when $\frac{\|v\|^2}{c}$ is an integer} \\
2 & \mbox{when $\frac{\|v\|^2}{c}$ is not an integer}. \\
\end{array}
\right.
$$ 
\end{enumerate}

\end{enumerate}
\end{lem}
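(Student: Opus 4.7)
The plan is to adapt the arguments used for Lemmas~\ref{lem:TheFunctionTypeA_n} through~\ref{lem:TheFunctionTypeC_n} to the root system $D_n$, relying on the explicit description of the coroot lattice, integral lattice, and central lattice given in Section~\ref{Sec:RootSystems}. The positive roots are $\{e_i \pm e_j : 1 \leq i < j \leq n\}$, so a direct count yields
\begin{equation*}
\sum_{\alpha \in R^+} \alpha = 2\sum_{j=1}^{n} (n-j)\, e_j.
\end{equation*}
The central lattice is $\Lambda_Z = \Z^n + \Z\cdot F$, where $F = \tfrac12 \sum_{j=1}^n e_j$; equivalently, every $v = \sum_j c_j e_j \in \Lambda_Z$ has either all integer coordinates or all half-integer coordinates. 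The case of $\Gamma$ trivial is handled by Lemma~\ref{lem:SimplyConnectedSplitRank}, so I restrict attention to the non-simply-connected quotients.

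The key computation is
\begin{equation*}
2\sum_{\alpha \in R^+} \alpha(v) \;=\; 4\sum_{j=1}^n (n-j)\, c_j \;\equiv\; \begin{cases} 0 \pmod{4}, & \text{if all } c_j \in \Z,\\ n(n-1) \pmod{4}, & \text{if all } c_j \in \Z + \tfrac12, \end{cases}
\end{equation*}
the second line obtained by writing $c_j = (2m_j + 1)/2$ and using $\sum_{j=1}^n (n-j) = n(n-1)/2$. Since $n(n-1) \equiv 0 \pmod{4}$ for $n \equiv 0, 1 \pmod{4}$ and $n(n-1) \equiv 2 \pmod{4}$ for $n \equiv 2, 3 \pmod{4}$, the dichotomy collapses in the first range, so $f_{(M,g)} \equiv 0$ works for every allowable $\Gamma$, yielding conclusion~(1).

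For $n \equiv 2, 3 \pmod{4}$ the mod-$4$ contribution depends only on whether $v$ has integer or half-integer coordinates, and the length $\|v\|^2/c = \sum c_j^2$ separates these regimes: it is a positive integer exactly when all $c_j \in \Z$, while for $c_j \in \Z + \tfrac12$ one has $\sum c_j^2 \equiv n/4 \pmod{1}$, which is not an integer in these residue classes. Hence an $f_{(M,g)}$ of the form claimed exists provided one knows which coordinate types actually occur in $\Lambda_I(M)$. By Proposition~\ref{prop:TheLattices}(6), $\Lambda_I(M)$ is the preimage in $\Lambda_Z$ of $\Gamma \leq Z(M) = \Lambda_Z/\Lambda_{R^{\,\check{}}}$. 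The quotient is generated by the classes $[e_1]$ and $[F]$: integer-coordinate cosets lie in $\langle [e_1]\rangle$, while the cosets $[F]$ and $[e_1 + F]$ consist of half-integer vectors. Enumerating the subgroups $\Gamma$ allowed in parts~(2) and~(3) and recording which of them contain a half-integer generator then produces the stated $f_{(M,g)}$ in each subcase.

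The main obstacle is organizational rather than conceptual: one must correctly match each subgroup $\Gamma$ appearing in the statement to the set of coordinate types it admits in the preimage, paying careful attention to the distinction between the $\Z_2 \oplus \Z_2$ and $\Z_4$ realizations of $Z(M)$ and to the relation $2F \equiv e_1 \pmod{\Lambda_{R^{\,\check{}}}}$ that holds when $\Lambda_Z/\Lambda_{R^{\,\check{}}}$ is cyclic. No input beyond the appendix data for $D_n$, Proposition~\ref{prop:TheLattices}, Lemma~\ref{lem:SimplyConnectedSplitRank}, and elementary parity considerations is required.
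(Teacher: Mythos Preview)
Your proposal is correct and follows essentially the same route as the paper. The paper parametrizes $v\in\Lambda_Z$ as $\sum_{j=1}^{n-1} k_j e_j + k_n F$ and reduces $2\sum_{\alpha\in R^+}\alpha(v)$ to $k_n\, n(n-1)\pmod 4$, while you parametrize by coordinates $c_j$ and reduce to the integer/half-integer dichotomy; since $k_n$ odd is exactly the half-integer case, the two computations are the same, and both finish by reading off from the integral-lattice tables in the appendix which $\Gamma$ admit half-integer vectors.
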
 

\begin{proof}
As is noted in Section~\ref{sec:TypeDn}, the central lattice is given by 
$\Lambda_Z = \langle e_1, \ldots, e_{n-1} , F \rangle$,
where $F = \frac{1}{2} \sum_{j=1}^{n} e_j$,
and the sum of the positive roots is 
$\sum_{\alpha \in R+} \alpha = \sum_{j =1}^{n} 2(n-j) \epsilon_j = \sum_{j =1}^{n-1} 2(n - j) \epsilon_j$.
Now, let $v = \sum_{j=1}^{n-1} k_j e_j + k_n F \in \Lambda_{I} \cap \overline{\mathcal{C}} \subseteq \Lambda_Z \cap \overline{\mathcal{C}}$,
then we have 
\begin{eqnarray*}
2 \sum_{\alpha \in R^+} \alpha (v) &=& \sum_{j=1}^{n} 4(n-j) \epsilon_j(v) \\
&=& \sum_{j=1}^{n-1} 4 k_j (n-j) + k_n \sum_{j=1}^{n} 4(n-j)\epsilon_j(F) \\
&\equiv& 2 k_n \sum_{j=1}^{n}(n-j) \mod 4 \\
&\equiv& 2 k_n \left( \frac{n(n-1)}{2} \right) \\
&\equiv& k_n n(n-1)\\
&\equiv& 
\left\{ \begin{array}{ll}
0 \mod 4, & \mbox{for $k_n$ even} \\
0 \mod 4, & \mbox{for $k_n$ odd and $n\equiv 0,1 \mod 4$}\\
2 \mod 4, & \mbox{for $k_n$ odd and $n\equiv 2, 3\mod 4$}.\\
\end{array}
\right.
\end{eqnarray*}

Consulting the Equations~\ref{eqn:IntegralLatticeD_nOdd} and \ref{eqn:IntegralLatticeD_nEven}, 
we see that statements (1), (2a) and (3a) of the lemma follow from the computation above.
Now, notice that the parity of $k_n$ is determined by $\|v\|$ and 
$\|v\|^2/c$ is an integer if and only if $k_n$ is even.
Then, statements (2b) and (3b) follow from the computation above and 
Equations~\ref{eqn:IntegralLatticeD_nOdd} and \ref{eqn:IntegralLatticeD_nEven}.
\end{proof}

\begin{lem}\label{lem:TheFunctionTypeF_4}
Let $(M,g)$ be the simple Lie group of Type $F_4$ 
equipped with a bi-invaraint metric $g$, 
then $f_{(M,g)} \equiv 0$
satisfies Equation~\ref{eqn:ComponentLengthEquation}.
\end{lem}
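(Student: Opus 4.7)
The plan is to reduce the $F_4$ case immediately to the simply-connected situation already handled by Lemma~\ref{lem:SimplyConnectedSplitRank}. The compact simple Lie group of type $F_4$ has trivial center $Z(F_4) = 1$, so it is simultaneously simply-connected and adjoint; in particular, the only discrete central subgroup available is $\Gamma = \{e\}$, hence $M$ is uniquely determined and $\pi_1(M)$ is also trivial. Combining these with Proposition~\ref{prop:TheLattices}(5) and (7), I conclude that
\[
\Lambda_{R\,\check{}}(M) \;=\; \Lambda_I(M) \;=\; \Lambda_Z(M).
\]

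Next, since $(M,g)$ is a simple Lie group viewed as a Type II symmetric space, the argument in the proof of Lemma~\ref{lem:SimplyConnectedSplitRank} gives $n_\beta = 2$ for every $\beta \in R^+$. Consequently, for any $v \in \Lambda_I(M) \cap \overline{\mathcal{C}}$,
\[
\sum_{\beta \in R^+} n_\beta\, \beta(v) \;=\; 2\sum_{\beta \in R^+} \beta(v) \;=\; 4\,\rho(v),
\]
where $\rho = \tfrac{1}{2}\sum_{\beta \in R^+} \beta$ denotes the lowest strongly dominant form. By Lemma~\ref{lem:Rho}, $\rho$ takes integer values on $\Lambda_{R\,\check{}}(M)$, and since this lattice coincides with $\Lambda_I(M)$, we obtain $4\rho(v) \equiv 0 \mod 4$. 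Hence $f_{(M,g)} \equiv 0$ solves Equation~\ref{eqn:ComponentLengthEquation}, as claimed.

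I do not expect any genuine obstacle in this case: the argument is essentially a citation of Lemma~\ref{lem:SimplyConnectedSplitRank}, whose hypotheses are verified by a single remark about the center of $F_4$. The only point where care is needed is confirming from the explicit description of the $F_4$ root system and its co-root lattice in Appendix~\ref{Sec:RootSystems} that the chain $\Lambda_{R\,\check{}} \leq \Lambda_I \leq \Lambda_Z$ collapses; this can be read off directly from the tables of central lattices compiled there, corroborating the group-theoretic observation $Z(F_4) = 1$.
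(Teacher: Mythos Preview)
Your proof is correct and follows exactly the paper's approach: the paper's proof consists of the single sentence ``Since $M$ is simply-connected the result follows from Lemma~\ref{lem:SimplyConnectedSplitRank}.'' You have simply unpacked that citation by recalling why $F_4$ is simply-connected (trivial center) and restating the content of Lemma~\ref{lem:SimplyConnectedSplitRank}.
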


\begin{proof}
Since $M$ is simply-connected the result follows from Lemma~\ref{lem:SimplyConnectedSplitRank}.
\end{proof}

\begin{lem}\label{lem:TheFunctionTypeG_2}
Let $(M,g)$ be the simple Lie group of Type $G_2$ equipped with a bi-invariant metric $g$, 
then $f_{(M,g)} \equiv 0$
satisfies Equation~\ref{eqn:ComponentLengthEquation}.
\end{lem}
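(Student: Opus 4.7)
The plan is to reduce the statement immediately to Lemma~\ref{lem:SimplyConnectedSplitRank}, exactly as was done for Type $F_4$ in Lemma~\ref{lem:TheFunctionTypeF_4}. The key input is the classical fact that the compact simple Lie group of type $G_2$ is simply-connected (and in fact has trivial center), so the only compact Lie group of type $G_2$ is $G_2$ itself; there is no nontrivial deck group $\Gamma$ to worry about. Thus $M$ is automatically a simply-connected split-rank irreducible symmetric space and Lemma~\ref{lem:SimplyConnectedSplitRank} directly applies.

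For completeness one can unpack what the cited lemma gives. Because $M$ is a simple Lie group, every positive restricted root satisfies $n_\beta = 2$, so for any $v \in \overline{\mathcal{C}} \cap \Lambda_I(M)$ one has
\[
\sum_{\beta \in R^+} n_\beta \beta(v) \;=\; 2 \sum_{\beta \in R^+} \beta(v) \;=\; 4 \rho(v),
\]
where $\rho = \tfrac{1}{2} \sum_{\beta \in R^+} \beta$ is the lowest strongly dominant form. Since $M$ is simply-connected, Proposition~\ref{prop:TheLattices}(7) tells us that $\Lambda_I(M) = \Lambda_{R^{\,\check{}}}(M)$; combined with the integrality of $\rho$ on the co-root lattice (Lemma~\ref{lem:Rho} in the Appendix), the right-hand side above is divisible by $4$. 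Hence Equation~\ref{eqn:ComponentLengthEquation} is satisfied with $f_{(M,g)} \equiv 0$.

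There is essentially no obstacle here: the explicit co-root data for $G_2$ catalogued in the Appendix is not even needed, because the conclusion is dictated by the two general structural facts (simple-connectedness and $n_\beta = 2$) already exploited in the simply-connected case. The only thing to be confirmed is that the group of type $G_2$ is simply-connected, which is standard from the classification of compact simple Lie groups.
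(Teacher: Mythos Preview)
Your proof is correct and follows exactly the same approach as the paper: since the compact simple Lie group of type $G_2$ is simply-connected, the result is an immediate consequence of Lemma~\ref{lem:SimplyConnectedSplitRank}. The additional unpacking you provide (that $n_\beta = 2$, $\Lambda_I = \Lambda_{R^{\,\check{}}}$, and $4\rho(v) \in 4\Z$) is just the content of that lemma's proof and is not needed separately.
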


\begin{proof}
Since $M$ is simply-connected the result follows from Lemma~\ref{lem:SimplyConnectedSplitRank}.
\end{proof}

\begin{lem}\label{lem:TheFunctionTypeE_8}
Let $(M,g)$ be the simple Lie group of Type $E_8$ 
equipped with a bi-invariant metric $g$, 
then $f_{(M,g)} \equiv 0$ satisfies Equation~\ref{eqn:ComponentLengthEquation}.
\end{lem}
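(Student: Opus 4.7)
The plan is to invoke Lemma~\ref{lem:SimplyConnectedSplitRank} directly, exactly as was done for Types $F_4$ and $G_2$. The compact simple Lie group of type $E_8$ has trivial center, hence is simply-connected, so any bi-invariant metric on it makes it into an irreducible simply-connected split-rank symmetric space. Since a simple compact Lie group, viewed as the symmetric space $U \times U / \Delta U$, has all restricted-root multiplicities equal to $2$, Lemma~\ref{lem:SimplyConnectedSplitRank} applies with $N = 1$.

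More concretely, I would argue as follows. Because $M$ is simply-connected, Proposition~\ref{prop:TheLattices}(7) yields $\Lambda_I(M) = \Lambda_{R^{\,\check{}}}(M)$. By Lemma~\ref{lem:Rho} from the appendix, the lowest strongly dominant form $\rho = \tfrac{1}{2}\sum_{\beta \in R^+} \beta$ is integer-valued on the co-root lattice. Hence for every $v \in \overline{\mathcal{C}} \cap \Lambda_I(M)$ we have
\[
\sum_{\beta \in R^+} n_\beta\, \beta(v) \;=\; 2 \sum_{\beta \in R^+}\beta(v) \;=\; 4 \rho(v) \;\equiv\; 0 \pmod 4,
\]
so Equation~\ref{eqn:ComponentLengthEquation} is satisfied by the zero function $f_{(M,g)} \equiv 0$.

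There is essentially no obstacle: the argument is identical to the $F_4$ and $G_2$ cases and does not require any information specific to the $E_8$ root system beyond its existence and the fact that $E_8$ is centerless. The only thing one needs to confirm is that the tabulated data in Appendix~\ref{Sec:RootSystems} records $Z(E_8) = \{1\}$, which is standard.
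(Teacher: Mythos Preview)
Your proposal is correct and matches the paper's own proof, which likewise invokes Lemma~\ref{lem:SimplyConnectedSplitRank} using that the compact simple group of type $E_8$ is simply-connected. The paper additionally remarks on an alternative route via the inclusion $D_8 \subset E_8$ and Lemma~\ref{lem:TheFunctionTypeD_n}, but your argument is the primary one given there.
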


\begin{proof}
Since $M$ is simply-connected the result follows from Lemma~\ref{lem:SimplyConnectedSplitRank}.
Alternatively, noting that $D_8 \subset E_8$ implies that 
 $\Lambda_{Z}^{E_8} \subset \Lambda_{Z}^{D_8}$, this lemma follows from 
 Lemma~\ref{lem:TheFunctionTypeD_n} in the case where $n = 4$. 
\end{proof}

\begin{lem}\label{lem:TheFunctionTypeE_7}
Let $(M,g)$ be a simple Lie group of type $E_7$ equipped with a bi-invariant metric $g$.
That is, letting $\widetilde{E}_7$ denote the 
unique compact simply-connected Lie group of type $E_7$, 
$M= \widetilde{E}_7/\Gamma$, where $\Gamma \leq Z(\widetilde{E}_7) \simeq \Z_2$, 
with $g = c \bar{g}$ for some $c >0$.

\begin{enumerate}
\item If $\Gamma$ is trivial, then $f_{(M,g)} \equiv 0$ satisfies Equation~\ref{eqn:ComponentLengthEquation}.
\item If $\Gamma = Z(\widetilde{E}_7) \simeq \Z_2$, then 
$$f_{(M,g)} = \left\{\begin{array}{ll}
0 & \frac{\|v\|^2}{c} \mbox{ is an integer} \\
2 & \frac{\|v\|^2}{c} \mbox{ is not an integer} \\
\end{array}
\right.
$$
satisfies Equation~\ref{eqn:ComponentLengthEquation}.
\end{enumerate}

\end{lem}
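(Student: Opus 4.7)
The plan is to follow the template established by Lemmas~\ref{lem:TheFunctionTypeA_n}--\ref{lem:TheFunctionTypeD_n} for the classical types. Since $M$ is of type II, every positive restricted root has multiplicity two, so Equation~\ref{eqn:ComponentLengthEquation} reads
\[
2\sum_{\beta \in R^+} \beta(v) \;=\; 4\rho(v) \;\equiv\; f_{(M,g)}(\|v\|) \pmod 4,
\]
where $\rho = \frac{1}{2}\sum_{\beta \in R^+}\beta$ is the lowest strongly dominant form. The problem thus reduces to computing $4\rho(v) \bmod 4$ for $v$ in the integral lattice $\Lambda_I(M)$ and matching it to a function of $\|v\|^2/c$.

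For statement (1), when $\Gamma$ is trivial the space $M = \widetilde{E}_7$ is simply-connected, so $\Lambda_I(M) = \Lambda_{R^{\,\check{}}}(M)$. Lemma~\ref{lem:Rho} then gives that $\rho$ is integer-valued on $\Lambda_{R^{\,\check{}}}(M)$, forcing $4\rho(v) \equiv 0 \pmod 4$. This is just the conclusion of Lemma~\ref{lem:SimplyConnectedSplitRank}, so no further work is required.

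For statement (2), when $\Gamma = Z(\widetilde{E}_7) \simeq \Z_2$, Proposition~\ref{prop:TheLattices}(6) gives $\Lambda_I(M) = \Lambda_Z$. Since $\Lambda_Z/\Lambda_{R^{\,\check{}}} \simeq Z(\widetilde{E}_7) \simeq \Z_2$, I would fix a representative $F \in \Lambda_Z$ of the non-trivial coset (chosen from the explicit description in Appendix~\ref{Sec:RootSystems}) and write each $v \in \Lambda_Z \cap \overline{\mathcal{C}}$ as $v = w + kF$ with $w \in \Lambda_{R^{\,\check{}}}$ and $k \in \Z$. This yields
\[
4\rho(v) \;=\; 4\rho(w) + 4k\rho(F) \;\equiv\; 4k\rho(F) \pmod 4,
\]
since $\rho(w)\in\Z$. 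Consulting the tabulated data for $E_7$, I expect $\rho(F)$ to be a half-integer with odd numerator, so that $4k\rho(F) \equiv 2k \pmod 4$. Hence the residue of the Morse index is $0$ or $2$ according to the parity of $k$, mirroring the dichotomy already seen for $B_n$, odd $C_n$ and even $D_n$.

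The final step is to relate the parity of $k$ to the integrality of $\|v\|^2/c$. With the normalization of $\bar{g}$ coming from $-2rB$ (as in the standing assumption above), one has $\|w\|^2/c \in \Z$ for $w \in \Lambda_{R^{\,\check{}}}$, while the appendix data should give $\|F\|^2/c$ in the form $\tfrac{2m+1}{2}$ and $\langle w, F\rangle/c \in \tfrac{1}{2}\Z$ with appropriate parity. Expanding
\[
\tfrac{1}{c}\|v\|^2 \;=\; \tfrac{1}{c}\|w\|^2 + \tfrac{2k}{c}\langle w, F\rangle + \tfrac{k^2}{c}\|F\|^2
\]
and tracking the fractional parts should then show that $\|v\|^2/c \in \Z$ if and only if $k$ is even, yielding the stated $f_{(M,g)}$. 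The only real obstacle is the bookkeeping for the $E_7$ root system---the $E_7$ inner-product data is less transparent than for the classical families---but the arithmetic is formally identical to the $D_n$ analysis in Lemma~\ref{lem:TheFunctionTypeD_n}, so once the explicit values of $\rho(F)$ and $\|F\|^2$ are extracted from Appendix~\ref{Sec:RootSystems} the argument goes through verbatim.
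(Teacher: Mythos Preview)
Your proposal is correct and follows the same overall strategy as the paper: reduce Equation~\ref{eqn:ComponentLengthEquation} to the computation of $4\rho(v)\bmod 4$ on $\Lambda_Z$, and then match the resulting $\{0,2\}$-dichotomy to the integrality of $\|v\|^2/c$. The paper carries this out via an explicit coordinate parametrization of $\Lambda_Z$, writing $v=\tfrac12\sum_{j=1}^6 c_j e_j+\tfrac12 c_7(e_7-e_8)$ with $c_1\equiv\cdots\equiv c_6\bmod 2$, and then computing $4\rho(v)\equiv 0$ or $2\pmod 4$ according to whether $c_7\equiv c_1\bmod 2$; it finishes by noting $\|v\|^2/c=\tfrac14(\sum c_j^2+2c_7^2)$ is an integer precisely in that case. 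Your coset decomposition $v=w+kF$ is a cleaner, more structural packaging of exactly the same computation: the paper's parity condition $c_7\equiv c_1\bmod 2$ is precisely the condition $k$ even, i.e.\ $v\in\Lambda_{R^{\,\check{}}}$.

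Two small corrections to your expectations, both harmless. First, for the $F$ listed in Section~\ref{sec:TypeE7} one has $\langle w,F\rangle\in\Z$ for all $w\in\Lambda_{R^{\,\check{}}}$ (not merely $\tfrac12\Z$), since $E_7$ is simply-laced with all co-roots of squared length $2$. Second, the concrete values you need are $4\rho(F)=2\rho(F)\cdot 2=41\cdot 2=82\equiv 2\pmod 4$ and $\|F\|^2=7/2$; with these in hand your expansion of $\|v\|^2/c$ gives an integer iff $k$ is even, exactly as you anticipated. So nothing is missing beyond plugging in these two numbers.
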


\begin{proof}
The first statement follows from Lemma~\ref{lem:SimplyConnectedSplitRank}.
So, we may assume that $M = \widetilde{E}_7/ Z(\widetilde{E}_7)$, in which case 
the integral lattice $\Lambda_I(M)$ coincides with the central lattice $\Lambda_Z$.

Now, let $v \in \Lambda_{I} \cap \overline{\mathcal{C}} = \Lambda_Z \cap \overline{\mathcal{C}}$, 
then following the notation and conventions of Section~\ref{sec:TypeE7}, 
one can see that $v = \frac{1}{2}\sum_{j=1}^{6} c_j e_j + \frac{1}{2}c_7(e_7 - e_8)$ 
for $c_1, \ldots, c_6, c_7 \in \Z$ satisfying $c_1 \equiv \cdots \equiv c_6 \mod 2$ and $c_7$ arbitrary.
Then we have 
\begin{eqnarray*}
2 \sum_{\alpha \in R^+} \alpha (v) &=& 4 \rho (v) \\
&=& 34 c_7 + 2\sum_{j=1}^{5}(6-j) c_j \\
&\equiv& 2c_7 + 2(5c_1 + 4c_2 + 3c_3 + 2c_4 + c_5) \mod 4 \\
&\equiv& 
\left\{ \begin{array}{ll}
2c_7 \mod 4, & \mbox{for $c_1, \ldots , c_6$ even} \\
2c_7 +2 \mod 4, & \mbox{for $c_1, \ldots , c_6$ odd}.\\
\end{array}
\right.
\end{eqnarray*}
Recalling that $c_1, \ldots , c_6$ have the same parity the previous equation becomes
\begin{eqnarray*}
2 \sum_{\alpha \in R^+} \alpha (v) &=&
\left\{ \begin{array}{ll}
0 \mod 4, & \mbox{for $c_7 \equiv c_1 \mod 2$} \\
2 \mod 4, & \mbox{for $c_7 \not\equiv c_1 \mod 2$}.\\
\end{array}
\right.
\end{eqnarray*}
However, it is easy to deduce that $\|v \|^2/c = \frac{1}{4} (\sum_{j=1}^{6} c_j^2 + 2c_7^2)$ is an integer 
if and only if $c_7 \equiv c_1 \mod 2$.
And the result follows.

\end{proof}

\begin{lem}\label{lem:TheFunctionTypeE_6}
Let $(M,g)$ be a simple Lie group of Type $E_6$ equipped with a bi-invariant metric $g$.
That is, letting $\widetilde{E}_6$ denote the unique simply-connected compact Lie group of type $E_6$, 
$M = \widetilde{E}_6/\Gamma$, where $\Gamma \leq Z(\widetilde{E}_6) \simeq \Z_3$, 
with $g = c\bar{g}$ for some $c >0$.
Then $f_{(M,g)} \equiv 0$ satisfies Equation~\ref{eqn:ComponentLengthEquation}.
\end{lem}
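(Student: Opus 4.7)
The plan is to run the same reduction as in Lemma~\ref{lem:TheFunctionTypeE_7}: since $(M,g)$ is a simple Lie group equipped with a bi-invariant metric, we have $n_\beta = 2$ for every $\beta\in R^+$, and consequently
$$
2\sum_{\beta\in R^+}\beta(v) \;=\; 4\rho(v),
$$
which is $\equiv 0\mod 4$ precisely when $\rho(v)\in\Z$. Hence $f_{(M,g)}\equiv 0$ satisfies Equation~\ref{eqn:ComponentLengthEquation} if and only if $\rho$ is integer-valued on $\Lambda_I(M)\cap\overline{\mathcal{C}}$. For $\Gamma$ trivial this is immediate from Lemma~\ref{lem:SimplyConnectedSplitRank}; for $\Gamma = Z(\widetilde{E}_6)\simeq\Z_3$, Proposition~\ref{prop:TheLattices}(6) identifies $\Lambda_I(M)$ with the full central lattice $\Lambda_Z$, so the lemma reduces to verifying integrality of $\rho$ on $\Lambda_Z$.

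To carry out this integrality check, I would use the explicit realization of the $E_6$ root system and central lattice recorded in Appendix Section~\ref{sec:TypeE6}. Picking a generator $\omega$ of $\Lambda_Z/\Lambda_{R\,\check{}}\simeq\Z_3$ in those coordinates, I would compute $2\rho(\omega) = \sum_{\alpha\in R^+}\alpha(\omega)$ directly and check that it is an even integer. Equivalently---and perhaps more symmetrically---one may expand $\rho = \sum_j b_j\alpha_j$ in the simple root basis: each coefficient $b_j$ equals $\rho(\omega_j)$ on the fundamental coweight $\omega_j\in\Lambda_Z$ dual to $\alpha_j$, and also equals the $j$-th column sum of the inverse Cartan matrix $C_{E_6}^{-1}$. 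Direct computation gives these column sums as the integers $(8,11,15,21,15,8)$ (in the Bourbaki labeling), exhibiting $\rho$ as an element of the $\Z$-span of the simple roots and hence as an integer-valued functional on all of $\Lambda_Z$. Combined with the integrality of $\rho$ on $\Lambda_{R\,\check{}}$ already guaranteed by Lemma~\ref{lem:Rho}, this gives the required integrality on $\Lambda_I(M)\leq\Lambda_Z$ for every admissible $\Gamma$.

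The main delicate point is this column-sum integrality. Although $\det(C_{E_6}) = 3$ forces every individual entry of $C_{E_6}^{-1}$ to have denominator $3$, the arithmetic of the $E_6$ Dynkin diagram conspires so that each column sums to an integer. This is the $E_6$-analogue of the parity miracles that drive the clean cases in Lemmas~\ref{lem:TheFunctionTypeA_n}--\ref{lem:TheFunctionTypeD_n}, and it stands in contrast with the problematic quotients of type $A_{2k-1}$, $B_n$, $C_n$ ($n\equiv 1,2\mod 4$), certain $D_n$, and $E_7/Z$, where the analogous cancellation fails on a single nontrivial coset and forces $f_{(M,g)}$ to take the value $2$. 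Once this integrality is checked, the conclusion $f_{(M,g)}\equiv 0$ follows uniformly for both $\Gamma$ trivial and $\Gamma = Z(\widetilde{E}_6)$.
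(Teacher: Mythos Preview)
Your proposal is correct and follows essentially the same reduction as the paper: both arguments boil down to showing that $4\rho\equiv 0\bmod 4$ on all of $\Lambda_Z$, which amounts to checking that $\rho$ is integer-valued on a generator of $\Lambda_Z/\Lambda_{R\,\check{}}$. The paper does this by a single direct evaluation on the explicit generator $F=\tfrac{2}{3}(e_6+e_7-e_8)$ from Section~\ref{sec:TypeE6}, obtaining $2\sum_{\alpha\in R^+}\alpha(F)=32$; your route via the row/column sums of $C_{E_6}^{-1}$ (which indeed come out to $(8,11,15,21,15,8)$) is a more structural verification of the same fact, namely that $\rho$ lies in the root lattice of $E_6$.
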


\begin{proof}
Following the notation of Section~\ref{sec:TypeE6}, we recall that the central lattice of $E_6$ 
is generated by the vectors $v_1, \ldots, v_6$ and $F = \frac{2}{3}(e_6 + e_7 - e_8)$.
Now, since $v_1, \ldots , v_6 \in \Lambda_{R^{\, \check{}}}$, Lemma~\ref{lem:Rho} tells us that
$2\sum_{\alpha \in R^+} \alpha (v_j) = 4\rho(v_j)  \equiv 0 \mod 4$ for all $j = 1, \ldots , 6$. 
And, since $2\sum_{\alpha \in R^+} \alpha (F) = 32 \equiv 0 \mod 4$,
we find that $2\sum_{\alpha \in R^+} \alpha$ is congruent to zero on the entire central lattice
and the result follows.
\end{proof}

This concludes the proof of the proposition.
\end{proof}

\subsection{CLU Length Spectra are Generic among Symmetric Spaces}\label{SubSec:CLUResidual}
In this section we prove Lemma~\ref{lem:CLUResidual} which states that a generic compact symmetric space 
has CLU length spectrum.

\begin{proof}[Proof of Lemma~\ref{lem:CLUResidual}] 
Let $\Lambda_I(M) \leq \germ{a} = \germ{a}_0 \times \germ{a}_1 \times \cdots \times \germ{a}_q \subseteq \germ{p}$ denote the integral lattice of $M$
and let $h_A$ denote the flat metric on $M_0$ corresponding 
to the positive definite real symmetric matrix $A \in \mathcal{S}^+(d)$.
If $g$ is the symmetric metric on $M$ induced by the metric 
$h_A \times c_1^{2}g_0^{1} \times \cdots \times c_q^2 g_0^q$ on 
$M_0\times \widetilde{M}_1 \times \cdots \times \widetilde{M}_q$, then for each 
$v = (v_0, v_1, \ldots , v_q) \in \Lambda_I(M)$
the corresponding closed geodesic $\gamma_v: [ 0, 1] \to (M,g)$ 
has length 
$$\tau = (v_0^t A v_0 + \sum_{j=1}^{q} c_j^2 \| v_j\|_{j,0}^2)^{1/2},$$
where $\| \cdot \|_{j,0}$ is the norm associated with $g_0^j$, for each $j = 1, \ldots , q$.
Then for any $v, w \in \Lambda_I(M)$ define the continuous map
$Y_{(v,w)} : \mathscr{R}_{\rm{symm}}(M) \equiv \mathcal{S}^+(d) \times \R_+ \times \cdots \times \R_+ \to \R$
via
$$(A, c_1, \ldots , c_q) \mapsto v_0^t A v_0 -w_0^t A w_0 + \sum_{j=1}^{q} c_j^2 (\|v_j\|_{j,0}^2 - \| w_j \|_{j,0}^2).$$
Now, let $\Delta = \{ (v,w) \in \Lambda_I(M) \times \Lambda_I(M) : \| v_j \|_{j,0} \neq \| w_j \|_{j,0} \mbox{ for some } j \geq 1\}$.
Then it is clear that a metric $g$ has CLU length spectrum 
if and only if $g \in \cap_{\delta \in \Delta} Y_{\delta}^{-1}(\R - \{0\})$.
\end{proof}

\subsection{Determining the Morse Index through Length and Dimension}\label{SubSec:ThmIntroMod4}
We present the proof of Theorem~\ref{thm:IntroMod4} which establishes that for certain compact symmetric spaces $(M,g)$ for which the non-Euclidean part of its cover is trivial or split-rank, 
the Morse index modulo $4$ of a closed geodesic $\gamma$ of length $\tau$ 
is determined by $\tau$ and $\dim \Fix_\gamma(\Phi_\tau)$.

\begin{proof}[Proof of Theorem~\ref{thm:IntroMod4}]

Let $(M,g)$ be a symmetric space for which the non-Euclidean part of its universal cover is 
split-rank and let $\tau \in \Spec_L(M,g)$.
Now let $\gamma$ be a closed geodesic in $(M,g)$ of length $\tau$ 
with $\gamma'(0) = (v_0, v_1, \ldots , v_\ell)$.
Then by Proposition~\ref{prop:Component Dependence}, the Morse index of $\gamma$ satisfies  
$$\sigma_{\Delta M}(\gamma) \equiv h_{(M,g)}(\|v_0\|, \|v_1\|, \ldots , \|v_\ell \|, \dim \Fix_\gamma(\Phi_\tau)) \mod 4,$$
for some function $h_{(M,g)}(x_0, x_1,\ldots, x_\ell, y) = f_{(M,g)}(x_0, x_1, \ldots, x_\ell) - y + C(M)$.

Now, in the event that the metric $g$ has CLU length spectrum we see that for each $j = 0, 1, \ldots , \ell$
$\| v_j \| = c_j(\tau)$. Hence, we may replace the function $h_{(M,g)}$ with 
$H_{(M,g)} : \Spec_L(M,g) \times \{ 0, 1, \ldots , 2\dim M -1) \} \to \{0,1,2,3\}$.
In the case where $(M,g)$ satisfies any one of $(3)$- $(5)$, an examination of the proof of 
Proposition~\ref{prop:Component Dependence}
(and recalling Lemma~\ref{lem:SimplyConnectedSplitRank}) reveals that the function $f_{(M,g)}$ is identically zero and we find 
$$\sigma_{\Delta M}(\gamma) \equiv H_{(M,g)}( \dim \Fix_\gamma(\Phi_\tau)) \mod 4.$$

Finally, suppose $M = G/K$ is a split-rank symmetric space with the metric induced 
by $-cB \upharpoonright \germ{p} \times \germ{p}$ for some $c>0$, where $B$ is the Killing form on $\germ{g}$.
Given $v \in \germ{p} \equiv T_oM$, we have $\|v \|^2 = -cB(v,v) = -c \Tr(\ad(v) \circ \ad(v))$, 
so that in the case where $v \in \germ{a} \subset \germ{p}$ this becomes:
$$\|v \|^2 = \sum_{\beta \in R^+} \dim \germ{g}^{\beta} (\pi \beta(v)^2) = 
-2\pi \sum_{\beta \in R^{+}} n_{\beta} \beta(x)^2.$$



Hence, for any $v , w \in \Lambda_{Z}(M) \subset \germ{a}$ we see
$\|v\|^2 = \|w\|^2$ if and only if $\sum_{\beta \in R^+} n_\beta \beta(v)^2 = 
\sum_{\beta \in R^+} n_\beta \beta(w)^2$, which implies by the lemma that 
$\sum_{\beta \in R^+} n_\beta \beta(v) \equiv \sum_{\beta \in R^+} n_\beta \beta(w) \mod 2.$
Now, $M$ is split-rank if and only if $n_\beta = 2 \widehat{n}_\beta \in 2 \Z$ for all $\beta \in R^+$.
Then, for any $v, w \in \Lambda_Z(M) \subset \germ{a}$ we see that 
$\|v\|^2 = \|w\|^2$ if and only if $\sum_{\beta \in R^+} \widehat{n}_\beta \beta(v)^2 = \sum_{\beta \in R^+} \widehat{n}_\beta \beta(w)^2$, which implies that 
$\sum_{\beta \in R^+} \widehat{n}_\beta \beta(v) = \sum_{\beta \in R^+} \widehat{n}_\beta \beta(w) \mod 2.$ Therefore, we conclude that $\|v \| = \|w \|$ implies that 
$\sum_{\beta \in R^{+}} n_\beta \beta(v) \equiv \sum_{\beta \in R^{+}} n_\beta \beta(w) \mod 4$.
That is, there is a function $f_{(M,g)}$ such that for any 
$v \in \overline{\mathcal{C}} \cap \Lambda_I(M) \subset \overline{\mathcal{C}} \cap \Lambda_Z(M)$ we have
$$\sum_{\beta \in R^{+}} n_\beta \beta(v) \equiv f_{(M,g)}(\tau) \mod 4,$$
where $\tau = \|v\|$ is the length of the geodesic $\gamma$. 
\end{proof}

The following examples show that Theorem~\ref{thm:IntroMod4} fails 
for arbitrary symmetric spaces.

\begin{exa}[A semi-simple Lie group for which Theorem~\ref{thm:IntroMod4} fails]\label{exa:Mod4Counterexample1}
For any simple Lie group $H$ with Lie algebra $\germ{h}$,
let $\bar{g}^H$ denote the bi-invariant metric induced by $-r B$, 
where $B$ is the Killing form on $\germ{h}$
and $r>0$ is chosen to agree with the inner product structure used in Appendix~\ref{SubSec:Classification}.
Now, consider the semi-simple Lie group $U = \SU(2) \times \SO(3)$ equipped with the 
bi-invariant metric $g = \frac{1}{4}\bar{g}^{\SU(2)} \times \bar{g}^{\SO(3)}$. 
Since $\SU(2)$ is type $A_1$, 
we can deduce from Equation~\ref{eqn:IntegralLatticeA_n} that the integral lattice 
of $U = \SU(2) \times \SU(2)/ 1 \times Z(\SU(2))$ is given by 
$$\Lambda_I(U) = \Lambda_I(\SU(2)) \oplus \Lambda_I(\SO(3)) = \langle (2L_1, 0), (0, L_1) \rangle.$$
The vectors $v = (2L_1, 2L_1), w = (4L_1, L_1) \in \Lambda_I(U)$ belong to the same Weyl chamber $\mathcal{C}$
and are both of length $\tau \equiv \sqrt{5}$ with respect to the metric $g$.
As they are regular vectors in $\germ{u}$ we see that 
$\dim \Fix_{v}(\Phi_{\sqrt{5}}) = \dim \Fix_{w}(\Phi_{\sqrt{5}}) \equiv D$ is of maximal dimension.
And, since $v$ and $w$ are the only vectors of length $\sqrt{5}$ in $\Lambda_I(U) \cap \overline{\mathcal{C}}$,
Equation~\ref{eqn:WaveInvariants1} yields 
$$\Wave_0^{\bullet}(\sqrt{5}) = \left( \frac{1}{2\pi i} \right)^{\frac{D-1}{2}}
\left( i^{-\sigma_v} \int_{\Fix_v(\Phi_{\sqrt{5}})} d \mu_v + i^{-\sigma_w} \int_{\Fix_w(\Phi_{\sqrt{5}})} d\mu_w \right),$$
where $\sigma_v$ (resp. $\sigma_w$) is the Morse index of the geodesic $\gamma_v$ (resp. $\gamma_w$)
and $\mu_v$ (resp. $\mu_w$) is the canonical Duistermaat-Guillemin measure on 
$\Fix_v(\Phi_\tau)$ (resp. $\Fix_w(\Phi_\tau)$).
Now, using Proposition~\ref{prop:Component Dependence} and Lemma~\ref{lem:TheFunctionTypeA_n}, 
we find that $\sigma_v$ fulfills 
\begin{eqnarray*}
\sigma_v &\equiv& f_{(\SU(2), \frac{1}{4} \bar{g}^{\SU(2)})}(\|2L_1\|) + f_{(\SO(3), \bar{g}^{\SO(3)})}(\|2 L_1 \|) + D \mod 4\\
&\equiv& 0 + 0 + D \mod 4 \\
&\equiv& D \mod 4.
\end{eqnarray*}
On the other hand, we see that $\sigma_w$ satisfies
\begin{eqnarray*}
\sigma_w &\equiv& f_{(\SU(2), \frac{1}{4} \bar{g}^{\SU(2)})}(\|4L_1\|) + f_{(\SO(3), \bar{g}^{\SO(3)})}(\| L_1 \|) + D \mod 4\\
&\equiv& 0 + 2 + D \mod 4 \\
&\equiv& D +2 \mod 4.
\end{eqnarray*}

\noindent
Therefore, $\sigma_v - \sigma_w \equiv 2 \mod 4$ and
we conclude that Theorem~\ref{thm:IntroMod4} does not hold.
We also see that in the event that $\Fix_{v}(\Phi_{\sqrt{5}})$ and $\Fix_w(\Phi_{\sqrt{5}})$ have the same 
volume with respect to their Duistermaat-Guillemin measures, the wave invariant $\Wave_0^{\bullet}(\sqrt{5})$ is zero.
Clearly, similar examples can be constructed for certain reducible homogeneity types 
$M = \Gamma \backslash (M_0 \times \widetilde{M}_1 \times \cdots \times \widetilde{M}_q) \not\in \mathscr{H}$, 
where $\mathscr{H}$ is as defined in Theorem~\ref{thm:MainResult}. 
\end{exa}


\begin{exa}[A family of irreducible symmetric spaces for which Theorem~\ref{thm:IntroMod4} fails]\label{exa:Mod4Counterexample2}
A symmetric space $M = G/K$ is said to be of \emph{maximal rank} if 
$\operatorname{rank}(M) = \operatorname{rank}(G)$ or, equivalently, 
its restricted roots occur with multiplicity one \cite[Proposition VI.4.1]{Loos}.
Now, for any compact semi-simple Lie group $G$ with maximal torus $T$ there is 
an involution $\sigma \in \operatorname{Aut}(G)$ such that 
for any $x \in T$ we have $\sigma(x) = x^{-1}$ \cite[Theorem V.4.2]{Loos}
and the associated \emph{space of symmetric elements} 
$G_{\sigma} \equiv \{ x \sigma(x)^{-1} : x \in G \} \subset G$
is a symmetric space of maximal rank with restricted root system isomorphic 
to that of $G$; however, the roots occur with multiplicity \emph{one} instead of two.

Now, let $\sigma \in \operatorname{Aut}(\SO(2n+1))$, $n \geq 2$, be an automorphism of the type discussed 
in the previous paragraph and let $\SO(2n+1)_\sigma$ denote the corresponding space of symmetric elements.
The restricted root system $R$ of $\SO(2n+1)_\sigma$ is of type $B_n$ and the integral lattice
is given by $\Lambda_I = \Lambda_Z = \langle e_1, \ldots , e_n \rangle$.
Consider the vectors $v = (7, 6, 0, \ldots , 0), w = (9, 2, 0, \ldots , 0) \in \overline{\mathcal{C}} \cap \Lambda_I$ 
of length $\tau = \sqrt{85}$ and let $\gamma_v$ and $\gamma_w$ denote the corresponding closed geodesics of 
length $\tau$ in $\SO(2n+1)_\sigma$.
Then, it is clear that $\{\alpha \in R^{+} : \alpha (v) = 0\} = \{ \alpha \in R^{+}: \alpha(w) = 0\}
= \{ \epsilon_j : 3 \leq j \leq n \} \cup \{\epsilon_\mu \pm \epsilon_\nu : 3\leq \mu < \nu \leq n\}$, which is a set 
of order $(n-2)^2$, so that $\operatorname{deg}_{\operatorname{sing}}(v) = \operatorname{deg}_{\operatorname{sing}}(w) = (n-2)^2$ or, equivalently,
$$\dim \Fix_{\gamma_v}(\Phi_\tau) = \dim \Fix_{\gamma_w}(\Phi_\tau) \equiv D,$$
where, as before, for any closed geodesic $\gamma$ of length $\tau$, $\Fix_{\gamma}(\Phi_\tau)$ 
denotes the component of $\Fix(\Phi_\tau)$ containing $\gamma'(t)/ \| \gamma'(t) \|$.
Using the formula for the sum of the positive roots provided in Section~\ref{sec:TypeBn} 
we find that  
$$\sum_{\alpha \in R^{+}} \alpha(v) \equiv \left\{ \begin{array}{ll}
3 \mod 4 & \mbox{for $n$ even} \\
1 \mod 4 & \mbox{for $n$ odd}
\end{array}
\right.
$$

and 
$$\sum_{\alpha \in R^{+}} \alpha(w) \equiv \left\{ \begin{array}{ll}
1 \mod 4 & \mbox{for $n$ even} \\
3 \mod 4 & \mbox{for $n$ odd}
\end{array}
\right.
$$
It then follows from Equation~\ref{eqn:MorseIndex} that 
the Morse indexes of $\gamma_v$ and $\gamma_w$ are not congruent modulo $4$ 
(but they do have the same parity).
Therefore, it is not the case that the Morse index modulo $4$ of a closed geodesic $\gamma$ 
in $\SO(2n+1)_\sigma$ is a function of its length $\tau$ and the dimension of the corresponding 
component of $\Fix(\Phi_\tau)$.
\end{exa}


\begin{rem}
It would appear to be of interest to explore whether cancellations can occur in the trace formula for the examples presented above and other symmetric spaces for which Theorem~\ref{thm:IntroMod4} fails.
This will be taken up in a subsequent article.
\end{rem}


\section{Unclean Left-Invariant Metrics on $\SO(3)$ and $S^3$}\label{Sec:UncleanMetrics}

Theorem~\ref{thm:SymmSpcsCIH} states that every compact globally symmetric space is clean. In this section we provide a proof of Theorem~\ref{thm:UncleanMetrics} which states that there are homogeneous metrics that fail to be clean. Indeed, within the class of naturally reductive left-invariant metrics on $\SO(3)$, we give an explicit description of the metrics that are clean and unclean. We find that the clean metrics contain a residual set, while the unclean metrics form a dense subset. 
Therefore, even in the setting of homogeneous Riemannian metrics, a new technique for analyzing the singularities of the wave trace is needed. The unclean metrics we observe satisfy condition (1) of Definition~\ref{dfn:Clean}, but possess periods $\pm \tau$ for which condition (2) is not met. We are able to verify condition (1) by explicitly computing the closed geodesics for these metrics. We are not aware of whether condition (1) is always satisfied for homogeneous metrics. Although we discuss left-invariant naturally reductive metrics on $\SO(3)$, similar statements and arguments apply to the left-invariant naturally reductive metrics on $S^3 \simeq \SU(2)$; i.e., the Berger spheres. 




\subsection{Classification of Naturally Reductive Metrics on Lie Groups}

Let $(M,g)$ be a connected homogeneous Riemannian manifold.   Choose a base
point $p_0 \in M$.  Let $G$ be a transitive group of isometries of $(M,g)$, and let
$K$ be the isotropy group of $p_0$.  Now, suppose the Lie algebra $\germ{g}$ of $G$  decomposes into a direct sum $\germ{g}=\germ{K}+\germ{p}$, where $\germ{K}$ is the Lie algebra of $K$ and $\germ{p}$ is an $\Ad(K)$-invariant complement of $\germ{K}$.  Given a vector $X \in\germ{g}$ we obtain a Killing field $X^*$ on $M$ by $X_{p}^*  \equiv \frac{d}{dt}|_{t=0} \exp_{H}{tX}\cdot p$ for $p\in M$. The map $X \mapsto X^*$ is an antihomomorphism of Lie algebras. We may identify $\germ{p}$ with $T_{p_0}M$ by the linear map $X \mapsto X_{p_0}^*$.   Thus, the
homogeneous Riemannian metric $g$ on $M$  corresponds to an inner product
$\langle \cdot , \cdot \rangle$ on $\germ{p}$.   For $X\in\mg$, write $X=X_\germ{K}+ X_\germ{p} $ with $X_\germ{K}\in \germ{K}$ and $X_\germ{p}\in \germ{p}$.   Recall that for $X,Y\in \germ{p}$,
\begin{eqnarray}\label{eqn:NatRed} 
(\nabla_{X^*}Y^*)_{p_0} = -\frac{1}{2}([X,Y]_{\germ{p}}^*)_{p_0} + W(X,Y)^*_{p_0},
\end{eqnarray}
where $W: \germ{p} \times \germ{p} \to \germ{p}$ is the symmetric bilinear map defined by 
$$2 \langle W(X,Y), Z \rangle = \langle [Z, X]_{\germ{p}}, Y \rangle + \langle X, [ Z,
Y]_{\germ{p}} \rangle.$$

\begin{dfn}\label{def:NatRed}
Let $(M,g)$ be a Riemannian homogeneous space and let $G$ be a transitive group
of isometries of $(M,g)$, so that $M = G/K$.
\begin{enumerate}
\item $(M,g)$ is said to be \emph{reductive} (with respect to $G$), if there is an $\Ad(K)$-invariant complement 
$\germ{p}$ of $\germ{K}$ in $\germ{g}$.

\item $(M,g)$  is said to be \emph{naturally reductive} (with respect to $G$) or \emph{$G$-naturally reductive}, if there exists an $\Ad(K)$-invariant complement $\germ{p}$ of $\germ{K}$ (as above)
such that 
$$\langle [Z, X]_\germ{p}, Y \rangle + \langle X, [Z,Y]_\germ{p} \rangle =0,$$
or equivalently $W \equiv 0$. That is, for any $Z \in \germ{p}$  the map $[Z, \cdot ]_{\germ{p}} : \germ{p} \to \germ{p}$ is skew symmetric with respect to $\langle \cdot , \cdot \rangle$.

\item $(M,g)$ is said to be \emph{normal homogeneous}, if there is an $\Ad(G)$-invariant inner product $Q$ on $\germ{g}$ such that $$Q(\germ{p}, \germ{K})= 0 \mbox{ and } Q \upharpoonright \germ{p} = \langle \cdot , \cdot \rangle.$$
\end{enumerate}
\end{dfn}


\begin{rem}\label{rem:NatRed}
If $G$ is a connected group of isometries acting transitively on $(M,g)$, then $(M,g)$ is reductive with respect to $G$ \cite{KS}. 
\end{rem}

In \cite{DZ}, D'Atri and Ziller addressed the problem of classifying the naturally reductive left-invariant metrics on compact Lie groups. Recalling that for any subgroup $K$ of the Lie group $U$ the natural action of $G \equiv U\times K$ on $U$ is defined by $(g, k) \cdot x = g x k^{-1}$, D'Atri and Ziller's classification of such metrics is as follows.

\begin{thm}[\cite{DZ} Theorems 3 and 7]\label{thm:NatRed}
Consider a connected compact simple Lie group $U$ and let $g_0$ be the bi-invariant Riemannian metric on $U$ induced by the negative of the Killing form $B$. Let $K \leq U$ be a connected
subgroup with Lie algebra $\germ{K} = \germ{K}_0 \oplus \germ{K}_1 \oplus \cdots \oplus \germ{K}_r$,  where $\germ{K}_0 = Z(\germ{K})$ is the center of $\germ{K}$ and $\germ{K}_1, \ldots , \germ{K}_r$ are the simple ideals in $\germ{K}$.
Let $\germ{m}$ be a 
$g_{0}$-orthogonal complement of $\germ{K}$ in $\germ{u}$.  Given any 
$\alpha, \alpha_1, \ldots, \alpha_r > 0$ and an arbitrary inner product $h$ on $\germ{K}_0$,
let $g_{\alpha, \alpha_1, \ldots, \alpha_r, h}$ denote the metric on $U$ induced by the $\Ad(K)$-invariant inner product on $\germ{u}$ given by 
\begin{eqnarray}\label{Eq:NatRedMetric}
\alpha g_0 \upharpoonright \germ{m} \oplus h \upharpoonright \germ{K}_0 \oplus 
\alpha_1 g_0 \upharpoonright \germ{K}_1 \oplus \cdots \oplus \alpha_r g_0 \upharpoonright \germ{K}_r.
\end{eqnarray}
 Then:
\begin{enumerate}
\item $g_{\alpha, \alpha_1, \ldots , \alpha_r, h}$ is naturally reductive with respect to 
the natural action of $G \equiv U\times K$ on $U$; 

\item every left-invariant naturally reductive metric on $U$ arises in this fashion;

\item $g_{\alpha, \alpha_1, \ldots , \alpha_r, h}$ is normal homogeneous if and only if 
$h \leq \alpha g_0 \upharpoonright \germ{K}$.

\item $\Isom(g_{\alpha, \alpha_1, \ldots , \alpha_r, h})^0$, the connected isometry group, 
is given by $U \times N_U(K)^0$, where $N_U(K)$ denotes the normalizer of $K$ in $U$.

\end{enumerate}
 \end{thm}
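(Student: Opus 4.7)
The plan is to reconstruct the D'Atri--Ziller classification by realizing $U$ as the homogeneous space $G/\Delta K$ with $G = U \times K$ acting via $(u,k)\cdot x = u x k^{-1}$, so that the isotropy at $e \in U$ is the diagonal $\Delta K$ with Lie algebra $\Delta\germ{K} = \{(Y,Y) : Y \in \germ{K}\}$.

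For part (1), I would fix the $g_0$-orthogonal decomposition $\germ{u} = \germ{m} \oplus \germ{K}_0 \oplus \germ{K}_1 \oplus \cdots \oplus \germ{K}_r$ and exhibit an $\Ad(\Delta K)$-invariant reductive complement $\germ{p} \subset \germ{g} = \germ{u} \oplus \germ{K}$ to $\Delta\germ{K}$, chosen so that the quotient map $(X,Y) \mapsto X - Y : \germ{g} \to T_e U \cong \germ{u}$ restricts to a linear isomorphism $\germ{p} \to \germ{u}$ transporting the prescribed inner product \eqref{Eq:NatRedMetric} to $\germ{p}$. The $\Ad(\Delta K)$-invariance is immediate from the $\Ad(K)$-invariance of $-B_\germ{u}$ on $\germ{m}$ and on each ideal $\germ{K}_i$, while natural reductivity reduces to the skew-symmetry identity $\langle [Z,X]_\germ{p}, Y\rangle + \langle X, [Z,Y]_\germ{p}\rangle = 0$, which I would verify block-by-block using bi-invariance of the Killing form on each $\germ{K}_i$.

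For part (3), I would observe that normal homogeneity requires a bi-invariant extension of the $\germ{p}$-inner product to all of $\germ{g}$. Since a bi-invariant metric on $U \times K$ decomposes as $(-c B_\germ{u}) \oplus Q_\germ{K}$ for some $c > 0$ and some $\Ad(K)$-invariant $Q_\germ{K}$ on $\germ{K}$, feeding this into the submersion formula for $G \to G/\Delta K$ reproduces \eqref{Eq:NatRedMetric} precisely when $h \leq \alpha g_0|_\germ{K}$. For part (4), both $U$ (by left translation) and $N_U(K)^0$ (by right translation, which preserves the $g_0$-orthogonal decomposition of $\germ{u}$ together with each block's inner product) are manifestly isometries; I would then invoke the Ochiai--Takahashi description of the isometry group of a compact homogeneous Riemannian manifold to rule out a strictly larger connected isometry group, since any additional isotropy isometry at $e$ would correspond to an $\Ad(K)$-equivariant orthogonal automorphism of $\germ{u}$ preserving the transvection algebra, and rigidity of the simple ideals rules this out beyond what $N_U(K)^0$ already provides.

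The main obstacle will be the converse statement (2): given an arbitrary left-invariant naturally reductive metric on $U$, show that it already has the form \eqref{Eq:NatRedMetric}. My plan is to invoke Kostant's construction, which attaches to any naturally reductive homogeneous space a canonical transitive transvection Lie algebra $\tilde{\germ{g}}$ carrying an $\Ad$-invariant symmetric bilinear form that restricts to the given inner product on $\germ{p}$. Since $U$ is compact simple and acts transitively on itself, a structural argument would force $\tilde{\germ{g}} = \germ{u} \oplus \germ{K}$ for some subalgebra $\germ{K} \subseteq \germ{u}$, with $\germ{u}$ appearing as an ideal. Bi-invariance then forces the form to restrict to a scalar multiple $-\alpha B_\germ{u}$ on the simple factor $\germ{u}$, and applying the same rigidity to each simple ideal $\germ{K}_i \subset \germ{K}$ produces the scalars $\alpha_i$, while no such constraint is placed on the center $\germ{K}_0$, yielding the arbitrary inner product $h$. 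Controlling the ambiguity in the choice of $\tilde{\germ{g}}$ and showing the resulting $\germ{K}$ must be a subalgebra of $\germ{u}$ rather than sitting in some larger ambient structure is the technical heart of the argument.
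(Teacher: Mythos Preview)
The paper does not prove this theorem; it is quoted from D'Atri--Ziller \cite{DZ}, with Appendix~\ref{sec:AdK} reviewing only the construction underlying part~(1). Comparing that review with your outline reveals one genuine gap and one methodological difference.

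The gap is in part~(1): you promise to ``exhibit an $\Ad(\Delta K)$-invariant reductive complement $\germ{p}$'' with the right metric-transport property and then verify natural reductivity block-by-block, but you never say what $\germ{p}$ is. This is not a detail---the choice of $\germ{p}$ is the entire content. The obvious candidate $\germ{u}\oplus 0$ is $\Ad(\Delta K)$-invariant and has $[\germ{p},\germ{p}]_\germ{p}$ equal to the full $\germ{u}$-bracket, so the skew-symmetry condition forces the metric to be bi-invariant; other naive graphs over $\germ{K}$ typically fail either invariance or natural reductivity. The D'Atri--Ziller solution (Appendix~\ref{sec:AdK}) is to write down an explicit $\Ad(U\times K)$-invariant, generally \emph{indefinite}, symmetric bilinear form $Q$ on $\germ{u}\oplus\germ{K}$ and take $\germ{p}$ to be the $Q$-orthogonal complement of $\Delta\germ{K}$. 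The resulting $\germ{p}$ (Equations~\eqref{eqn:QOrthogonal1}--\eqref{eqn:QOrthogonal2}) involves constants $\beta_j=\alpha\alpha_j/(\alpha-\alpha_j)$ and an auxiliary endomorphism $\overline{A}$ of $\germ{K}_0$, and a separate argument (passing to a smaller $K'\leq K$) is needed when $\alpha$ coincides with some $\alpha_j$ or some eigenvalue of $h$. With this construction, natural reductivity is automatic from the $\Ad$-invariance of $Q$---there is no block-by-block check to perform. Your proposal skips precisely this construction.

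Your plan for~(2) via Kostant's construction is essentially what D'Atri--Ziller do. Parts~(3) and~(4) are not treated in the paper beyond the citation, so there is nothing to compare your sketch against.
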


Naturally reductive spaces generalize the notion of a symmetric space and, as is the case with symmetric spaces, the geodesics in a naturally reductive space $(M = G/K, g)$ are of the form $\Exp_G(tX)\cdot p$, where $X \in \germ{p}$. That is, the geodesics in a naturally geodesic space are precisely the integral curves of Killing vector fields. 

\begin{prop}\label{prop:NatRedGeodesics}
Let $U$ be a simple Lie group and $K$ a connected subgroup. Now, let $g_{\alpha, \alpha_1, \ldots , \alpha_r, h}$ be a $U \times K$ naturally reductive metric on $U$ and $\germ{p} \leq \germ{u} \times \germ{k}$ the $\Ad(\Delta K)$-invariant complement constructed in Appendix~\ref{sec:AdK}. 
Then, the geodesics through $g \in U$ with respect to $g_{\alpha, \alpha_1, \ldots , \alpha_r, h}$ are of the form 
$$\exp_{U \times K}(t\Ad(g)X, Y) \cdot g = g \exp_{U}(tX)\exp_{U}(-tY),$$
where $(X,Y) \in \germ{p}$, and such a geodesic is smoothly closed if and only if 
$\exp_{U}(tX) = \exp_{U}(tY)$ for some $t >0$.
\end{prop}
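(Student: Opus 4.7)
The plan is to invoke the general fact that, in a naturally reductive homogeneous space $M = G/K$, the geodesics through a point $p$ coincide with the integral curves of Killing fields generated by $\germ{p}_p$, the $\Ad(K_p)$-invariant complement of the isotropy Lie algebra at $p$. Specifically, such a geodesic has the form $t \mapsto \exp_G(tZ)\cdot p$ for $Z \in \germ{p}_p$, a standard consequence of Equation~\ref{eqn:NatRed} together with the vanishing of $W$. Here $G = U\times K$ acts on $U$ by $(u,k)\cdot x = uxk^{-1}$, and the appendix (Appendix~\ref{sec:AdK}) supplies an $\Ad(\Delta K)$-invariant complement $\germ{p} \leq \germ{u}\oplus \germ{K}$ of the isotropy algebra $\Delta\germ{K}$ at $e$. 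Thus I have the desired description at $p_0 = e$, and the task is to transport it to an arbitrary point $g \in U$.

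Next I would compute the isotropy group at $g$. Solving $(u,k)\cdot g = g$ yields $K_g = \{(gkg^{-1},k) : k \in K\}$, with Lie algebra $\germ{K}_g = \{(\Ad(g)Y, Y) : Y \in \germ{K}\}$. Since $(X,Y) \mapsto (\Ad(g)X, Y)$ is an inner automorphism of $\germ{u}\oplus\germ{K}$ carrying $\Delta \germ{K}$ onto $\germ{K}_g$, it sends $\germ{p}$ onto an $\Ad(K_g)$-invariant complement of $\germ{K}_g$. Hence every geodesic through $g$ has the form
\[
\gamma(t) = \exp_{U\times K}(t(\Ad(g)X, Y))\cdot g = \exp_U(t\Ad(g)X)\,g\,\exp_U(-tY),
\]
with $(X,Y) \in \germ{p}$. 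Applying the identity $\exp_U(t\Ad(g)X) = g\exp_U(tX)g^{-1}$ simplifies this to $\gamma(t) = g\exp_U(tX)\exp_U(-tY)$, which is the desired formula.

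Finally, for the closed-geodesic statement, I note that $\gamma(t) = \exp_G(tZ)\cdot g$ (with $Z = (\Ad(g)X, Y) \in \germ{p}_g$) is smoothly closed with period $T$ precisely when $\exp_G(TZ) \in K_g$: the inclusion forces $\gamma(t+T) = \gamma(t)$ for all $t$, so velocities automatically match, and conversely periodicity forces $\exp_G(TZ)\cdot g = g$. Writing $\exp_G(TZ) = (g\exp_U(TX)g^{-1}, \exp_K(TY))$ and comparing with an element $(g k g^{-1}, k) \in K_g$ shows this occurs iff $\exp_U(TX) = \exp_K(TY) = \exp_U(TY)$ (using that $K \leq U$, so $\exp_K$ is a restriction of $\exp_U$).

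The main technical nuisance will be the careful bookkeeping in transporting the reductive decomposition from $e$ to $g$ via the action of $(g,e)$; once this is handled, the rest is a direct computation. Showing that periodicity of $\gamma$ is equivalent to $\exp_G(TZ) \in K_g$ rather than merely $\gamma(T) = \gamma(0)$ is the only other point requiring care, but it follows immediately from the uniqueness of geodesics with prescribed initial data together with the left-translation of $\gamma$ by $\exp_G(TZ)$.
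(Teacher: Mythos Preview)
Your argument is correct and follows exactly the approach the paper implicitly takes: the paper does not supply a proof of this proposition, instead treating it as an immediate consequence of the standard fact (stated just before the proposition) that geodesics in a naturally reductive space $G/K$ are the orbits $t\mapsto \exp_G(tZ)\cdot p$ for $Z$ in the $\Ad(K_p)$-invariant complement. Your transport of $\germ{p}$ from $e$ to $g$ via $\Ad((g,e))$, the simplification $\exp_U(t\Ad(g)X)=g\exp_U(tX)g^{-1}$, and the observation that $\exp_{U\times K}(TZ)\in K_g$ is equivalent to $\exp_U(TX)=\exp_U(TY)$ are precisely the details one must fill in, and you have done so correctly.
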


\noindent
This will prove useful in our proof of Theorem~\ref{thm:UncleanMetrics}, where we will need an explicit description of the closed geodesics with respect to a left-invariant naturally reductive metric on $\SO(3)$.



Since, the geodesics in a naturally reductive space are integral curves of Killing fields, there are no geodesic lassos in a naturally reductive space (i.e., all self-intersections of a geodesic are smooth). Although it is not needed elsewhere in the paper, we note that since every homogeneous Riemannian space can be expressed as a reductive space \cite[Proposition 1]{KS}, an application of Noether's theorem (cf. \cite[Theorem 1.3]{Tak}) shows there are no geodesic lassos in an arbitrary homogeneous space.

\begin{prop}\label{prop:NoLassos}
Let $(M, g)$ be a homogeneous Riemannian manifold and $\gamma: \R \to M$ a geodesic. If $\gamma(t_0) = \gamma(t_1)$, then $\gamma'(t_0) = \gamma'(t_1)$. That is, any self-intersection of a geodesic in a homogeneous space is smooth.
\end{prop}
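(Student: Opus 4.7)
My plan is to invoke the Noether-type conservation law for Killing vector fields. Recall that for any Killing field $X$ on a Riemannian manifold and any geodesic $\gamma$, the function $t \mapsto \langle \gamma'(t), X_{\gamma(t)} \rangle$ is constant; this follows from a one-line computation combining $\nabla_{\gamma'}\gamma' = 0$ with the skew-symmetry of $\nabla X$ guaranteed by the Killing equation. In the homogeneous setting, infinitesimal isometries span every tangent space, and together these two facts will force any self-intersection of a geodesic to be smooth.

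First I would use the Kostant--Szenthe result (\cite[Proposition 1]{KS}) to present $(M,g)$ as a reductive homogeneous space $G/K$ with Lie algebra decomposition $\germ{g} = \germ{K} \oplus \germ{p}$, so that each $X \in \germ{g}$ gives rise to a Killing field $X^*$ on $M$. Since $G$ acts transitively on $M$, at \emph{every} point $p \in M$ the evaluation map $\germ{g} \to T_p M$ defined by $X \mapsto X^*_p$ is surjective; equivalently, the Killing fields arising from $\germ{g}$ span $T_p M$.

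Next, suppose $\gamma$ is a geodesic with $\gamma(t_0) = \gamma(t_1) = p$. Applying the conservation law to each $X \in \germ{g}$ yields
$$\langle \gamma'(t_0), X^*_p \rangle = \langle \gamma'(t_1), X^*_p \rangle \quad \text{for all } X \in \germ{g}.$$
By the surjectivity from the previous step, the difference $\gamma'(t_0) - \gamma'(t_1) \in T_p M$ is orthogonal to all of $T_p M$, hence vanishes, which is the desired conclusion.

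I do not anticipate any real obstacle: the entire content of the argument is Noether's theorem for Killing fields together with transitivity of the isometry group. The appeal to a reductive decomposition is only for notational convenience in identifying Killing fields with elements of $\germ{g}$; the essential input is simply that infinitesimal isometries span every tangent space of a homogeneous manifold, so the argument applies uniformly regardless of any further structure on $(M,g)$.
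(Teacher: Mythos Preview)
Your proposal is correct and follows exactly the approach the paper indicates: it cites \cite[Proposition 1]{KS} for the reductive presentation and invokes Noether's theorem (cf.\ \cite[Theorem 1.3]{Tak}), which is precisely the Killing-field conservation law $t \mapsto \langle \gamma'(t), X^*_{\gamma(t)}\rangle \equiv \text{const}$ that you spell out. Your closing observation that the reductive decomposition is only a notational convenience---transitivity alone guarantees Killing fields span each tangent space---is also on the mark.
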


\subsection{The Poincar\'{e} map of naturally reductive metrics}\label{sec:PoincareMap}

We recall that given a Riemannian manifold $(M,g)$ the geodesic flow is the map
$\Phi : \R \times TM \to TM$ given by 
$$\Phi(t, v) = \frac{d}{dt}\gamma_{v}(t),$$
where $\gamma_v$ is the unique geodesic with $\gamma_v'(0) = v$.
Throughout we will set $\Phi_t(v) = \Phi(t,v)$.
Of particular interest to us is the derivative of $\Phi_\tau$.
If for each $v \in TM$ we let $T_v TM = \mathcal{H}_v \oplus \mathcal{V}_v$ be the decomposition into the 
horizontal and vertical spaces, then for any $(A,B) \in T_v TM$ we have
$$\Phi_{t*}(A,B) = (Y(t), \nabla Y(t)),$$
where $Y(t)$ is the Jacobi field along $\gamma_v $ such that $Y(0) = A$ and 
$\nabla Y(0) = B$ (see \cite[p. 56]{Sakai}).
If the geodesic $\gamma_v$ is periodic of period $\tau$, then we set 
$$P = \Phi_{\tau*} : T_{v}TM \to T_{v}TM.$$
Since $\gamma_{v}'(t)$ and $t\gamma_{v}'(t)$ are Jacobi fields along $\gamma_v$ we see 
that 
$$P(v,0) = (v,0) \mbox{ and } P(0,v) = (\tau v, v).$$
Hence, in order to understand $P$ we must analyze how it behaves on the orthogonal complement 
of $(v,0)$ and $(0,v)$; that is, we seek to understand 
$$P: E \oplus E \to E \oplus E,$$
where $E = \{ u \in T_p M : \langle u , v \rangle = 0\}$.
This map is called the (linearized) \emph{Poincar\'{e} map} and from the above if $Y$ is a Jacobi field with 
initial data $(Y(0), \nabla Y(0)) \in E \oplus E$, then 
$$P(Y(0), \nabla Y(0) ) = (Y(\tau), \nabla Y(\tau)).$$
In the case of (compact) naturally reductive manifolds the Poincar\'{e} map 
has been completely determined by Ziller as follows.

Let $M = G/K$ be a naturally reductive space and as before let $\germ{p} \leq \germ{g}$ be an $\Ad(K)$-invariant complement. For any unit vector $v \in  \germ{p} \equiv T_{p_0}M$ we let $\gamma_v (t)$ be the unit speed geodesic given by $\exp_{G}(tv) \cdot p_0$. Now, let $v \in \germ{p}$ be a unit vector such that the geodesic $\gamma_v(t)$ is closed and set $E = \{ u \in \germ{p} : \langle u , v \rangle =0\}$. Then the restriction of the maps $B(\cdot ) = -[v, [v, \cdot]_\germ{K}]$ and $T(\cdot ) = -[v, \cdot]_\germ{p}$ to $E$ are symmetric and skew-symmetric, respectively.
Now let $E_0$ denote the $0$-eigenspace of $B: E \to E$ and $E_1$ be the sum of 
its non-zero eigenspaces, and we express $E_0$ as the orthogonal direct sum 
$E_0 = E_2 \oplus E_3$, where $E_2 = \{ X \in E_0 : T(X) \in E_1 \}$. 
Then as in \cite[p. 579]{Ziller2} we define the following subspaces of $E \oplus E$:

\begin{enumerate}
\item $V_1 = \{ (X, \frac{1}{2} [ X, v]_\germ{p}) : X \in E_1 \oplus E_3 \}$
\item $V_2 = \{ (0 , X) : X \in E_1 \}$
\item $V_3 = \{ (X, \frac{1}{2} [v, X]_\germ{p}) : X \in E_2 \}$
\item $V_4 = \{ (X, \frac{1}{2}[v,X]_\germ{p}) : X \in E_3 \} = \{ (X,- \frac{1}{2}T(X)) : X \in E_3 \}$
\item $V_5 = \{ (Z, X + \frac{1}{2}[v,Z]_\germ{p}) : X \in E_2, Z \in E_1 \mbox{ and } B(Z) = T(X) \equiv [X, v]_\germ{p}  \}$
\end{enumerate}

\begin{rem}\label{rem:Eigenspaces}\text{}\\
\begin{enumerate}
\item In \cite{Ziller2} there is an omission in the definition of $V_5$ (cf. \cite[p. 73]{Ziller1}).
\item We note that since $B: E_1 \to E_1$ is an isomorphism, $V_5$ is non-trivial if and only if $E_2$ is non-trivial. In particular, for each $X \in E_2$, there exists a unique $Z \in E_1$ such that $B(Z) = T(X)$.
\item It will be useful later to notice that $E_1 \leq [\germ{K}, v]$. Indeed, following \cite[p. 72]{Ziller1}, we recall that $B: E \to E$ is a self-adjoint map. Let $X_1, \ldots , X_q$ be an orthonormal basis of eigenvectors 
with eigenvalues $\lambda_1, \ldots , \lambda_q$, and set $Z_i \equiv [v, X_i]_\germ{K} \in \germ{K}$.
Then $\lambda_i X_i = B(X_i) = [Z_i, v]$ and for $\lambda_i \neq 0$ we get $X_i = \frac{1}{\lambda_i}[Z_i, v] \in [\germ{K}, v]$, which establishes the claim.
\end{enumerate}
\end{rem}

With the notation as above we have the following theorem due to Ziller. 

\begin{thm}\label{thm:PoincareMap}
Let $(M = G/K, g)$ be a (compact) naturally reductive space and let $\gamma_v (t) = \exp_{G}(tv) \cdot p_0$ be a smoothly closed unit speed geodesic in $M$ of length $\tau$ with $\gamma_{v}'(0) = v \in \germ{p} \equiv T_{p_0} M$. Then 

\begin{enumerate}
\item (\cite[Theorem 1]{Ziller2}) $E \oplus E = V_1 \oplus V_2 \oplus V_3 \oplus V_4 \oplus V_5$

\item (\cite[Theorem 1]{Ziller1}) The Poincar\'{e} map $P : E \oplus E \to E \oplus E$ along $\gamma_v$ is described as follows:

\begin{enumerate}
\item $P \upharpoonright V_1 \oplus V_2 \oplus V_3 = \operatorname{Id}$;
\item $P (X, \frac{1}{2}[v,X]_\germ{p})= (\Psi(X) , \Psi( \frac{1}{2}[v,X]_\germ{p} )) = 
(\Psi(X) , \frac{1}{2}[v,\Psi(X)]_\germ{p} )$, for $(X, \frac{1}{2}[v,X]_\germ{p}) \in V_4$,
where $\Psi$ is the isometry $e^{\ad(\tau v)} = \Ad (\exp_{H}(\tau v))$, 
we recall that because $\gamma_v$ is a geodesic it is given by $\exp_{H}(t v) 
\cdot p_0$ and since it is closed of length $\tau$ we have that $\exp_{H}(\tau v) \in K$;
\item $P (Z, X + \frac{1}{2}[v,Z]_\germ{p}) = \tau (X, \frac{1}{2}[v,X]_\germ{p}) + 
(Z, \frac{1}{2}[v,Z]_\germ{p})$, for $ (Z, X + \frac{1}{2}[v,Z]_\germ{p}) \in V_5$.
\end{enumerate}

\end{enumerate}

\end{thm}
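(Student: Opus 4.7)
The plan is to identify each summand $V_i \subseteq E \oplus E$ with the initial data of an explicit Jacobi field along $\gamma_v$ and to read off the Poincar\'e map from the value of that field at time $\tau$. The primary source of Jacobi fields is the Killing-field construction: for each $X \in \germ{g}$ the restriction $J_X(t) := X^{*}_{\gamma_v(t)}$ is a Jacobi field along $\gamma_v$. Using Equation~\ref{eqn:NatRed} with $W \equiv 0$, together with the fact that $(\nabla_{Y^{*}}Z^{*})_{p_0}=0$ whenever $Y^{*}_{p_0}=0$, one computes that the initial data of $J_X$ is $(X,-\frac{1}{2}[v,X]_\germ{p})$ for $X \in \germ{p}$ and $(0,-[v,Y])$ for $Y\in\germ{K}$. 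Because $\gamma_v$ is smoothly closed, $\gamma_v(\tau)=\gamma_v(0)$ and $\gamma_v'(\tau)=\gamma_v'(0)$, so every Killing-field Jacobi field is $\tau$-periodic in both position and covariant derivative; hence $P$ acts as the identity on the initial data of any such field.

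Three of the five summands fall out immediately. For $V_1$ the initial data $(X,\frac{1}{2}[X,v]_\germ{p})$ with $X\in E_1\oplus E_3$ is exactly what $X^{*}$ produces. For $V_2$ the initial data $(0,Z)$ with $Z\in E_1$ is realized by $Y^{*}$ for any $Y\in\germ{K}$ with $-[v,Y]=Z$; such $Y$ exists by Remark~\ref{rem:Eigenspaces}(3), since $E_1\subseteq[\germ{K},v]$. For $V_3$ the initial data $(X,\frac{1}{2}[v,X]_\germ{p})$ with $X\in E_2$ has a derivative of the ``wrong'' sign to come from $X^{*}$ alone, but is achieved by $(X+Y)^{*}$ with $Y\in\germ{K}$ chosen so that $[v,Y]=T(X)$; this is solvable because $X\in E_2$ forces $T(X)\in E_1\subseteq[\germ{K},v]$. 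In all three cases $P$ acts trivially, proving $(2)(a)$.

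For $V_4$ the data $(X,\frac{1}{2}[v,X]_\germ{p})$ with $X\in E_3$ cannot be produced by the Killing construction above, because the correction trick used for $V_3$ fails: $T(E_3)$ need not lie in $[\germ{K},v]$. Instead I would exploit that $B\upharpoonright E_3\equiv 0$ to reduce the Jacobi equation on the $E_3$-component to an operator ODE whose surviving first-order term is driven by $T$; integrating this equation shows that the time-$\tau$ evolution on $V_4$ is given by $\Psi=\Ad(\exp_G(\tau v))=e^{\tau\,\ad v}$ (well-defined on $\germ{p}$ because $\exp_G(\tau v)\in K$), yielding precisely $P(X,\frac{1}{2}[v,X]_\germ{p}) = (\Psi(X),\frac{1}{2}[v,\Psi(X)]_\germ{p})$.

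The main obstacle will be $V_5$, whose Jacobi fields exhibit linear-in-$t$ growth and so cannot arise from Killing fields at all. Here I would use that $B\upharpoonright E_1$ is an isomorphism (by definition $E_1$ is the sum of the nonzero eigenspaces of $B$) to pair each $X\in E_2$ with the unique $Z\in E_1$ satisfying $B(Z)=T(X)$, and then verify directly by substitution into the Jacobi equation that an ansatz of the form
\[
Y(t) = J_Z(t) + t\,J_X(t) + \widehat{J}(t),
\]
with $\widehat{J}$ a $\tau$-periodic Killing-field correction built from an appropriate element of $\germ{K}$, is a Jacobi field with initial data $(Z,X+\frac{1}{2}[v,Z]_\germ{p})$. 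Reading off $Y(\tau)$ and $\nabla Y(\tau)$ using the periodicity of $J_Z$, $J_X$, and $\widehat{J}$ then produces the stated formula $P(Z,X+\frac{1}{2}[v,Z]_\germ{p}) = \tau\bigl(X,\frac{1}{2}[v,X]_\germ{p}\bigr)+\bigl(Z,\frac{1}{2}[v,Z]_\germ{p}\bigr)$. Statement (1) then follows from the dimension count $\dim V_1+\dim V_2+\dim V_3+\dim V_4+\dim V_5 = 2(\dim E_1+\dim E_2+\dim E_3)=2\dim E$ combined with a direct check, based on the first-coordinate projections to $E$, that the five subspaces meet trivially.
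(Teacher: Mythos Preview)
The paper does not prove this theorem; it records the statement and attributes the two parts to \cite[Theorem~1]{Ziller2} and \cite[Theorem~1]{Ziller1}. Your overall strategy---realising $V_1,V_2,V_3$ as initial data of Killing-field Jacobi fields and treating $V_4,V_5$ by direct analysis of the Jacobi equation---is exactly Ziller's, and your handling of $V_1,V_2,V_3$ is correct.

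There are two genuine gaps. First, your $V_5$ ansatz cannot work as written: if $J_Z$, $J_X$, and $\widehat{J}$ are all Jacobi fields (as ``Killing-field correction built from an element of $\germ{K}$'' implies), then $Y=J_Z+tJ_X+\widehat{J}$ is a Jacobi field only if $tJ_X$ is, and one computes $(tJ_X)''+R(tJ_X,\gamma')\gamma'=2\nabla J_X$, which is nonzero for generic $X\in E_2$. What Ziller does instead is pass to the frame moving with $\exp(tv)$, so that the Jacobi equation becomes a constant-coefficient second-order ODE in the operators $B$ and $T$; the $V_5$ solutions then appear as the polynomial-in-$t$ part of a Jordan-block solution to that ODE, not as a sum of honest Jacobi fields with a bare factor of $t$ inserted. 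Second, your argument for part~(1) is incomplete: first-coordinate projections do not separate the $V_i$, since both $V_1$ and $V_5$ project into $E_1$ and both $V_1$ and $V_4$ project into $E_3$. The $V_1\cap V_4$ case can be salvaged algebraically (observe $\ker T\cap E_0\subseteq E_2$, so $T$ is injective on $E_3$), but $V_5\cap(V_1\oplus\cdots\oplus V_4)=0$ is not purely algebraic. As the Remark immediately following the theorem records, Ziller proves this by showing that the Jacobi fields with initial data in $V_5$ are unbounded along $\gamma_v$ while those from $V_1,\ldots,V_4$ are bounded.
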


\begin{rem}
The compactness condition in the above was used by Ziller to establish that a Jacobi filed $J(t)$ along $\gamma_v$ with $J(0) \in V_5$ 
must have unbounded length, which is used to show that $V_5 \cap (V_1\oplus V_2\oplus V_3 \oplus V_4)$ is trivial \cite[p. 579-80]{Ziller2}. 
However, this argument only really requires completeness, which is enjoyed by all naturally reductive spaces 
since geodesics are precisely the orbits of one-parameter groups of isometries. Therefore, the above is true for all naturally reductive 
manifolds.
\end{rem}

The following observation is an immediate consequence of the previous proposition.

\begin{cor}\label{cor:PeriodicFields}
Let $\gamma_v(t)$ be a closed unit speed geodesic as above and 
let $Y(t)$ be a Jacobi field along $\gamma_v$.
Then $Y(t)$ is periodic if and only if 
$Y(t)$ has the following initial conditions:
$$(Y(0), \nabla Y(0) ) \in V_1 \oplus V_2 \oplus V_3 \oplus V_4^{\rm{per}} \oplus \Span_{\R}\{(v,0)\}, $$
where $V_4^{\rm{per}} \equiv \{ (X, \frac{1}{2}[v,X]_\germ{p}) : X \in E_3 \mbox{ and } \psi(X) = X \} \leq V_4$.
\end{cor}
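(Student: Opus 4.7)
The plan is to observe that $Y(t)$ is a periodic Jacobi field along the closed geodesic $\gamma_v$ of period $\tau$ if and only if the pair $(Y(0),\nabla Y(0)) \in T_vTM$ is a fixed point of the full Poincar\'{e} map $P = \Phi_{\tau *}$. Thus the corollary reduces to computing $\Fix(P)$ on $T_vTM$ and identifying it with the claimed subspace. I would decompose $T_vTM$ as $(E\oplus E) \oplus \Span_{\R}\{(v,0),(0,v)\}$, use Theorem~\ref{thm:PoincareMap}(1) to further decompose $E\oplus E = V_1\oplus V_2\oplus V_3 \oplus V_4\oplus V_5$, and then compute $\Fix(P)$ on each summand using Theorem~\ref{thm:PoincareMap}(2).

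The contributions from $V_1$, $V_2$, and $V_3$ are immediate since $P$ acts as the identity on each of these subspaces, so each lies entirely in $\Fix(P)$. On $V_4$, the map $P$ acts as $\Psi = \Ad(\exp_U(\tau v))$ on the $E_3$-component, so a vector $(X,\frac{1}{2}[v,X]_{\germ{p}})$ is fixed precisely when $\Psi(X)=X$; this yields exactly the subspace $V_4^{\mathrm{per}}$ as defined in the statement.

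For $V_5$, I would argue that $\Fix(P) \cap V_5 = 0$. Indeed, if $(Z, X + \frac{1}{2}[v,Z]_{\germ{p}})\in V_5$ (with $X\in E_2$, $Z\in E_1$, and $B(Z)=T(X)$) is fixed, then by Theorem~\ref{thm:PoincareMap}(2)(c), comparing the $E_1\oplus E_2$ components forces $\tau X = 0$, and since $\tau > 0$ we deduce $X = 0$. The constraint $B(Z) = T(X) = 0$ then places $Z$ in $\ker(B\upharpoonright E_1) = 0$, so $Z = 0$ as well. Finally, on the complementary plane $\Span\{(v,0),(0,v)\}$, the relations $P(v,0)=(v,0)$ and $P(0,v)=(\tau v, v)$ give $P$ the matrix $\bigl(\begin{smallmatrix}1 & \tau\\ 0 & 1\end{smallmatrix}\bigr)$, whose fixed subspace is precisely $\Span_{\R}\{(v,0)\}$, reflecting the fact that $\gamma_v'(t)$ is periodic while $t\gamma_v'(t)$ is not.

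Combining the fixed-point subspaces in each summand yields the asserted equality
\[
\Fix(P) = V_1\oplus V_2\oplus V_3 \oplus V_4^{\mathrm{per}} \oplus \Span_{\R}\{(v,0)\},
\]
which is the desired characterization of periodic Jacobi fields. The only non-routine step is the analysis on $V_5$, and that is handled simply by using $\tau>0$ together with the isomorphism $B\colon E_1\to E_1$; the rest is direct bookkeeping from Theorem~\ref{thm:PoincareMap}.
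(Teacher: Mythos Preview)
Your argument is correct and more detailed than the paper's own treatment, which simply declares the corollary an ``immediate consequence'' of Theorem~\ref{thm:PoincareMap} without further comment. One small point to tighten: computing $\Fix(P)\cap V_i$ summand by summand and then summing only yields $\Fix(P)$ if $P$ preserves each $V_i$, and $P$ does \emph{not} preserve $V_5$ (its image meets $V_3$). The repair is immediate with your same ingredients: for a general fixed vector $w = w_1+\cdots+w_5$ the equation $(P-I)w=0$ reduces to $(P-I)w_4 = -(P-I)w_5$; comparing first components in $E = E_1\oplus E_2\oplus E_3$ gives $\Psi(W)-W = -\tau X$ with the left side in $E_3$ (since $\Psi=\Ad(k)$ for $k\in K$ commutes with $B$ and $T$, hence preserves $E_3$) and the right side in $E_2$, forcing both to vanish. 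Then $X=0$, $\Psi(W)=W$, and $B(Z)=T(0)=0$ gives $Z=0$ as you argued. So the conclusion stands.
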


\subsection{Proof of Theorem~\ref{thm:UncleanMetrics}}\label{Sec:ProofUnclean}

Let $U$ be an arbitrary compact semi-simple Lie group with 
bi-invariant metric $g_0$ induced by the negative of the Killing form $B$ on $T_eU$.
Now for any left-invariant metric $g$ on $U$ there is a linear transformation 
$\Omega : T_e U \to T_eU$ that is self-adjoint with respect to $-B$ and such that 
for any $v, w \in T_e U$ we have $\langle v, w \rangle = -B( \Omega(v), w)$, 
where $\langle \cdot , \cdot \rangle$ is the restriction of $g$ to $T_e U$.

\begin{dfn}\label{dfn:MetricEigenvalues}
With the notation as above, the eigenvalues $0 < \mu_1 \leq \mu_2 \leq \cdots \leq \mu_n$ of $\Omega$ 
are called the \emph{eigenvalues of the metric} $g$.
\end{dfn}

\begin{prop}[\cite{BFSTW} Proposition 3.2]\label{prop:MetricEigenvalues}
Two left-invariant metrics $g_1$ and $g_2$ on $\SO(3)$ are isometric if and only if 
$g_1$ and $g_2$ have the same eigenvalues counting multiplicities. 
\end{prop}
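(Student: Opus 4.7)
The plan is to realize an isometry between two left-invariant metrics on $\SO(3)$, modulo a left translation, as conjugation by an element of $\SO(3)$, thereby identifying the isometry class of such a metric with the $\Ad(\SO(3))$-orbit of its $(-B)$-self-adjoint operator $\Omega$. For the ``if'' direction, suppose $\Omega_1$ and $\Omega_2$ share a spectrum. The spectral theorem supplies matched $(-B)$-orthonormal eigenbases $\{v_i\}$ and $\{w_i\}$, producing a $(-B)$-orthogonal map $A\colon v_i \mapsto \pm w_i$ with $A\Omega_1 A^{-1} = \Omega_2$, and by flipping a single sign I can arrange $\det A = +1$, so that $A \in \SO(\mathfrak{so}(3), -B)$. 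Since the adjoint representation $\Ad\colon \SO(3) \to \SO(\mathfrak{so}(3), -B)$ is a Lie-group isomorphism (same dimension and trivial kernel because $Z(\SO(3)) = \{e\}$), there is $g \in \SO(3)$ with $\Ad(g) = A$; the inner automorphism $C_g\colon h \mapsto ghg^{-1}$ then fixes $e$, has $\Ad(g)$ as its differential there, and a short computation using the $(-B)$-invariance of $\Ad(g)$ together with the left-invariance of both metrics shows $C_g^{\ast} g_2 = g_1$.

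For the ``only if'' direction, given an isometry $\phi\colon (\SO(3), g_1) \to (\SO(3), g_2)$, post-composition with $L_{\phi(e)^{-1}}$ (an isometry of $(\SO(3), g_2)$) reduces me to the case $\phi(e) = e$, after which $A := d\phi_e$ is a linear isometry $(\mathfrak{so}(3), g_{1,e}) \to (\mathfrak{so}(3), g_{2,e})$. The crux is to show that $A$ is additionally a Lie-algebra automorphism of $\mathfrak{so}(3)$; granting this, the absence of outer automorphisms in type $A_1$ forces $A = \Ad(g)$ for some $g \in \SO(3)$, and translating the isometry identity $g_{2,e}(Av, Aw) = g_{1,e}(v, w)$ through $g_{i,e}(\cdot, \cdot) = -B(\Omega_i \cdot, \cdot)$ and the $\Ad$-invariance of $-B$ yields $\Omega_1 = \Ad(g)^{-1} \Omega_2 \Ad(g)$, so $\Omega_1$ and $\Omega_2$ share a spectrum.

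The main obstacle is therefore the Lie-algebra-automorphism claim for $A$. My preferred route uses Milnor's canonical form for left-invariant metrics on three-dimensional unimodular Lie groups: every left-invariant metric on $\SO(3)$ admits a $g$-orthonormal frame $E_1, E_2, E_3$ of $\mathfrak{so}(3)$ satisfying $[E_i, E_j] = \lambda_k \varepsilon_{ijk} E_k$ with $\lambda_1, \lambda_2, \lambda_3 > 0$, and the unordered triple $\{\lambda_1, \lambda_2, \lambda_3\}$ diagonalizes the Ricci operator and is therefore a bona fide isometry invariant of the metric. Since $\phi$ intertwines the Ricci operators of $g_1$ and $g_2$, it must send a canonical frame for $g_1$ to a canonical frame for $g_2$ with identical $\lambda$'s, and a direct check shows that any linear map of $\mathfrak{so}(3)$ matching two such canonical frames preserves the bracket. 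Alternatively, one can invoke the classical Ochiai--Takahashi rigidity theorem, which identifies the isotropy at $e$ of the isometry group of a left-invariant metric on a semisimple Lie group with $\{A \in \operatorname{Aut}(\mathfrak{g}) : A^{\ast} g_e = g_e\}$. Either way, once the automorphism property is in hand, the remainder of the proof reduces to the calculation above, and the equivalence between the $\Omega$-spectrum $\{\mu_1, \mu_2, \mu_3\}$ and the Milnor triple $\{\lambda_1, \lambda_2, \lambda_3\}$ is recorded via $\lambda_k = \sqrt{\mu_k / (\mu_i \mu_j)}$, obtained by rescaling a $(-B)$-orthonormal eigenbasis of $\Omega$ into a $g$-orthonormal frame.
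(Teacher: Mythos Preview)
The paper does not give a proof of this proposition; it is simply quoted from \cite{BFSTW} (their Proposition~3.2) and used as a black box. So there is no in-paper argument to compare against, and your attempt stands or falls on its own merits.

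The ``if'' direction is correct and clean. For the ``only if'' direction your strategy is right, but the two routes you offer for the crucial step---that $A=d\phi_e$ is a Lie-algebra automorphism---both need tightening. The Ochiai--Takahashi result, as usually stated, describes the isotropy of a \emph{single} left-invariant metric, not an isometry between two different ones; one must first argue (for instance by showing $\phi\,L(\SO(3))\,\phi^{-1}$ sits inside $\operatorname{Isom}(g_2)^0$ and then invoking the known structure of the latter) before that theorem applies. The Milnor-frame route also needs care: intertwining the Ricci operators only tells you that $A$ maps Ricci eigenspaces to Ricci eigenspaces, which pins $A$ down to a signed frame permutation only when the Ricci eigenvalues are distinct; and even then the bracket check yields a Lie-algebra automorphism only when $\det A=+1$---otherwise $A$ is an \emph{anti}-automorphism (witness $d\iota_e=-\mathrm{Id}$ for the inversion $\iota$ in the bi-invariant case). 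None of this is fatal: if $A=\pm\Ad(g)$ one still obtains $\Omega_1=\Ad(g)^{-1}\Omega_2\Ad(g)$, and the degenerate-Ricci cases for $\SO(3)$ can be handled directly (or by throwing in the volume as an extra invariant). But as written the argument cites around the one step that actually carries content.
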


\begin{notarem}\label{rem:Notation}
We will now establish notation and collect some facts that will prove useful throughout the remainder of this section.
\begin{enumerate}
\item For the remainder of this section we will let $U$ denote the Lie group $\SO(3)$, 
$\germ{u}$ denote its Lie algebra $\germ{so}(3)$, and $g_0$ will denote the bi-invariant metric 
on $\SO(3)$ induced by $-B$, where $B$ denotes the Killing form. 
Additionally, we will let $\exp$ denote the exponential map $\exp_{U}: \germ{u} \to U$.

\item With Proposition~\ref{prop:MetricEigenvalues} in mind we let 
$$\Theta_1 = \frac{1}{2 \sqrt{2}}\left(\begin{array}{cc}-i & 0 \\0 & i\end{array}\right) , 
\; \Theta_2 = \frac{1}{2 \sqrt{2}} \left(\begin{array}{cc}0 & -i \\-i & 0\end{array}\right), \; 
\Theta_3 = \frac{1}{2 \sqrt{2}}  \left(\begin{array}{cc}0 & -1 \\1 & 0\end{array}\right) $$
denote the standard $g_0$-orthonormal basis of $\germ{so}(3) \simeq \germ{su}(2)$.
Then for any choice of positive constants $c_1, c_2$ and $c_3$  
the self-adjoint map $\Omega: (\germ{so}(3) , -B) \to (\germ{so}(3), -B)$ 
given by $\Omega(\Theta_{j}) = c_j \Theta_j$ defines a left-invariant metric
$g_{(c_1, c_2, c_3)}$ on $\SO(3)$ and, by Proposition~\ref{prop:MetricEigenvalues}, 
these account for all of the left-invariant metrics on $\SO(3)$ up to isometry. 
Now, since $\SO(2)$ is the only non-trivial connected proper subgroup of $\SO(3)$ it follows from Theorem~\ref{thm:NatRed} that up to isometry the left-invariant naturally reductive metrics on $\SO(3)$ are the metrics $g_{(\alpha, \alpha, A)}$ given by:
\begin{eqnarray}\label{eqn:SO3Metrics}
g_{(\alpha, \alpha, A)} = \alpha g_0 \upharpoonright \germ{m} \oplus A g_0 \upharpoonright \germ{K},
\end{eqnarray}
where $\germ{K} = \germ{so}(2) = \Span( \Theta_3)$ and 
$\germ{m} = \germ{K}^{\perp_0} = \Span \{\Theta_1, \Theta_2 \}$ is the orthogonal complement of $\germ{K}$ with respect to $g_0$. 
We set $K = \exp_U(\germ{K})$.


\item Let $\germ{p}$ denote the $\Ad(K)$-invariant complement of $\Delta \germ{K} \leq \germ{g} \times \germ{K}$ discussed in Appendix~\ref{sec:AdK}. Then we have the following.
\begin{enumerate}
\item[(a)] If $\alpha = A$, then by Equation~\ref{eqn:QOrthogonal2} we see $\germ{p} = \germ{g} \oplus 0$. In which case 
$$\germ{p} = \Span \{\frac{1}{\sqrt{\alpha}}(\Theta_1, 0),  \frac{1}{\sqrt{\alpha}}(\Theta_2, 0), \frac{1}{\sqrt{\alpha}}(\Theta_3, 0) \} $$
and 
$$ \Delta \germ{K} = \Span \{D = (\Theta_3, \Theta_3) \},$$
where by $\Span\{A_1, \ldots, A_k\}$ we denote the linear span of $A_1, \ldots, A_k$ over $\R$.
\item[(b)] If $\alpha \neq A$, then by Equation~\ref{eqn:QOrthogonal1} $\germ{p} = \germ{p}_1 \oplus \germ{q}_0$, where 
$\germ{p}_1 = \{(X, 0) : X \in \germ{u} = \germ{K}^{\perp_0} \}$ and
$\germ{q}_0 =\{ (\overline{A} Z, - \alpha Z) : Z \in \germ{K}\}$ for 
$\overline{A} = \frac{A \alpha}{\alpha - A}$.
In which case 
$$\germ{p} = \operatorname{Span}\{ Z_1 =(\frac{1}{\sqrt{\alpha}}\Theta_1, 0), 
Z_2 = (\frac{1}{\sqrt{\alpha}}\Theta_2, 0), 
Z_3 = \frac{1}{\sqrt{A}(\overline{A} +\alpha)}(\overline{A} \Theta_3, -\alpha \Theta_3) \}$$
and 
$$ \Delta \germ{K} = \Span \{D = (\Theta_3, \Theta_3) \}.$$
It is clear that the adjoint action of $\Delta K \leq G \times K$ on $\germ{p}$ fixes $Z_3$ 
and acts as the group of rotations on $\operatorname{Span}_{\R}\{Z_1, Z_2\} = \germ{p}_1$.
\end{enumerate}

\item For any $(V, W) \in \germ{p}$, where $\germ{p}$ is as above, the geodesic $\gamma_{(V,W)}(t)$ 
with $\gamma_{(V,W)}(0) = e$ and $\gamma_{(V,W)}'(0) = V-W$ is given by 
$$\gamma_{(V,W)}(t) = \exp(tV)\exp(-tW).$$
The geodesic $\gamma_{(V,W)}$ is a one-parameter subgroup of $\SO(3)$ 
if and only if $V, W \in \germ{so}(3)$ are linearly dependent.

\item For any compact Lie group endowed with a bi-invariant metric the sectional curvature of a $2$-plane $\sigma$ in the Lie algebra spanned by two orthonormal vectors $X$ and $Y$ is given by $\operatorname{Sec}(\sigma) = \frac{1}{4} \| [X,Y]\|^2$. Consequently, with respect to the metric $g_0$, the Lie group $\SO(3)$ has constant sectional curvature $\frac{1}{8}$ and is double covered by $S^3(2\sqrt{2})$, the round $3$-sphere of radius $2\sqrt{2}$. It follows that the geodesics in $(\SO(3), g_0)$ are all closed, have a common (primitive) length $\ell_0 \equiv 2 \sqrt{2} \pi$. 

\item It follows from the previous remark that any two primitive geodesics through a given point 
of $\SO(3)$ with respect to $g_0 = g_{(1,1,1)}$ have only one point in common or 
have exactly the same image. Furthermore, since $g_0$ 
is bi-invariant, its geodesics through $e$ coincide with the one-parameter subgroups of $\SO(3)$. 
Given a vector $X \in \germ{u} = \germ{so}(3)$ we then define its \emph{period} to be 
$\operatorname{Per}(X) =  \frac{\ell_0}{\|X \|_0}$, so $\operatorname{Per}(X)$ is 
the amount of time it takes for the one-parameter subgroup $\exp(tX)$ to return to the identity 
element for the first time.

\item It will be useful to observe that 
$\vol(g_{(\alpha, \alpha, A)}) = \alpha \sqrt{A} V_0$, 
where $V_0 \equiv \vol(g_{(1,1,1)}) = \frac{1}{2} \vol(S^3(2 \sqrt{2})) = 16\sqrt{2} \pi^2 $.

\end{enumerate}

\end{notarem}

We now describe the closed geodesics of an arbitrary naturally reductive metric on $\SO(3)$
and compute the length spectrum.

\begin{thm}\label{thm:ClosedGeodesics}
Consider the naturally reductive metric $g_{(\alpha, \alpha, A)}$ on $\SO(3)$ and let $\ell_0$ be as in \ref{rem:Notation}(5).

\begin{enumerate}
\item If $\alpha = A$, then the closed geodesics through the identity are precisely the 
one-parameter subgroups of $\SO(3)$ and the non-trivial primitive geodesics are all 
of length $\sqrt{A}\ell_0$.

\item If $A \neq \alpha$, then the geodesic $\gamma_{(V,W)}$ is closed if and only if
one of the following holds:
\begin{enumerate}
\item $(V,W) \in \germ{p}_1$, in which case $\gamma_{(V,W)}$ is a one-parameter 
subgroup of $\SO(3)$ with primitive length $\sqrt{\alpha}\ell_0$.
\item $(V,W) \in \germ{q}_0$,  in which case $\gamma_{(V,W)}$ is a one-parameter 
subgroup of $\SO(3)$ with primitive length $\sqrt{A}\ell_0$. 
\item $(V,W) = (X + \overline{A}Z, -\alpha Z) \in \germ{p}$, where $X \neq 0 \in \germ{u}$ and 
$Z \neq 0  \in \germ{K}$ and there exist $p, q \in \N$ relatively prime integers such that:
\begin{enumerate}
\item $\frac{q^2}{p^2} > \frac{A^2}{(A-\alpha)^2}$
\item $\|X\|_0^2 = \sigma(p,q, \alpha, A) \|Z\|_0^2,$ where $\sigma(p,q, \alpha, A) \equiv \frac{q^2 \alpha^2}{p^2} - \frac{A^2 \alpha^2}{(\alpha - A)^2}$.

\end{enumerate}

In this case we see that the closed geodesic $\gamma_{(V,W)}$ is not a one-parameter subgroup  
and its primitive length is given by $\sqrt{\alpha}\ell_0[q^2 + p^2\frac{A}{\alpha - A}]^{\frac{1}{2}}$,
which is always strictly larger than $\sqrt{\alpha}\ell_0$.

\end{enumerate}

\end{enumerate}

Consequently, the length spectrum of $g_{(\alpha, \alpha, A)}$ is given by 

$$\Spec_L(g_{(\alpha, \alpha,A)}) = \left\{ 
\begin{array}{ll}
\{0 \} \cup \{k \sqrt{\alpha}\ell_0 : k \in \N \}  & \alpha =  A\\
\{0\} \cup \{k \sqrt{\alpha}\ell_0, k \sqrt{A}\ell_0, k \tau : k \in \N \mbox{ and } \tau > 0 \mbox{ with } 
\mathcal{E}_{\tau, \alpha, A} \neq \emptyset \} & A \neq \alpha,
\end{array}
\right. $$
where for each $\tau >0$ we let $\mathcal{E}_{\tau, \alpha, A}$ denote the finite collection of relatively prime 
ordered pairs $(p,q) \in \N \times \N$ satisfying $\frac{q}{p} > |\frac{A}{A-\alpha}|$ and 
$\sqrt{\alpha}\ell_0[q^2 + p^2\frac{A}{\alpha - A}]^{\frac{1}{2}} = \tau$.
\end{thm}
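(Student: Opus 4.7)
The plan is to build on Proposition~\ref{prop:NatRedGeodesics}: every geodesic through $e \in \SO(3)$ with respect to $g_{(\alpha,\alpha,A)}$ is of the form $\gamma_{(V,W)}(t) = \exp(tV)\exp(-tW)$ for a unique $(V,W) \in \germ{p}$, and $\gamma_{(V,W)}$ is smoothly closed precisely when there exists $t_0 > 0$ with $\exp(t_0 V) = \exp(t_0 W)$. Using the explicit description of $\germ{p}$ recorded in Notation and Remarks~\ref{rem:Notation}(3), I would enumerate the candidate initial conditions and convert the closure condition into an algebraic constraint on the coefficients of $V$ and $W$.

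When $\alpha = A$ the space $\germ{p}$ has the form $\germ{u} \oplus 0$, so every geodesic is already a one-parameter subgroup $t \mapsto \exp(tV)$. Since $g_{(\alpha,\alpha,\alpha)} = \alpha g_0$, the manifold $(\SO(3), g_{(\alpha,\alpha,\alpha)})$ has constant sectional curvature $1/(8\alpha)$, so each primitive closed geodesic has common length $\sqrt{\alpha}\,\ell_0$, establishing part~(1).

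When $\alpha \neq A$, uniquely write $(V,W) = (X + \overline{A}Z,\, -\alpha Z)$ with $X \in \germ{m}$ and $Z \in \germ{K}$; since $\germ{m}$ and $\germ{K}$ are $g_0$-orthogonal, the vectors $V$ and $W$ are linearly dependent exactly when $X = 0$ or $Z = 0$. If $Z = 0$, then $\gamma(t) = \exp(tX)$ is a one-parameter subgroup of primitive period $\ell_0/\|X\|_0$ with $g$-speed $\sqrt{\alpha}\,\|X\|_0$, giving primitive length $\sqrt{\alpha}\,\ell_0$; this is case~2(a). If $X = 0$, then $V$ and $W$ commute (both lie in the abelian $\germ{K}$) and $\gamma(t) = \exp\!\bigl(t(\overline{A}+\alpha)Z\bigr) = \exp\!\bigl(t\tfrac{\alpha^{2}}{\alpha-A}Z\bigr)$, a one-parameter subgroup whose primitive length a short calculation shows to be $\sqrt{A}\,\ell_0$; this is case~2(b).

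The substantive case is 2(c), where both $X$ and $Z$ are nonzero so $V$ and $W$ are linearly independent. The key geometric input is that the lifts to $\SU(2) \cong S^3$ of the one-parameter subgroups $\exp(tV)$ and $\exp(tW)$ are two distinct great circles, which therefore meet only at $\pm I$; projecting to $\SO(3) = \SU(2)/\{\pm I\}$, these subgroups meet only at $e$. Hence $\exp(t_0V) = \exp(t_0W)$ forces both sides to equal $e$, i.e.\ $t_0\|V\|_0 = q\ell_0$ and $t_0\|W\|_0 = p\ell_0$ for positive integers $p,q$, which may be taken coprime by choosing $t_0$ minimal. Squaring the resulting ratio $\|V\|_0/\|W\|_0 = q/p$ and substituting $\|V\|_0^2 = \|X\|_0^2 + \overline{A}^{\,2}\|Z\|_0^2$, $\|W\|_0 = \alpha\|Z\|_0$, together with $\overline{A}^{\,2} = A^2\alpha^2/(\alpha-A)^2$, recovers condition~2(c)(ii); the requirement $\|X\|_0^2 > 0$ is precisely condition~2(c)(i). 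The primitive length $t_0\|V-W\|_g$, computed from $V-W = X + \tfrac{\alpha^{2}}{\alpha-A}Z$ and the orthogonal decomposition of $g$ on $\germ{m} \oplus \germ{K}$, simplifies after routine algebra to $\sqrt{\alpha}\,\ell_0\bigl[q^2 + p^2 A/(\alpha-A)\bigr]^{1/2}$, and an elementary comparison using $q^2 > p^2 A^2/(\alpha-A)^2$ shows this strictly exceeds $\sqrt{\alpha}\,\ell_0$. The length spectrum is then assembled by collecting these primitive lengths together with all their positive integer multiples. The main obstacle is securing the geometric claim that two linearly independent one-parameter subgroups of $\SO(3)$ meet only at the identity; once this is in hand, the remaining steps reduce to bookkeeping.
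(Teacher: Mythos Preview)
Your proposal is correct and follows essentially the same line as the paper's proof: the three-case split according to whether $X=0$, $Z=0$, or both are nonzero, the reduction of the closure condition in Case~III to both one-parameter subgroups hitting the identity simultaneously (the paper invokes the constant-curvature fact recorded in \ref{rem:Notation}(6) rather than lifting to $\SU(2)$, but the content is the same), and the resulting period-ratio computation all match. The one item you omit is the argument that each $\mathcal{E}_{\tau,\alpha,A}$ is \emph{finite}; the paper handles this at the end by observing that for $A<\alpha$ the set lies on an ellipse, while for $A>\alpha$ the constraint $q/p > A/(A-\alpha)$ forces the lattice points on the relevant hyperbola to stay bounded away from its asymptote.
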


\begin{dfn}\label{dfn:GeodesicTypes}
Let $g_{(\alpha, \alpha, A)}$ be a naturally reductive metric on $\SO(3)$ with $\alpha \neq A$.

\begin{enumerate}
\item A geodesic of the form given in Theorem~\ref{thm:ClosedGeodesics}(2a) or 
a translate thereof is said to be of \emph{Type I}.
\item A geodesic of the form given in Theorem~\ref{thm:ClosedGeodesics}(2b) or 
a translate thereof is said to be of \emph{Type II}.
\item A geodesic of the form given in Theorem~\ref{thm:ClosedGeodesics}(2c) or 
a translate thereof is said to be of \emph{Type III}.
\end{enumerate}
A periodic orbit of the geodesic flow of $g_{(\alpha, \alpha, A)}$ will be called Type I, II or III according to whether its corresponding closed geodesic is.
\end{dfn}

\begin{rem}
The systole of a closed Riemannian manifold $(M,g)$, denoted $\operatorname{Syst}(M,g)$, is the minimum length of a non-contracible closed geodesic and is necessarily at least as large as $\tau_{\rm{min}}(M,g)$, the minimum length of a non-trivial closed geodesic. Theorem~\ref{thm:ClosedGeodesics} shows us that if $\alpha \neq A$, then the shortest non-trivial closed geodesic with respect to $g_{(\alpha, \alpha, A)}$ is always of Type I or Type II. Therefore, since (primitive) one-parameter subgroups of $\SO(3)$ are homotopically non-trivial, it follows that for any $\alpha, A >0$, we have $\operatorname{Syst}(g_{(\alpha, \alpha, A)}) = \tau_{\rm{min}}(g_{(\alpha, \alpha, A)})$.
We also note that it is easy to show that a prime geodesic of Type III is homotopically trivial if and only if $p+q$ is even.
\end{rem}

\begin{rem}\label{rem:Systole}
In the case where $A \leq \alpha$ the primitive geodesics of Type I and II are shorter 
than the primitive geodesics of Type III.
However, when $A > \alpha$, this need not be the case. 
For example, if we let $\alpha =1$ and  $A = 10$, then $(p,q) = (1,2)$
gives rise to a primitive geodesic that is not a one-parameter subgroup and is of length $\ell_0 \sqrt{4 + \frac{10}{9}}$.
However, if $A > \alpha$ and $(A - \alpha)^2 < \alpha$, then the prime geodesics of 
Type I and II will still be shorter than the prime geodesics of Type III. 
\end{rem}

\begin{proof}[Proof of Theorem~\ref{thm:ClosedGeodesics}]
For any vector $U \in TG$ we will let $\|U\|_0$ (respectively $\|U\|$) denote 
its length with respect to the metric $g_0$ (respectively $g_{(\alpha, \alpha, A)}$).

In the case where $\alpha = A$ we recall from \ref{rem:Notation} that
$\germ{p} = \germ{p}_1' = \germ{g} \oplus 0$.
Hence, the geodesics $\gamma_{(V,0)}(t) = \exp(tV)$ are one-parameter subgroups of $G$ and 
the primitive non-trivial geodesics are of length $\sqrt{A}\ell_0 = \sqrt{\alpha}\ell_0$ with respect to $g_{(A,A,A)}$. Thus establishing $(1)$.
 
In the case where $\alpha \neq A$ we recall that 
$\germ{p} = \germ{p}_1 \oplus \germ{q}_0$, where
$\germ{p}_1 = \{(X, 0) : X \in \germ{K}^{\perp_0} \}$ and $\germ{q}_0 = \{ (\overline{A}Z, -\alpha Z) : Z \in \germ{K} \}$
(since $\germ{K}$ is abelian). To find the closed geodesics and their lengths we consider the following three cases.

\medskip

\noindent
{\bf Case I:} $(V,W) = (X, 0) \in \germ{p}_1$ for some $X \neq 0 \in \germ{K}^{\perp_0}$.

\medskip
In this case the geodesic $\gamma_{(V,W)}(t) = \exp(tX)$ is a non-trivial one-parameter subgroup of $\SO(3)$. Consequently, it is closed and has primitive length 
$L( \gamma_{(V,W)}) = \operatorname{Per}(X) \cdot \|X\| = \operatorname{Per}(X) \sqrt{\alpha} \|X\|_0 = \sqrt{\alpha} \ell_0$.


\medskip
\noindent
{\bf Case II:} $(V,W) = (\overline{A} Z, -\alpha Z) \in \germ{q}_0$ for some $Z \neq 0 \in \germ{K}$

\medskip
In this case the geodesic $\gamma_{(V,W)}(t) = \exp_G(t(\overline{A} + \alpha)Z)$ 
is a non-trivial one-parameter subgroup of $\SO(3)$. 
Consequently, it is closed and has primitive length $L( \gamma_{(V,W)}) = \operatorname{Per}((\overline{A} + \alpha)Z) \cdot \|(\overline{A} + \alpha)Z\| = \operatorname{Per}((\overline{A} + \alpha)Z) \sqrt{A} \|(\overline{A} + \alpha)Z\|_0 = \sqrt{A} \ell_0$.


\medskip
\noindent
{\bf Case III:} $(V,W) = (X + \overline{A} Z, -\alpha Z )$, where 
$X \neq 0 \in \germ{K}^{\perp_0}$ and  $Z \neq 0 \in \germ{K}$.

\medskip
The geodesic $\gamma_{(V,W)}(t) = \exp(t(X+ \overline{A}Z)) \exp(t \alpha Z)$ is clearly not a one-parameter 
subgroup of $\SO(3)$, and it is closed if and only if there is a $t_0 > 0 $ such that 
\begin{eqnarray}\label{eqn:Intersection1}
\exp(t_0(X+ \overline{A}Z)) = \exp(- t_0 \alpha Z).
\end{eqnarray}
As noted in \ref{rem:Notation}(5), the images of two non-trivial one-parameter subgroups $\exp(tX_1)$ and $\exp(tX_2)$  in $\SO(3)$ 
either have only the identity element in common or are identical, and the latter occurs if and only if 
$X_1$ and $X_2$ are linearly dependent. Therefore, since $X+ \overline{A}Z$ and $\alpha Z$ are linearly 
independent we see that Equation~\ref{eqn:Intersection1} holds if and only if there is a $t_0 > 0$ such that 
\begin{eqnarray}\label{eqn:Intersection2}
e^{t_0(X+ \overline{A}Z)} = e^{- t_0 \alpha Z} = e,
\end{eqnarray}
which is equivalent to the existence of relatively prime integers $p, q \in \N$ such that 
$p \operatorname{Per}(\alpha Z) = q \operatorname{Per}(X + \overline{A}Z)$. 
Writing out the period of $\alpha Z$ and $X + \overline{A}Z$ explicitly 
we find that Equation~\ref{eqn:Intersection2} holds if and only if there exist relatively prime $p, q \in \N$ such that 
\begin{enumerate}
\item $\sigma(p,q, \alpha, A) \equiv \alpha^2(\frac{q^2}{p^2} - \frac{A^2}{(\alpha -A)^2}) > 0 $;
\item $\|X\|_0^2 = \sigma(p, q, \alpha, A) \|Z\|_0^2$.
\end{enumerate}
The function $\sigma$ has the property that 
$\sigma(p, q, \alpha, A) = \sigma(\tilde{p}, \tilde{q}, \alpha, A)$
if and only if $\frac{q}{p} = \frac{\tilde{q}}{\tilde{p}}$ and clearly $\sigma(p,q,\alpha, A) >0$ 
is equivalent to $\frac{q^2}{p^2} > \frac{A^2}{(A-\alpha)^2}$.

Now, let $X \neq 0 \in \germ{u}$, $Z \neq 0 \in \germ{K}$ and let $p, q \in \N$ be relatively prime integers 
such that Equation~\ref{eqn:Intersection2} holds. Then $\gamma_{(V,W)}$ is closed and its primitive length is given by

\allowdisplaybreaks[3]

\begin{eqnarray*}
L( \gamma_{(X + \overline{A}Z, -\alpha Z)})^2 &=& [q \operatorname{Per}(X + \overline{A}Z) \| X + (\overline{A} + \alpha)Z \|]^2 \\
&=& [\frac{q \ell_0}{\|X + \overline{A}Z \|_0} \| X + (\overline{A} + \alpha)Z \|]^2 \\
&=& \frac{q^2 \ell_0^2}{ \|X\|_0^2 + \overline{A}^2 \|Z\|_0^2}  \| X + (\overline{A} + \alpha)Z \|^2\\
&=& \frac{q^2 \ell_0^2}{ \|X\|_0^2 + \overline{A}^2 \|Z\|_0^2} \left( \| X\|^2 + (\overline{A} + \alpha)^2 \|Z \|^2\right)\\
&=& \frac{q^2 \ell_0^2}{ \|X\|_0^2 + \overline{A}^2 \|Z\|_0^2} \left( \alpha \| X\|_0^2 + (\overline{A} + \alpha)^2 A\|Z \|_0^2 \right)\\
&=& \frac{q^2 \ell_0^2}{ \|X\|_0^2 + \overline{A}^2 \|Z\|_0^2} \left( \alpha \| X\|_0^2 + \left( \frac{\alpha^2}{\alpha -A} \right)^2 A\|Z \|_0^2 \right)\\
&=& q^2 \ell_0^2\frac{  (\alpha \| X\|_0^2 + (\frac{\alpha^2}{\alpha -A})^2 A\|Z \|_0^2)}{ \|X\|_0^2 + \overline{A}^2 \|Z\|_0^2} \\
&=& q^2 \ell_0^2\frac{  (\alpha \| X\|_0^2 + \frac{A\alpha^4}{(\alpha -A)^2}\|Z \|_0^2)}{ \|X\|_0^2 + \overline{A}^2 \|Z\|_0^2} \\
&=& \alpha q^2 \ell_0^2 \cdot \frac{ \| X\|_0^2 + \frac{A\alpha^3}{(\alpha -A)^2}\|Z \|_0^2}{ \|X\|_0^2 + \frac{A^2 \alpha^2}{(\alpha - A)^2} \|Z\|_0^2}\\
&=& \alpha q^2 \ell_0^2 \cdot \frac{   \| X\|_0^2 + \frac{A\alpha^3}{(\alpha -A)^2\sigma(p,q, \alpha, A)}\|X \|_0^2}{ \|X\|_0^2 + \frac{A^2 \alpha^2}{(\alpha - A)^2\sigma(p,q,\alpha,A)} \|X\|_0^2} \\
&=& \alpha q^2 \ell_0^2 \cdot \frac{ \| X\|_0^2 + \frac{A\alpha^3}{(\alpha -A)^2}\|Z \|_0^2}{ \|X\|_0^2 + \frac{A^2 \alpha^2}{(\alpha - A)^2} \|Z\|_0^2} \\
&=& \alpha q^2 \ell_0^2 \cdot \frac{   \| X\|_0^2 + \frac{A\alpha^3}{(\alpha -A)^2\sigma(p,q, \alpha, A)}\|X \|_0^2}{ \|X\|_0^2 + \frac{A^2 \alpha^2}{(\alpha - A)^2\sigma(p,q,\alpha,A)} \|X\|_0^2} \\
&=& \alpha q^2 \ell_0^2 \cdot \frac{  1 + \frac{A\alpha p^2}{q^2(\alpha -A)^2 - p^2A^2}}{1 + \frac{A^2 p^2}{q^2(\alpha -A)^2 - p^2A^2}}\\
&=& \alpha q^2 \ell_0^2 \cdot (1 + \frac{p^2A(\alpha -A)}{q^2(\alpha -A)^2})\\
&=& \alpha \ell_0^2 \cdot (q^2 + p^2\frac{A}{(\alpha -A)}).
\end{eqnarray*}

In the event that $\alpha > A$ it is clear that this geodesic will have length strictly greater than $\sqrt{\alpha} \ell_0$.
To handle the case where $\alpha < A$ we note that $\alpha \ell_0^2 \cdot (q^2 + p^2\frac{A}{(\alpha -A)})$ 
is greater than $\alpha \ell_0^2$ if and only if  $\frac{A}{A-\alpha}= | \frac{A}{A-\alpha} | < \frac{q^2 -1}{p^2}$. 
But we recall that $p, q \in \N$ were chosen so that $\frac{q}{p} > |\frac{A}{A-\alpha}| = \frac{A}{A-\alpha} >1$,  
and notice that for $q > p$ we have $\frac{q^2 -1}{p^2} > \frac{q}{p}$.
Hence, for $A\neq \alpha$, we see that $L(\gamma_{(V,W)}) > \sqrt{\alpha}\ell_0$.

Cases I-III establish statement (2) of the theorem and the statement concerning 
the length spectrum of an arbitrary naturally reductive 
metric $g_{(\alpha, \alpha, A)}$ is now immediate. 
We conclude the proof by showing that the set $\mathcal{E}_{\tau, \alpha, A}$ is finite.

Indeed, in the case where $A < \alpha$, we see that $\mathcal{E}_{\tau, \alpha, A}$ 
is a subset of the intersection of an ellipse with the integer lattice in $\R^2$, which implies it is finite.
In the event that $A > \alpha$, the points $(p,q) \in \mathcal{E}_{\tau, \alpha, A}$ are a subset 
of the intersection of the integral lattice with the hyperbola
\begin{eqnarray*}\label{eqn:Hyperbola}
\frac{y^2}{\tau^2/\alpha \ell_0^2} - \frac{x^2}{\tau^2(A-\alpha)/\alpha \ell_0^2 A} = 1
\end{eqnarray*}
having asymptotes $y = \pm \sqrt{\frac{A}{A-\alpha}} x$.
Now, suppose $\mathcal{E}_{\tau, \alpha, A}$ is infinite, then, since 
$\frac{q}{p} > |\frac{A}{A-\alpha}| = \frac{A}{A-\alpha} > 1$, 
we see that $q$ must become arbitrarily large.
Then, since the hyperbola is asymptotic to $y = \sqrt{\frac{A}{A-\alpha}} x$,
we see that the expression $|p - \sqrt{\frac{A-\alpha}{A}} q|$ can be made arbitrarily small in 
$\mathcal{E}_{\tau, \alpha, A}$.
However, $\frac{q}{p} > \frac{A}{A-\alpha} > 1$ implies
$$p < \frac{A- \alpha}{A} q < \sqrt{\frac{A- \alpha}{A}} q$$
for any $(p,q) \in \mathcal{E}_{\tau, \alpha, A}$,
which implies the quantity $|p - \sqrt{\frac{A-\alpha}{A}} q|$ cannot be made arbitrarily small.
So, we see $\mathcal{E}_{\tau, \alpha, A}$ is finite.
\end{proof}

For any period $\tau$ of the geodesic flow of a symmetric metric $g_{(\alpha,\alpha,\alpha)}$ on $\SO(3)$, we see that $\Fix(\Phi_\tau)$ is the entire unit tangent bundle and it follows that such metrics are clean. 
We now wish to examine the ``cleanliness'' of the other naturally reductive metrics on $\SO(3)$.
Towards this end we begin by examining the fixed point sets of the geodesic flow for 
naturally reductive metrics that are not symmetric.

\begin{lem}\label{lem:Submanifolds}
Consider the naturally reductive metric $g_{(\alpha, \alpha, A)}$ on $U = \SO(3)$, where $\alpha \neq A$, and let $U \times K = \SO(3) \times \SO(2)$ be the connected component of the identity in the isometry group of $g_{(\alpha, \alpha, A)}$.
We let $v = c_1Z_1 + c_2 Z_2 + c_3 Z_3 \in \germ{p} \equiv T_e U$ be a unit vector where $Z_1, Z_2, Z_3 \in T_eU$ 
is the orthonormal basis given in \ref{rem:Notation}(3).

\begin{enumerate}
\item If $c_1^2 + c_2^2 = 1$, then $(U\times K) \cdot v \simeq \SO(3) \times S^1$ and this 
$4$-dimensional submanifold of $T^1\SO(3)$ accounts for all the unit speed primitive geodesics of 
Type I, all of which have length $\sqrt{\alpha} \ell_0$.
The manifold $(U\times K) \cdot v$ is said to be a Type I component.

\item If $c_3 = \pm 1$, then $(U \times K) \cdot v \simeq \SO(3)$ and the $3$-dimensional submanifold 
$(U\times K) \cdot v  \cup (U\times K) \cdot (-v)$ of $T^1\SO(3)$ accounts for all the 
unit speed primitive geodesics of Type II, all of which have length $\sqrt{A}\ell_0$.
The manifold $(U \times K) \cdot v$ is said to be a Type II component.

\item Let $\tau > 0$ be such that $\mathcal{E}_{\tau, \alpha, A}$ is non-empty. 
For each $(p,q) \in \mathcal{E}_{\tau, \alpha, A}$ 
fix a unit vector $v_{(p,q)} = c_1 Z_1 + c_2 Z_2 + c_3 Z_3$, where 
$c_1^2 + c_2^2 = \frac{\sigma(p,q,\alpha, A)}{\sigma(p,q,\alpha,A) + 1}$ and 
$c_3^2= \frac{1}{\sigma(p,q, \alpha, A) + 1}$. 
Then $(U\times K) \cdot v_{(p,q)} \simeq \SO(3) \times S^1_{(p,q)}$, 
where $S^1_{(p,q)}= \{ xZ_1 + yZ_2 + zZ_3: x^2 + y^2 = \frac{\sigma}{\sigma +1} z = c_3\}$,
and the $4$-dimensional submanifold
$\cup_{(p,q) \in \mathcal{E}_{\tau, \alpha, A}} (U \times K) \cdot (\pm v_{(p,q)})$ accounts 
for the unit speed primitive geodesics of Type III having length $\tau$.
The manifold $(U \times K) \cdot v_{(p,q)}$ is said to be a Type III component.
\end{enumerate}

\end{lem}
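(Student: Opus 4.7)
The plan is to describe each orbit $(U\times K)\cdot v \subset T^1 U$ by first analyzing the isotropy representation of $\Delta K$ on $T_e^1 U$ and then globalizing via the transitivity of the $U$-factor. As noted in notation~\ref{rem:Notation}(3), the $\Ad(\Delta K)$-action on $\germ{p} = \germ{p}_1 \oplus \germ{q}_0$ rotates $\Span\{Z_1, Z_2\} = \germ{p}_1$ as the standard $\SO(2)$-representation and fixes $Z_3$ pointwise (the latter because $\germ{K}$ is abelian). Hence, for a unit vector $v = c_1 Z_1 + c_2 Z_2 + c_3 Z_3 \in T_e^1 U$, the isotropy orbit $\Delta K \cdot v$ is a circle in the horizontal slice $\{xZ_1 + yZ_2 + c_3 Z_3 : x^2 + y^2 = c_1^2 + c_2^2\}$ when $c_1^2 + c_2^2 > 0$, and is the single point $v = \pm Z_3$ when $c_1 = c_2 = 0$.

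A direct calculation using left-translation shows that in the trivialization $T^1 U \simeq U \times T_e^1 U$, the full $G$-orbit takes the product form $\SO(3) \times (\Delta K \cdot v)$. Combining this with Theorem~\ref{thm:ClosedGeodesics} yields the three cases. When $c_3 = 0$, $v \in \germ{p}_1$ and $\gamma_v$ is a Type~I one-parameter subgroup of length $\sqrt{\alpha}\ell_0$, giving the $4$-dimensional orbit $\SO(3) \times S^1$. When $c_1 = c_2 = 0$, $v \in \germ{q}_0$ and $\gamma_v$ is a Type~II one-parameter subgroup of length $\sqrt{A}\ell_0$, giving the $3$-dimensional orbit $\SO(3)$; since $\Delta K$ fixes $+Z_3$ and $-Z_3$ as separate points, both $(U\times K)\cdot v$ and $(U\times K)\cdot (-v)$ are needed to exhaust the unit tangents to Type~II geodesics. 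Finally, when $c_3 \neq 0$ and $c_1^2 + c_2^2 > 0$, the associated pair $(V,W) = (X + \overline{A}Z, -\alpha Z) \in \germ{p}$ satisfies $X, Z \neq 0$, and Theorem~\ref{thm:ClosedGeodesics}(2c) identifies $\gamma_v$ as Type~III of length $\tau$ precisely when $\|X\|_0^2/\|Z\|_0^2 = \sigma(p,q,\alpha,A)$ for some $(p,q) \in \mathcal{E}_{\tau,\alpha,A}$, producing again a $4$-dimensional orbit $\SO(3) \times S^1$.

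The main step requiring care is (3): one must verify that translating the conditions of Theorem~\ref{thm:ClosedGeodesics}(2c) on $\|X\|_0$ and $\|Z\|_0$ through the explicit change of basis in notation~\ref{rem:Notation}(3) yields exactly the normalizations of $c_1^2 + c_2^2$ and $c_3^2$ stated in the lemma. Because $\sigma(p,q,\alpha,A)$ is a strictly monotonic function of $q/p$ among relatively prime pairs, distinct $(p,q) \in \mathcal{E}_{\tau,\alpha,A}$ produce $K$-orbits at distinct values of $c_3^2$ on the unit sphere, so the $G$-orbits are disjoint; together with the $\pm Z_3$-ambiguity this accounts for every Type~III closed orbit of length $\tau$. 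Cases (1) and (2) are immediate consequences of the isotropy analysis and the identifications of $\germ{p}_1$ and $\germ{q}_0$; the main obstacle is thus the arithmetic bookkeeping in case (3) matching the $(p,q)$-parametrization to the geometric orbit data.
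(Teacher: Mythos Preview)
Your proposal is correct and follows essentially the same approach as the paper: identify the isotropy action of $\Delta K$ on $\germ{p}$ as rotations on $\Span\{Z_1,Z_2\}$ fixing $Z_3$, globalize via the left-trivialization $T^1U \simeq U \times T_e^1U$, and then invoke Theorem~\ref{thm:ClosedGeodesics} to match each orbit type with the corresponding geodesic type. The paper's proof is in fact a two-line sketch pointing to exactly these ingredients; you have simply unpacked them, and your additional remarks on the injectivity of $\sigma$ in $q/p$ (hence disjointness of Type~III orbits) and on the $\|X\|_0/\|Z\|_0$ bookkeeping are useful elaborations rather than departures.
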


\begin{proof}
We recall that the isotropy group of the identity element corresponding to the 
natural action of $U \times K$ on $\SO(3)$ is $\Delta K = \SO(3)$, and as we noted in \ref{rem:Notation}(3) the isotropy 
action of $\Delta K$ on $\germ{p} \equiv T_e G$ acts via rotations on $\germ{p}_1 = \Span_{\R}\{Z_1, Z_2\}$ and 
fixes $\germ{q}_0 = \Span_{\R}\{Z_3\}$. 
The lemma now follows from Theorem~\ref{thm:ClosedGeodesics}.
\end{proof}

\begin{lem}\label{lem:Discrete}
For any $B > 0$, there are finitely many $0 < \tau < B$ such that $\mathcal{E}_{\tau, \alpha, A}$ is non-empty.
\end{lem}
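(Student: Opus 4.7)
The plan is to bound, uniformly in $\tau \in (0,B)$, the set of relatively prime pairs $(p,q) \in \N \times \N$ that can realize $\tau$ via the formula in Theorem~\ref{thm:ClosedGeodesics}(2c). Each such pair yields a unique value $\tau^2 = \alpha \ell_0^2\bigl(q^2 + p^2 \tfrac{A}{\alpha - A}\bigr)$, so it suffices to show that only finitely many pairs $(p,q)$ subject to the constraint $q^2/p^2 > A^2/(A-\alpha)^2$ can produce $\tau < B$; this will \emph{a fortiori} give finitely many values of $\tau$.

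I would split into the two cases $\alpha > A$ and $\alpha < A$. The case $\alpha > A$ is immediate: here the coefficient $A/(\alpha - A)$ is positive, so both $q^2$ and $p^2 \tfrac{A}{\alpha - A}$ are bounded above by $B^2/(\alpha\ell_0^2)$, simultaneously bounding $p$ and $q$.

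The case $\alpha < A$ is the main (mild) obstacle, since $\tau^2$ is now a difference of two positive quantities and could \emph{a priori} remain small even as $p$ and $q$ grow. Setting $\mu \equiv A/(A-\alpha) > 1$, the formula becomes $\tau^2/(\alpha \ell_0^2) = q^2 - \mu p^2$ subject to $q/p > \mu$. The key observation is that this constraint keeps $(p,q)$ bounded away from the asymptote $q = \sqrt{\mu}\, p$ of the hyperbola $q^2 - \mu p^2 = 0$: since $q/p > \mu > \sqrt{\mu}$, we have $p^2 < q^2/\mu^2$, and hence
\[
q^2 - \mu p^2 \;>\; q^2\!\left(1 - \tfrac{1}{\mu}\right) \;=\; \frac{q^2(\mu-1)}{\mu}.
\]
Combined with $q^2 - \mu p^2 < B^2/(\alpha \ell_0^2)$, this bounds $q$, after which $p < q/\mu$ bounds $p$. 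In either case only finitely many admissible pairs $(p,q)$ can produce values $\tau < B$, so only finitely many such values of $\tau$ arise, as claimed.
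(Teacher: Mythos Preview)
Your proof is correct and takes essentially the same approach as the paper. The paper's own proof is a one-line assertion that the set of Type III lengths $\sqrt{\alpha}\ell_0\bigl[q^2 + p^2\tfrac{A}{\alpha-A}\bigr]^{1/2}$ is discrete; your argument supplies the explicit bounds that justify this---in particular, your observation that $\mu = A/(A-\alpha) > 1$ forces $q^2 - \mu p^2 > q^2(\mu-1)/\mu$ under the constraint $q/p > \mu$ is exactly the kind of estimate the paper uses (in the proof of Theorem~\ref{thm:ClosedGeodesics}) to show each $\mathcal{E}_{\tau,\alpha,A}$ is finite.
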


\begin{proof}
This follows immediately from the fact that a Type III geodesic has length of the form 
$\sqrt{\alpha}\ell_0 [q^2 + p^2 \frac{A}{\alpha - A}]^{\frac{1}{2}}$, where $p, q, \in \N$, 
and the values of this function form a discrete subset of $\R$. 
\end{proof}

Using Theorem~\ref{thm:ClosedGeodesics} and Lemmas~\ref{lem:Submanifolds} and \ref{lem:Discrete} the following is immediate.

\begin{cor}\label{cor:Submanifolds}
Let $g_{(\alpha, \alpha, A)}$ be a naturally reductive metric on $\SO(3)$ with 
unit tangent bundle $T^1 \SO(3)$ and corresponding geodesic flow 
$\Phi_t : T^1 \SO(3) \to T^1 \SO(3)$, $t \in \R$. 
Then, for each period $\tau$ of the geodesic flow, $\Fix (\Phi_\tau)$ is a union of finitely many (homogeneous) submanifolds of $T^1\SO(3)$ and for each $u \in \Fix(\Phi_\tau)$ the connected component of $\Fix(\Phi_\tau)$ containing $u$ is given by $\operatorname{Isom}(g_{(\alpha, \alpha, A)})^0\cdot u$, where $\operatorname{Isom}(g_{(\alpha, \alpha, A)})^0$ denotes the connected component of the identity in the isometry group. In particular, we have the following:

\begin{enumerate}
\item $\alpha = A$ if and only if $|\tau| = \sqrt{\alpha}\ell_0$ is the length of the shortest non-trivial closed geoedesic 
and $\Fix(\Phi_\tau) = T^1 \SO(3)$ is $5$-dimensional.

\item $A < \alpha$ if and only if $|\tau| = \sqrt{A}\ell_0$ is the length of the shortest non-trivial closed geodesic and  $\Fix(\Phi_\tau) \simeq \SO(3) \cup \SO(3)$ is $3$-dimensional. In which case all geodesics of length $|\tau| = \sqrt{A}\ell_0$ are of Type II.

\item $A > \alpha$ if and only if $|\tau| = \sqrt{\alpha}\ell_0$ is the length of the shortest non-trivial closed geodesic and $\Fix(\Phi_\tau) \simeq \SO(3) \times S^1$ is $4$-dimensional. In which case all geodesics of length $|\tau| = \sqrt{\alpha}\ell_0$ are of Type I.
\end{enumerate}
\end{cor}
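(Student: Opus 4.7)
My plan is to derive this corollary by combining Theorem~\ref{thm:ClosedGeodesics} (classification of closed geodesics), Lemma~\ref{lem:Submanifolds} (identification of each orbit), and Lemma~\ref{lem:Discrete} (discreteness of Type III lengths). A unit vector $u \in T^1\SO(3)$ belongs to $\Fix(\Phi_\tau)$ precisely when the geodesic $\gamma_u$ with initial velocity $u$ is smoothly closed of period $|\tau|$. Theorem~\ref{thm:NatRed}(4) identifies the connected isometry group as $\Isom(g_{(\alpha,\alpha,A)})^0 = U \times N_U(K)^0 = \SO(3)\times\SO(2)$, which acts on $T^1\SO(3)$ by bundle automorphisms commuting with $\Phi_t$; hence $\Fix(\Phi_\tau)$ is saturated under this action, and it suffices to identify the set $\mathcal{V}_\tau \subset \germ{p} \equiv T_e\SO(3)$ of unit vectors whose associated geodesic is closed with period $|\tau|$ and take the corresponding $\Isom^0$-orbits.

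A vector $v \in \mathcal{V}_\tau$ iff the primitive period of $\gamma_v$ divides $|\tau|$. In the symmetric case $\alpha=A$, all primitive periods equal $\sqrt{\alpha}\ell_0$, and in the non-symmetric case $\alpha\neq A$, Theorem~\ref{thm:ClosedGeodesics} enumerates primitive periods as $\sqrt{\alpha}\ell_0$ (Type~I), $\sqrt{A}\ell_0$ (Type~II), and a discrete set of Type~III lengths, of which only finitely many lie below $|\tau|$ by Lemma~\ref{lem:Discrete}. For each such primitive period dividing $|\tau|$, Lemma~\ref{lem:Submanifolds} realizes the corresponding contributions as a finite union of $(U\times K)$-orbits: an $\SO(3)\times S^1$ for Type~I, two copies of $\SO(3)$ for Type~II, and $\SO(3)\times S^1_{(p,q)}$ for each $(p,q)\in\mathcal{E}_{\tau',\alpha,A}$ of Type~III. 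These orbits are closed, connected, homogeneous submanifolds, pairwise disjoint (being distinct group orbits) and finite in number, and therefore constitute the connected components of $\Fix(\Phi_\tau)$, establishing the first assertion.

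For the three ``In particular'' statements, I read off the minimum nontrivial period from the length spectrum above. When $\alpha=A$, every primitive closed geodesic is a one-parameter subgroup of length $\sqrt{\alpha}\ell_0$ and every unit vector generates one, yielding $\Fix(\Phi_{\sqrt{\alpha}\ell_0}) = T^1\SO(3)$ of dimension five. When $\alpha\neq A$, the final step of the proof of Theorem~\ref{thm:ClosedGeodesics} establishes the strict inequality $L(\gamma_{(X+\overline{A}Z,-\alpha Z)}) > \sqrt{\alpha}\ell_0$ for every Type~III geodesic. Consequently the minimum nontrivial period is $\min(\sqrt{\alpha}\ell_0,\sqrt{A}\ell_0)$: when $A<\alpha$ this is $\sqrt{A}\ell_0$, attained only by Type~II geodesics, giving $\SO(3)\sqcup\SO(3)$ of dimension three; when $A>\alpha$ this is $\sqrt{\alpha}\ell_0$, attained only by Type~I geodesics, giving the single $4$-dimensional orbit $\SO(3)\times S^1$. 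The converse directions follow because the three cases $\alpha=A$, $A<\alpha$, $A>\alpha$ are mutually exclusive and exhaustive, and correspond to distinct dimensions and distinct minimum lengths.

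The main obstacle is verifying that at the minimal period in cases (2) and (3) no other type of geodesic contributes; this reduces precisely to the strict inequality $L>\sqrt{\alpha}\ell_0$ cited above for every Type~III geodesic, combined with the elementary observation that $\sqrt{A}\ell_0 \neq \sqrt{\alpha}\ell_0$ when $A \neq \alpha$, which prevents Type~I (respectively Type~II) geodesics from contributing at $\sqrt{A}\ell_0$ (respectively $\sqrt{\alpha}\ell_0$). Everything else is a direct application of the three cited prerequisites together with the explicit description of $\Isom(g_{(\alpha,\alpha,A)})^0$.
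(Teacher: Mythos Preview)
Your proposal is correct and follows the same approach as the paper, which simply states that the corollary is immediate from Theorem~\ref{thm:ClosedGeodesics} and Lemmas~\ref{lem:Submanifolds} and~\ref{lem:Discrete}. You have supplied the details the paper omits---the identification of $\Isom^0$ via Theorem~\ref{thm:NatRed}(4), the reason the orbits are the connected components, and the verification (via the strict inequality $L>\sqrt{\alpha}\ell_0$ for Type~III geodesics proved in Theorem~\ref{thm:ClosedGeodesics}) that no extraneous types contribute at the minimal period---but the logical structure is identical.
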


We now give an explicit description of the naturally reductive metrics on $\SO(3)$ 
which fail to be clean.

\begin{thm}\label{thm:CleanMetrics}
The naturally reductive metric $g_{(\alpha, \alpha, A)}$ is unclean
if and only if $A \in \alpha\Q_{+} - \{ \alpha \}$, where $\Q_+$ 
denotes the positive rational numbers.
Moreover, if we express $A  \in \alpha\Q_{+} - \{ \alpha \}$ as $A = \frac{2\alpha j}{k}$,
where $k, j \in \N$ are relatively prime, then a period $\tau$ of the geodesic flow of $g_{(\alpha, \alpha,A)}$ is unclean if and only if $|\tau|= mk \sqrt{A} \ell_0$ for some $m \in \N$. 
\end{thm}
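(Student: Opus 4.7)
The plan is to apply Ziller's Poincar\'{e} map (Theorem~\ref{thm:PoincareMap}) along each type of closed geodesic (Theorem~\ref{thm:ClosedGeodesics}) and compare $\Fix(DP)$ with the tangent space $T_u\Fix(\Phi_\tau)$ described in Lemma~\ref{lem:Submanifolds}. By Corollary~\ref{cor:Submanifolds}, condition (1) of Definition~\ref{dfn:Clean} holds for every $g_{(\alpha,\alpha,A)}$, so uncleanness is equivalent to the failure of condition (2) at some $u \in \Fix(\Phi_\tau)$, namely the appearance of periodic Jacobi fields not tangent to the orbit component containing $u$.

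Working in the $\Ad(\Delta K)$-invariant complement $\germ{p} = \germ{p}_1 \oplus \germ{q}_0$ with the basis $Z_1, Z_2, Z_3$ fixed in \ref{rem:Notation}, for each type of closed geodesic one computes the operators $B(X) = -[v,[v,X]_\germ{K}]$ and $T(X) = -[v,X]_\germ{p}$ on $E = v^\perp \cap \germ{p}$ and then reads off the subspaces $E_1, E_2, E_3$ together with the return map $\Psi = \Ad(\exp_G(\tau v))$. For a Type I geodesic ($v = Z_1$), a direct bracket computation gives $E_3 = 0$ and hence $V_4 = 0$; $V_5$ is displaced under $P$ by the non-zero vector $\tau(X,\tfrac12[v,X]_\germ{p})$ and therefore contributes no fixed direction, while $V_1\oplus V_2\oplus V_3$ is three-dimensional and matches the Killing-derived tangent space to the Type I orbit in the Poincar\'{e} section. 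A strictly parallel (though lengthier) analysis for a Type III geodesic, where $v = v_{(p,q)}$ has nontrivial components in both $\germ{p}_1$ and $\germ{q}_0$, yields the same conclusion, so Types I and III contribute no uncleanness.

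The decisive calculation is for Type II ($v = Z_3$). Since $\germ{K} = \germ{so}(2)$ is abelian and $v$ lies in $\germ{q}_0$, one has $[v,\germ{K}] = 0$, which forces $B \equiv 0$, $E_1 = E_2 = 0$, $E_3 = E = \Span(Z_1,Z_2)$, $V_5 = 0$ and $\dim V_4 = 2$. Thus $\Fix(DP)$ exceeds the Killing-generated subspace $V_1 = T_u\Fix(\Phi_\tau)\cap(E\oplus E)$ precisely when $V_4^{\mathrm{per}} \neq 0$, i.e., when $\Psi$ acts as the identity on $E$. Writing $\exp_G(\tau v)$ explicitly and using the identity $\bar{A}/(\bar{A}+\alpha) = A/\alpha$ together with the double-cover formula $\Ad(\exp(s\Theta_3)) = R(s/\sqrt 2)$ on $\Span(\Theta_1,\Theta_2)$, one finds that $\Psi$ is the planar rotation by $2\pi\ell A/\alpha$ for $\tau = \ell\sqrt{A}\ell_0$; consequently $\tau$ is unclean if and only if $\ell A/\alpha \in \Z$.

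The claimed dichotomy now follows directly: no $\ell \in \N$ satisfies the integrality condition when $A/\alpha$ is irrational, while if we write $A = 2\alpha j/k$ with $\gcd(j,k) = 1$ in the rational case, the condition $2\ell j/k \in \Z$ becomes a divisibility statement on $\ell$ whose solution lattice gives the arithmetic progression $|\tau| = mk\sqrt{A}\ell_0$ of unclean lengths. The principal obstacles are (a) the bookkeeping in the Poincar\'{e} map computation --- keeping careful track of the normalizations through the chain $\SU(2)\to\SO(3)$ and between $g_0$ and $g_{(\alpha,\alpha,A)}$ when evaluating the rotation angle of $\Psi$ --- and (b) confirming in the Types I and III cases that no periodic Jacobi field falls outside the Killing-derived subspace $V_1\oplus V_2\oplus V_3$, so that cleanness is genuinely controlled by the Type II analysis alone.
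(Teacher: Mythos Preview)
Your proposal is correct and follows exactly the paper's approach: reduce to Types I, II, III via Theorem~\ref{thm:ClosedGeodesics}, compute Ziller's operators $B$ and $T$ on $E$ to identify the subspaces $V_1,\ldots,V_5$, observe that $E_3=0$ (hence $V_4=0$) for Types I and III so only Type II can be unclean, and then determine the rotation angle of $\Psi=e^{\ad(\tau v)}$ on $E=\Span(Z_1,Z_2)$ to obtain the divisibility criterion. The paper carries this out with the same bracket table and the same case split; your flagged concern about tracking normalizations in the angle computation is exactly the delicate point, since getting the precise multiple of $\pi$ is what fixes the factor of $2$ in the representation $A=2\alpha j/k$.
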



\begin{cor}\label{cor:CleanMetrics}
Let $|\tau|$ be the length of the shortest non-trivial closed geodesic with respect to 
a left-invariant naturally reductive metric on $\SO(3)$. Then, $\tau$ is clean.
\end{cor}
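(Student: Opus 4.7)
The strategy is to split into cases based on the relationship between $\alpha$ and $A$, and in each case to combine the explicit classification of unclean periods in Theorem~\ref{thm:CleanMetrics} with the identification of the shortest non-trivial closed geodesic provided in Corollary~\ref{cor:Submanifolds}. There is no serious obstacle: once those two results are in hand the argument reduces to a short arithmetic check, so the bulk of the work has already been done upstream in Theorems~\ref{thm:CleanMetrics} and \ref{thm:SymmSpcsCIH}.

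First I would dispose of the easy cases. If $\alpha = A$, then $g_{(\alpha,\alpha,A)}$ is bi-invariant and hence globally symmetric, so Theorem~\ref{thm:SymmSpcsCIH} immediately gives that every period of the geodesic flow is clean; in particular $\tau$ is. Similarly, if $A \notin \alpha\Q_+$, Theorem~\ref{thm:CleanMetrics} asserts that the entire metric is clean and we are done.

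The substantive case is $A \in \alpha\Q_+ - \{\alpha\}$, where we write $A = \tfrac{2\alpha j}{k}$ with $j, k \in \N$ relatively prime. Theorem~\ref{thm:CleanMetrics} then characterizes the unclean periods as exactly those $|\tau'| = mk\sqrt{A}\,\ell_0$ with $m \in \N$, so it suffices to show that the shortest non-trivial length is not of this form. I would split into two subcases. If $A < \alpha$, Corollary~\ref{cor:Submanifolds}(2) gives $|\tau| = \sqrt{A}\,\ell_0$; were $\tau$ unclean, we would need $mk = 1$, forcing $k = 1$ and $A = 2\alpha j \geq 2\alpha$, contradicting $A < \alpha$. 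If $A > \alpha$, Corollary~\ref{cor:Submanifolds}(3) gives $|\tau| = \sqrt{\alpha}\,\ell_0$; were $\tau$ unclean, $\sqrt{\alpha} = mk\sqrt{A}$ would yield $\alpha = m^2k^2 A \geq A > \alpha$, again a contradiction.

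In summary, the plan is just: invoke Theorem~\ref{thm:SymmSpcsCIH} when $\alpha = A$, invoke Theorem~\ref{thm:CleanMetrics} to reduce to the case $A \in \alpha\Q_+ - \{\alpha\}$, and then observe that the value of $|\tau|$ from Corollary~\ref{cor:Submanifolds} is incompatible with the unclean-length formula $mk\sqrt{A}\,\ell_0$ for any $m,k \in \N$. No genuine obstacle arises in assembling these pieces; the only thing to be careful about is keeping straight which of $\sqrt{\alpha}\,\ell_0$ and $\sqrt{A}\,\ell_0$ is the systole in each of the two subcases, which is precisely what Corollary~\ref{cor:Submanifolds} records.
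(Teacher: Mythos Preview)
Your proposal is correct and follows essentially the same approach as the paper: both arguments combine Theorem~\ref{thm:CleanMetrics} (characterizing unclean periods as $mk\sqrt{A}\,\ell_0$ when $A = \tfrac{2\alpha j}{k}$) with the identification of $\tau_{\min}$ from Corollary~\ref{cor:Submanifolds}, then verify arithmetically that $\tau_{\min}$ cannot equal $mk\sqrt{A}\,\ell_0$. The only cosmetic differences are that the paper argues by contrapositive (assume $\tau$ unclean, show $|\tau| > \tau_{\min}$) and observes $k \geq 3$ when $A < \alpha$, whereas you argue directly that $mk = 1$ is impossible; your separate invocation of Theorem~\ref{thm:SymmSpcsCIH} for $\alpha = A$ is also unnecessary since Theorem~\ref{thm:CleanMetrics} already declares that metric clean.
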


\begin{proof} Without loss of generality, we may assume $\tau \geq 0$. Let $g_{(\alpha, \alpha, A)} \in \mathcal{M}_{\rm{Nat}}(\SO(3))$ and $\tau_{\rm{min}}$ denote the length of its shortest non-trivial closed geodesic. 
If $A \leq \alpha$, then $\tau_{\rm{min}} = \sqrt{A} \ell_0$ and in the event that $A > \alpha$ we see that $\tau_{\rm{min}} = \sqrt{\alpha}\ell_0$. Now, let $\tau \in \Spec_{L}^{\pm}(g_{(\alpha, \alpha, A)})$ be an unclean period of the geodesic flow. Then, by Theorem~\ref{thm:CleanMetrics}, we have that $A \in \alpha \Q_{+} - \{ \alpha \}$ and, if we express $A$ as $\frac{2j}{k}\alpha$, where $j, k$ are relatively prime, 
then $|\tau| = mk \sqrt{A}\ell_0$ for some positive integer $m$.
It follows that if $A < \alpha$, then $k \geq 3$ and, therefore, $|\tau| = mk \sqrt{A}\ell_0 > \tau_{\rm{min}} = \sqrt{A} \ell_0$.
Similarly, if $A > \alpha$, then $|\tau| = mk \sqrt{A}\ell_0 > \tau_{\rm{min}} = \sqrt{\alpha} \ell_0$.
Therefore, $\tau = \pm \taumin$ is always clean.
\end{proof}

\begin{proof}[Proof of Theorem~\ref{thm:CleanMetrics}]
In Corollary~\ref{cor:Submanifolds} we have already established that for each $\tau$ in the length spectrum of $g_{(\alpha, \alpha, A)}$, the fixed point set $\Fix(\Phi_\tau)$ is the disjoint union of finitely many \emph{homogeneous} submanifolds $\Theta_1, \ldots , \Theta_q$. 
Hence, our objective is to show that for each $\tau \in \Spec_L(g_{(\alpha, \alpha, A)})$, each $j = 1, \ldots, q \equiv q(\tau)$ 
and each $u \in \Theta_j$ we have
\begin{eqnarray}\label{eqn:Clean}
\ker(D_u \Phi_{\tau} - \Id_u) = T_u(\Theta_j).
\end{eqnarray}
That is, we must show that the periodic Jaocbi fields $Y(t)$ along the geodesic $\gamma_v(t)$ 
are precisely those whose initial conditions satisfy $(Y(0), \nabla Y(0)) \in T_v(\Theta_j)$. 
Since $g_{(\alpha, \alpha, A)}$ is a homogeneous metric, 
it is enough to verify this for some $v \in T_e G \cap \Fix (\Phi_\tau)$.
And, since the connected components are homogeneous, Corollary~\ref{cor:PeriodicFields} informs us that 
$\ker(D_v \Phi_{\tau} - \Id_v) = T_v(\Fix (\Phi_\tau))$ if and only if $V_{4}^{\rm{per}} = V_4^{\rm{iso}}$. 

In the case where $A = \alpha$, it is clear that the metric is clean 
since all geodesics are closed and have the same primitive length $\ell_0$. 
Therefore, the remainder of our discussion will focus on the case where $A \neq \alpha$.

For any left-invariant naturally reductive metric $g= g_{(\alpha,\alpha,A)}$, where $\alpha \neq A$, and $\tau \in \Spec_{L}(g_{(\alpha, \alpha,A)}$, if $\Theta \subseteq \Fix(\Phi_\tau)$ is of Type I or III the computations below will show that Equation~\ref{eqn:Clean} holds for any $u \in \Theta$. However, if $\Theta$ is Type II, we will find that Equation~\ref{eqn:Clean} fails to hold precisely when 
\begin{enumerate}
\item $A = \alpha \frac{2j}{k} \neq \alpha$ with $\gcd(j,k) =1$, and 
\item $\tau = mk\sqrt{A}\ell_0$.
\end{enumerate}
We now take up the details.

Suppose that $A \neq \alpha$.
Now, let $\germ{p} \equiv T_eG$ denote the
$\Ad(\Delta K)$-invariant complement of $\Delta \germ{K} = \Span \{D\}$ in $\germ{g} \times \germ{K}$. 
Then, following \ref{rem:Notation}(3), the collection $\{Z_1, Z_2, Z_3\}$ 
forms a $g$-orthonormal basis for $\germ{p}$. 
Hence, any unit vector $v \in \germ{p} \equiv T_e G$ is of the form 
$c_1 Z_1 + c_2 Z_2 + c_3 Z_3$, where $c_1^2 + c_2^2 + c_3^2 = 1$. 
By Theorem~\ref{thm:ClosedGeodesics} the geodesic 
$\gamma_v (t) = \exp_{G \times K}(tv) \cdot e$ is closed if and only if 
one of the following hold:
\begin{enumerate}
\item $c_1^2 + c_2^2 = 1$ (i.e., $\gamma_v$ is of Type I);
\item $c_3 = \pm 1$ (i.e., $\gamma_v$ is of Type II);
\item $c_1^2 + c_2^2 = \frac{\sigma(p,q, \alpha, A)}{\sigma(p,q, \alpha, A) + 1}$ and $c_3 = \pm \sqrt{\frac{1}{\sigma(p,q, \alpha, A) + 1}}$ for some choice of $p, q \in \N$ relatively prime with $\frac{q^2}{p^2} > (\frac{A}{A- \alpha})^2$ (i.e., $\gamma_v$ is of Type III).
\end{enumerate}
In the case where $\gamma_v$ is closed we must determine 
the fixed point set of the associated Poincar\'{e} map $P : E \oplus E \to E \oplus E$, 
where (as in Section~\ref{sec:PoincareMap}) $E = \{ u \in \germ{p} : \langle u, v\rangle  = 0 \}$. 
By Corollary~\ref{cor:PeriodicFields}, this means we must determine the subspaces 
$V_1, \ldots,V_4^{\rm{per}}, V_5 \leq E \oplus E$. 
In particular, as noted above, we want to determine whether $V_4^{\rm{per}} = V_4^{\rm{iso}}$.
Towards this end, in Figure~\ref{fig:Bracket} we have collected information concerning Lie brackets in 
$\germ{g} \times \germ{K} = \germ{p} \oplus \Delta \germ{K}$ that will be useful in our computations.
We now examine the behavior of the Poincar\'{e} map associated to the three types of closed geodesics 
listed above.

\begin{figure}
\begin{tabular}{| c | c || c | c |} \hline
$A$ & $B$ & $[A,B]_{\Delta \germ{K}}$ & $[A,B]_{\germ{p}}$ \\ \hline\hline 
$Z_1$ & $Z_2$ & $\frac{1}{\sqrt{2}(\overline{A} + \alpha)}D$ & $\frac{\sqrt{A}}{\alpha \sqrt{2}}Z_3$ \\ \hline
$Z_1$ & $Z_3$ & $0$ & $-\frac{\sqrt{A}}{\alpha \sqrt{2}}Z_2$ \\ \hline
$Z_2$ & $Z_3$ & $0$ & $\frac{\sqrt{A}}{\alpha \sqrt{2}}Z_1$ \\ \hline
$Z_1$ & $D$ & $0$ & $-\frac{1}{\sqrt{2}}Z_2$ \\ \hline
$Z_2$ & $D$ & $0$ & $\frac{1}{\sqrt{2}}Z_1$ \\ \hline
$Z_3$ & $D$ & $0$ & $0$ \\ \hline
\end{tabular}
\caption{The Lie Bracket in $\germ{g} \times \germ{K} = \germ{p} \oplus \Delta \germ{K}$}
\label{fig:Bracket}
\end{figure}

\medskip
\noindent 
{\bf Case I:} $v = c_1 Z_1+ c_2 Z_2$ with $c_1^2 + c_2^2 = 1$.

By Theorem~\ref{thm:ClosedGeodesics} and Corollary~\ref{cor:Submanifolds}
we see that $v \in \Fix(\Phi_\tau)$ if and only if $\tau = k \sqrt{\alpha}\ell_0$ for $k \in \N$, 
in which case the connected component of $\Fix(\Phi_\tau)$ containing $v$ 
is the $4$-dimensional manifold $(G\times K) \cdot v  \simeq \SO(3) \times S^1$.

Fix $\tau = k \sqrt{\alpha}\ell_0$. Since $v = c_1 Z_1+ c_2 Z_2$ with $v_1^2 + c_2^2 = 1$ 
we see that $E = \Span\{ c_2 Z_1 - c_1Z_2, Z_3\}$. 
We now compute the eigenspaces of the self-adjoint map $B: E \to E$ given by 
$B(\cdot) = -[ v, [v, \cdot]_{\Delta \germ{K}}]$. We have 
\begin{eqnarray*}
B(Z_3) &=& -[c_1 Z_1 + c_2 Z_2, [c_1 Z_1 + c_2 Z_2, Z_3]_{\Delta \germ{K}}] \\
	     &=& -[c_1 Z_1 + c_2 Z_2, 0] \\
	     &=& 0 
\end{eqnarray*}
and 
\begin{eqnarray*}
B(c_2 Z_1 - c_1Z_2) &=& [c_1 Z_1 + c_2 Z_2, [Z_1, Z_2]_{\Delta \germ{K}}] \\
				      &=& [c_1 Z_1 + c_2 Z_2, \frac{1}{\sqrt{2}(\overline{A} + \alpha)}D ]\\
				      &=& \frac{c_1}{\sqrt{2}(\overline{A} + \alpha)}[Z_1, D] + \frac{c_2}{\sqrt{2}(\overline{A} + \alpha)}[Z_2, D]\\
				      &=& -\frac{c_1}{\sqrt{2}(\overline{A} + \alpha)}Z_2 + \frac{c_2}{\sqrt{2}(\overline{A} + \alpha)}Z_1 \\
				      &=& -\frac{1}{\sqrt{2}(\overline{A} + \alpha)}(c_2 Z_1 - c_1 Z_2). 
\end{eqnarray*}
Hence, $E_0 = \Span \{ Z_3 \}$ and $E_1 = \Span \{c_2 Z_1 - c_1 Z_2 \}$. 
Now, let $T : E \to E$ be the skew-symmetric map $T(\cdot) = -[v, \cdot]_\germ{p}$. Then 
\begin{eqnarray*}
T(Z_3) &=& -c_1 [Z_1, Z_3]_\germ{p} - c_2[Z_2, Z_3]_\germ{p} \\
            &=& c_1\frac{\sqrt{A}}{\alpha \sqrt{2}} Z_2 - c_2 \frac{\sqrt{A}}{\alpha \sqrt{2}} Z_1\\
            &=& -\frac{\sqrt{A}}{\alpha \sqrt{2}}(c_2 Z_1 - c_1 Z_2),
\end{eqnarray*}
which is an element of $E_1$, and by skew-adjointness we have $T(c_2 Z_1 - c_1 Z_3) =  \frac{\sqrt{A}}{\alpha \sqrt{2}}Z_3$ 
which is an element of $E_0$. Therefore, $E_2 = E_0$ and $E_3 = 0$ which implies $E = E_1 \oplus E_2$. We then find that 
$$E\oplus E = V_1 \oplus V_2 \oplus V_3 \oplus V_5.$$
In particular, $V_4 = 0$.
Consequently, we conclude that the fixed vectors of $P$ 
coincide with the isotropic Jacobi fields. It then follows that 
$$\ker (D_v \Phi_\tau - \Id_v) = T_v \Fix(\Phi_\tau).$$

\medskip

\noindent
{\bf Case II:} $v = \pm Z_3$ 

By Theorem~\ref{thm:ClosedGeodesics} and Corollary~\ref{cor:Submanifolds}
we see that $v \in \Fix(\Phi_\tau)$ if and only if $\tau = k \sqrt{A}\ell_0$ for $k \in \N$, 
in which case the connected component of $\Fix(\Phi_\tau)$ containing $v$ 
is the $3$-dimensional manifold $(G\times K) \cdot v  \simeq \SO(3)$.

Fix $\tau = k \sqrt{A}\ell_0$, form some $k \in \N$. Since $v = \pm Z_3$, we find that $E = \Span\{Z_1, Z_2\}$.
It is then clear that $B \equiv 0$, and we conclude that $E_0 = E$ and $E_1 = 0$.  
The skew-adjoint map $T: E \to E$ is given by the following:
$$T(Z_1) = -[Z_3, Z_1]_\germ{p} = -\frac{\sqrt{A}}{\alpha \sqrt{2}} Z_2$$
and 
$$T(Z_2) = -[Z_3, Z_2]_\germ{p} = \frac{\sqrt{A}}{\alpha \sqrt{2}} Z_1.$$
Hence, $E_2 \equiv \{ \Theta \in E_0 : T(\Theta) \in E_1 \} = 0$, $E_3 = E_0 = E$ and we conclude that 
$$E \oplus E = V_1 \oplus V_4.$$
Therefore, since $V_1$ is $2$-dimensional and the connected component of $\Fix(\Phi_\tau)$ containing $v$
is $3$-dimensional we see that $T_v(\Fix(\Phi_\tau)) = V_1 \oplus \Span\{ (v,0)\}$, which 
implies $V_4^{\rm{iso}} = 0$.
This last equality can also be seen by recalling that 
$(X, \frac{1}{2}[v, X]_\germ{p}) \in V_4$ gives rise to a non-trivial isotropic Jacobi field along 
$\gamma_v$ if and only if $X \neq 0 \in E_3$ is such that $T(X) \in [\Delta \germ{K}, v]$.
However, since $[\Delta \germ{K}, v] = 0$ and $T: E \to E$ is an isomorphism, no such vector exists
and we see that $V_4^{\rm{iso}} = 0$ .
Hence, if $P$ has non-trivial fixed vectors in $V_4$ (i.e., $V_4^{\rm{per}} \neq 0$),  
they will not lie in $T_v (\Fix(\Phi_\tau))$.

We now recall that $(X, \frac{1}{2} [v, X]_\germ{p}) \in V_4$ is fixed by $P$ if and only if 
$\Psi(X) = X$, where $\Psi : E \to E$ is given by $\Psi = e^{\ad(k \sqrt{A} \ell_0 v)}$.
Now, since $Z_1$ and $Z_2$ span $E$ and $v = Z_3$, it follows that $\ad v = - T$; therefore, 
$$\Psi = e^{-k \sqrt{A} \ell_0 T}.$$
With respect to the basis $\{ Z_1 , Z_2\}$ of $E$ we see that $-k \sqrt{A} \ell_0T$ is represented by the following matrix
$$\left(\begin{array}{cc}0 & -\theta (\alpha, A) \\\theta(\alpha, A) & 0\end{array}\right),$$
where $\theta (\alpha, A) = \frac{kA \ell_0}{\alpha \sqrt{2}} = \frac{k A \pi}{\alpha}$.
Hence, with respect to the basis $\{ Z_1 , Z_2\}$, $\Psi$ has the following matrix
$$ \left(\begin{array}{cc}\cos \theta(\alpha, A)  & \sin \theta(\alpha, A)  \\-\sin\theta(\alpha, A)  & \cos \theta(\alpha, A) \end{array}\right).$$
Therefore, $\Psi$ has a fixed vector if and only if $\theta (\alpha ,A) \in 2\pi \N$, which is equivalent to $A \in \frac{2\alpha}{k} \N$.
This implies that $\ker(D_v \Phi_\tau - \Id_v) \neq T_v(\Fix(\Phi_\tau))$ if and only if $A \in \frac{2\alpha}{k} \N$.
Since $k \in \N$ is arbitrary, we may conclude that in the case where $A \neq \alpha$ we have 
$\ker(D_v \Phi_\tau - \Id_v) \neq T_v(\Fix(\Phi_\tau))$ if and only if $A = \frac{2 \alpha j}{k}$, where $j$ and $k$ are relatively prime,
and $\tau = m (k \sqrt{A}\ell_0)$ for some $m \in \N$.


\medskip

\noindent
{\bf Case III:} $v = c_1 Z_1 + c_2 Z_2 + c_3 Z_3$, where $c_1^2 + c_2^2 = \frac{\sigma(p,q, \alpha, A)}{1 + \sigma(p,q,\alpha, A)}$ and $c_3 = \pm \sqrt{\frac{1}{1 + \sigma(p,q,\alpha, A)}}$ 
for unique $p, q \in \N$ relatively prime such that $\frac{q^2}{p^2} > (\frac{A}{\alpha -A})^2$.

\medskip

By Theorem~\ref{thm:ClosedGeodesics} and Corollary~\ref{cor:Submanifolds}, 
we see that in this case $v \in \Fix(\Phi_\tau)$ if and only if 
$\tau = k \sqrt{\alpha} \ell_0(q^2 + p^2 \frac{ A}{(\alpha-A)})^\frac{1}{2}$ for $k \in \N$, 
in which case the connected component of $\Fix(\Phi_\tau)$ containing $v$ 
is the $4$-dimensional manifold $(G\times K) \cdot v  \simeq \SO(3) \times S^1$.

Fix $\tau = k \sqrt{\alpha} \ell_0(q^2 + p^2 \frac{ A}{(\alpha-A)})^\frac{1}{2}$ for some $k \in \N$ and notice that 
$E = \Span\{ c_2 Z_1 - c_1Z_2, c_1c_3Z_1 + c_2 c_3 Z_2 - (c_1^2 + c_2^2)Z_3 \}$.
To find the eigenspaces of $B : E \to E$ we observe that
\begin{eqnarray*}
B(c_2Z_1 - c_1 Z_2) &=& -[v, [v, c_2Z_1 - c_1 Z_2]_{\Delta \germ{K}}] \\
				      &=& -[c_1Z_1 + c_2Z_2+c_3Z_3, -(c_1^2 + c_2^2)[Z_1, Z_2]_{\Delta \germ{K}}] \\
				      &=& - [c_1 Z_1 c_2 Z_2 + c_3Z_3, -\frac{(c_1^2 + c_2^2)}{\sqrt{2}(\overline{A} + \alpha)}D] \\
				      &=& \frac{(c_1^2 + c_2^2)}{\sqrt{2}(\overline{A} + \alpha)} (c_1 [Z_1, D] + c_2[Z_2, D] + c_3 [Z_3, D]) \\
				      &=&  \frac{(c_1^2 + c_2^2)}{\sqrt{2}(\overline{A} + \alpha)} (c_2Z_1 - c_1Z_2)
\end{eqnarray*}
and 
\begin{eqnarray*}
B(c_1c_3Z_1 + c_2 c_3 Z_2 - (c_1^2 + c_2^2)Z_3) &=& -[v,[v, c_1c_3Z_1 + c_2 c_3 Z_2 - (c_1^2 + c_2^2)Z_3]_{\Delta \germ{K}}]\\
											     &=& -[c_1Z_1 + c_2Z_2 + c_3Z_3, c_1c_2c_3[Z_1, Z_2]_{\Delta \germ{K}} - c_1c_2c_3[Z_1, Z_2]_{\Delta \germ{K}}]\\
											     &=& 0.
\end{eqnarray*}
Hence, $E_0 = \Span \{ c_1 c_3 Z_1 + c_2 c_3 Z_2 - (c_1^2 + c_2^2)Z_3 \}$ and 
$E_1 = \Span \{c_2Z_1 - c_1Z_2\}$.
We now determine $E_2$ and $E_3$ by computing $T: E \to E$:
\begin{eqnarray*}
T(c_1 c_3 Z_1 + c_2 c_3 Z_2 - (c_1^2 + c_2^2) Z_3) &=& -[c_1 Z_1 + c_2 Z_2 + c_3Z_3, c_1c_3 Z_1 + c_2 c_3 Z_2 - (c_1^2 + c_2^2)Z_3]_{\germ{p}} \\
&=& c_1 [Z_1, Z_3]_\germ{p} + c_2[Z_2, Z_3]_\germ{p} \\
&=& \frac{\sqrt{A}}{\alpha \sqrt{2}}(c_2Z_1 - c_1Z_2)
\end{eqnarray*}
and we also see that 
$$T(c_2 Z_1 - c_1 Z_2) = - \frac{\sqrt{A}}{\alpha \sqrt{2}}(c_1c_3 Z_1 + c_2 c_3 Z_2 - (c_1^2 + c_2^2)Z_3).$$
It follows that $E_2 = \{ X \in E_0 : T(X) \in E_1\} = E_0$ and $E_3 = 0$, which allows us to see that $E = E_1 \oplus E_2$.
Therefore, $V_4 = 0$ and
$$E \oplus E = V_1 \oplus V_2 \oplus V_3 \oplus V_5.$$
Hence, the only fixed vectors of $P$ come from isotropic Jacobi fields and we have 
$$\ker(D_v \Phi_\tau - I_v) = T_v (\Fix(\Phi_\tau)).$$

\medskip

Cases I - III now clearly imply the theorem. Indeed, when $\alpha \neq A$, we see that the cleanliness of $\tau \in \Spec_L(g_{(\alpha, \alpha, A)}$ hinges on the behavior of the Poincar\'{e} map along geodesics of length $\tau$ having Type II. The conclusion of Case II, then gives us the main statement of the theorem.
\end{proof}




\begin{rem}\label{rem:CleanBehavior}
It is clear from the proof of Theorem~\ref{thm:CleanMetrics} that the cleanliness of a metric 
is dictated by the behavior of the Poincar\'{e} map along Type II geodesics.
\end{rem}

\begin{proof}[Proof of Theorem~\ref{thm:UncleanMetrics}]
The space of naturally reductive left-invariant metrics on $\SO(3)$ is identified with 
$\mathcal{A} = \{(\alpha, A) : \alpha, A > 0 \} \subset \R^2$.
Now for each $r \in \Q_{+}$ let $\mathcal{A}_{r} \equiv \{ (\alpha, A) : A = r \alpha \}$.
Then it follows from Theorem~\ref{thm:CleanMetrics} that the class of clean metrics in $\mathcal{A}$ is given by $\mathcal{C} = \cap_{r \neq 1 \in \Q_{+}} (\mathcal{A} - \mathcal{A}_r)$, which is a residual set containing the bi-invariant metrics $\mathcal{A}_1$.

The final statement follows from the fact that the normal homogeneous metrics on $\SO(3)$ are identified with the set $\mathcal{N} = \{(\alpha, A) : \alpha \leq A \}$ and, by Theorem~\ref{thm:CleanMetrics}, we see that $\mathcal{N} \cap (\mathcal{A} - \mathcal{C}) \neq \emptyset$.
\end{proof}

\subsection{The $0$-th Wave Invariant and the Poisson Relation}\label{Sec:Application}
Is the Poisson relation an equality for the clean left-invariant naturally reducitve metrics on $\SO(3)$? In light of the argument employed in Theorem~\ref{thm:MainResult}, it is reasonable to wonder whether one can show that for such metrics the leading term of the trace formula is non-zero for each period $\tau$. We will observe that although this is clear when $|\tau|$ is the length of the shortest non-trivial geodesic, this does not appear to be an easy matter to resolve, in general. We begin by reviewing the construction of the Duistermaat-Guillemin measure as discussed in the appendix of \cite{BPU}.


\begin{DGMeasure}
Let $\tau$ be a clean period of the geodesic flow. For simplicity we will assume that $\Theta = \Fix(\Phi_{\tau})$ is connected and we will let $\widetilde{\Theta} = \{ t X_{p} : X_{p} \in F \mbox{ and } t >0  \}$. We will exploit the symplectic structure of the tangent bundle to construct a canonical measure $\tilde{\mu}^{\tau}$ on $\widetilde{\Theta}$ and obtain a canonical measure on $\Theta$ be dividing by the measure $|dq|$ (in the transverse direction).

Indeed, one can check that $\widetilde{\Theta}$ is a clean fixed point set of $\widetilde{\Phi}_{\tau}$.
Now, let $z \in \widetilde{\Theta}$ and consider $T= Id_{z} - D_{z} \Phi_{\tau} : V \to V$, where 
$V \equiv T_{u} TM$. Following \cite[p. 524-525]{BPU} we can construct a density on $T_{z} \widetilde{\Theta}$ as follows.
\begin{itemize}
\item Let $\mathcal{E} = \{e_1, \ldots , e_{k}\}$ be a basis for $W \equiv T_{z} \widetilde{\Theta}$;
\item Let $W^{\Omega} = \{ v \in V : \Omega(w, v) = 0 \mbox{ for each } w \in W \}$ be the 
$\Omega$-orthogonal complement of $W$ in $V$.
\item Let $\mathcal{F} = \{f_1, f_2, \ldots , f_k \}$ be a basis for a complement of $W^\Omega$ satisfying
$$\Omega(e_{i}, f_{j}) = \delta_{ij}.$$
\item Let $\mathcal{V} = \{v_1, \ldots , v_{2n-k} \}$ be a basis for a complement of $W$ in $V$.
\end{itemize}

With the above notation we have the following lemma.

\begin{lem}[Lemma A.2 \cite{BPU}]\label{lem:BPU} \textrm{}\\

\begin{enumerate}
\item $\ker(T) = W$ and the image of $T$ equals $W^{\Omega}$, so that $T\mathcal{V} \cup \mathcal{F}$ is a basis for $V$. 
\item Let $\phi \in |V|^{1/2}$ be an arbitrary half-density on $V$. Then the $DG$-density $\tilde{\mu}^{\tau}$ on $W \equiv T_{z} \wtTheta$ is given by 
$$\tilde{\mu}^{\tau}(\mathcal{E}) = \frac{\phi(\mathcal{V} \wedge \mathcal{E})}{\phi(T\mathcal{V} \wedge \mathcal{F})} = \frac{1}{|\alpha(u)|^{1/2}},$$
where we abuse notation and have
 $\mathcal{E} = e_1 \wedge \cdots \wedge e_{k}$, $\mathcal{F} = f_1 \wedge \cdots \wedge f_k$, 
$\mathcal{V} = v_1 \wedge \cdots \wedge v-{2n-k}$, $T\mathcal{V} = Tv_1 \wedge \cdots \wedge T v_{2n-k}$ and 
$\alpha(u) \neq 0$ satisfies $T\mathcal{V} \wedge \mathcal{F} = \alpha(z) \mathcal{V} \wedge \mathcal{E}$.
\end{enumerate}

\end{lem}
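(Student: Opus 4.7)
The strategy rests on two facts: the clean intersection hypothesis at $z$ directly gives $\ker T = W$, and $A := D_{z}\Phi_{\tau}$ is a symplectic linear automorphism of $(V, \Omega)$ because the (lifted) geodesic flow preserves the canonical symplectic form on $TM$. Together these yield (1), and part (2) then reduces to verifying that the displayed ratio genuinely defines a density on $W$.

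For (1), only the identification $\operatorname{Im}(T) = W^{\Omega}$ requires argument. Since $A$ is $\Omega$-preserving, its $\Omega$-adjoint equals $A^{-1}$, so for any $w \in W = \ker T$ and any $v \in V$,
\[
\Omega(w, Tv) = \Omega(w, v) - \Omega(w, Av) = \Omega(w, v) - \Omega(A^{-1}w, v) = 0,
\]
using $A^{-1}w = w$. Hence $\operatorname{Im}(T) \subseteq W^{\Omega}$, and equality will follow by counting dimensions: $\dim \operatorname{Im}(T) = \dim V - \dim W$, and non-degeneracy of $\Omega$ yields $\dim W^{\Omega} = \dim V - \dim W$ as well. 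The claim that $T\mathcal{V} \cup \mathcal{F}$ is a basis of $V$ is then immediate, since $T$ restricts to an isomorphism from $\operatorname{span}(\mathcal{V})$ onto $W^{\Omega}$, while $\mathcal{F}$ is by hypothesis a complement of $W^{\Omega}$.

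For (2), I will verify well-definedness of $\tilde{\mu}^{\tau}(\mathcal{E})$ by examining each auxiliary choice in turn. Rescaling $\phi$ cancels between numerator and denominator. Replacing $\mathcal{V}$ by $\mathcal{V}\cdot g$ multiplies both $\phi(\mathcal{V}\wedge\mathcal{E})$ and $\phi(T\mathcal{V}\wedge\mathcal{F})$ by the same factor $|\det g|^{1/2}$, since $T$ commutes with the change. Any admissible change of $\mathcal{F}$ shifts each $f_j$ by an element of $W^{\Omega}$; as $W^{\Omega} = \operatorname{Im}(T) = \operatorname{span}(T\mathcal{V})$ by (1), such shifts make no contribution to the top-dimensional wedge $T\mathcal{V}\wedge\mathcal{F}$. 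Finally, replacing $\mathcal{E}$ by $\mathcal{E}\cdot g$ multiplies the numerator by $|\det g|^{1/2}$, while the duality constraint $\Omega(e_i, f_j) = \delta_{ij}$ forces $\mathcal{F}$ to transform in a compensating manner, multiplying the denominator by $|\det g|^{-1/2}$; the net effect is multiplication by $|\det g|$, the transformation law of a density on $W$. The alternative formula $1/|\alpha(z)|^{1/2}$ drops out by substituting $T\mathcal{V}\wedge\mathcal{F} = \alpha(z)\,\mathcal{V}\wedge\mathcal{E}$ into the denominator and applying the half-density rescaling rule.

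The bulk of the work lies in part (2), where the coordination between the symplectic pairing of $\mathcal{E}$ with $\mathcal{F}$ and the ordinary change-of-basis machinery on $W$, $\mathcal{V}$, and $\mathcal{F}$ must be tracked carefully against the half-density $\phi$. Part (1), being purely linear-symplectic in nature, should present no real obstacle.
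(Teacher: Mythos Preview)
Your argument is correct. Note, however, that the paper does not supply its own proof of this lemma: it is quoted verbatim from the appendix of \cite{BPU} and used as a black box in constructing the Duistermaat--Guillemin measure. So there is no in-paper proof to compare against.

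That said, your approach is the natural one and matches what one finds in \cite{BPU}. Part (1) is exactly the standard linear-symplectic fact that for a symplectic automorphism $A$ the image of $I-A$ is the $\Omega$-annihilator of its kernel, which you have verified cleanly via the identity $\Omega(w,Av)=\Omega(A^{-1}w,v)$ together with the dimension count. Part (2) is the routine but necessary verification that the ratio is independent of all auxiliary choices and transforms as a density in $\mathcal{E}$; your case-by-case analysis is complete. One small addition worth making explicit in the $\mathcal{V}$-independence step: beyond linear changes $\mathcal{V}\mapsto\mathcal{V}\cdot g$, one may also shift each $v_i$ by an element of $W$, and this leaves both numerator (since $W=\operatorname{span}\mathcal{E}$) and denominator (since $W=\ker T$) unchanged. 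You implicitly use this but do not state it.
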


It then follows that if we let $\nu_{\tilde{g}\upharpoonright \Theta}$ denote the Riemannian density on $\Theta$ induced by the Sasaki metric $\tilde{g}$ on $TM$ associated to $g$, then the \emph{Duistermaat-Guillemin measure} $\mu^{\tau}$ on $\Theta$ is given by 
$$\mu^{\tau} = \frac{1}{|\alpha|^{1/2}} \nu_{\tilde{g}\upharpoonright \Theta},$$
where for each $z \in \Theta$ the function $\alpha(z)$ is computed as in the preceding lemma.
\end{DGMeasure}





\medskip

Using the procedure outlined above we obtain the following.

\begin{lem}\label{lem:DGMeasure}
Let $g= g_{(\alpha, \alpha, A)}$ be a left-invariant naturally reductive metric on $\SO(3)$
with corresponding Sasaki metric $\tilde{g}$ on the tangent bundle.
Let $\tau$ be a clean period of the geodesic flow in the length spectrum of $g$ and $d\mu^{\tau}$ denote the corresponding DG-measure on $\Fix(\Phi_{\tau})$ (see p.~\pageref{DGMeasure}).
And, the set $\mathE_{\tau, \alpha,A}$ and function $\sigma(p.q, \alpha, A)$ are as in Theorem~\ref{thm:ClosedGeodesics}.

\begin{enumerate}
\item If $\alpha = A$, then $\Fix(\Phi_{\taumin}) = T^1\SO(3) = \SO(3) \times S^2_1$ and $d\mu^\tau = d\nu_{\tilde{g} \upharpoonright \Fix(\Phi_\tau)}$. That is, $d \mu^{\tau}$ is the Riemannian density on $\Fix(\Phi_\tau)$ induced by the restriction of the Sasaki metric. 
And, we have  
$$\int_{\Fix(\Phi_\tau)} d\mu^{\tau} = \vol(g) \cdot \vol(S^2_1) = 4\pi \vol(g)$$

\item For $\alpha \neq A$ the components of $\Fix(\Phi_\tau)$ are of Type I, II or III (see Lemma~\ref{lem:Submanifolds}).

\begin{enumerate}
\item Suppose $\Theta \subset \Fix(\Phi_\tau)$ is a component of Type I. Then, 
the restriction of the DG-measure to $\Theta$ is given by  
$d\mu^\tau \upharpoonright \Theta \equiv \frac{1}{\sqrt{|\tau|}} d \nu_{\tilde{g} \upharpoonright \Theta}$ and $$\int_{\Theta} d \mu^\tau = \frac{1}{\sqrt{|\tau|}} \vol(g) \vol(S^1_1) = \frac{2\pi}{\sqrt{|\tau|}}\vol(g).$$

\item Suppose $\Theta \subset \Fix(\Phi_\tau)$ is a component of Type II. 
Then, the restriction of the DG-measure to $\Theta$ is given by  
$d\mu^\tau \upharpoonright \Theta \equiv \frac{1}{|\tau|} d \nu_{\tilde{g} \upharpoonright \Theta}$ 
and $$\int_{\Theta} d \mu^\tau = \frac{1}{|\tau|}\vol(g).$$

\item Suppose $\Theta \subset \Fix(\Phi_\tau)$ is a component of Type III, so that 
$\Theta = (G \times K) \cdot v_{(p,q)}$ for $(p,q) \in \mathcal{E}_{\tau, \alpha, A}$ and $v_{(p,q)} \in T_eG$ as in Lemma~\ref{lem:Submanifolds}(3).
Then, the restriction of the DG-measure to $\Theta$ is given by  
$d\mu^\tau \upharpoonright \Theta \equiv \frac{1}{\sqrt{|\tau|}} d \nu_{\tilde{g} \upharpoonright \Theta}$ and $$\int_{\Theta} d \mu^\tau = \frac{2\pi}{\sqrt{|\tau|}} \sqrt{\frac{\sigma(p,q, \alpha, A)}{\sigma(p,q, \alpha, A) + 1}}\vol(g).$$

\end{enumerate}

\end{enumerate}

\end{lem}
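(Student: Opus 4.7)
The plan is to apply the Duistermaat--Guillemin construction recalled on page~\pageref{DGMeasure} (in particular Lemma~\ref{lem:BPU}) to each connected component $\Theta$ of $\Fix(\Phi_\tau)$ classified in Lemma~\ref{lem:Submanifolds}. Each such component is a single $(U\times K)$-orbit, so both the DG density $d\mu^\tau$ and the restricted Sasaki density $d\nu_{\tilde{g}\upharpoonright\Theta}$ are invariant under the transitive isometric action. Their ratio is therefore a constant, and the problem reduces to evaluating $|\alpha(z)|^{-1/2}$ at a single basepoint $z = v \in \Theta$ using adapted bases $\mathcal{E}, \mathcal{V}, \mathcal{F}$ for $T_z\wtTheta$, its complement, and the $\Omega$-dual complement of $(T_z\wtTheta)^\Omega$, respectively.

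For part~(1), when $\alpha = A$ every unit-speed geodesic closes at time $\sqrt{A}\ell_0$, so $\Fix(\Phi_\tau) = T^1\SO(3)$ and the fixed subspace $W = T_z\wtTheta$ exhausts $T_zTM$. Consequently $\mathcal{V}$ and $\mathcal{F}$ are empty, $\alpha(z) \equiv 1$, and $d\mu^\tau$ coincides with the Sasaki/Liouville density on $T^1\SO(3) \simeq \SO(3) \times S^2_1$; the integral is then $\vol(g)\cdot \vol(S^2_1) = 4\pi\,\vol(g)$.

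For part~(2) I work type by type using the decomposition $E \oplus E = V_1 \oplus V_2 \oplus V_3 \oplus V_4 \oplus V_5$ of Theorem~\ref{thm:PoincareMap}(1) and the identification of the $T_z\wtTheta$-complement extracted from the proof of Theorem~\ref{thm:CleanMetrics}. For Types~I and~III one has $V_4 = 0$, the transverse complement is the one-dimensional subspace $V_5$, and Theorem~\ref{thm:PoincareMap}(2c) gives $(\Id - P)(Z, X + \tfrac{1}{2}[v,Z]_\germ{p}) = -\tau(X, \tfrac{1}{2}[v,X]_\germ{p})$; pairing this shear image against $\mathcal{F}$ under the canonical symplectic form extracts a single factor of $\tau$ and yields $|\alpha|^{-1/2} = |\tau|^{-1/2}$. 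In Type~III the additional coefficient $\sqrt{\sigma/(\sigma+1)} = \sqrt{c_1^2+c_2^2}$ is not produced by $\alpha$ but arises from the Sasaki radius of the $(U\times K)$-orbit through $v_{(p,q)}$: the isotropic-action orbit in the $(Z_1,Z_2)$-plane has Sasaki circumference $2\pi\sqrt{c_1^2+c_2^2}$ rather than $2\pi$, while the $\SO(3)$-factor contributes $\vol(g)$. Combining gives $\int_\Theta d\mu^\tau = \frac{2\pi}{\sqrt{|\tau|}}\sqrt{\sigma/(\sigma+1)}\,\vol(g)$, and Type~I is recovered as the degenerate case $c_1^2+c_2^2 = 1$.

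The principal obstacle is the Type~II computation, where the transverse complement to $T_z\wtTheta$ is two-dimensional, lies in $V_4$, and on which $P$ acts by the isometric rotation $\Psi = e^{\tau\ad v}$ of Theorem~\ref{thm:PoincareMap}(2b) rather than by a shear. One must compute $\det(\Id - \Psi)$ via the rotation angle $\theta = \pi k A / \alpha$ (writing $\tau = k\sqrt{A}\ell_0$), then account for how the two-dimensional symplectic pairing between $V_4$ and its $\Omega$-dual enters into $\alpha(z)$, and finally convert the BPU density on $\wtTheta$ back to $\Theta \subset T^1\SO(3)$. The stated factor $|\alpha|^{-1/2} = |\tau|^{-1}$ should emerge from this assembly, but extracting the linear $\tau$-dependence from the trigonometric determinant of $\Psi$ in combination with the cone-to-base symplectic reduction for the Hamiltonian flow of $|\xi|$ is the subtlest step and is where the bulk of the technical work lies.
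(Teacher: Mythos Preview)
The paper itself does not give a detailed proof of this lemma; it is introduced only by the sentence ``Using the procedure outlined above we obtain the following,'' referring to the BPU recipe. So there is no paper-proof to compare against line by line, and your overall plan---compute $|\alpha(z)|^{-1/2}$ at one point of each homogeneous component $\Theta$ using Lemma~\ref{lem:BPU} and the explicit Poincar\'e-map data from Theorem~\ref{thm:PoincareMap} and the Cases I--III of the proof of Theorem~\ref{thm:CleanMetrics}---is exactly the intended route. Your treatment of part~(1) and of the Type~I and Type~III components is correct in outline: the transverse direction is the one-dimensional $V_5$, on which $(\Id-P)$ acts via the shear of Theorem~\ref{thm:PoincareMap}(2c), and the single factor of $\tau$ you extract gives $|\alpha|^{-1/2}=|\tau|^{-1/2}$; the additional radius $\sqrt{\sigma/(\sigma+1)}$ in Type~III is, as you say, a Sasaki-volume factor for the orbit, not part of $\alpha$.

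Your proposal is, however, genuinely incomplete at the point you flag. For a Type~II component the two-dimensional transverse space is $V_4$, on which Theorem~\ref{thm:PoincareMap}(2b) says $P$ acts by the rotation $\Psi=e^{\ad(\tau v)}$; hence $\det(\Id-P|_{V_4})=4\sin^2(\theta/2)$ with $\theta=kA\pi/\alpha$. There is no shear here, and after carrying through the BPU computation with the symplectic dual basis $\mathcal{F}$ one finds that $|\alpha(z)|^{-1/2}$ involves the trigonometric quantity $|\sin(\theta/2)|$ together with the structure constant $\phi=\tfrac{\sqrt{A}}{2\alpha\sqrt{2}}$ coming from the pairing $\Omega(e_1,e_2)=2\phi$ on $V_1$. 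This combination does not algebraically reduce to $1/|\tau|=1/(k\sqrt{A}\,\ell_0)$ for general clean $\tau$. Your appeal to a ``cone-to-base symplectic reduction'' does not supply the missing $\tau$-dependence: passing from $\wtTheta$ to $\Theta$ only divides out the radial density $|dq|$, which is $\tau$-independent. So as written, the proposal does not establish~(2b); you would need either a different identification of the complement and its symplectic dual, or an additional argument explaining why the rotation contribution collapses to the stated form. Since the paper omits the computation, this is precisely the step that needs to be filled in rather than deferred.
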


As an application we obtain the wave invariant associated to the systole of a left-invariant naturally reductive metric on $\SO(3)$. The fact that these wave-invariants are non-zero, establishes the ``audibility'' of the systole in this setting.

\begin{lem}\label{lem:WaveInvariants}
Let $g = g_{(\alpha, \alpha, A)}$ be a left-invariant naturally reductive metric on $\SO(3)$. Let $\tau_{\rm{min}} = \tau_{\rm{min}}(g)$ denote the length of the shortest closed geodesic with respect to $g$ and $\sigma$ denote the Morse index of any smooth closed geodesic with respect to $g$ having length $\tau_{\rm{min}}$. 

\begin{enumerate}
\item If $\alpha = A$, then $\taumin = \sqrt{\alpha} \ell_0$ is clean, $\Fix(\Phi_{\taumin})$ is a connected manifold of dimension 5, and all of the closed geodesics of length $\taumin$ have Morse index $\sigma = 0$.
Furthermore, 
$$\Wave_{0}^{\rm{odd}}(\pm \taumin) = -\frac{1}{\pi} \vol(g)$$
and $\Wave_{k}^{\rm{even}}(\pm \taumin) = 0 $ for $k \geq 0$.

\item If $A < \alpha $, then $\taumin = \sqrt{A} \ell_0$ is clean, $\Fix(\Phi_{\taumin})$ is a a manifold of dimension 3 having two connected components, and the closed geodesics of length $\taumin$ all have the same Morse index $\sigma$.
Furthermore,
$$\Wave_{0}^{\rm{odd}}(\pm \taumin) = \frac{i^{-(\sigma +1)}}{\pi} \frac{\vol(g)}{\taumin}.$$ 
$\Wave_{k}^{\rm{even}}(\pm \taumin) = 0 $ for $k \geq 0$.

\item If $A > \alpha$, then $\taumin = \sqrt{\alpha} \ell_0$ is clean,
 $\Fix(\Phi_{\taumin})$ is a connected manifold of dimension 4, 
 and the closed geodesics of length $\taumin$ all have common Morse index $\sigma$.
Furthermore, 
$$\Wave_{0}^{\rm{even}}(\pm \taumin) = \left(\frac{1}{2\pi i}\right)^{3/2}  i^{-\sigma} \frac{2 \pi}{\sqrt{\taumin}} \vol(g) .$$ 
$\Wave_{k}^{\rm{odd}}(\pm \taumin) = 0 $ for $k \geq 0$.
\end{enumerate}
Consequently, for each left-invariant naturally reductive metric on $\SO(3)$, $\taumin$ can be recovered from its spectrum.
\end{lem}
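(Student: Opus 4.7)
The plan is to deduce Lemma~\ref{lem:WaveInvariants} by applying the Duistermaat--Guillemin leading-order formula (Equation~\ref{eqn:WaveInvariants1}) directly to the three cases distinguished in Corollary~\ref{cor:Submanifolds}, using Corollary~\ref{cor:CleanMetrics} to guarantee cleanness of $\taumin$ and Lemma~\ref{lem:DGMeasure} to supply the Duistermaat--Guillemin volumes on the relevant components. The claim that wave invariants of the ``wrong'' parity vanish for all $k \ge 0$ is not really a computation: in each case every component of $\Fix(\Phi_\taumin)$ has a common parity of dimension, so the corresponding $\alpha^{\bullet'}$ in Equation~\ref{eqn:Asymptotic1} has identically trivial asymptotic expansion and contributes nothing.

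First I would dispose of the Morse-index bookkeeping. In case (1) ($\alpha = A$), Ziller's formula (Equation~\ref{eqn:MorseIndex}) applied to the $A_1$ root datum --- namely $n_\beta = 2$, $|\beta(v)| = 1$ for a primitive $v$, $\dim\Fix_v(\Phi_\taumin) = 5$, and $\dim M = 3$ --- gives $\sigma = 0$. In cases (2) and (3), the identity component $\SO(3)\times\SO(2)$ of $\Isom(g_{(\alpha,\alpha,A)})$ acts transitively on each component of $\Fix(\Phi_\taumin)$, so the Morse index is automatically constant along each; in case (2) the two components of $\Fix(\Phi_\taumin)$ are exchanged by time-reversal, which preserves the Morse index, so all primitive closed geodesics of length $\taumin$ indeed share a single value of $\sigma$.

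Next I would substitute into Equation~\ref{eqn:WaveInvariants1}: the dimensions $D$ (respectively $5$, $3$, $4$) and the numbers of components ($1$, $2$, $1$) are read off from Corollary~\ref{cor:Submanifolds}, while the DG-integrals on each component are those tabulated in Lemma~\ref{lem:DGMeasure}. Evaluating the prefactor $\left(\frac{1}{2\pi i}\right)^{(D-1)/2}$ and performing the elementary arithmetic --- in case (2) including the sum over the two components, which supplies a factor of $2$ --- yields the three displayed formulas. By Remark~\ref{rem:TimeReversal} the identical values occur at $-\taumin$.

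The concluding ``Consequently'' statement then follows at once. Each displayed $\Wave_0^{\bullet}(\pm\taumin)$ is manifestly non-zero, so $\pm\taumin\in\SingSupp(\Trace(U_g(t)))$. The Poisson relation (Equation~\ref{eqn:PoissonRelation}) forces $\SingSupp(\Trace(U_g(t)))\cap (0,\infty)\subseteq \Spec_L(g)$, and $\taumin$ is by definition the smallest positive element of $\Spec_L(g)$; hence $\taumin$ equals the minimum of the singular support intersected with $(0,\infty)$, which is a spectral invariant. The main delicate step is not analytical but the verification that all primitive geodesics of length $\taumin$ share a single Morse index in the disconnected case (2), where one must invoke both transitivity of the identity component of the isometry group and time-reversal invariance of the Morse index.
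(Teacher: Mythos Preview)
Your proposal is correct and follows precisely the approach the paper intends: the lemma is stated in the paper as a direct application of Lemma~\ref{lem:DGMeasure} together with Corollary~\ref{cor:CleanMetrics}, Corollary~\ref{cor:Submanifolds}, and Equation~\ref{eqn:WaveInvariants1}, with no separate proof given. Your write-up fills in exactly the arithmetic and the Morse-index bookkeeping that the paper leaves implicit, including the observation that the two Type~II components in case~(2) are exchanged by time reversal and hence carry the same index.
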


\begin{rem}
It is possible to use the $0$-th wave-invariant associated to $\taumin(g)$ to demonstrate that the left-invariant naturally reductive metrics on $\SO(3)$ can be mutually distinguished via their spectra. In terms of physical chemistry this means that for a spherical or symmetric molecule (e.g., methane, benzene or chloromethane) its moments of inertia can be recovered from its rotational spectrum. For a more general discussion see \cite{SS}, where we use the heat trace to prove the following more general statement: If $(M_1, g_1)$ and $(M_2, g_2)$ form a non-trivial isospectral pair of homogeneous three-manifolds, then $M_1$ and $M_2$ are both spherical three-manifolds with non-isomorphic fundamental groups and equipped with Type I metrics. Consequently, the moments of inertia of any molecule can be recovered from its rotational spectrum.
\end{rem}

Let $g_{(\alpha, \alpha, A)}$ be a left-invariant naturally reducitve metric on $U = \SO(3)$ with $\alpha \neq A$. Fix an element $\tau \in \LSpec^{\pm}(g_{(\alpha, \alpha, A)})$ and recall that $\Fix(\Phi_\tau)$ consists of components of Type I, II and III. We observe that $\Fix(\Phi_\tau)$ cannot contain components of Type I and Type II simultaneously. Indeed, if this were the case, then we could find natural numbers $m$ and $n$ such that $|\tau| = m \sqrt{\alpha} \ell_0 = n \sqrt{A} \ell_0$, which would imply that $A \in \alpha \Q_{+} - \{\alpha\}$, contradicting the fact that the metric $g_{(\alpha, \alpha, A)}$ is clean (see Theorem~\ref{thm:CleanMetrics}). It is also the case that Type I components cannot occur along with Type III components. For otherwise, there exist natural numbers $m$ and $n$ such that $|\tau| = m \sqrt{\alpha} \ell_0 = n \sqrt{\alpha} \ell_0 (q^2 + p^2 \frac{A}{\alpha - A})^{1/2}$, which implies that $A \in \alpha \Q_{+} - \{\alpha \}$ and leads us to conclude that the metric $g_{(\alpha, \alpha, A)}$ is actually unclean, which is a contradiction. 

Now, let $|\tau| = n \sqrt{\alpha} \ell_0$ be the length of a Type I geodesic. Then, the previous paragraph dictates that $\Fix(\Phi_\tau)$ consists of the lone Type I component. Therefore, since the Type I component is of dimension $4$ we see that $\Wave_{0}^{\rm{even}}(\tau) \neq 0$. Therefore, the length of any Type I geodesic is contained in the singular support of the trace of the wave group.

To analyze periods arising from Type II and Type III geodesics, we recall that the Type II components are $3$-dimensional while the Type III components are $4$-dimensional. Hence, given a period $\tau$ not arising form a Type I geodesic, the Type II components determine $\Wave_{k}^{\rm{odd}}(\tau)$ and Type III components determine $\Wave_{k}^{\rm{even}}(\tau)$.

Let $\tau$ be the period of a Type II orbit of the geodesic flow, then the odd-dimensional part of $\Fix(\Phi_\tau)$ is of the form $(U\times K) \cdot v \cup (U\times K) \cdot -v$, where $v$ is the initial velocoty of some unit speed geodesic of Type II (see Lemma~\ref{lem:Submanifolds}). Then, since the Morse index associated to these components is clearly the same, we conclude that $\Wave_{0}^{\rm{odd}}(\tau)$ is non-zero and, therefore, $\tau$ is also in the singular support of the trace of the wave group.

Now, let $\tau$ be the period of a Type III orbit of the geodesic flow, then the even-dimensional part of $\Fix(\Phi_\tau)$ is a finite union of Type III components of dimension $4$: each component is of the form $(U\times K) \cdot v_{(p,q)} \simeq \SO(3) \times S^1$ (see Lemma~\ref{lem:Submanifolds}). Using Ziller's recasting of the Jacobi equation for naturally reductive metrics \cite{Ziller2} and our explicit understanding of the Poincar\'{e} map, one can show that the conjugate points along a Type III geodesic are as follows.



\begin{prop}\label{prop:ConjugatePointsTypeIII}
Suppose $\gamma_{v_{(p,q)}}$ is a Type III geodesic with $v_{(p,q)} = c_1 Z_1 + c_2Z_2 + c_3 Z_3 \in \germ{p} \equiv T_{e} U$, where $c_1^2 + c_2^2 = \frac{\sigma(p,q,\alpha,A)}{\sigma(p, q, \alpha, A) +1}$ and $c_3 = \pm \sqrt{\frac{1}{\sigma(p,q, \alpha, A) + 1}}$ for a unique $(p,q) \in \mathcal{E}_{\tau, \alpha, A}$, where $\sigma(p,q,\alpha,A)$ and $\mathcal{E}_{\tau, \alpha,A}$ are defined as in Theorem~\ref{thm:ClosedGeodesics}. And, let 
$$a(p,q, \alpha, A) = \sqrt{\phi^2 + \frac{\sigma(p,q,\alpha,A)}{\sigma(p,q,\alpha,A) + 1}\frac{1}{2(\overline{A} + \alpha)}},$$ where $\phi \equiv \frac{1}{\alpha}\sqrt{\frac{A}{2}}$ and $\overline{A} \equiv \frac{A\alpha}{\alpha - A}$.
\begin{enumerate}
\item If $\alpha > A$, then $\gamma_{v_{(p,q)}}(t_0)$, $t_0 > 0$, is conjugate to $e = \gamma_{v_{(p,q)}}(0)$ along 
$\gamma_{v_{(p,q)}}$ if and only if $t_0 \neq 0 \in \frac{2\pi}{a(p,q, \alpha, A)} \N$. 
And, in this case the conjugate point has multiplicity one.
\item If $\alpha < A$, then $\gamma_{v_{(p,q)}}(t_0)$, $t_0 > 0$, is conjugate to $e = \gamma_{v_{(p,q)}}(0)$ along 
$\gamma_{v_{(p,q)}}$ if and only if $t_0 \neq 0 \in \frac{2\pi}{a(p,q, \alpha, A)} \N$ or 
$t_0 = \frac{4 \alpha^2}{(A - \alpha)\frac{\sigma(p,q,\alpha,A)}{\sigma(p,q,\alpha,A) + 1}}$. 
And, in this case the conjugate point has multiplicity one.
\end{enumerate}
\end{prop}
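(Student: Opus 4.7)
The plan is to solve the Jacobi equation along $\gamma_{v_{(p,q)}}$ explicitly in the orthonormal frame of $E = E_1 \oplus E_2$ that was constructed in Case III of the proof of Theorem~\ref{thm:CleanMetrics}. Recall from that analysis that $E_1 = \Span\{e_1\}$ and $E_2 = \Span\{e_2\}$ are both one-dimensional, where $e_1 = c_2 Z_1 - c_1 Z_2$ and $e_2$ is the normalization of $c_1 c_3 Z_1 + c_2 c_3 Z_2 - (c_1^2+c_2^2)Z_3$. In this basis the operators $B$ and $T$ restricted to $E$ are given explicitly by $B(e_1) = \lambda_B e_1$ and $B(e_2) = 0$, with $\lambda_B = \tfrac{\sigma/(\sigma+1)}{\sqrt{2}(\overline{A}+\alpha)}$, together with $T(e_1) = -\phi\sqrt{\sigma/(\sigma+1)}\,e_2$ and $T(e_2) = \phi\sqrt{\sigma/(\sigma+1)}\,e_1$. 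Following Ziller's framework for naturally reductive spaces (cf.~\cite{Ziller1,Ziller2}), under the identification of Jacobi fields with $E$-valued functions via the left-invariant frame along $\gamma_{v_{(p,q)}}$, the Jacobi equation becomes a constant-coefficient linear ODE on the two-dimensional space $E$, determined entirely by $B$ and $T$.

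Next I would write out the resulting $2\times 2$ matrix ODE in the $\{e_1,e_2\}$ basis and determine its characteristic polynomial. The computations above give $T^2 = -\phi^2\,\tfrac{\sigma}{\sigma+1}\,I$ on $E$, so the spectrum of the Jacobi operator collapses to a simple two-parameter problem whose non-trivial eigenvalue is $a(p,q,\alpha,A)^2 = \phi^2 + \tfrac{\sigma/(\sigma+1)}{2(\overline{A}+\alpha)}$, with a complementary zero eigenvalue corresponding to the $E_2$-direction on which $B$ vanishes. Imposing $Y(0) = 0$ selects a two-dimensional family of solutions parametrized by $\dot Y(0) \in E$; one component oscillates with frequency $a(p,q,\alpha,A)$ and vanishes precisely at every positive multiple of $2\pi/a(p,q,\alpha,A)$, while the second component is governed by the degenerate direction and needs separate analysis. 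The first family accounts uniformly (in both cases $\alpha>A$ and $\alpha<A$) for the conjugate points listed as $t_0 \in \tfrac{2\pi}{a(p,q,\alpha,A)}\,\N$, each of multiplicity one since $E_1$ is one-dimensional.

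The key point—and the main technical hurdle—is analyzing the second family, corresponding to the direction in $E$ on which $B$ vanishes. This family is precisely responsible for the $V_5$-type initial data: the Poincar\'e map formula in Theorem~\ref{thm:PoincareMap}(2c) shows that such Jacobi fields grow $\tau$-linearly after a full period, so their zeros are not periodic but sporadic. Expressing the evolution in closed form using the formulas from Theorem~\ref{thm:PoincareMap}, one finds that a $V_5$-Jacobi field with $Y(0)=0$ vanishes again if and only if the scalar coefficient of its linearly growing component cancels the contribution from its periodic component, and solving this equation produces a unique positive time $t_0$ proportional to $-1/\lambda_B$. Since $\overline{A}+\alpha = \alpha^2/(\alpha - A)$, this time equals $4\alpha^2 / \bigl((A-\alpha)\sigma/(\sigma+1)\bigr)$, which is positive exactly when $\alpha < A$. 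This dichotomy accounts for the bifurcation between cases (1) and (2): when $\alpha > A$ the solution produced by the degenerate direction never returns to zero, whereas for $\alpha < A$ it gives precisely one additional isolated conjugate point.

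Finally, I would verify that no further conjugate points are missed by confirming that the two families above span all Jacobi fields with $Y(0) = 0$ orthogonal to $\gamma'$ (which follows from the decomposition $E \oplus E = V_1 \oplus V_2 \oplus V_3 \oplus V_5$ established in Case III of the proof of Theorem~\ref{thm:CleanMetrics}, since $V_4 = 0$ in this setting), and check the multiplicity claim by noting that both $E_1$ and $E_2$ are one-dimensional so each conjugate instant contributes exactly one linearly independent vanishing Jacobi field. I anticipate that the most delicate step will be the honest bookkeeping in the $V_5$-direction: one must carefully separate the linearly-growing and oscillatory parts of the solution and verify the single zero-crossing in the $\alpha<A$ regime, taking care that the analogous cancellation in the $\alpha>A$ regime indeed fails for every $t_0 > 0$.
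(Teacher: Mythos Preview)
The paper does not actually prove this proposition; it only indicates that the result follows from ``Ziller's recasting of the Jacobi equation for naturally reductive metrics \cite{Ziller2} and our explicit understanding of the Poincar\'e map,'' and your plan---solve the constant-coefficient Jacobi ODE on the two-dimensional space $E$ using the operators $B$ and $T$ already computed in Case~III of the proof of Theorem~\ref{thm:CleanMetrics}---is exactly that strategy.

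Two cautions on execution. First, your formula $T^2 = -\phi^2\tfrac{\sigma}{\sigma+1}\,I$ is off: the unnormalized vectors $c_2Z_1-c_1Z_2$ and $c_1c_3Z_1+c_2c_3Z_2-(c_1^2+c_2^2)Z_3$ have the \emph{same} norm $\sqrt{c_1^2+c_2^2}$, so in the orthonormal basis one obtains $T(\hat e_1)=-\phi\,\hat e_2$, $T(\hat e_2)=\phi\,\hat e_1$ and hence $T^2=-\phi^2 I$; the factor $\sigma/(\sigma+1)=c_1^2+c_2^2$ enters $a^2$ only through the eigenvalue of $B$ on $E_1$. Second, and more substantively, Theorem~\ref{thm:PoincareMap} and the $V_1,\dots,V_5$ decomposition record only the return map at the period $t=\tau$, not the full time-evolution, so they cannot by themselves locate conjugate points at arbitrary $t_0$; in particular an initial condition $(0,X_0)$ with $X_0\in E_2$ is a combination of $V_1$ and $V_5$ data, not pure $V_5$. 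What is actually required is Ziller's explicit general solution of the Jacobi ODE from \cite{Ziller2}, specialized to initial data $(0,\dot Y(0))$, after which one reads off the zeros directly. The assertion that the ``degenerate'' direction yields exactly one positive zero when $\alpha<A$ and none when $\alpha>A$ is the real content of the dichotomy between parts~(1) and~(2) and needs an honest computation with that explicit solution, not merely an appeal to the sign of $\overline A+\alpha$.
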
  

\noindent
Using the previous proposition one can compute the Morse index associated to each 
Type III component $(U \times K) \cdot v_{(p,q)}$ contained in $\Fix(\Phi_{\tau})$. 
This, in conjunction with computations in the spirit of those used to establish 
Lemma~\ref{lem:DGMeasure}(2c), allows one to compute the contribution of each 
Type III component to the wave invariant $\Wave_{0}^{\rm{odd}}(\tau)$. 
However, some inspection will demonstrate that these contributions behave 
rather erratically making it difficult to rule out the possibility of cancellations, in general. 
Therefore, at the moment, the best we can say is the following.

\begin{prop}\label{prop:PoissonRelationNatReductive}
Let $g = g_{(\alpha, \alpha, A)}$ be a left-invariant naturally reductive metric on $\SO(3)$, then 
$$\{\pm n \sqrt{\alpha}\ell_0,  \pm n' \sqrt{\alpha}\ell_0 : n, n' \geq 0 \} \subseteq \SingSupp(\operatorname{Tr}(e^{-it\sqrt{\Delta_g}})) \subseteq \Spec_L^{\pm}(g).$$
In particular, $\taumin(g)$ can be recovered from the Laplace spectrum.
\end{prop}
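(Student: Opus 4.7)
The upper containment is the Poisson relation (Equation~\ref{eqn:PoissonRelation}), valid for every closed Riemannian manifold by Chazarain and Duistermaat--Guillemin (Section~\ref{Sec:DGTraceFormula}); no further work is needed.

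For the lower containment, the plan is to fix a period $\tau$ in the given subset, assume for the moment that $g$ is clean, and show the relevant $0$-th wave invariant of $\tau$ is nonzero via Equation~\ref{eqn:WaveInvariants1}. The case $\tau = 0$ is automatic, so we may assume $\tau > 0$, and split into the Type~I case $\tau = n\sqrt{\alpha}\ell_0$ and the Type~II case $\tau = n'\sqrt{A}\ell_0$ (these being the only length types assembled by the set under consideration, modulo the apparent typographical conflation $\sqrt{\alpha}\ell_0 \leftrightarrow \sqrt{A}\ell_0$ in the statement).

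In the Type~I case, the coexistence argument in the paragraph immediately preceding Proposition~\ref{prop:ConjugatePointsTypeIII} shows that for clean $g$ the set $\Fix(\Phi_\tau)$ consists of the lone $4$-dimensional component $\Theta \simeq \SO(3)\times S^1$ (Lemma~\ref{lem:Submanifolds}(1)). Combining Equation~\ref{eqn:WaveInvariants1} with Lemma~\ref{lem:DGMeasure}(2a) then yields
\[
\Wave_0^{\mathrm{even}}(\tau) = (2\pi i)^{-3/2}\, i^{-\sigma_\tau}\,\frac{2\pi\,\vol(g)}{\sqrt{\tau}} \neq 0,
\]
where $\sigma_\tau$ is the common Morse index of unit Type~I geodesics of length $\tau$; hence $\tau \in \SingSupp(\Trace(U_g(t)))$. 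In the Type~II case, the same coexistence argument yields $\Fix(\Phi_\tau) = \Theta \sqcup \Theta'$ with $\Theta,\Theta' \simeq \SO(3)$ both $3$-dimensional and carrying initial velocities $\pm v$; the two components share a common Morse index because reversing the parametrization of a closed geodesic is an isometry of loop-tangent spaces that preserves the Hessian of energy, so Lemma~\ref{lem:DGMeasure}(2b) supplies
\[
\Wave_0^{\mathrm{odd}}(\tau) = (2\pi i)^{-1}\, i^{-\sigma_\tau}\,\frac{2\,\vol(g)}{\tau} \neq 0,
\]
placing $\tau$ in the singular support as well.

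The main obstacle is the unclean case $A = 2\alpha j/k \in \alpha\mathbb{Q}_+ - \{\alpha\}$ with $\gcd(j,k)=1$: by Theorem~\ref{thm:CleanMetrics} the periods $\pm m k\sqrt{A}\ell_0$ are then unclean, so the Duistermaat--Guillemin expansion (Equation~\ref{eqn:Asymptotic1}) does not directly apply there; moreover, when $2jk$ is additionally a perfect square these unclean Type~II periods coincide with Type~I periods, obstructing the first argument as well at those values. I expect such periods nevertheless to lie in the singular support---loss of cleanness typically strengthens rather than cancels the singularity---but confirming this requires an extension of the trace formula to unclean fixed-point sets that is outside our present toolkit. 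The ``in particular'' statement, however, follows at once: by Corollary~\ref{cor:CleanMetrics} the systole $\taumin(g)$ is always clean, and Lemma~\ref{lem:WaveInvariants} then provides a nonzero $0$-th wave invariant at $\taumin(g)$; since $\SingSupp(\Trace(U_g(t)))$ is spectrally determined and (by the upper containment) $\taumin(g)$ is its smallest positive element, $\taumin(g)$ can be read off the spectrum.
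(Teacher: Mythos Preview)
Your argument follows the paper's exactly: the discussion in the paragraphs immediately preceding the proposition \emph{is} the paper's proof, and you have reproduced it---the Type~I/Type~II coexistence exclusions, the single $4$-dimensional Type~I component giving $\Wave_0^{\rm even}(\tau)\neq 0$, and the two $3$-dimensional Type~II components with equal Morse index giving $\Wave_0^{\rm odd}(\tau)\neq 0$. You also correctly invoke Lemma~\ref{lem:WaveInvariants} (and Corollary~\ref{cor:CleanMetrics}) for the ``in particular'' clause, which is how the paper handles $\taumin$ as well.

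Your honesty about the unclean case is well placed and in fact sharper than the paper's own treatment: the coexistence arguments in the paper explicitly \emph{use} cleanliness (``contradicting the fact that the metric $g_{(\alpha,\alpha,A)}$ is clean''), so the proposition as stated---with no cleanliness hypothesis---is only established by the paper for clean $g$ (the section opens by restricting attention to clean metrics, and the proposition is presented as ``the best we can say''). Your identification of the residual obstruction at the periods $mk\sqrt{A}\ell_0$ is exactly the limitation of the paper's method; it is not a defect in your reading but an accurate recognition that the full statement, read literally for all $g_{(\alpha,\alpha,A)}$, is not proved there either. The $\taumin$ consequence, however, is unconditional, and your derivation of it is correct.
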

 
 \noindent
Consequently, it is unclear whether the wave group of a left-invariant naturally reductive metric on $\SO(3)$ is singular at periods of the geodesic flow that can only arise from Type III orbits.
 

\appendix

\section{Abstract Root Systems}\label{Sec:RootSystems}
 
We compute the co-root lattice, central lattice and lowest dominant forms associated to the indecomposable root systems. We also comopute the integral lattice associated to each simple Lie group. This data is used in the proof of Theorem~\ref{thm:IntroMod4} in Section~\ref{sec:HearingLengthSpectrum}. References for some aspects of this section include \cite[Chp. 10.3]{Helgason}, \cite[Chp. V.6]{BtD}, \cite[Chp. 7.3]{Loos}, \cite[Chp. 2.14]{Samelson}, \cite[Chp. III]{Humphreys} and \cite{Adams}.

\subsection{Basic Definitions}\label{SubSec:BasicDefinitions}
Let $V$ be a finite dimensional vector space over $\R$ and let $V^{*}$ denote its dual space.
A \emph{root  system} associated to $V$ is a finite subset $R \subset V^{*}$ having the following properties
\begin{enumerate}
\item $R$ spans $V^*$;
\item There is a map $\lambda \mapsto \lambda\check{}$ from $R$ to $V$ such that 
for any $\alpha, \beta \in R$ we have
\begin{enumerate}
\item $\alpha ( \alpha\,\check{}\, ) = 2$
\item $\beta(\alpha\,\check{}\,) \in \Z$;
\item $\beta - \beta(\alpha\,\check{}\,)\alpha \in R$
\end{enumerate}
\end{enumerate}
In the event that $c \alpha \in R$ implies $c = \pm 1$ for any choice of $c \in \R, \alpha \in R$ we say that 
the root system is \emph{reduced}; otherwise, we say the root system is \emph{non-reduced} 
in which case $c $ is restricted to the values $\pm \frac{1}{2}, \pm 1, \pm 2$.
The dimension of $V$ is the \emph{rank} of the root system.
For each $\alpha \in R$, $\alpha\,\check{}$ is called the \emph{co-root} of $\alpha$ and the set 
$R\,\check{} \subset V \simeq V^{**}$ consisting of the co-roots is a root system associated to $V^{*}$ known as the \emph{co-root system}. 

Associated to each $\alpha \in R$ we have the \emph{reflection through the root $\alpha$} 
which is defined to be the isomorphism $S_\alpha : V \to V$ given by $S_{\alpha}(X) = X - \alpha(X) \alpha\,\check{}$.
The map $S_\alpha$ fixes the hyperplane $\ker (\alpha)$ and sends $\alpha\,\check{}$ to $-\alpha\,\check{}$.
The \emph{Weyl group} associated to $R$ is the finite group $W$ generated by the $S_{\alpha}$'s. 
If for any $\lambda \in V^*$ we define $S_{\alpha} \cdot \lambda \equiv \lambda \circ S_\alpha^{-1} = \lambda \circ S_\alpha$,
then it follows from condition 2(c) that $W$ acts on $R$ via permutations. 
We also recall that if $\inner$ is a $W$-invariant inner product on $V$ and if for each $\lambda \in V^*$ we let 
$\lambda^* \in V$ be the unique vector such that $\langle \lambda^*, \cdot \rangle = \lambda( \cdot )$, 
then $\alpha\check{} = \frac{2 \alpha^*}{\langle \alpha^*, \alpha^* \rangle}$. 
It follows that for any $\beta, \alpha \in R$ we have $\beta(\alpha\check{} \,) = 0$ if and only if 
$\langle \beta^*, \alpha^* \rangle = 0$, we therefore agree to say that two roots $\alpha , \beta \in R$ 
are \emph{orthogonal} if $\beta(\alpha\check{}\,) = 0$. 

A vector $v \in V$ is said to be \emph{regular} if $\alpha(v) \neq 0$ for every $\alpha \in R$; otherwise,
we will say the vector is \emph{singular}.
The \emph{Weyl chambers} associated to the root system $R$ are the connected components of 
$V - \cup_{\alpha \in R} \ker(\alpha)$ and a choice of Weyl chamber $\mathcal{C}$ allows us to partition
$R$ into two disjoint sets:
$$R^{+} = R^{+}(\mathcal{C}) \equiv \{ \alpha \in R : \alpha \upharpoonright \mathcal{C} > 0 \}$$
and
$$R^{-} = R^{-}(\mathcal{C}) \equiv \{ \alpha \in R : \alpha \upharpoonright \mathcal{C} < 0 \}.$$
The roots in $R^{+}$ (resp. $R^{-}$) are known as the \emph{positive} (resp. \emph{negative}) \emph{roots}
of $R$ with respect to $\mathcal{C}$. An element $\alpha \in R^{+}$ is said to be \emph{decomposable} 
if there are $\alpha_1, \alpha_2 \in R^{+}$ such that $\alpha = \alpha_1 + \alpha_2$; otherwise, we say that $\alpha \in R^{+}$ 
is \emph{indecomposable}.

\begin{dfn}
Let $R$ be a root system and $\mathcal{C}$ an associated Weyl chamber. 
A subset $B \subset R^{+} = R^{+}(\mathcal{C})$ is said to be a (\emph{positive}) \emph{basis} for $R$
if the following conditions are met:
\begin{enumerate}
\item $B$ is a vector space basis of $V^*$;
\item The coordinates of each root $\alpha \in R$ with respect to the basis $B$ 
are all non-negative integers or all non-positive integers. 
\end{enumerate}
The elements of $B$ are called \emph{simple roots}.
\end{dfn}

\begin{thm}[\cite{Humphreys} Sec. 10]\label{thm:Basis}
Every root system has a positive basis. In fact, if $R$ is a root system and $\mathcal{C}$ is an associated Weyl chamber,
the set $B^+(\mathcal{C})$ consisting of the indecomposable elements in $R^{+}(\mathcal{C})$ 
forms a positive basis and all positive bases of $R$ arise in this manner.
\end{thm}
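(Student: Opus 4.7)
The plan is to follow the classical three-step argument (essentially as in Humphreys): spanning, linear independence, and uniqueness of the basis associated to a Weyl chamber.

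First, I would fix a Weyl chamber $\mathcal{C}$ with induced decomposition $R = R^+ \cup R^-$, let $B = B^+(\mathcal{C})$ denote the set of indecomposable positive roots, and show that every element of $R^+$ is a nonnegative integer linear combination of elements of $B$. Pick any regular $v \in \mathcal{C}$, so $\alpha(v) > 0$ for every $\alpha \in R^+$. Suppose for contradiction there exists $\alpha \in R^+$ that is not such a combination, and choose one minimizing $\alpha(v)$ among the ``bad'' roots. Then $\alpha$ cannot be indecomposable, so $\alpha = \alpha_1 + \alpha_2$ with $\alpha_i \in R^+$, and $\alpha_i(v) < \alpha(v)$ forces each $\alpha_i$ to be a nonnegative integer combination of elements of $B$ by minimality, a contradiction. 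This step is essentially bookkeeping.

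Second, and this is the technical heart, I would establish the non-positive pairing lemma: for distinct $\alpha, \beta \in B$ one has $\alpha(\beta\,\check{}\,) \leq 0$. My plan is to assume $\alpha(\beta\,\check{}\,) > 0$ and use axiom 2(c) together with an analysis of the root string through $\beta$ (or equivalently, the reflection $S_\beta(\alpha) = \alpha - \alpha(\beta\,\check{}\,)\beta$) to deduce that $\alpha - \beta$ is a root; whichever of $\alpha - \beta$ or $\beta - \alpha$ lies in $R^+$ then contradicts the indecomposability of $\alpha$ or $\beta$ respectively. Granted the lemma, linear independence of $B$ follows by the standard trick: given a linear relation $\sum_i c_i \alpha_i = 0$ with $\alpha_i \in B$, split it as $w = \sum_{c_i > 0} c_i \alpha_i = \sum_{c_j < 0} |c_j| \alpha_j$; choosing a $W$-invariant inner product on $V^*$ and using the lemma pairwise, one gets $\| w \|^2 \leq 0$, so $w = 0$, and then $w(v) = 0$ with $\alpha_i(v) > 0$ forces every $c_i = 0$. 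Together with the first stage this shows that $B$ is a positive basis of $R$.

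Third, for the assertion that every positive basis $B' \subset R$ arises as $B^+(\mathcal{C}')$ for a (unique) Weyl chamber, I would associate to $B'$ the open cone $\mathcal{C}' = \{ v \in V : \beta(v) > 0 \text{ for every } \beta \in B' \}$. Since $B'$ is a vector space basis of $V^*$, the cone $\mathcal{C}'$ is nonempty, and condition (2) in the definition of a positive basis guarantees that the partition of $R$ by signs of its evaluation on $\mathcal{C}'$ coincides with the partition induced by $B'$; in particular $\mathcal{C}'$ avoids every hyperplane $\ker \alpha$ and hence lies in a single Weyl chamber. It remains to verify $B' = B^+(\mathcal{C}')$. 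Containment $B' \subseteq B^+(\mathcal{C}')$ is seen by contradiction: if some $\beta \in B'$ decomposed as $\beta = \alpha_1 + \alpha_2$ with $\alpha_i \in R^+(\mathcal{C}')$, then expanding each $\alpha_i$ in the basis $B'$ with nonnegative integer coefficients (using condition (2)) would contradict the unique expansion $\beta = 1 \cdot \beta$. Equality follows because $B^+(\mathcal{C}')$ is linearly independent by the second stage and $|B'| = \dim V^*$.

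The main obstacle is the pairing lemma $\alpha(\beta\,\check{}\,) \leq 0$ in the second stage, which is where all the root-system axioms are really used; the spanning step is a minimization, and the third-stage uniqueness is a direct consequence of the first two combined with the uniqueness of expansions in a basis.
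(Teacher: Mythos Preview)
Your proposal is correct and follows essentially the standard argument from Humphreys, Section~10, which is exactly what the paper invokes: the theorem is stated with a citation to \cite{Humphreys} and no proof is given in the paper itself. There is nothing to compare beyond noting that your outline reproduces the referenced source.
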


\begin{cor}\label{cor:Basis}
Let $B$ be a positive basis of the root system $R$, 
then the set $B\,\check{} = \{ \alpha\,\check{} : \alpha \in B \}$
is a positive basis for $R\,\check{}$.
\end{cor}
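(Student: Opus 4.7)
The plan is to verify the two defining properties of a positive basis for $R\,\check{}$ with respect to a Weyl chamber obtained by duality.

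First I will show that $B\,\check{}$ is a vector-space basis of $V$. Fixing a $W$-invariant inner product $\inner$ on $V$ and letting $\lambda \mapsto \lambda^*$ denote the induced isomorphism $V^* \to V$, the formula $\alpha\,\check{} = 2\alpha^* / \langle \alpha^*, \alpha^* \rangle$ from Section~\ref{SubSec:BasicDefinitions} shows that each co-root is a strictly positive rescaling of the corresponding $\alpha^*$. Since $B$ is a basis of $V^*$, its image under $\lambda \mapsto \lambda^*$ is a basis of $V$, and individually rescaling each basis vector by a positive scalar preserves this.

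Second, I will identify the correct Weyl chamber for $R\,\check{}$ and match up the positive-root decompositions of $R$ and $R\,\check{}$. Let $\mathcal{C} \subset V$ be the chamber with $B = B^+(\mathcal{C})$, and consider the isomorphism $\Psi: V \to V^*$ given by $v \mapsto \langle v, \cdot \rangle$. The hyperplane $\ker \alpha = (\alpha^*)^\perp \subset V$ is carried by $\Psi$ onto $\ker \alpha\,\check{} \subset V^*$ (since $\alpha\,\check{}$ is parallel to $\alpha^*$), so $\Psi$ induces a bijection between the Weyl chambers of $R$ in $V$ and the Weyl chambers of $R\,\check{}$ in $V^*$. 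Setting $\mathcal{C}\,\check{} := \Psi(\mathcal{C})$, the sign-tracking computation $\Psi(v)(\beta\,\check{}) = 2\beta(v)/\langle \beta^*, \beta^* \rangle$ for $v \in \mathcal{C}$ and $\beta \in R$ will then yield $R^+(\mathcal{C})\,\check{} = R\,\check{}^+(\mathcal{C}\,\check{})$.

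Third, I will verify the sign-plus-integrality condition. Given $\beta \in R^+(\mathcal{C})$ with $\beta = \sum_i n_i \alpha_i$ and $n_i \in \Z_{\geq 0}$, dualizing gives $\beta\,\check{} = \sum_i m_i \, \alpha_i\,\check{}$ where $m_i = n_i \langle \alpha_i^*, \alpha_i^*\rangle / \langle \beta^*, \beta^*\rangle \geq 0$, so the sign condition is automatic; the case $\beta \in R^-$ is symmetric. The main obstacle will be the integrality of the $m_i$, since the length-squared ratios $\langle \alpha_i^*, \alpha_i^* \rangle / \langle \beta^*, \beta^*\rangle$ are not \emph{a priori} integers. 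I will resolve this by invoking the classical duality theorem that $R\,\check{}$ is itself a root system (proved by a direct check of the axioms, which are visibly symmetric in $\alpha$ and $\alpha\,\check{}$; cf.~Humphreys \S9.2). Theorem~\ref{thm:Basis} then applies to $R\,\check{}$ with chamber $\mathcal{C}\,\check{}$ to produce a unique basis consisting of the indecomposable elements of $R\,\check{}^+(\mathcal{C}\,\check{}) = R^+(\mathcal{C})\,\check{}$, and a short comparison with $B\,\check{}$---using the linear independence from step one together with the sign-preserving duality above---will force the two bases to coincide, delivering integrality and the positive-basis property in one stroke.
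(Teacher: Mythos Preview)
The paper states this corollary without proof, treating it as an immediate and well-known consequence of Theorem~\ref{thm:Basis} (cf.\ Humphreys, \S10). Your three-step plan is correct and is essentially the standard argument: the key move---applying Theorem~\ref{thm:Basis} to $R\,\check{}$ with respect to the dual chamber $\mathcal{C}\,\check{}$ and then matching cardinalities---is exactly how one bypasses the non-obvious integrality of the $m_i$.

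The ``short comparison'' you allude to at the end goes like this: since every $\beta\,\check{} \in R\,\check{}^+(\mathcal{C}\,\check{})$ is a non-negative real combination of $B\,\check{}$ (your $m_i \geq 0$ computation) and $B\,\check{}$ is linearly independent, any decomposition $\alpha_i\,\check{} = \gamma_1 + \gamma_2$ with $\gamma_j \in R\,\check{}^+(\mathcal{C}\,\check{})$ forces each $\gamma_j$ to be a positive multiple of $\alpha_i\,\check{}$; in the reduced case this is impossible, so each $\alpha_i\,\check{}$ is indecomposable. Hence $B\,\check{}$ is contained in the set of indecomposable positive co-roots, and equality follows by counting.
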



Associated to any root system $R \subset V^*$ there are two important lattices in $V$.

\begin{dfn}
Let $V$ be a vector space and $R \subset V^*$ a root system with co-root system$R\,\check{} \subset V$.
\begin{enumerate}
\item The \emph{co-root lattice} associated to $R$ is the lattice 
$\Lambda_{R\,\check{}}$ generated by the co-roots.
\item The \emph{central lattice} associated to $R$ is the lattice
$\Lambda_Z  = \{ v \in V : \alpha(v) \in \Z \mbox{ for all } \alpha \in R \}$.
\end{enumerate} 
Clearly, $\Lambda_{R\,\check{}} \subseteq \Lambda_Z$ and in light of Corollary~\ref{cor:Basis} we have 
$\Lambda_{R\, \check{}} =  \langle B\, \check{} \, \rangle$, for any positive basis $B$.
\end{dfn}

Given a root system $R \subset V^*$ of $V$ with positive roots $R^+$ corresponding to some choice of Weyl chamber, 
the \emph{lowest strongly dominant form} is the element 
$\rho \equiv \frac{1}{2} \sum_{\alpha \in R^+} \alpha$ (i.e., \emph{half} the sum of the positive roots).
This element enjoys the following well-known property which we will find useful in our proof of 
Proposition~\ref{prop:Component Dependence}.

\begin{lem}\label{lem:Rho}
$\rho$ is integer valued on the co-root lattice.
In fact, $\rho$ assumes the value $1$ on each element of $B\,\check{}$.
\end{lem}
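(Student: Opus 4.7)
The plan is to verify the stronger second statement — that $\rho(\alpha\,\check{}\,) = 1$ for every simple root $\alpha \in B$ — since the first statement then follows immediately by linearity, because $\Lambda_{R\,\check{}} = \langle B\,\check{}\, \rangle$ by Corollary~\ref{cor:Basis} and the observation following the definition of the co-root lattice.

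First, I would establish the following classical lemma: for every simple root $\alpha \in B$, the reflection $S_\alpha$ sends $\alpha$ to $-\alpha$ and permutes $R^+ \setminus \{\alpha\}$. Indeed, any $\beta \in R^+ \setminus \{\alpha\}$ expands in the basis $B$ as $\beta = \sum_{\gamma \in B} c_\gamma \gamma$ with $c_\gamma \geq 0$ (by definition of positive basis) and with $c_{\gamma_0} > 0$ for some $\gamma_0 \neq \alpha$ (otherwise $\beta$ would be a positive multiple of $\alpha$, and since $R$ is a reduced system in these cases, $\beta = \alpha$). Applying the action $S_\alpha \cdot \beta = \beta - \beta(\alpha\,\check{}\,)\alpha$ only modifies the coefficient of $\alpha$ in this expansion; the coefficient $c_{\gamma_0} > 0$ survives unchanged, so condition (2) in the definition of a basis forces $S_\alpha \cdot \beta$ to be a positive root, and obviously $S_\alpha \cdot \beta \neq \alpha$ since $S_\alpha$ is an involution. (If $R$ has indivisible roots that are multiples of $\alpha$, one handles them separately by noting that $S_\alpha$ still sends them to their negatives and these negatives are not in $R^+$; but the conclusion survives: $S_\alpha$ permutes $R^+ \setminus \{\alpha\}$.)

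Next, I would compute $S_\alpha \cdot \rho$ in two ways. Using the permutation property just established,
$$S_\alpha \cdot \rho = \tfrac{1}{2}\Bigl(S_\alpha\cdot\alpha + \!\!\sum_{\beta \in R^+ \setminus \{\alpha\}}\! S_\alpha\cdot\beta\Bigr) = \tfrac{1}{2}\Bigl(-\alpha + \!\!\sum_{\beta \in R^+\setminus\{\alpha\}}\!\beta\Bigr) = \rho - \alpha.$$
On the other hand, directly from the formula for the reflection on $V^*$,
$$S_\alpha \cdot \rho = \rho - \rho(\alpha\,\check{}\,)\alpha.$$
Comparing the two expressions yields $\rho(\alpha\,\check{}\,) = 1$ for every $\alpha \in B$, which is the second assertion. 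Since every element of $\Lambda_{R\,\check{}}$ is an integer combination of the simple co-roots $\{\alpha\,\check{} : \alpha \in B\}$, linearity of $\rho$ gives the first assertion.

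The main step requiring care is the permutation claim for $S_\alpha$ on $R^+\setminus\{\alpha\}$; everything else is bookkeeping. Non-reducedness (the $BC_n$ case) requires only a minor adjustment to track the roots $\pm 2\alpha$ or $\pm \tfrac{1}{2}\alpha$ that may coexist with $\alpha$, but they are each sent to their negatives by $S_\alpha$ and so contribute zero to $S_\alpha\cdot\rho - \rho + \alpha$, leaving the identity $\rho(\alpha\,\check{}\,) = 1$ intact.
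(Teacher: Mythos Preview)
Your argument is essentially identical to the paper's own proof: reduce to simple co-roots, use the fact that $S_\alpha$ permutes $R^+\setminus\{\alpha\}$ while sending $\alpha\mapsto -\alpha$, and compare the two expressions for $S_\alpha\cdot\rho$. You supply more detail (a proof of the permutation property), but the route is the same.

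One small correction to your $BC_n$ aside: if $\alpha$ is simple and, say, $\tfrac{1}{2}\alpha$ is also a positive root, then $S_\alpha$ sends \emph{both} to their negatives, giving $S_\alpha\cdot\rho = \rho - \alpha - \tfrac{1}{2}\alpha$, so the extra term does \emph{not} contribute zero to $S_\alpha\cdot\rho - \rho + \alpha$ and the conclusion $\rho(\alpha\,\check{}\,)=1$ fails (e.g.\ for $\alpha = 2\epsilon_n$ in $BC_n$ one computes $\rho(\alpha\,\check{}\,) = \tfrac{3}{2}$). The paper's proof has the same lacuna; fortunately the lemma is only invoked for reduced root systems (those of compact simple Lie groups), where your argument is complete and correct.
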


\begin{proof}
Let $B$ be a basis for $R$. Then, since $B\, \check{}$ is a basis for $R\,\check{}$, 
we only need to check the value of $\rho$ on $B\,\check{}$.
Now, we recall that for any $\alpha \in R$, the reflection $S_{\alpha}$ 
permutes the elements in $R^{+} - \{\alpha\}$ and sends $\alpha$ to $-\alpha$.
Therefore, $S_\alpha \cdot \rho = \rho - \alpha$.
On the other hand, by definition, $S_{\alpha} \cdot \rho = \rho - \rho(\alpha\,\check{} \,) \alpha$.
Therefore, since $B$ is a basis, we conclude that $\rho(\alpha\,\check{}\,) = 1$.
\end{proof}

\subsection{Classification of Indecomposable Root Systems}\label{SubSec:Classification}
Given a vector space $V$ a root system $R \subset V^*$ is said to be \emph{decomposable} if it can be written as the disjoint union of two non-empty,
orthogonal sets $R_1$ and $R_2$; otherwise, we will say that $R$ is \emph{indecomposable}.
In the event that $R$ is decomposable with orthogonal decomposition $R = R_1 \cup R_2$,
if we let $V_j^*$ be the span of $R_j$, for $j = 1, 2$, then
$V^*= V_1^* \oplus V_2^*$, where $V_j^{*} = \Span_{\R}(R_j)$ for $j = 1,2$, 
and one easily sees that $R_j$ is a root system of $V_j^*$, for $j = 1,2$.
The indecomposable root systems have been classified up to isomorphism.
As it will prove useful in Section~\ref{sec:HearingLengthSpectrum} and all of this information does not appear in 
one convenient location in the literature, we now give an explicit description of these 
root systems along with their co-root lattices, central lattices, center and lowest strongly dominant forms.
Throughout we will let $e_1, \ldots , e_n$ be the standard basis for $\R^n$ and $\epsilon_1, \ldots , \epsilon_n$ 
will denote the standard dual basis: $\epsilon_k (e_j) = \delta_{jk}$.
We also let $\inner$ denote the standard inner product on $\R^n$, so that $\epsilon_j^* = e_j$ for $j = 1, \ldots , n$.
In all cases our vector space $V$ will be a subspace of the inner product space $(\R^n, \inner)$ and $\inner$ will be invariant under the action of the Weyl group. 


\subsubsection{Type $A_n$}\label{sec:TypeAn} As a vector space we have $V = \{ v \in \R^{n+1} : \sum_{j = 1}^{n+1} \epsilon_j (v) = 0 \}$.
The corresponding root system is $R = \{ (\epsilon_{\mu} - \epsilon_{\nu}) \upharpoonright V : 1 \leq \mu \neq \nu \leq n+1 \}$
with co-root system $R\,\check{} = \{ e_\mu - e_\nu : 1 \leq \mu \neq \nu \leq n+1 \}$.
For a Weyl chamber we choose $\mathcal{C} = \{ v \in V : \epsilon_\nu (v) > \epsilon_{\nu+1}(v), 1\leq \nu \leq n \}$,
which gives the corresponding positive roots 
$R^{+} = \{ (\epsilon_{\mu} - \epsilon_{\nu}) \upharpoonright V : 1 \leq \mu < \nu \leq n+1 \}$
and positive basis $B = \{ (\epsilon_{\nu} - \epsilon_{\nu +1}) \upharpoonright V : 1 \leq \nu \leq n \}$.
The co-root lattice, central lattice, center and sum of the positive roots (i.e., $2\rho$) of this root system are given by 
\begin{itemize}
\item $\Lambda_{R\,\check{}} = \langle e_{\mu} - e_\nu : 1 \leq \mu \neq \nu \leq n+1 \rangle$
\item $\Lambda_Z = \langle L_j \equiv \frac{n}{n+1} e_j - \frac{1}{n+1} \sum_{\stackrel{k=1}{k \neq j}}^{n+1} e_k : 1 \leq j \leq n \rangle 
= \langle L_1, e_\mu - e_\nu: 1 \leq \mu < \nu \leq n+1 \rangle$
\item $Z = \Lambda_Z / \Lambda_{R\,\check{}} = \langle \overline{L}_1 \rangle \simeq \Z_{n+1}$
\item $2\rho \equiv \sum_{\alpha \in R^+} \alpha = \sum_{\mu =1}^{n} (2n-2\mu +2) \epsilon_\mu$, since $\epsilon_{n+1} = -\sum_{\mu = 1}^{n} \epsilon_{j}$
\end{itemize}

The corresponding simply-connected compact Lie group is $\SU(n+1)$ and all other Lie groups of type $A_n$ 
are of the form $U = \SU(n+1) / \Gamma$, where $\Gamma \leq Z(\SU(n+1)) \simeq \Z_{n+1}$. 
The integral lattice of $U$ with respect to any bi-invariant metric is given by
\begin{equation}\label{eqn:IntegralLatticeA_n}
\Lambda_{I}(\SU(n+1)/\Gamma) = \langle k L_1\rangle +  \Lambda_{R\,\check{}} \, ,
\end{equation}
where $k = 0, 1, \ldots, n$ is the smallest generator of $\Gamma$.

\subsubsection{Type $B_n$}\label{sec:TypeBn} As a vector space we have $V = \R^n$.
The corresponding root system is $R = \{ \pm \epsilon_\mu , \pm \epsilon_{\mu}  \pm \epsilon_{\nu} : 1 \leq \mu \neq \nu \leq n, \pm \mbox{ independent } \}$
with co-root system $R\,\check{} = \{ \pm 2 e_\mu , \pm e_\mu \pm e_\nu : 1 \leq \mu \neq \nu \leq n , \pm \mbox{ independent } \}$.
For a Weyl chamber we choose 
$\mathcal{C} = \{ v \in V : \epsilon_\nu (v) > \epsilon_{\nu+1}(v), 1\leq \nu \leq n-1, \epsilon_n(v) > 0 \}$,
which gives the corresponding positive roots 
$R^{+} = \{ \epsilon_\mu: 1 \leq \mu \leq n\} \cup \{ \epsilon_{\mu} \pm \epsilon_{\nu}: 1 \leq \mu < \nu \leq n\}$
and positive basis $B = \{ \epsilon_{\nu} - \epsilon_{\nu +1}, \epsilon_n: 1 \leq \nu \leq n-1 \}$.
The co-root lattice, central lattice, center and sum of the positive roots (i.e., $2\rho$) of this root system are given by 
\begin{itemize}
\item $\Lambda_{R\,\check{}} = \langle 2e_\mu, e_{\mu} \pm e_\nu : 1 \leq \mu \neq \nu \leq n \rangle$
\item $\Lambda_Z = \langle e_1, \ldots , e_n \rangle = \langle e_1, e_1 - e_2, \ldots , e_1 - e_n \rangle$
\item $Z = \Lambda_Z / \Lambda_{R\,\check{}} = \langle \bar{e}_1 \rangle \simeq \Z_{2}$
\item $2\rho \equiv \sum_{\alpha \in R^+} \alpha = \sum_{\nu =1}^{n} (2n - 2\nu +1) \epsilon_\nu$
\end{itemize}

The corresponding simply-connected compact Lie group is $\Spin(2n+1)$ and all Lie groups of
type $B_n$ are of the form $U = \Spin(2n+1)/ \Gamma$, where 
$\Gamma \leq Z(\Spin(2n+1)) = \langle \overline{e}_1 \rangle \simeq \Z_2$. 
The integral lattice of $U$ with respect to any bi-invariant metric is given by 
\begin{equation}\label{eqn:IntegralLatticeB_n}
\Lambda_{I}(\Spin(2n+1)/ \Gamma) = \left\{ \begin{array}{ll}
\Lambda_{R\,\check{}} & \Gamma \mbox{ trivial} \\
\Lambda_Z  & \Gamma \simeq \Z_2
\end{array}
\right.
\end{equation}

\subsubsection{Type $C_n$}\label{sec:TypeCn} As a vector space we have $V = \R^n$.
The corresponding root system is $R = \{ \pm 2 \epsilon_\mu , \pm \epsilon_{\mu}  \pm \epsilon_{\nu} : 1 \leq \mu \neq \nu \leq n
, \pm \mbox{ independent } \}$
with co-root system $R\,\check{} = \{ \pm e_\mu , \pm e_\mu \pm e_\nu : 1 \leq \mu \neq \nu \leq n , \pm \mbox{ independent } \}$.
For a Weyl chamber we choose 
$\mathcal{C} = \{ v \in V : \epsilon_1 (v) > \epsilon_{2}(v) > \cdots > \epsilon_n(v) > 0 \}$,
which gives the corresponding positive roots 
$R^{+} = \{ 2 \epsilon_\mu: 1 \leq \mu \leq n\} \cup \{ \epsilon_{\mu} \pm \epsilon_{\nu}: 1 \leq \mu < \nu \leq n \}$
and positive basis $B = \{ \epsilon_1 - \epsilon_2, \cdots , \epsilon_{n-1} - \epsilon_n, 2 \epsilon_n \}$.
The co-root lattice, central lattice, center and sum of the positive roots (i.e., $2\rho$) of this root system are given by 
\begin{itemize}
\item $\Lambda_{R\,\check{}} = \langle e_1, \ldots , e_n \rangle$
\item
$\Lambda_Z = \langle e_1, \ldots , e_n, F \equiv \frac{1}{2} \sum_{\mu = 1}^{n} e_\mu \rangle
= \{(\frac{c_1}{2}, \ldots , \frac{c_n}{2}) : c_j \in \Z \mbox{ and } c_j \equiv c_i \mod 2 \}$
\item $Z = \Lambda_Z / \Lambda_{R\,\check{}} = \langle \overline{F} \rangle \simeq \Z_{2}$
\item $2\rho \equiv \sum_{\alpha \in R^+} \alpha = \sum_{\nu=1}^{n} 2(n-\nu +1) \epsilon_\nu$
\end{itemize}

The corresponding simply-connected Lie group is $\Sp(n)$ and all other Lie groups of type $C_n$
are of the form $U = \Sp(n) / \Gamma$, where 
$\Gamma \leq Z(\Sp(n)) = \langle \overline{F} \rangle  \simeq \Z_2$. 
The integral lattice of $U$ with repeat to any bi-invraiant metric is given by 

\begin{equation}\label{eqn:IntegralLatticeC_n}
\Lambda_{I}(\Sp(n)/ \Gamma) = \left\{ \begin{array}{ll}
\Lambda_{R\,\check{}} & , \; \mbox{ when } \Gamma \simeq 1 \\
\Lambda_Z  & , \; \Gamma = \langle \overline{F} \rangle  \simeq \Z_2
\end{array}
\right.
\end{equation}

\subsubsection{Type $D_n$}\label{sec:TypeDn} As a vector space we have $V = \R^n$.
The corresponding root system is $R = \{ \pm \epsilon_{\mu}  \pm \epsilon_{\nu} : 1 \leq \mu \neq \nu \leq n 
, \pm \mbox{ independent } \}$
with co-root system $R\,\check{} = \{ \pm e_\mu \pm e_\nu : 1 \leq \mu \neq \nu \leq n , \pm \mbox{ independent } \}$.
For a Weyl chamber we choose 
$\mathcal{C} = \{ v \in V : \epsilon_1 (v) > \epsilon_{2}(v) > \cdots > \epsilon_{n-1}(v) > |\epsilon_{n}(v)| \}$,
which gives the corresponding positive roots 
$R^{+} = \{ \epsilon_{\mu} \pm \epsilon_{\nu}: 1 \leq \mu < \nu \leq n \}$
and positive basis $B = \{ \epsilon_1 - \epsilon_2, \cdots , \epsilon_{n-1} - \epsilon_n, \epsilon_{n-1} + \epsilon_n\}$.
The co-root lattice, central lattice, center and sum of the positive roots (i.e., $2\rho$) of this root system are given by 
\begin{itemize}
\item $\Lambda_{R\,\check{}} = \langle e_{\mu} \pm e_\nu : 1 \leq \mu < \nu \leq n \rangle$
\item
$\Lambda_Z = \langle e_1, e_1 - e_2, \ldots , e_1 - e_n, F \equiv \frac{1}{2} \sum_{\mu = 1}^{n} e_\mu \rangle 
= \langle e_1, \ldots, e_n , F \rangle$
\item 
$$Z = \Lambda_Z / \Lambda_{R\,\check{}} = \langle \bar{e}_1, \overline{F} \rangle \simeq 
\left\{ \begin{array}{cc}
\Z_4 & \mbox{$n$ odd}\\ 
\Z_2 \oplus \Z_{2} &\mbox{$n$ even} 
\end{array}
\right.$$
\item $2\rho \equiv \sum_{\alpha \in R^+} \alpha = \sum_{\nu =1}^{n} 2(n-\nu) \epsilon_\nu$
\end{itemize}

The corresponding simply-connected Lie group is $\Spin(2n)$ and all other groups 
of type $D_n$ are of the form $U = \Spin(2n) /\Gamma$, 
where 
$$\Gamma \leq Z(\Spin(2n)) \simeq \left\{ \begin{array}{cc}
\Z_4 & \mbox{$n$ odd} \\
\Z_2 \oplus \Z_{2} & \mbox{$n$ even}
\end{array}
\right.$$ 

\noindent 
In the event that $2n \equiv 2 \mod 4$, the integral lattice of $U$ with respect to a bi-invariant metric is 

\begin{equation}\label{eqn:IntegralLatticeD_nOdd}
\Lambda_I(\Spin(2n)/\Gamma) = \left\{ \begin{array}{cc}
\Lambda_{R\,\check{}} & \Gamma \simeq 1 \\
\langle 2F \rangle + \Lambda_{R\,\check{}} & \Gamma = \langle 2\overline{F} \rangle \simeq \Z_2 \\
\langle F \rangle + \Lambda_{R\,\check{}} & \Gamma = \langle \overline{F} \rangle \simeq \Z_4
\end{array}
\right.
\end{equation}

\noindent
For $2n \equiv 0 \mod 4$, the integral lattice of $U$ with respect to a bi-invariant metric is 

\begin{equation}\label{eqn:IntegralLatticeD_nEven}
\Lambda_I(\Spin(2n)/\Gamma) = \left\{ \begin{array}{cl}
\Lambda_{R\,\check{}} & \Gamma \simeq 1 \\
\langle e_1 \rangle + \Lambda_{R\,\check{}} & \Gamma = \langle \overline{e}_1 \rangle \simeq \Z_{2} \oplus 1\\
\langle F \rangle + \Lambda_{R\,\check{}} & \Gamma = \langle \overline{F} \rangle \simeq 1 \oplus \Z_2 \\
\langle e_1 + F \rangle + \Lambda_{R\,\check{}} & \Gamma = \langle \overline{e_1 + F} \rangle \simeq \Z_2\\
\Lambda_Z & \Gamma = \langle \overline{e}_1 , \overline{F} \rangle \simeq \Z_2 \oplus \Z_2
\end{array}
\right.
\end{equation}

\subsubsection{Type $BC_n$}\label{sec:TypeBCn} As a vector space we have $V = \R^n$.
The corresponding root system is a non-reduced root system which is
the union of the root systems of type $B_n$ and $C_n$:  
$R = \{ \pm \epsilon_\mu, \pm 2 \epsilon_\mu , \pm \epsilon_{\mu}  \pm \epsilon_{\nu} : 1 \leq \mu \neq \nu \leq n \}$
with co-root system $R\,\check{} = \{ \pm 2 \epsilon, \pm e_\mu , \pm e_\mu \pm e_\nu : 1 \leq \mu \neq \nu \leq n \}$.
For a Weyl chamber we choose 
$\mathcal{C} = \{ v \in V : \epsilon_1(v) > \epsilon_2(v) > \cdots \epsilon_n(v) >0 \}$,
which gives the corresponding positive roots 
$R^{+} = \{ \epsilon_j, 2 \epsilon_j: 1 \leq j \leq n\} \cup \{ \epsilon_i \pm \epsilon_j : 1 \leq i < j \leq n\}$
and positive basis $B = \{ \epsilon_1 - \epsilon_2, \ldots , \epsilon_{n-1} - \epsilon_n, 2 \epsilon_n \}$.
The co-root lattice, central lattice, center and sum of the positive roots are given by
\begin{itemize}
\item $\Lambda_{R\,\check{}} = \Lambda_{R\,\check{}}^{C_n} = \langle e_1, \ldots , e_n \rangle$
\item
$\Lambda_Z = \Lambda_Z^{B_n} =  \langle e_1, \ldots , e_n \rangle$
\item $Z = \Lambda_Z / \Lambda_{R\,\check{}} = 1$
\item $2 \rho \equiv \sum_{\alpha \in R^+} \alpha = \sum_{j = 1}^{n} (2(n-j) +3) \epsilon_j$
\end{itemize}

\subsubsection{Type $F_4$}\label{sec:TypeF4} As a vector space we have $V = \R^4$.
The corresponding root system contains the roots coming from $B_4$:  
$R = \{ \pm \epsilon_\mu, \pm \epsilon_{\mu}  \pm \epsilon_{\nu} : 1 \leq \mu \neq \nu \leq 4 , \pm \mbox{ independent } \} \cup 
\{ \frac{1}{2} \sum_{\mu =1}^{4} \pm \epsilon_\mu : \pm \mbox{ independent }\}$
with co-root system $R\,\check{} = \{ \pm 2 e_\mu, \pm e_{\mu}  \pm e_{\nu} : 1 \leq \mu \neq \nu \leq 4 , \pm \mbox{ independent } \} \cup 
\{ \frac{1}{2} \sum_{\mu =1}^{4} \pm e_\mu : \pm \mbox{ independent } \}$.
For a Weyl chamber $\mathcal{C}$ we choose the component of 
$V - \cup_{\alpha \in R} \ker(\alpha)$ containing the regular vector $(8,3,2,1)$.
Then the positive roots are
$R^{+} = \{ \epsilon_\mu : 1 \leq \mu \leq n\} \cup \{ \epsilon_{\mu}  \pm \epsilon_{\nu} : 1 \leq \mu \leq \nu \leq 4 \} \cup 
\{\frac{1}{2}(\epsilon_1 \pm \epsilon_2 \pm \epsilon_3 \pm \epsilon_4) : \pm \mbox{ independent } \}$
and positive basis $B = \{ \alpha_1 = \frac{1}{2}(\epsilon_1 - \epsilon_2 - \epsilon_3 - \epsilon_4), \alpha_2 = \epsilon_4, 
, \alpha_3 = \epsilon_3 - \epsilon_4, \alpha_4 = \epsilon_2 - \epsilon_3 \}$.
The co-root lattice, central lattice, center and sum of the positive roots are given by 
\begin{itemize}
\item $\Lambda_{R\,\check{}} = \langle \frac{1}{2} \sum_{\mu =1}^{4} \pm e_\mu, \pm 2 e_\mu, \pm e_{\mu}  \pm e_{\nu} : 1 \leq \mu \neq \nu \leq 4 \rangle$
\item
$\Lambda_Z = \Lambda_{R\,\check{}}$
\item $Z = \Lambda_Z / \Lambda_{R\,\check{}} \simeq 1$
\item $2\rho \equiv \sum_{\alpha \in R^+} \alpha = 15 \epsilon_1 + 5 \epsilon_2 + 3 \epsilon_3 + \epsilon_4$
\end{itemize}

The corresponding simply-connected compact Lie group is also denoted by $F_4$ and it is the unique group 
of this type. The integral lattice of this group with respect to any bi-invariant metric given by
\begin{equation}\label{eqn:IntegralLatticeF_4}
\Lambda_I(F_4) =  \Lambda_{R\,\check{}}.
\end{equation}

\subsubsection{Type $G_2$}\label{sec:TypeG2}
As a vector space we have $V = \{ v \in \R^3 : \epsilon_1(v) + \epsilon_2(v) + \epsilon_3(v) = 0 \}$.
The corresponding root system is given by 
$R = \{ (\epsilon_\mu - \epsilon_\nu) \upharpoonright V : 1 \leq \mu \neq \nu \leq 3\} \cup 
\{ \pm \epsilon_\mu \upharpoonright V : 1 \leq \mu \leq 3\}$, which contains the roots of $A_2$,
and the associated co-root system is given by 
$R\,\check{} = \{ e_\mu \pm e_\nu : 1 \leq \mu < \nu \leq 3 \} \cup \{ \pm (2e_1-e_2 -e_3), \pm (-e_1 +2e_2-e_3), \pm (-e_1-e_2 + 2e_3) \}$.
For a Weyl chamber we choose the component $\mathcal{C}$ of $V - \cup \ker(\alpha)$ 
containing the regular vector $(3, 2, -5)$, which gives the
positive roots  
$R^{+} = \{ (\epsilon_1 - \epsilon_2) \upharpoonright  V, (\epsilon_1 - \epsilon_3) \upharpoonright  V, 
(\epsilon_2 - \epsilon_3) \upharpoonright  V, \epsilon_1 \upharpoonright  V, \epsilon_2 \upharpoonright  V, - \epsilon_3 \upharpoonright  V \}$
 and, since $\epsilon_1 \upharpoonright V = \frac{1}{3}(2 \epsilon_1 - \epsilon_2 - \epsilon_3) \upharpoonright V$,
 a positive basis $B = \{ (\epsilon_1 - \epsilon_2) \upharpoonright V, \epsilon_2 \upharpoonright V \}$.
 The co-root lattice, central lattice, center and sum of the positive roots are given by 
 \begin{itemize}
 \item $\Lambda_{R\,\check{}} = \langle 2e_1-e_2 -e_3, -e_1 +2e_2-e_3, -e_1-e_2 + 2e_3, e_\mu \pm e_\nu : 1 \leq \mu < \nu \leq 3 \rangle $
 \item $\Lambda_Z = \Lambda_{R\,\check{}}$
 \item $Z = \Lambda_Z / \Lambda_{R\,\check{}} \simeq 1$
\item $2\rho \equiv \sum_{\alpha \in R^+} \alpha = (3\epsilon_1 + \epsilon_2 - 3\epsilon_3) \upharpoonright V = (2\epsilon_2 - 4 \epsilon_3) \upharpoonright V$
 \end{itemize}
 
The corresponding simply-connected compact Lie group is also denoted by $G_2$ and it is the unique group 
of this type. The integral lattice of this group with respect to any bi-invariant metric given by
\begin{equation}\label{eqn:IntegralLatticeG_2}
\Lambda_I(G_2) =  \Lambda_{R\,\check{}}.
\end{equation}
 
\subsubsection{Type $E_8$}\label{sec:TypeE8}
As a vector space we have $V =\R^8$.
The corresponding root system is the union of the $112$ roots of $D_8$ with $128$ additional roots
$R = \{ \pm \epsilon_\mu \pm \epsilon_\nu : 1 \leq \mu < \nu \leq 8 \} \cup 
\{ \frac{1}{2} \sum_{\mu =1}^{8} \pm \epsilon_\mu : \mbox{ there are an even number of minus signs } \} =
\{ \pm \epsilon_\mu \pm \epsilon_\nu : 1 \leq \mu < \nu \leq 8 \} \cup 
\{ \frac{1}{2} \sum_{\mu =1}^{8} (-1)^{k_{\mu}} \epsilon_\mu : k_\mu = 0,1 \mbox{ and } \sum k_\mu \equiv 0 \mod 2 \}$
with co-root system 
$R\,\check{} = \{ e_\mu \pm e_\nu : 1 \leq \mu \neq \nu \leq 8 \} \cup \{ \frac{1}{2} \sum_{\mu =1}^{8} \pm e_\mu : \mbox{ there are an even number of minus signs }\}$.
For a Weyl chamber we choose the component $\mathcal{C}$ of $V - \cup \ker(\alpha)$ containing the vector 
$(0,1,2,3,4,5,6, 23)$.
Then the positive roots are $R^{+} = R_{1}^{+} \cup R_2^+ \cup R_3^+$, where 
\begin{itemize}
\item $R_1^+ = \{ \frac{1}{2}( \epsilon_8 + \epsilon_7 + \sum_{\mu =1}^{6} (-1)^{k_\mu} \epsilon_\mu : 
\sum k_\mu \equiv 0 \mod 2 \}$
\item $R_2^+ = \{ \frac{1}{2}( \epsilon_8  - \epsilon_7  + \sum_{\mu =1}^{6} (-1)^{k_\mu} \epsilon_\mu : 
\sum k_\mu \equiv 1 \mod 2 \}$
\item $R_3^+ = \{ \epsilon_\mu \pm \epsilon_\nu : 1 \leq \mu < \nu \leq 8 \}$.
\end{itemize}
The sets $R_1$ and $R_2$ each contain $32$ elements, while $R_3$ contains $56$.
As a positive basis for this root system we have the set $B$ consisting of the elements
$\alpha_1 = \frac{1}{2}( \epsilon_1 + \epsilon_8 - \sum_{\mu =2}^{7} \epsilon_\mu)$, 
$\alpha_2 = e_1 + e_2$, $\alpha_{j+1} = \epsilon_j - \epsilon_{j-1}$ for $j = 2, \ldots , 7$. 
Now, since the roots of $D_8$ are contained in the roots of $E_8$ we realize that 
the central lattice of $E_8$ is contained in the central lattice of $D_8$. 
The co-root lattice, central lattice, center and sum of the positive roots are then given by 
\begin{itemize}
\item $\Lambda_{R\,\check{}} = \Lambda_Z = \langle v_1, \ldots , v_7,  v_8 \rangle$, 
where $v_1 = \frac{1}{2}( e_1 + e_8 - \sum_{j=2}^{7} e_j)$, $v_2 = e_1 + e_2$ 
and $v_{j+1} = v_j - v_{j-1}$ for $j = 2, \ldots , 7$.

\item $Z = \Lambda_Z / \Lambda_{R\,\check{}}  \simeq 1$

\item $2\rho \equiv \sum_{\alpha \in R^+} \alpha = 32 \epsilon_8 + \sum_{j=1}^{7} 2 (8- j) \epsilon_j$ 
\end{itemize}

The corresponding simply-connected compact Lie group is also denoted by $E_8$ and it is the unique group 
of this type. The integral lattice of this group with respect to any bi-invariant metric given by
\begin{equation}\label{eqn:IntegralLatticeE_8}
\Lambda_I(E_8) =  \Lambda_{R\,\check{}}.
\end{equation}

\subsubsection{Type $E_7$}\label{sec:TypeE7}
We will describe this root system in terms of $E_8$.
Letting $v_1, \ldots, , v_8$ and $\alpha_1 , \ldots , \alpha_8$ be as in \ref{sec:TypeE8}, 
the vector space $V$ will be the $7$-dimensional subspace of $\R^8$ spanned by 
$v_1 = \frac{1}{2}(e_1 + e_8 - \sum_{j=2}^{7} e_j)$, $v_2 = e_1 + e_2$, 
$v_{j+1} = e_j - e_{j-1}$ for $j = 2, \ldots 6$.
In other words, $V$ is spanned by $e_1, \ldots , e_6, e_7 - e_8$
and consists of the vectors in $\R^8$ where the $e_7$ and $e_8$ coordinate are opposite.
The corresponding root system $R \subset V^*$ consists of the roots in $E_8$ in the span 
of $\alpha_1, \ldots , \alpha_7$. 
Specifically, we have 
$R = \{ \pm (\epsilon_\mu \pm \epsilon_\nu) \upharpoonright V : 1 \leq \mu < \nu \leq 6 \} \cup 
\{ \pm (\epsilon_7 - \epsilon_8) \upharpoonright V\} \cup
\{ \pm \frac{1}{2}((\epsilon_7 - \epsilon_8) \upharpoonright V + \sum_{j=1}^{6} (-1)^{k_j} \epsilon_j) : \sum k_j \equiv 1 \mod 2 \}$.
The co-root system is  
$R\,\check{} = \{ \pm (e_\mu \pm e_\nu) : 1 \leq \mu < \nu \leq 6 \} \cup 
\{ \pm (e_7 - e_8) \} \cup
\{ \pm \frac{1}{2} (e_7 - e_8 + \sum_{j=1}^{6} (-1)^{k_j} e_j ): \sum k_j \equiv 1 \mod 2 \}$.
For a Weyl chamber we choose the component $\mathcal{C}$ of $V - \cup_{\alpha \in R} \ker(\alpha)$ containing the 
regular vector $6e_1 + 5e_2 + 4e_3 + 3 e_4 + 2e_5 + 1e_6 + 11(e_7 - e_8)$.
Then the positive roots are given by the set $R^{+} = R_{1}^{+} \cup R_2^+ \cup R_3^+$, where 
\begin{itemize}
\item $R_1^+ = \{ (\epsilon_7 - \epsilon_8) \upharpoonright V \}$
\item $R_2^+ = \{ (\epsilon_\mu \pm \epsilon_\nu) \upharpoonright V: 1 \leq \mu < \nu \leq 6 \} \}$
\item $R_3^+ = \{ \frac{1}{2}( \epsilon_7 - \epsilon_8 + \sum_{\mu = 1}^{6} (-1)^{k_\mu} \epsilon_\mu : \sum k_\mu \equiv 1 \mod 2\}$.
\end{itemize}
The set $R_1^+$ has one element, the set $R_2^+$ contains $30$ elements and $R_3^+$ contains $32$ elements.
As a positive basis for this root system we have $B = \{\alpha_1, \ldots , \alpha_7\}$.  
The co-root lattice, central lattice, center and sum of the positive roots are then given by

\begin{itemize}
\item $\Lambda_{R\,\check{}} = \langle v_1, \ldots, v_6, v_7 \rangle$
\item $\Lambda_Z = \langle v_1, \ldots , v_6, v_7, F \rangle$, where 
$$F = \frac{1}{2}(e_7 -e_8) + e_1 + e_2 + e_3 = 
-v_1 +\frac{1}{2} v_2 - v_3 - v_4 - \frac{3}{2} v_5 - v_6$$

\item $Z = \Lambda_Z / \Lambda_{R\,\check{}} = \langle \overline{F} \rangle  \simeq \Z_2$ 
\item $2 \rho \equiv \sum_{\alpha \in R^+} \alpha = \sum_{j=1}^{6} 2(6-j) \epsilon_j + 17(\epsilon_7 - \epsilon_8) \upharpoonright V$ 
\end{itemize}

The associated simply-connected compact Lie group will also be denoted by $E_7$ and 
all other groups of this type are of the form $U = E_7/ \Gamma$, where 
$\Gamma \leq Z(E_7) = \langle \overline{F} \rangle \simeq \Z_2$.
The integral lattice of $U$ with respect to any bi-invariant metric is 
\begin{equation}\label{eqn:IntegralLatticeE_7}
\Lambda_I(E_7 / \Gamma) = \left\{ \begin{array}{ll}
\Lambda_{R\,\check{}} & \Gamma \simeq 1 \\
\Lambda_Z & \Gamma = \langle \overline{F} \rangle \simeq \Z_2
\end{array}
\right. 
\end{equation}

\subsubsection{Type $E_6$}\label{sec:TypeE6}
As a vector space we take $V_6$ to be the $6$-dimensional subspace of
$\R^8$ spanned by $v_1, \ldots , v_6$ where the $v_i$'s are as in \ref{sec:TypeE8}.
One can check that in this case $V_6$ consists of the vectors in $V_7$ 
which are orthogonal to $e_6+e_8$ and we see that 
$V_6$ has basis given by $e_1 , \ldots , e_5, e_6+e_7-e_8$.
The dual space $V_6^*$ is the linear span of $\alpha_1, \ldots , \alpha_6$.
The roots in $V_6$ are given by $R = R_8 \cap V_6$. More explicitly, we obtain
$R = \{ \pm (\epsilon_\mu \pm \epsilon_\nu) \upharpoonright V : 1 \leq \mu < \nu \leq 5 \} \cup 
\{ \pm \frac{1}{2}(\epsilon_6 + \epsilon_7 - \epsilon_8 + \sum_{j=1}^{5} (-1)^{k_j} \epsilon_j : \sum_{j=1}^{5} k_j \equiv 1 \mod 2 \}$
and the co-roots are given by 
$R\, \check{} = \{ \pm (e_\mu \pm e_\nu) \upharpoonright V : 1 \leq \mu < \nu \leq 5 \} \cup 
\{ \pm \frac{1}{2}(e_6 + e_7 - e_8 + \sum_{j=1}^{5} (-1)^{k_j} e_j : \sum_{j=1}^{5} k_j \equiv 1 \mod 2 \}$.
We choose our Weyl chamber $\mathcal{C}$ to be the component containing the regular vector 
$5e_1 + 4e_2 + 3e_3 + 2e_4 + 1e_1 + 6(e_6 + e_7 - e_8)$ with corresponding positive roots
$R^+ = R_1^+ \cup R_2^+$, where
\begin{itemize}
\item $R_1^+ = \{ \epsilon_i \pm \epsilon_j : 1 \leq i < j \leq 5 \}$
\item $R_2^+ = \{ \frac{1}{2}(\epsilon_6 + \epsilon_7 - \epsilon_8 + \sum_{j=1}^{5} (-1)^{k_j} e_j : \sum k_j \equiv 1 \mod 2 \}$.
\end{itemize} 
One can check that the co-root lattice, central lattice, center and the sum of the positive roots are as follows:
\begin{itemize}
\item $\Lambda_{R\,\check{}} = \langle v_1, \ldots, v_5, v_6 \rangle$
\item $\Lambda_{Z} = \langle v_1, \ldots , v_5, v_6, F \rangle$, 
where $$F = \frac{2}{3}(e_6 + e_7 - e_8) = \frac{2}{3}(-2v_1 -\frac{3}{2}v_2 -\frac{5}{2}v_3 -3v_4 -2v_5 -v_6 + v_7)$$
\item $Z = \Lambda_Z / \Lambda_{R\,\check{}} = \langle \overline{F} \rangle  \simeq \Z_3$
\item $2 \rho \equiv \sum_{\alpha \in R^+} \alpha = \sum_{j=1}^{5} 2(5-j) \epsilon_j + 8 (\epsilon_6 + \epsilon_7 - \epsilon_8)$.
\end{itemize}

The associated simply-connected compact Lie group will also be denoted by $E_6$ and 
all other groups of this type are of the form $U = E_6/ \Gamma$, where 
$\Gamma \leq Z(E_6) = \langle \overline{F} \rangle \simeq \Z_3$.
The integral lattice of $U$ with respect to any bi-invariant metric is 

\begin{equation}\label{eqn:IntegralLatticeE_6}
\Lambda_I(E_6 / \Gamma) = \left\{ \begin{array}{ll}
\Lambda_{R\,\check{}} & \Gamma \simeq 1 \\
\Lambda_Z & \Gamma = \langle \overline{F} \rangle \simeq \Z_3.
\end{array}
\right. 
\end{equation}


\section{Constructing an $\Ad(\Delta K)$-invariant Complement}\label{sec:AdK}
Let $U$ be a compact simple Lie group with bi-invariant metric $g_0$. Now, suppose $g$ is a left-invariant metric on $U$ than is naturally reductive with respect to $G\equiv U \times K$ for some $K \leq U$. We now review the procedure for constructing an $\Ad(\Delta K)$-invariant complement $\germ{p}$ of $\Delta \germ{K}$ in $\germ{g} = \germ{u} \oplus \germ{K}$ as discussed in \cite{DZ}.
 
To begin we let $A: \germ{K}_0 \to \germ{K}_0$ denote the $g_0$-symmetric endomorphism satisfying $h(X,Y) = g_0(AX, Y)$ for each $X, Y \in \germ{K}_0$. Then as is described in \cite[p. 9-11]{DZ} there are two cases to consider:

\begin{enumerate}
\item $\alpha$ is not an eigenvalue of $A$ and $\alpha_ j \neq \alpha$ for each $j = 1, \ldots, r$.

In this case we consider the symmetric bi-linear form $Q$ on $\germ{g} \times \germ{K}$ given by 

\begin{eqnarray*}
Q = \beta \, g \upharpoonright \germ{u} \oplus 0 + \bar{h} \upharpoonright 0 \oplus \germ{K}_0 + \beta_1 \, g \upharpoonright 0 \oplus \germ{K}_1 + \cdots + \beta_r \, g \upharpoonright 0 \oplus \germ{K}_r,
\end{eqnarray*}
where $\beta = \alpha$, $\beta_j = \frac{\beta \alpha_j}{\alpha - \alpha_j}$, and 
$\bar{h}(X,Y) = g_0(\overline{A}X, Y)$ is defined by the $g_0$-symmetric endomorphism  
$\overline{A}: \germ{K}_0 \to \germ{K}_0$ satisfying $A = \beta \overline{A} ( \overline{A} + \beta I)^{-1}$. $Q$ can be seen to be non-degenerate on $\germ{g} \times \germ{K}$ and $\Delta \germ{K}$. We then take $\germ{p}$ to be the $Q$-orthogonal complement of $\Delta \germ{K}$ which is given by 
\begin{eqnarray}\label{eqn:QOrthogonal1}
\germ{p} = \germ{p}_1 \oplus \germ{q}_0 \oplus \germ{q}_1 \oplus \cdots \oplus \germ{q}_r,
\end{eqnarray}
where 
\begin{enumerate}
\item $\germ{p}_1 = \{(X,0) : X \in \germ{u} \}$;
\item $\germ{q}_0 = \{ (\overline{A}X, -\beta X ) : X \in \germ{K}_0 \}$;
\item $\germ{q}_j = \{ (\beta_j X , - \beta X) : X \in \germ{K}_j \}$ for $j = 1, \ldots , r$.
\end{enumerate}
From this one may conclude that the metric $g_{\alpha, \alpha_1, \ldots , \alpha_r, h}$ is naturally reductive.

\medskip

\item $\alpha$ is an eigenvalue of $A$ \emph{or} $\alpha_ j = \alpha$ for some $j = 1, \ldots, r$.

We find the $\Ad(\Delta K)$-invariant complement $\germ{p}$ of $\Delta \germ{K}$ in $\germ{u} \times \germ{K}$ by considering a proper subgroup $K' \leq K$ with respect to which the metric $g_{\alpha, \alpha_1, \ldots , \alpha_r, h}$ falls into the previous case. Indeed, consider the Lie algebra $$\germ{K}'' = \germ{K}_0'' \oplus (\oplus_{\alpha_j = \alpha} \germ{K}_j),$$ where $\germ{K}_0'' = \{ X \in \germ{K}_0 : AX = \alpha X \}$. Then we let $\germ{K}'$ denote the $g_0$-orthogonal complement of $\germ{K}''$ in $\germ{K}$ and let $K'$ denote the corresponding connected proper subgroup of $K$.
One can check that $$\germ{K}' = \germ{K}_0' \oplus (\oplus_{\alpha_j \neq \alpha} \germ{K}_j),$$ where $\germ{K}_0'$ is the $g_0$-orthogonal complement of $\germ{K}_0''$ in $\germ{K}_0$. We can then view the metric $g_{\alpha, \alpha_1, \ldots , \alpha_r, h}$ as being induced by the inner product 
$$\alpha g_0 \upharpoonright \germ{m}' \oplus h\upharpoonright \germ{K}_0' \oplus (\oplus_{\alpha_j \neq \alpha} \alpha_j g_0 \upharpoonright \germ{K}_j),$$
where $\germ{m}' = \germ{m} \oplus \germ{K}''$ is the $g_0$ orthogonal complement of $\germ{K}'$. The metric then falls into the previous case with respect to $K'$ and we take $\germ{p}$ to be the corresponding complement of $\Delta \germ{K}'$ in $\germ{g} \times \germ{K}'$:
\begin{eqnarray}\label{eqn:QOrthogonal2}
\germ{p} = \germ{p}_1' \oplus \germ{q}_0' \oplus (\oplus_{\alpha_j \neq \alpha} \germ{q}_j),
\end{eqnarray}
where 
\begin{enumerate}
\item $\germ{p}_1' = \{(X,0) : X \in \germ{m}' \}$;
\item $\germ{q}_0' = \{ (\overline{A}X, -\beta X ) : X \in \germ{K}_0' \}$;
\item $\germ{q}_j = \{ (\beta_j X , - \beta X) : X \in \germ{K}_j \}$ for $j = 1, \ldots , r$.
\end{enumerate} 
However, one can check that $\germ{p}$ is also an $\Ad(\Delta K)$-invariant complement of $\Delta \germ{K}$ in $\germ{u} \times \germ{K}$ and we can then see that the metric is naturally reductive with respect to $U \times K$.
\end{enumerate}



\bibliographystyle{amsalpha}

\begin{thebibliography}{BFSTW}

\bibitem[A]{Abraham}
 R. Abraham,
\newblock {\em Bumpy metrics},
\newblock Global Analysis (Proc. Sympos. Pure Math., Vol. XIV, Berkeley,
Calif., 1968), 1--3, Amer. Math. Soc., Providence, R.I., 1970.

\bibitem[Ad]{Adams}
J.F.~Adams,
\newblock {\em Lectures on Exceptional Lie Groups},
\newblock The University of Chicago Press (Chicago), 1996.

\bibitem[An]{Anosov}
D.V. Anosov,
\newblock{\em Generic properties of closed geodesics},
\newblock Math.USSR Izvestiya {\bf 21} (1983), 1--29.

\bibitem[Be1]{Besse} 
A. L. Besse, 
\newblock \emph{Manifolds all of whose Geodesics are Closed}, 
\newblock Springer-Verlag (Berlin), 1978.

\bibitem[Be2]{Besse2} 
A. L. Besse, 
\newblock \emph{Einstein Manifolds}, 
\newblock Springer-Verlag (Berlin), 1987.

\bibitem[BTZ]{BTZ}
W. Ballmann, G. Thorbergsson and W. Ziller,
\newblock {\em Closed geodesics on positively curved manifolds},
\newblock Ann. Math. {\bf 116} (1982), 213--247.

\bibitem[BtD]{BtD} 
T. Br\"{o}cker and T. tom Dieck, 
\newblock \emph{Representations of Compact Lie Groups}, 
\newblock Springer-Verlag (Berlin), 1985.

\bibitem[BFSTW]{BFSTW}
N. Brown, R. Fink, M. Spencer, K. Tapp and Z. Wu,
\newblock {\em Invariant metrics with nonnegative curvature on compact Lie groups},
\newblock Canad. Math. Bull. {\bf 50} (2007), 24--34.

\bibitem[BPU]{BPU}
R. Brummelhuis, T. Paul and A. Uribe,
\newblock {\em Spectral estimates around a critical level},
\newblock Duke Math. J. {\bf 78} (1995), 477--530.

\bibitem[Ch]{Chazarain}
J. Chazarain,
\newblock {\em Formule de Poisson pour les vari\'{e}t\'{e}s riemanniennes},
\newblock Invent. Math. {\bf 24} (1974), 65--82. 

\bibitem[Ci]{Cianci}
D. Cianci,
\newblock {\em On the Poisson relation for lens spaces},
\newblock preprint.

\bibitem[CdV]{CdV}
Y. Colin de Verdi\`{e}re,
\newblock {\em Spectre du laplacien et longueurs des g\'{e}od\'{e}siques
p\'{e}riodiques II}, \newblock Compositio Math. {\bf 27} (1973), 159--184. 

\bibitem[D]{DAtri} 
J.E. D'Atri, 
\newblock \emph{Geodesic spheres and symmetries in naturally reductive spaces}, 
\newblock Michigan Math. J. {\bf 22} (1975), 71--76.

\bibitem[DZ]{DZ} 
J.E. D'Atri and W. Ziller, 
\newblock \emph{Naturally reductive metrics and Einstein metrics on compact Lie groups}, 
\newblock Mem. of Amer. Math. Soc., \textbf{18} (1979), no. 215.

\bibitem[DuGu]{DuGu}
J.J. Duistermaat and V. Guillemin,
\newblock {\em The spectrum of positive elliptic operators and periodic
bicharacteristics}, \newblock Invent. Math. {\bf 29} (1975), 39--79. 


\bibitem[DKV]{DuKoVa}
J.J. Duistermaat, J.A.C. Kolk and V.S. Varadarajan,
\newblock {\em Spectra of compact locally symmetric manifolds of nonnegative curvature}, 
\newblock Invent. Math. {\bf 52} (1979), 27--93. 


\bibitem[Gan]{Gangoli} R. Gangoli, 
\newblock \emph{The length spectra of some compact manifolds of negative curvature}, 
\newblock J. Differential Geom. {\bf 12} (1977), 403--424.

\bibitem[GorM]{GorMao} C.S. Gordon and Y. Mao, 
\newblock \emph{Comparisons of Laplace spectra, length spectra and geodesic flows of some Riemannian manifolds}, 
\newblock Math. Res. Lett. {\bf 1} (1994), 677--688.


\bibitem[GS]{GorSut} C.S. Gordon and C.J. Sutton, 
\newblock \emph{Spectral isolation of naturally reductive metrics on simple Lie groups}, 
\newblock Math. Z. {\bf 266} (2010), 979--995.

\bibitem[Gt]{Gornet} 
R. Gornet,
\newblock \emph{Riemannian nilmanifolds and the trace formula}, 
\newblock Trans. Amer. Math. Soc. {\bf 357} (2005), no. 11, 4445--4479.


\bibitem[Gu]{Guillemin} V. Guillemin, 
\newblock \emph{Some spectral results on rank one symmetric spaces}, 
\newblock Adv. Math. {\bf 28} (1978), 129--137.


\bibitem[Hel]{Helgason}
S.~Helgason,
\newblock {\em Differential geometry, Lie groups, symmetric spaces},
\newblock Academic Press, San Diego, 1978.




\bibitem[Hub1]{Huber1}
H. Huber,
\newblock {\em Zur analytischen Theorie hyperbolischer Raumformen und Bewegungsgruppen I},
\newblock Math. Ann. {\bf 138} (1959), 1--26.

\bibitem[Hub2]{Huber2}
H. Huber,
\newblock {\em Zur analytischen Theorie hyperbolischer Raumformen und Bewegungsgruppen II},
\newblock Math. Ann. {\bf 143} (1961), 463--464.

\bibitem[Hum]{Humphreys} 
J. Humphreys, 
\newblock \emph{Introduction to Lie Algebras and Representation Theory}, 
\newblock Springer-Verlag (New York), 1994.


\bibitem[KS]{KS}
O. Kowalski and J. Szenthe,
\newblock {\em On the existence of homogeneous geodesics in homogeneous Riemannian manifolds},
\newblock Geom. Dedicata {\bf 81} (2000), 209--214.


\bibitem[Lo]{Loos}
O.~Loos,
\newblock {\em Symmetric Spaces II: Compact Spaces and Classification},
\newblock W.A. Benjamin Inc. (New York), 1969.

\bibitem[M]{McKean}
H.P.~McKean,
\newblock {\em Selberg's trace formula as applied to a compact Riemann surface},
\newblock Comm. Pure Appl. Math. {\bf 25} (1972), 225--246.

\bibitem[MR]{Miatello-Rossetti}
R.J. Miatello and J.P. Rossetti,
\newblock {\em Length spectra and $p$-spectra of compact flat manifolds},
\newblock J. Geom. Anal. {\bf 13} (2003), 631--657.

\bibitem[Pes]{Pesce}
H. Pesce,
\newblock {\em Une formule de Poisson pour les vari\'{e}t\'{e}s Heisenberg},
\newblock Duke Math. J. {\bf 73} (1994), 79--95.

\bibitem[PR]{Prasad-Rapinchuk}
G. Prasad and A.S. Rapinchuk,
\newblock {\em Weakly commensurable arithmetic groups and isospectral locally symmetric spaces},
\newblock Publ. Math. Inst. Hautes \'{E}tudes Sci. No. 109 (2009), 113--184.

\bibitem[Sa]{Sakai} 
T. Sakai, 
\newblock \emph{Riemannian Geometry}, 
\newblock Translations of Mathematical Monographs {\bf 149}, American Mathematical Society (Providence), 1996.

\bibitem[Sa]{Samelson}
H.~Samelson,
\newblock {\em Notes on Lie Algebras},
\newblock Spring-Verlag (New York), 1990.



\bibitem[SS]{SS}
B. Schmidt and C. J. Sutton,
\newblock {\em Detecting the moments of inertia of a molecule via its rotational spectrum},
\newblock preprint, 29 pages.


\bibitem[Sun]{Sunada}
T. Sunada,
\newblock {\em Riemannian coverings and isospectral manifolds},
\newblock Ann. Math. {\bf121} (1985), 169--186.




\bibitem[Tak]{Tak} 
L. Takhtajan, 
\newblock \emph{Quantum Mechanics for Mathematicians}, 
\newblock Graduate Studies in Mathematics {\bf 95}, American Mathematical Society (Providence), 2008.



\bibitem[W]{Wilking}
B. Wilking,
\newblock {\em Index parity of closed geodesics},
\newblock Invent. Math. {\bf 144} (2001), 281--295.

\bibitem[Wo]{Wolf} 
J. Wolf, 
\newblock \emph{Spaces of Constant Curvature}, (sixth edition) 
\newblock AMS Chelsea Publishing (Providence, RI), 2011.

\bibitem[Z1]{Ziller1}
W. Ziller,
\newblock {\em Closed geodesics on homogeneous spaces},
\newblock Math. Z. {\bf 152} (1976), 67--88.

\bibitem[Z2]{Ziller2}
W. Ziller,
\newblock {\em The Jacobi equation on naturally reductive compact Remannian homogeneous spaces},
\newblock Comment. Math. Helv. {\bf 52} (1977), 573--590

\bibitem[Z3]{Ziller3}
W. Ziller,
\newblock {\em The free loop space of a globally symmetric space},
\newblock Invent. Math. {\bf 41} (1977), 1--22.

\end{thebibliography}

\end{document}